\newtheorem{theorem}{Theorem}[section]
\newtheorem{lemma}[theorem]{Lemma}
\newtheorem{remark}[theorem]{Remark}
\newtheorem{corollary}[theorem]{Corollary}
\newtheorem{proposition}[theorem]{Proposition}
\newcommand{\weakly}{\rightharpoonup}
\newcommand{\N}{\ensuremath{\mathbb{N}}}
\newcommand{\T}{\ensuremath{\mathbb{T}}}
\renewcommand{\S}{\ensuremath{\mathbb{S}}}
\newcommand{\R}{\ensuremath{\mathbb{R}}}
\newcommand{\ii}{\textnormal{i}}
\newcommand{\X}{\mathbb{X}}
\newcommand{\Y}{\mathbb{Y}}
\newcommand{\e}{\textnormal{e}}
\newcommand{\supp}{\textnormal{supp}}
\newcommand{\zb}[1]{\ensuremath{\boldsymbol{#1}}}
\newcommand{\dx}{\,\mathrm{d}}
\newcommand{\tT}{\mathrm{T}}
\newcommand{\uL}{\scriptscriptstyle{L}}
\DeclareMathOperator{\emp}{emp}
\DeclareMathOperator{\atom}{atom}
\DeclareMathOperator{\curve}{curv}
\DeclareMathOperator{\Lcurve}{{\lambda}-curv}
\DeclareMathOperator{\Acurve}{ a-curv}
\DeclareMathOperator{\Lip}{Lip}
\DeclareMathOperator*{\diam}{diam}
\DeclareMathOperator*{\spann}{span}
\DeclareMathOperator{\dist}{dist}
\DeclareMathOperator{\G}{\mathcal{G}}
\DeclareMathOperator{\OOO}{O}
\DeclareMathOperator*{\SO}{SO}
\DeclareMathOperator{\TSP}{TSP}
\DeclareMathOperator{\MST}{MST}
\begin{document}

\title{Curve Based Approximation of Measures on Manifolds by Discrepancy Minimization}

\author{
	Martin Ehler\footnotemark[1]
	\and
	Manuel Gr\"af\footnotemark[2]
	\and
	Sebastian Neumayer\footnotemark[3] 
	\and
	and Gabriele Steidl\footnotemark[3] 
}
\maketitle 

\date{\today}

\footnotetext[3]{Department of Mathematics,
	TU Kaiserslautern,
	Paul-Ehrlich-Str.~31, D-67663 Kaiserslautern, Germany,
	\{neumayer,steidl\}@mathematik.uni-kl.de
} 
\footnotetext[1]{University of Vienna, Department of Mathematics, Vienna, Austria,
	\{martin.ehler\}@univie.ac.at	
}
\footnotetext[2]{Austrian Academy of Sciences, Acoustics Research Institute, Vienna, Austria,
	\{mgraef\}@kfs.oeaw.ac.at
}
\begin{abstract}
	The approximation of probability measures on compact metric spaces and in particular on Riemannian manifolds
	by atomic or empirical ones is a classical task in approximation and complexity theory 
	with a wide range of applications.
	Instead of point measures we are concerned  with the approximation by  measures supported on Lipschitz curves.
	Special attention is paid to push-forward measures of Lebesgue measures on the unit interval by such curves.
	Using the discrepancy as distance between measures, we prove optimal approximation rates in terms of the curve's length and Lipschitz constant.
	Having established the theoretical convergence rates, we are interested in the numerical minimization of the
	discrepancy between a given probability measure and the set of push-forward measures of Lebesgue measures on the unit interval by Lipschitz curves.
	We present numerical examples for measures on the 2- and 3-dimensional torus, the 2-sphere, the rotation group on $\mathbb R^3$ and the Grassmannian of all 2-dimensional linear subspaces of $\R^4$.
	Our algorithm of choice is a conjugate gradient method on these manifolds which incorporates second-order information.
	For efficiently computing the gradients and the Hessians within the algorithm, 
	we approximate the given measures by truncated Fourier series and use fast Fourier transform techniques on these manifolds.
\end{abstract}	

The approximation of probability measures by atomic or empirical ones based on their discrepancies 
is a well examined problem in approximation and complexity theory \cite{Kuipers:1974la,Matousek:2010kb,Nowak:2010rr}
with a wide range of applications, e.g., in the derivation of quadrature rules and in the construction of designs.
Recently, discrepancies  were also used in image processing for dithering \cite{Graf:2013fk,SGBW2010,TSGSW2011}, 
i.e., for representing a gray-value image by a finite number of black dots,
and in generative adversarial networks \cite{DRG2015}. 

Besides discrepancies, Optimal Transport (OT) and in particular Wasserstein distances have emerged as powerful tools to compare probability measures in recent years,
see \cite{CP2019,Villani2003} and the references therein.
In fact, so-called Sinkhorn divergences, which are computationally much easier to handle than OT, 
are known to interpolate between OT and  discrepancies \cite{FSVATP2018}. 
For the sample complexity of Sinkhorn divergences we refer to \cite{GCBCP2018}.
The rates for approximating probability measures by atomic or empirical ones with respect to Wasserstein distances depend on the dimension of the underlying spaces, see \cite{Che18,Kloeckner2012}.
In contrast, approximation rates based on discrepancies can be given independently of the dimension~\cite{Nowak:2010rr}, i.e., they do not suffer from the curse of dimensionality.
Additionally, we should keep in mind that the computation of discrepancies does not involve a minimization problem, which is a major drawback of OT and Sinkhorn divergences.
Moreover, discrepancies admit a simple description in Fourier domain and hence the use of fast Fourier transforms is possible, leading to better scalability than the aforementioned methods. 

Instead of point measures, we are interested in approximations
with respect to measures supported on curves.
More precisely, we consider push-forward measures of probability measures $\omega \in {\mathcal P} ([0,1])$ by Lipschitz curves of bounded speed, with special focus on absolutely continuous measures $\omega = \rho \lambda$ and the
Lebesgue measure $\omega = \lambda$.
In this chapter, we focus on approximation with respect to discrepancies.
For related results on quadrature and approximation on manifolds, 
we refer to \cite{Filbir:2011fk,Grochenig:2015ya,Mhaskar:2010kx,Mhaskar:2017nr} and the references therein.
An approximation model based on the 2-Wasserstein distance was proposed in \cite{LGKW2018}.
That work exploits completely different techniques than ours both in the theoretical and numerical part.
Finally, we want to point out a relation to principal curves which are
used in computer science and graphics for approximating  distributions approximately supported on curves
\cite{HS89,Hauberg15,KKLZ00,Hauberg15,KLO20}.
For the interested reader, we further comment on this direction of research
in Remark~\ref{rem:principalcurves} and in the conclusions.
Next, we want to motivate our framework by numerous potential applications:
\begin{itemize}
	\item In MRI sampling \cite{BCCKW2016,CCKW2014}, it is desirable to construct sampling curves with short sampling times (short curve) and high reconstruction quality.
	Unfortunately, these requirements usually contradict each other and finding a good trade-off is necessary.
	Experiments demonstrating the power of this novel approach on a real-world scanner are presented in \cite{LWCM19}.
	\item For laser engraving \cite{LGKW2018} and 3D printing \cite{CSGC17}, we require nozzle trajectories based on our (continuous) input densities.
	Compared to the approach in \cite{CSGC17}, where points given by Llyod's method are connected as a solution of the TSP (traveling salesman problem), our method jointly selects the points and the corresponding curve.
	This avoids the necessity of solving a TSP, which can be quite costly, although efficient approximations exist. 
	Further, it is not obvious that the fixed initial point approximation is a good starting point for constructing a curve.
	\item The model can be used for wire sculpture creation \cite{AXGT13}. In view of this, our numerical experiment presented in Fig.~\ref{fig:spock} can be interpreted as a building plan for a wire sculpture of the Spock head, namely of a 2D surface.
	Clearly, the approach can be also used to create images similar to TSP Art \cite{KP05}, where images are created from points by solving the corresponding TSP.
	\item In a more manifold related setting, the approach can be used for grand tour computation on $\mathcal{G}_{2,4}$ \cite{Asimov:1985aa}, see also our numerical experiment in Fig.~\ref{fig:uniform_g24}. More technical details are provided in the corresponding section.
\end{itemize}

Our contribution is two-fold.
On the theoretical side, we provide estimates of the approximation rates in terms of the maximal speed of the curve.
First, we prove approximation rates for general probability measures on compact Ahlfors $d$-regular length spaces $\X$.
These spaces include many compact sets in the Euclidean space $\R^d$, e.g., the unit ball or the unit cube as well as $d$-dimensional compact Riemannian manifolds  without boundary.
The basic idea consists in combining the known convergence rates for approximation by atomic measures with cost estimates for the traveling salesman problem.
As for point measures, the approximation rate  $L^{d/(2d-2)} \le L^{-1/2}$ for general $\omega \in {\mathcal P} ([0,1])$
and $L^{d/(3d-2)} \le L^{-1/3}$ for $\omega = \lambda$ in terms of the maximal Lipschitz constant (speed) $L$ of the curves
does not crucially depend on the dimension of $\X$. In particular, the second estimate improves a result given in~\cite{CCKW2017} for the torus.

If the measures fulfill additional smoothness properties, these estimates can be improved on compact, connected, $d$-dimensional Riemannian manifolds without boundary.
Our results are formulated for absolutely continuous measures (with respect to the Riemannian measure) having densities in the Sobolev space $H^s(\X)$, $s> d/2$.
In this setting, the optimal approximation rate becomes roughly speaking $L^{-s/(d-1)}$.
Our proofs rely on a general result of Brandolini et al.~\cite{Brandolini:2014oz} on the quadrature error achievable by integration with respect to a measure that exactly integrates all eigenfunctions of the Laplace--Beltrami with eigenvalues smaller than a fixed number.
Hence, we need to construct measures supported on curves that fulfill the above exactness criterion.
More precisely, we construct such curves for the $d$ dimensional torus $\T^d$, the spheres $\S^d$, the rotation group $\SO(3)$ and the Grassmannian $\mathcal{G}_{2,4}$.

On the numerical side, we are interested in finding (local) minimizers of discrepancies between a given continuous measure and those from the set of push-forward measures of the Lebesgue measure by bounded Lipschitz curves.
This problem is tackled numerically on $\T^2$, $\T^3$, $\S^2$ as well as $\SO(3)$ and $\mathcal{G}_{2,4}$
by switching to the Fourier domain. 
The minimizers are computed using the method of conjugate gradients (CG) on manifolds, which incorporates second order information in form of a multiplication by the Hessian.
Thanks to the approach in the Fourier domain, the required gradients and the calculations involving the Hessian 
can be performed efficiently by fast Fourier transform techniques at arbitrary nodes on the respective manifolds.
Note that in contrast to our approach, semi-continuous OT minimization relies on Laguerre tessellations \cite{GKL2019}, which
are not available in the required form on the 2-sphere, $\SO(3)$ or $\mathcal{G}_{2,4}$.

This chapter is organized as follows: 
In Section \ref{sec:basics} we give the necessary preliminaries on probability measures.
In particular, we introduce the different sets of measures supported on Lipschitz curves that are used for the approximation.
Note that measures supported on continuous curves of finite length 
can be equivalently characterized by push-forward measures of probability measures by Lipschitz curves.
Section~\ref{sec:discrepancies}
provides the notation on reproducing kernel Hilbert spaces and discrepancies including their representation in the Fourier domain.
Section~\ref{sec:2} contains our estimates of the approximation rates for general given measures and different approximation spaces of measures supported on curves.
Following the usual lines in approximation theory, we are then concerned with the
approximation of absolutely continuous measures with density functions lying in Sobolev spaces.
Our main results on the approximation rates of smoother measures are contained in 
Section \ref{sec:3}, where we distinguish between the approximation 
with respect to the  push-forward of general measures $\omega \in {\mathcal P}[0,1]$,
absolute continuous measures and the Lebesgue measure on $[0,1]$. 
In Section \ref{sec:discretization_dith} we formulate our numerical minimization problem.
Our numerical algorithms of choice are briefly described in Section~\ref{sec:algs}.
For a comprehensive description of the algorithms on the different manifolds, we refer to respective papers.
Section \ref{sec:numerics_dith} contains  numerical results demonstrating the practical feasibility of our findings.
Conclusions are drawn in Section \ref{sec:conclusions_dith}.
Finally, Appendix  \ref{sec:examples} briefly introduces the different manifolds $\X$ used in our numerical examples together with the  Fourier
representation of probability  measures on $\X$.
\section{Probability measures and curves} \label{sec:basics}
In this section, the basic notation on measure spaces is provided, see \cite{AFP2000,FL2007},
with focus on probability measures supported on curves. At this point, let us assume that 
\begin{quotation}
	$\X$ is a compact metric space endowed with a bounded non-negative Borel measure $\sigma_\X \in \mathcal{M} (\X)$
	such that $\supp(\sigma_\X)=\X$.
	Further, we denote the metric by $\dist_\X$. 
\end{quotation}
Additional requirements on $\X$ are added along the way and notations are explained below.
By $\mathcal{B}(\X)$ we denote the Borel $\sigma$-algebra on $\X$ 
and by $\mathcal M(\X)$ the linear space of all finite signed Borel measures on $\X$, 
i.e., the space of all $\mu\colon \mathcal{B}(\X) \rightarrow \mathbb R$ satisfying $\mu(\X) < \infty$ and for any sequence 
$(B_k)_{k \in \N} \subset \mathcal{B}(\X)$ of pairwise disjoint sets the relation
$\mu(\bigcup_{k=1}^\infty B_k) = \sum_{k=1}^\infty \mu(B_k)$.
The \emph{support of a measure} $\mu$ is the closed set 
\[\supp(\mu) \coloneqq \bigl\{ x \in \X: B \subset \X \text{ open, }x \in B  \implies \mu(B) >0\bigr\}.\]
For $\mu \in \mathcal M(\X)$ the total variation measure is defined by
\[
|\mu|(B) \coloneqq \sup \biggl\{ \sum_{k=1}^\infty |\mu(B_k)|:
\bigcup_{k=1}^\infty B_k = B, \, B_k \; \mbox{pairwise disjoint}\biggr\}.
\]
With the norm $\| \mu\|_{\mathcal M} = |\mu|(\X)$ the space $\mathcal M(\X)$ becomes a Banach space.
By ${\mathcal C}(\X)$ we denote the Banach space of continuous real-valued functions on 
$\X$ equipped with the norm $\| \varphi\|_{{\mathcal C}(\X)} \coloneqq \max_{x \in \X} |\varphi(x)|$.
The space $\mathcal M(\X)$ can be identified via Riesz' theorem with the dual space of ${\mathcal C}(\X)$ 
and the weak-$^\ast$ topology on $\mathcal M(\X)$
gives rise to the \emph{weak convergence of measures},
i.e., a sequence $(\mu_k )_k \subset \mathcal M(\X)$
converges \emph{weakly} to $\mu$ and we write $\mu_k \weakly \mu$, if
\begin{equation}
	\lim_{k \to \infty} \int_{\X} \varphi \dx \mu_k = \int_{\X} \varphi \dx \mu \quad \text{for all } \varphi \in {\mathcal  C}(\X).
\end{equation} 
For a non-negative, finite measure $\mu$, let $L^p(\X,\mu)$ be the Banach space (of equivalence classes)
of complex-valued functions with norm
\[\|f\|_{L^p(\X,\mu)} = \left( \int_\X |f|^p \dx \mu \right)^\frac1p < \infty.\]

By $\mathcal P (\X)$ we denote the space of Borel probability measures on $\X$,
i.e., non-negative Borel measures with $\mu(\X) = 1$. This space is \emph{weakly compact}, i.e., compact with respect to the topology of weak convergence.
We are interested in the approximation of measures in $\mathcal P (\X)$
by probability measures supported on points and curves in $\X$. 
To this end, we associate with $x \in \X$ a probability measure $\delta_x$ with values 
$\delta_x(B) = 1$ if $x \in B$ and $\delta_x(B) = 0$ otherwise.

The \emph{atomic probability measures} at $N$ points are defined by
\begin{align}
	\mathcal P_N^{\atom} (\X) &\coloneqq
	\biggl\{\sum_{k=1}^N w_k \delta_{x_k} :  
	(x_k)_{k=1}^N \in \X^N, \,  (w_k)_{k=1}^N \in [0,1]^N,\, \sum_{k=1}^N w_k = 1
	\biggr\}.
\end{align}
In other words, $\mathcal P_N^{\atom} (\X)$ 
is the collection of probability measures, whose support consists of at most $N$ points. Further restriction 
to equal mass distribution leads to the \emph{empirical probability measures} at $N$ points denoted by 
\begin{equation}\label{eq:def empirical}
	\mathcal P_N^{\emp} (\X)\coloneqq\biggl\{ \frac{1}{N}\sum_{k=1}^N \delta_{x_k} :  (x_k)_{k=1}^N \in \X^N\biggr\}.
\end{equation}

In this chapter, we are interested in the approximation by measures having their support on curves.
Let $\mathcal{C}([a,b],\X)$ denote the set of  closed,
continuous curves $\gamma\colon[a,b]\rightarrow \X$.
Although our presented experiments involve solely closed curves, some applications might require open curves. Hence, we want to point out that all of our approximation results still hold without this requirement. Upper bounds would not get worse and we have not used the closedness for the lower bounds on the approximation rates.
The \emph{length of} a curve  $\gamma\in\mathcal{C}([a,b],\X) $ is given by
\begin{equation}\label{eq:Laenge}
	\ell(\gamma)\coloneqq \sup_{\substack{a\leq t_0\leq \ldots\leq t_n\leq b\\ n\in\N }} 
	\sum_{k=1}^n \dist_\X \bigl(\gamma(t_k),\gamma(t_{k-1})\bigr).
\end{equation}
If $\ell(\gamma)<\infty$, then $\gamma$ is called \emph{rectifiable}. By reparametrization, see \cite[Thm.~3.2]{Hhjlasz:2003sy}, 
the image of any rectifiable curve in $\mathcal{C}([a,b],\X)$ can be derived from the set of closed \emph{Lipschitz continuous curves}
\begin{align*} \label{eq:lip}
	\Lip(\X) \coloneqq 
	\bigl\{\gamma \in \mathcal{C}([0,1],\X) : \exists L \in \R \; \text{with} \;
	\dist_\X\bigl(\gamma(s),\gamma(t)\bigr)\leq L|s-t|\;\forall s,t\in[0,1]\bigr\}.
\end{align*}
The \emph{speed of a curve} $\gamma \in \Lip(\X)$ is defined a.e.~by the metric derivative
\begin{equation*}
	|\dot \gamma|(t) \coloneqq\lim_{s\rightarrow t} 
	\frac{\dist_\X\bigl(\gamma(s),\gamma(t)\bigr)}{|s-t|},\qquad t\in[0,1],
\end{equation*}
cf.~\cite[Sec.~1.1]{Ambrosio}.
The optimal Lipschitz constant $L=L(\gamma)$ of a curve $\gamma$ is given by
$L(\gamma) = \| \, |\dot \gamma| \, \|_{^\infty([0,1])}$.
For a constant speed curve it holds $L(\gamma) = \ell(\gamma)$.

We aim to approximate measures in $\mathcal{P}(\X)$ from those of the subset 
\begin{equation}\label{eq:def P curve 010}
	\mathcal{P}^{\curve}_L(\X)\coloneqq\bigl\{\nu\in\mathcal{P}(\X)  : 
	\exists \gamma\in \mathcal{C}([a,b],\X),\; \supp(\nu)\subset \gamma([a,b]),\; \ell(\gamma)\leq L\bigr\}.
\end{equation}
This space is quite large and in order to define further meaningful subsets, we derive an equivalent formulation in terms of push-forward measures. 
For $\gamma\in \mathcal{C}([0,1],\X)$, the \emph{push-forward} $\gamma{_*} \omega \in {\mathcal P}(\X)$ 
of a probability measure $\omega \in {\mathcal P}([0,1])$ is defined by $\gamma{_*} \omega(B)\coloneqq\omega (\gamma^{-1} (B))$ 
for $B \in \mathcal B(\X)$.
We directly observe $\supp(\gamma{_*} \omega)=\gamma(\supp(\omega))$.
By the following lemma, $\mathcal{P}^{\curve}_L(\X)$ consists of the push-forward of measures in $\mathcal{P}([0,1])$ by constant speed curves.

\begin{lemma}\label{lem:repa}
	The space $\mathcal{P}^{\curve}_L(\X)$ in \eqref{eq:def P curve 010} is equivalently given by
	\begin{equation}\label{eq:def P curve simpler}
		\mathcal{P}^{\curve}_L(\X)= \bigl\{ \gamma{_*} \omega : \gamma\in\Lip(\X) \text{ has constant speed } L(\gamma)\leq L  ,\; \omega\in\mathcal{P}([0,1]) \bigr\}.
	\end{equation}
\end{lemma}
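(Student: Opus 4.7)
The plan is to establish the two inclusions in \eqref{eq:def P curve simpler} separately. The direction ``$\supseteq$'' is essentially by definition: given $\gamma\in\Lip(\X)$ of constant speed with $L(\gamma)\le L$, together with $\omega\in\mathcal{P}([0,1])$, the push-forward $\nu := \gamma{_*}\omega$ is a probability measure with $\supp(\nu)=\gamma(\supp(\omega))\subset \gamma([0,1])$; since the speed is constant, the length equals $L(\gamma)\le L$, so $\nu\in\mathcal{P}^{\curve}_L(\X)$ is witnessed by $\gamma$ on $[a,b]=[0,1]$.

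For the direction ``$\subseteq$'', I plan to (i) reparametrize by arclength, (ii) construct a Borel right inverse of the reparametrized curve on its image, and (iii) push $\nu$ back along this inverse to obtain the desired $\omega$. Concretely, if $\nu\in\mathcal{P}^{\curve}_L(\X)$ is witnessed by $\gamma\in\mathcal{C}([a,b],\X)$ with $K := \gamma([a,b])$, $\supp(\nu)\subset K$ and $\ell(\gamma)\le L$, then the reparametrization result \cite[Thm.~3.2]{Hhjlasz:2003sy} already cited in the excerpt produces a constant-speed $\tilde\gamma\in\Lip(\X)$ with $\tilde\gamma([0,1])=K$ and $L(\tilde\gamma)=\ell(\gamma)\le L$.

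The crux is then to find $\omega\in\mathcal{P}([0,1])$ with $\tilde\gamma{_*}\omega = \nu$. I will define $s\colon K\to[0,1]$ by the pointwise minimum
\begin{equation}
s(x) := \min \tilde\gamma^{-1}(\{x\}),
\end{equation}
which is well defined since $\tilde\gamma^{-1}(\{x\})$ is a non-empty closed subset of the compact interval $[0,1]$; by construction $\tilde\gamma\circ s = \id_K$. The one non-routine point is the measurability of $s$, which I intend to handle by verifying $\{x\in K : s(x)>c\} = K\setminus \tilde\gamma([0,c])$ for every $c\in[0,1]$; the right-hand side is relatively open in $K$ because $\tilde\gamma([0,c])$ is the continuous image of a compact set and hence closed.

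With a Borel $s$ in hand, I set $\omega := s{_*}\nu \in \mathcal{P}([0,1])$ and verify, using $\tilde\gamma\circ s=\id_K$ together with $\nu(\X\setminus K)=0$ (which follows from $\supp(\nu)\subset K$), that $\tilde\gamma{_*}\omega(B) = \nu\bigl(s^{-1}(\tilde\gamma^{-1}(B))\bigr) = \nu(B\cap K) = \nu(B)$ for every Borel set $B\subset\X$. The main obstacle I anticipate is the measurability of $s$; once that is in hand, the rest amounts to tracking push-forwards through $\tilde\gamma$ and $s$.
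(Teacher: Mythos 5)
Your proposal is correct and follows essentially the same route as the paper's proof: reparametrize to a constant-speed curve, define the Borel selection $x\mapsto\min\tilde\gamma^{-1}(\{x\})$ (your measurability check via $\{s>c\}=K\setminus\tilde\gamma([0,c])$ is the complement of the paper's observation that $\{f\le t\}=\tilde\gamma([0,t])$ is compact), and push $\nu$ forward along it. The only cosmetic difference is that the paper treats the single-point support case separately before invoking the reparametrization result, whereas you do not; this is a negligible edge case.
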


\begin{proof}
	Let $\nu\in \mathcal{P}^{\curve}_L(\X)$ as in \eqref{eq:def P curve 010}. If $\supp(\nu)$ consists of a single point $x \in \X$ only, 
	then the constant curve $\gamma \equiv x$ pushes forward an arbitrary $\delta_t$ for $t\in[a,b]$, which shows that $\nu$ is contained 
	in \eqref{eq:def P curve simpler}. 
	
	Suppose that $\supp(\nu)$ contains at least two distinct points and let $\gamma\in\mathcal{C}([a,b],\X)$ 
	with $\supp(\nu)\subset \gamma([a,b])$ and $\ell(\gamma)<\infty$.
	According to \cite[Prop.~2.5.9]{Burago:2001hs}, there exists a continuous curve 
	$\tilde \gamma \in \Lip(\X)$ with constant speed $\ell(\gamma)$ 
	and a continuous non-decreasing function $\varphi \colon [a,b] \to [0,1]$ with $\gamma = \tilde \gamma \circ \varphi$.
	Now, define $f\colon \X \to [0,1]$ by $f(x) \coloneqq \min \{\tilde \gamma^{-1}(x)\}$. This function is measurable, 
	since for every $t \in [0,1]$ it holds that 
	\[\bigl\{x \in \X: f(x) \leq t\bigr\} = \bigl\{ x \in \X: \min \{\tilde \gamma^{-1} (x) \} \le t \bigr\} = \tilde \gamma([0,t])\]
	is compact.
	Due to $\supp(\nu)\subset \tilde \gamma([0,1])$, we can define $\omega \coloneqq f{_*}\nu \in \mathcal P([0,1])$.
	By construction, $\omega$ satisfies $\tilde{\gamma}{_*} \omega (B)=\omega(\tilde{\gamma}^{-1}(B)) =\nu(f^{-1} \circ \tilde{\gamma}^{-1}(B))= \nu(B)$ for all $B\in \mathcal{B}(\X)$.
	This concludes the proof.
\end{proof}

The set $\mathcal{P}^{\curve}_L(\X)$  contains $\mathcal P_N^{\atom} (\X)$ if $L$ is sufficiently large compared to $N$ and $\X$ is sufficiently nice, cf.~Section \ref{sec:2}. 
It is reasonable to ask for more restrictive sets of approximation measures, e.g., when $\omega \in \mathcal P([0,1])$ is assumed to be absolutely continuous.
For the Lebesgue measure $\lambda$ on $[0,1]$, we consider
\begin{equation}\label{eq:LcurveNew}
	\mathcal{P}^{\Acurve}_{L}(\X) 
	\coloneqq
	\bigl\{ \gamma{_*} \omega : \gamma \in \Lip(\X), \; L(\gamma) \leq L, 
	\; \omega = \rho \lambda \in \mathcal P([0,1]), \,  L(\rho) \leq L\bigr\}.
\end{equation}

In the literature \cite{CCKW2017,LGKW2018}, the special case 
of push-forward of the Lebesgue measure $\omega = \lambda$ on $[0,1]$  by Lipschitz curves
in $\mathbb T^d$ was discussed and successfully used in certain applications \cite{BCCKW2016,CCKW2014}.
Therefore, we also consider approximations from 
\begin{equation}\label{eq:Lcurve}
	\mathcal{P}^{\Lcurve}_L(\X) 
	\coloneqq
	\bigl\{ \gamma{_*}\lambda : \gamma \in \Lip(\X), \; L(\gamma) \leq L \bigr\}.
\end{equation}
It is obvious that our probability spaces related to curves are nested,
\begin{equation*}
	\mathcal{P}^{\Lcurve}_L(\X) \subset \mathcal{P}^{\Acurve}_L(\X) \subset \mathcal{P}^{\curve}_L(\X).
\end{equation*}
Hence, one may expect that establishing good approximation rates is most difficult for $\mathcal{P}^{\Lcurve}_L(\X)$ and easier for $\mathcal{P}^{\curve}_L(\X)$.

\section{Discrepancies and RKHS} \label{sec:discrepancies}
The aim of this section is to introduce the way we quantify the distance (``discrepancy'') between two probability measures.
To this end, choose a continuous, symmetric function $K\colon \X \times \X \rightarrow \mathbb R$ that is positive definite, i.e., for any finite number $n \in \mathbb N$ 
of points $x_j\in \X$, $j=1,\ldots,n$, the relation
\[
\sum_{i,j=1}^n a_i a_j K(x_i,x_j) \ge 0
\]
is satisfied for all $a_j\in \mathbb R$, $j=1,\ldots,n$. We know by Mercer's theorem \cite{CS2002,Mer1909,Steinwart:2011it} 
that there exists an orthonormal basis $\{\phi_k: k \in \N\}$ of $L^2(\X,\sigma_\X)$ and non-negative coefficients $(\alpha_k)_{k \in \N} \in \ell_1$ such that $K$ has the Fourier expansion
\begin{equation} \label{mercer}
	K(x,y) = \sum_{k=0}^\infty \alpha_k \phi_k(x)\overline{\phi_k(y)}
\end{equation}
with absolute and uniform convergence of the right-hand side.
If $\alpha_k > 0$ for some $k \in \mathbb N_0$, the corresponding function $\phi_k$ is continuous.
Every function $f\in L^2(\X,\sigma_\X)$ has a Fourier expansion
\[
f = \sum_{k=0}^\infty \hat f_k \phi_k, \quad \hat f_k \coloneqq \int_{\X} f \overline{\phi_k} \dx \sigma_\X.
\]
The kernel $K$ gives rise to a \emph{reproducing kernel Hilbert space} (RKHS).
More precisely, the function space
\[
H_{K} (\X) \coloneqq \Bigl\{f \in L^2(\X,\sigma_\X) : \sum_{k=0}^\infty \alpha_k^{-1} |\hat f_k|^2 < \infty \Bigr\}
\]
equipped with the inner product and the corresponding norm
\begin{equation} \label{norm_rkhs}
	\langle f,g \rangle_{H_{K} (\X)} 
	= \sum_{k=0}^\infty \alpha_k^{-1} \hat f_k \overline{\hat g_k}, \qquad \|f\|_{H_{K} (\X)} = \sqrt{\langle f,f \rangle}_{H_{K} (\X)}
\end{equation}
forms a Hilbert space with reproducing kernel, i.e.,
\begin{align}
	K (x,\cdot) \in H_{K} (\X) \qquad\qquad &\mbox{for all} \; x \in \X,\\
	f(x) = \bigl\langle f, K (x,\cdot) \bigr\rangle_{H_{K} (\X)} \qquad &\mbox{for all} \; f \in H_{K} (\X), \; x \in \X.
\end{align}
Note that $f\in H_K(\X)$ implies $\hat{f}_k=0$ if $\alpha_k=0$, in which case we make the convention \smash{$\alpha_k^{-1} \hat f_k=0$} in \eqref{norm_rkhs}. 
The space $H_{K} (\X)$ is the closure of the linear span of $\{ K (x_j,\cdot): x_j \in \X \}$ with respect to the norm \eqref{norm_rkhs}, and $H_{K} (\X)$ 
is continuously embedded in $C(\X)$. In particular, the point evaluations in $H_{K} (\X)$ are continuous.

The \emph{discrepancy} $\mathscr{D}_K(\mu,\nu)$ is defined as the dual norm on $H_{K}(\X)$ of the linear operator $T\colon H_K(\X) \rightarrow \mathbb C$ 
with $\varphi \mapsto \int_{\X} \varphi  \dx (\mu -  \nu)$:
\begin{equation}\label{equiv_1}
	\mathscr{D}_K(\mu,\nu)
	=
	\max_{\| \varphi\|_{ H_{K} (\X) } \le 1}
	\Bigl|\int_{\X} \varphi  \dx (\mu -  \nu) \Bigr| ,
\end{equation}
see \cite{Gnewuch:2012jy,Nowak:2010rr}.
Note that this looks similar to the 1-Wasserstein distance, 
where the space of test functions consists of Lipschitz continuous functions and is larger.
Since
\begin{align}
	\int_\X \varphi \dx \mu = \int_\X \bigl\langle \varphi, K(x,\cdot) \bigr\rangle_{H_K(\X)} \dx \mu(x)
	= \Bigl\langle \varphi, \int_\X K(x,\cdot) \dx \mu(x) \Bigr\rangle_{H_K(\X)},
\end{align}
we obtain by Riesz's representation theorem
$$
\max_{\| \varphi\|_{ H_{K} (\X) } \le 1} \int_{\X} \varphi  \dx \mu = \Bigl\| \int_\X K(x,\cdot) 
\dx \mu(x)\Bigr\|_{H_{K}(\X)},
$$
which yields by Fubini's theorem, \eqref{mercer}, \eqref{norm_rkhs} and symmetry of $K$ that
\begin{align} \label{mercer_1}
	\mathscr{D}^2_K(\mu,\nu)
	=& \iint\limits_{\X\times\X} K \dx\mu \dx\mu - 2\iint\limits_{\X\times\X} K\dx\mu \dx\nu
	+\iint\limits_{\X\times\X} K\dx\nu \dx\nu  
	\\
	=&\sum_{k=0}^\infty \alpha_k  | \hat{\mu}_{k}-\hat{\nu}_{k}  |^2,\label{mercer_2}
\end{align}
where the \emph{Fourier coefficients} of $\mu, \nu\in \mathcal P(\X)$ are well-defined for $k$ with $\alpha_k\neq 0$ by
\begin{equation} \label{fourier_measure}
	\hat{\mu}_k\coloneqq \int_{\X}\overline{\phi_k} \dx\mu,\qquad \hat{\nu}_k\coloneqq \int_{\X}\overline{\phi_k} \dx\nu.
\end{equation}
\begin{remark}
	The Fourier coefficients $\hat{\mu}_{k}$ and $\hat{\nu}_{k}$ depend on both $K$ and $\sigma_\X$, but the identity \eqref{mercer_1} shows that $\mathscr{D}_K(\mu,\nu)$ only depends on $K$. 
	Thus, our approximation rates do not depend on the choice of $\sigma_\X$.
	On the other hand, our numerical algorithms in Section \ref{sec:algs} depend on $\phi_k$ and hence on the choice of $\sigma_\X$. 
\end{remark}

If $\mu_n \weakly \mu$ and $\nu_n \weakly \nu$ as $n\rightarrow \infty$, then also $\mu_n \otimes \nu_n \weakly \mu \otimes \nu$.
Therefore, the continuity of $K$ implies that $\lim_{n \rightarrow \infty} \mathscr{D}_K(\mu_n,\nu_n) = \mathscr{D}_K(\mu,\nu)$, so that $\mathscr{D}_K$ is  
continuous with respect to weak convergence in both arguments. Thus, for any weakly compact subset $P\subset\mathcal{P}(\X)$, the infimum 
\begin{equation*}
	\inf_{\nu\in P} \mathscr{D}_K(\mu,\nu)
\end{equation*}
is actually a minimum. All of the subsets introduced in the previous section are weakly compact. 

\begin{lemma}
	The sets $\mathcal{P}_N^{\atom}(\X)$, $\mathcal{P}_N^{\emp}(\X)$, $\mathcal{P}^{\curve}_L(\X)$, $\mathcal{P}^{\Acurve}_L(\X)$, and $\mathcal{P}^{\Lcurve}_L(\X)$ are weakly compact. 
\end{lemma}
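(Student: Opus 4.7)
\medskip

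\noindent\textbf{Proof plan.} Since $\X$ is a compact metric space, $\mathcal{C}(\X)$ is separable and the bounded set $\mathcal{P}(\X)\subset \mathcal{M}(\X)=\mathcal{C}(\X)^*$ is weakly-$*$ metrizable and sequentially compact. It therefore suffices, for each of the five sets, to show that it is \emph{sequentially weakly closed}. The overall strategy in each case is to take a convergent sequence, extract further subsequences so that the finitely many ingredients (points and weights, curves, base measures, densities) converge in their natural topologies, and then verify that the weak limit admits the required representation.

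For $\mathcal{P}_N^{\atom}(\X)$, let $\mu_n=\sum_{k=1}^N w_k^{(n)} \delta_{x_k^{(n)}}\weakly\mu$. Since $\X^N$ is compact and the weights live in the compact simplex, a diagonal subsequence yields $x_k^{(n)}\to x_k\in\X$ and $w_k^{(n)}\to w_k\in[0,1]$ with $\sum_k w_k=1$. Testing against $\varphi\in\mathcal{C}(\X)$ shows $\int\varphi\dx\mu_n\to\sum_k w_k\varphi(x_k)$, so $\mu=\sum_k w_k\delta_{x_k}\in\mathcal{P}_N^{\atom}(\X)$. The empirical case is identical with $w_k=1/N$ fixed.

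For the curve sets I would take a weakly convergent sequence $\gamma_{n,*}\omega_n \weakly \mu$ with $\gamma_n\in\Lip(\X)$, $L(\gamma_n)\leq L$, and $\omega_n\in\mathcal{P}([0,1])$. The family $\{\gamma_n\}$ is uniformly bounded (as $\X$ is compact) and equicontinuous with common constant $L$, so Arzel\`a--Ascoli gives a subsequence $\gamma_n\to\gamma$ uniformly; the $L$-Lipschitz bound passes to the limit. Simultaneously, $\mathcal{P}([0,1])$ is weakly compact, so along a further subsequence $\omega_n\weakly\omega\in\mathcal{P}([0,1])$. The key step is the joint continuity of the push-forward: for $\varphi\in\mathcal{C}(\X)$,
\begin{equation*}
\int_\X\!\varphi\dx\gamma_{n,*}\omega_n-\int_\X\!\varphi\dx\gamma_*\omega
=\int_0^1\bigl(\varphi\!\circ\!\gamma_n-\varphi\!\circ\!\gamma\bigr)\dx\omega_n
+\int_0^1\!\varphi\!\circ\!\gamma\dx(\omega_n-\omega).
\end{equation*}
The first summand is bounded by $\|\varphi\!\circ\!\gamma_n-\varphi\!\circ\!\gamma\|_\infty$, which vanishes by uniform continuity of $\varphi$ on $\X$ and uniform convergence $\gamma_n\to\gamma$; the second vanishes because $\varphi\!\circ\!\gamma\in\mathcal{C}([0,1])$ and $\omega_n\weakly\omega$. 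Hence $\mu=\gamma_*\omega$. Via the equivalent characterization of $\mathcal{P}^{\curve}_L(\X)$ given in Lemma~\ref{lem:repa}, the fact that $\gamma\in\Lip(\X)$ with $L(\gamma)\leq L$ means $\supp(\mu)\subset\gamma([0,1])$ with $\ell(\gamma)\leq L$, so $\mu\in\mathcal{P}^{\curve}_L(\X)$.

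For $\mathcal{P}^{\Acurve}_L(\X)$ I apply Arzel\`a--Ascoli once more to the densities: $\omega_n=\rho_n\lambda$ with $L(\rho_n)\leq L$ and $\int_0^1 \rho_n\dx\lambda=1$ gives uniform bounds and equicontinuity, so along a subsequence $\rho_n\to\rho$ uniformly with $\rho\geq 0$, $\int_0^1\rho\dx\lambda=1$ and $L(\rho)\leq L$; uniform convergence implies $\omega_n\weakly\rho\lambda$, and the argument above applies. For $\mathcal{P}^{\Lcurve}_L(\X)$ the base measure $\omega_n=\lambda$ is fixed, so the second summand above is identically zero and only the Arzel\`a--Ascoli extraction of $\gamma_n$ is needed. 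The main obstacle is really the joint-continuity identity displayed above, which hinges on uniform continuity of $\varphi$ on the compact space $\X$; everything else is a standard compactness bookkeeping.
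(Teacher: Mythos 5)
Your proof is correct and follows essentially the same route as the paper: Arzel\`a--Ascoli applied to the uniformly $L$-Lipschitz curves combined with weak compactness of $\mathcal{P}([0,1])$ (resp.\ compactness of $\X^N$ and the weight simplex in the atomic/empirical cases), then passing to the limit in the push-forward. The only difference is that you spell out the joint-continuity estimate showing $\gamma_{n,*}\omega_n \weakly \gamma_*\omega$ and the analogous $\Acurve$/$\Lcurve$ cases, which the paper's proof leaves implicit or declares ``analogous.''
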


\begin{proof}
	It is well-known that $\mathcal{P}_N^{\atom}(\X)$ and $\mathcal{P}_N^{\emp}(\X) $ are weakly compact. 
	
	We show that $\mathcal{P}^{\curve}_L(\X)$ is weakly compact.
	In view of \eqref{eq:def P curve simpler}, let $(\gamma_k)_{k\in\N}$ be Lipschitz curves with constant speed $L(\gamma_k)\leq L$ 
	and $(\omega_k)_{k\in\N} \subset {\mathcal P}([0,1])$. Since ${\mathcal P}([0,1])$ is weakly compact, 
	we can extract a subsequence $(\omega_{k_j})_{j\in\N}$ with weak limit $\hat \omega \in {\mathcal P}([0,1])$.
	Now, we observe that 
	$
	\dist_\X( \gamma_{k_j} (s), \gamma_{k_j} (t)) \le L |s-t|
	$
	for all $j\in \mathbb N$.
	Since $\X$ is compact, the Arzelà--Ascoli theorem implies that there exists a subsequence of 
	$(\gamma_{k_j})_{j\in\N}$ which converges uniformly towards $\hat \gamma\in \Lip(\X)$ with  $L(\hat{\gamma})\leq L$.
	Then, $\hat \nu\coloneqq \hat{\gamma}{_*}\hat{\omega}$ fulfills $\supp(\hat{\nu})\subset \hat{\gamma}([0,1])$, 
	so that $\hat{\nu}\in {\mathcal P}_{L}^{\curve}(\X)$ by \eqref{eq:def P curve 010}. Thus, $\mathcal{P}^{\curve}_L(\X)$ is weakly compact. 
	
	The proof for $\mathcal{P}^{\Acurve}_L(\X)$ and $\mathcal{P}^{\Lcurve}_L(\X)$ is analogous and hence omitted.
\end{proof}

\begin{remark} \textrm{(Discrepancies and Convolution Kernels)} \label{rem:conv}
	Let $\X = \mathbb T^d \coloneqq \R^d / \mathbb{Z}^d$ be the torus and $h \in \mathcal{C}(\mathbb T^d)$ be a function with Fourier series
	\[
	h(x) = \sum_{k \in \mathbb Z^d} \hat h_k \e^{2 \pi\ii \langle k,x\rangle}, \quad
	\hat h_k \coloneqq \int_{\mathbb T^d} h(x)  \e^{- 2 \pi\ii \langle k,x\rangle} \dx \sigma_{\mathbb{T}^d}(x),
	\]
	which converges in $L^2(\mathbb T^d)$ so that $\sum_k |\hat h_k|^2 < \infty$.
	Assume that $\hat h_k \not = 0$ for all $k \in \mathbb Z^d$.
	We consider the special Mercer kernel
	\begin{equation} \label{hi_2}
		K(x,y) \coloneqq \sum_{k \in \mathbb Z^d} |\hat h_k|^2 \e^{2 \pi \ii \langle k,x-y\rangle} = \sum_{k \in \mathbb Z^d} |\hat h_k|^2 \cos \bigl(2 \pi \langle k,x-y\rangle\bigr)
	\end{equation}
	with associated discrepancy $\mathscr{D}_h$ via \eqref{mercer_1},
	i.e., $\phi_k(x) = \e^{2 \pi \ii \langle k,x\rangle}$, $\alpha_k = |\hat h_k|^2$, $k \in \mathbb Z^d$ in~\eqref{mercer}.
	The convolution of $h$ with $\mu \in {\mathcal M}(\mathbb T^d)$ is the function
	$h * \mu \in C(\mathbb T^d)$
	defined by
	\[
	(h * \mu) (x) \coloneqq \int_{\mathbb T^d} h(x-y) \dx \mu(y).
	\]
	By the convolution theorem for Fourier transforms it holds
	$\widehat{(h * \mu)}_k = \hat h_k \hat \mu_k$, $k \in \mathbb Z^d$, and
	we obtain by Parseval's identity for $\mu,\nu \in \mathcal M(\mathbb T^d)$ and \eqref{mercer_2} that
	\begin{align*}
		\|h*(\mu - \nu)\|_{L^2(\mathbb T^d)}^2
		&=
		\bigl\| \bigl( \hat h_k \, (\hat \mu_k - \hat \nu_k) \bigr)_{k \in \mathbb Z^d}\bigr\|_{\ell_2}^2
		= \sum_{k \in \mathbb Z^d} |\hat h_k|^2 |\hat \mu_k - \hat \nu_k|^2
		=
		\mathscr{D}_h^2(\mu,\nu).
	\end{align*}
	In image processing, metrics of this kind were considered in \cite{CCKW2017,FHS2013,TSGSW2011}. 
\end{remark}
\begin{remark}
	\textrm{(Relations to Principal Curves)} \label{rem:principalcurves} 
	A similar concept, sharing the common theme of ``a curve which passes through the \emph{middle} of a distribution'' with the intention of our chapter, is that of principle curves. The notion of principal curves has been developed in a statistical framework and was successfully applied in statistics and machine learning, see \cite{GW13,KKLZ00,KLO20}. 
	The idea is to generalize the concept of principal components with just one direction to so-called self-consistent (principal) curves. In the seminal paper \cite{HS89}, the authors showed that these principal curves $\gamma$ are critical points of the energy functional
	\begin{equation}
		\label{eq:PCEnergy}
		E(\gamma,\mu) = \int_{\mathbb X} \Vert x - \mathrm{proj}_\gamma(x) \Vert^2_2 \mathrm d \mu(x),  		
	\end{equation}
	where $\mu$ is a given probability measure on $\mathbb X$ and $\mathrm{proj}_\gamma(x) = \mathrm{argmin}_{y \in \gamma} \Vert x - y\Vert_2$ is a projection of a point $x \in \mathbb X$ on $\gamma$. This notion has also been generalized to Riemannian manifolds in \cite{Hauberg15}, see also \cite{KLO20} for an application on the sphere. Further investigation of principal curves in the plane, cf.~\cite{DS96}, showed that self-consistent curves are not (local) minimizers, but saddle points of \eqref{eq:PCEnergy}. Moreover, the existence of such curves is established only for certain classes of measures, such as elliptical ones. By additionally constraining the length of curves minimizing~\eqref{eq:PCEnergy}, these unfavorable effects were eliminated, cf.~\cite{KKLZ00}.	In comparison to the objective~\eqref{eq:PCEnergy}, the discrepancy \eqref{mercer_1} averages for fixed $x \in \mathbb X$ the distance encoded by $K$ to any point on $\gamma$, instead of averaging over the squared minimal distances to $\gamma$. 
\end{remark}
\section{Approximation of general probability measures}\label{sec:2}
Given $\mu\in\mathcal{P}(\X)$, the estimates\footnote{ We use the symbols 
	$\lesssim$ and $\gtrsim$ to indicate that the corresponding inequalities hold up to a positive constant factor on the respective right-hand side. 
	The notation $\sim$ means that both relations $\lesssim$ and $\gtrsim$ hold. The dependence of the constants on other parameters shall either be explicitly stated or clear from
	the context.}
\begin{equation}\label{eq:DiscConv}
	\min_{\nu \in \mathcal P_N^{\atom}(\X)} \mathscr{D}_K(\mu,\nu) \leq \min_{\nu \in \mathcal P_N^{\emp}(\X)} 
	\mathscr{D}_K(\mu,\nu) \lesssim N^{-\frac12},
\end{equation}
are well-known, cf.~\cite[Cor.~2.8]{Graf:2013zl}. Here, the constant hidden in $\lesssim$ depends on $\X$ and $K$ but is independent of $\mu$ and $N\in\N$. 
In this section, we are interested in approximation rates with respect to measures supported on curves.

Our approximation rates for $\mathcal{P}^{\curve}_L(\X)$ are based on those for $\mathcal{P}_N^{\atom}(\X)$
combined with estimates for the traveling salesman problem (TSP).
Let $\TSP_{\X}(N)$ denote the worst case minimal cost tour in a fully connected graph $G$ of $N$ arbitrary nodes represented by $x_1,\ldots,x_N\in\X$ and edges with cost $\dist_\X(x_i,x_j)$, $i,j=1,\ldots,N$.
Similarly, let $\MST_{\X}(N)$ denote the worst case cost of the minimal spanning tree of $G$.
To derive suitable estimates, we require that $\X$ is \emph{Ahlfors $d$-regular} (sometimes also called Ahlfors-David $d$-regular), i.e., there exists $0<d<\infty$ such that
\begin{equation}\label{eq:ahlfors stuff}
	\sigma_\X\bigl(B_r(x)\bigr) \sim r^d, \quad\text{for all } x\in\X,\quad 0<r\leq \diam(\X),
\end{equation}
where $B_r(x)=\{y\in\X : \dist_{\X}(x,y)\leq r\}$ and the constants in $\sim$ do not depend on $x$ or $r$.
Note that $d$ is not required to be an integer and turns out to be the Hausdorff dimension.
For $\X$ being the unit cube the following lemma was proved in \cite{SS89}.

\begin{lemma}\label{lemma TSP}
	If $\X$ is a compact Ahlfors $d$-regular metric space, then there is a constant $0<C_{\TSP}<\infty$ depending on $\X$ such that
	\begin{equation}\label{eq:Patom subset Pcurve}
		\TSP_{\X}(N) \leq C_{\TSP} N^{1-\frac{1}{d}}.
	\end{equation}
\end{lemma}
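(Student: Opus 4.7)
The plan is to reduce the TSP bound to a minimum spanning tree (MST) bound via the standard double-tree heuristic, and to produce a short spanning tree via a hierarchical net construction on $\X$.

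First I would invoke the fact that in any finite metric space, doubling a spanning tree $T$ yields an Eulerian multigraph; an Eulerian tour of it, followed by shortcutting via the triangle inequality, produces a Hamiltonian tour of weight at most $2|T|$. Hence $\TSP_\X(N)\le 2\,\MST_\X(N)$, and it suffices to exhibit, for every configuration of $N$ points in $\X$, a spanning tree of length $\lesssim N^{1-1/d}$.

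To build such a tree, I would use nested nets at geometric scales. Set $r_k=2^{-k}\diam(\X)$ and let $\mathcal N_k\subset\X$ be a maximal $r_k$-separated set. Ahlfors $d$-regularity \eqref{eq:ahlfors stuff} makes the balls $B_{r_k/2}(x)$, $x\in\mathcal N_k$, disjoint with $\sigma_\X$-measure $\sim r_k^d$, whence $|\mathcal N_k|\lesssim r_k^{-d}\sim 2^{kd}$. Maximality guarantees that every point of $\X$, and in particular every point of $\mathcal N_{k+1}$, lies within $r_k$ of some ``parent'' in $\mathcal N_k$. Choose $k^\ast$ minimal with $2^{k^\ast d}\ge N$, so that $r_{k^\ast}\sim N^{-1/d}$, and take a spanning tree on $\mathcal N_{k^\ast}$ by, for each $0\le k<k^\ast$ and each $x\in\mathcal N_{k+1}$, joining $x$ to its parent. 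The total edge length at level $k+1$ is then bounded by
\[
|\mathcal N_{k+1}|\cdot r_k \;\lesssim\; 2^{(k+1)d}\cdot 2^{-k}\diam(\X)\;\sim\; 2^{k(d-1)}.
\]
For $d>1$ the geometric sum over $0\le k<k^\ast$ is $\lesssim 2^{k^\ast(d-1)}\lesssim N^{(d-1)/d}=N^{1-1/d}$, while for $d\le 1$ the space has bounded $1$-dimensional Hausdorff measure and the statement reduces to $\TSP_\X(N)\lesssim 1$, which one verifies directly. Finally I would attach each of the $N$ input points as a leaf to its nearest neighbor in $\mathcal N_{k^\ast}$, at additional cost $\le N\cdot r_{k^\ast}\lesssim N^{1-1/d}$. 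Doubling this spanning tree, shortcutting, and then shortcutting past the auxiliary net points, yields a tour through the $N$ input points of length $\lesssim N^{1-1/d}$.

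The step I expect to be the main obstacle is the cumulative geometric control of the edge lengths across scales, together with a careful identification of implicit constants coming from Ahlfors regularity and from $\diam(\X)$, so that they absorb into a single explicit factor $C_{\TSP}$ depending only on $\X$. Beyond this bookkeeping the argument is constructive throughout and parallels the Few/Steele proof of \cite{SS89} on the unit cube $[0,1]^d$.
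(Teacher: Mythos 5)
Your proposal is correct in the regime the paper actually needs ($d>1$) and shares the paper's outer skeleton --- the reduction $\TSP_{\X}(N)\le 2\,\MST_{\X}(N)$ by doubling a spanning tree and shortcutting --- but the way you produce a short spanning tree is genuinely different. The paper argues greedily: the lower Ahlfors bound plus a covering/pigeonhole argument (borrowed from Steele) shows that among any $N$ points two lie within $c\,N^{-1/d}$ of each other; deleting one and recursing yields $\MST_{\X}(N)\le \MST_{\X}(N-1)+c\,N^{-1/d}$ and hence $\MST_{\X}(N)\lesssim \sum_{n\le N} n^{-1/d}\sim N^{1-1/d}$. You instead build an explicit multiscale net tree and hang the $N$ inputs as leaves at the finest scale; the same lower Ahlfors bound controls the net cardinalities, and your geometric sum $\sum_{k<k^{\ast}}2^{k(d-1)}$ plays exactly the role of the paper's harmonic-type sum. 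The paper's route is shorter and introduces no auxiliary vertices; yours is fully constructive (it exhibits a concrete tour rather than bounding an optimum by induction), makes the dependence of $C_{\TSP}$ on $\diam(\X)$ and the regularity constants transparent, and is the more portable argument (it is the standard net-tree bound from metric geometry). One caveat on your aside about $d\le 1$: the claim that the statement ``reduces to $\TSP_{\X}(N)\lesssim 1$, which one verifies directly'' fails for general compact Ahlfors $1$-regular spaces --- the four-corner Cantor set is $1$-regular yet admits no uniform bound on $\TSP_{\X}(N)$, reflecting the logarithmic loss that both your level sum and the paper's harmonic sum incur at $d=1$. This limitation is, however, identical to (and left unaddressed in) the paper's own proof, and is immaterial for the connected manifolds of dimension $d\ge 2$ (or $\T^1\cong\S^1$) to which the lemma is applied.
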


\begin{proof}
	Using \eqref{eq:ahlfors stuff} and the same covering argument as in \cite[Lem.~3.1]{Steele:1988ai}, we see that for every choice $x_1,\ldots,x_N\in \X$, 
	there exist $i\neq j$ such that $\dist_{\X}(x_i,x_j)\lesssim N^{-1/d}$, where the constant depends on $\X$.
	
	Let $S = \{x_1,\ldots,x_N\}$ be an arbitrary selection of $N$ points from $\X$.
	First, we choose $x_i$ and $x_j$ with $\dist_{\X}(x_i,x_j)\leq c N^{-1/d}$.
	Then, we form a minimal spanning tree $T$ of $S \setminus \{x_{i}\}$ and augment the tree by adding the edge between $x_i$ and $x_j$.
	This construction provides us with a spanning tree and hence we can estimate $\MST_{\X}(N) \leq \MST_{\X}(N-1) + c N^{-1/d}$.
	Iterating the argument, we deduce
	\begin{equation*}
		\MST _{\X}(N)\lesssim N^{1-\frac{1}{d}},
	\end{equation*}
	cf.~\cite{SS89}. 
	Finally, the standard relation $\TSP_\X(N) \leq 2 \MST_\X(N)$ for edge costs satisfying the triangular inequality  concludes the proof.
\end{proof} 

To derive a curve in $\X$ from a minimal cost tour in the graph, we require the additional assumption that $\X$ is a \emph{length space}, i.e., a metric space with 
\begin{equation*}
	\dist_\X(x,y)=\inf\bigl\{ \ell(\gamma) : \gamma \text{ a continuous curve that connects $x$ and $y$}\bigr\},
\end{equation*}
cf.~\cite{Bridson:1999qz,Burago:2001hs}. 
Thus, for the rest of this section, we are assuming that 
\begin{quotation}
	$\X$ is a compact Ahlfors $d$-regular length space.
\end{quotation}
In this case, Lemma \ref{lemma TSP} yields the next proposition.
\begin{proposition}\label{prop:ahlfors etc}
	If $\X$ is a compact Ahlfors $d$-regular length space, then $\mathcal{P}^{\atom}_N(\X)\subset \mathcal{P}^{\curve}_{C_{\TSP}  N^{1-1/d}}(\X)$.
\end{proposition}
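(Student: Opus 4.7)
The plan is to take any atomic measure $\mu = \sum_{k=1}^N w_k \delta_{x_k}$ in $\mathcal{P}^{\atom}_N(\X)$ and explicitly construct a closed continuous curve in $\X$ of length at most $C_{\TSP} N^{1-1/d}$ whose image contains $\supp(\mu) = \{x_1,\ldots,x_N\}$; by the definition \eqref{eq:def P curve 010}, this places $\mu$ in $\mathcal{P}^{\curve}_{C_{\TSP} N^{1-1/d}}(\X)$ (with the parameter-space measure $\omega$ chosen as the pullback of the $w_k$ to the visited parameter values). Applying Lemma \ref{lemma TSP} to the $N$ support points yields a cyclic ordering encoded by a permutation $\pi$ with $\sum_{k=1}^{N} \dist_\X(x_{\pi(k)}, x_{\pi(k+1)}) \le C_{\TSP} N^{1-1/d}$, where we set $\pi(N+1) \coloneqq \pi(1)$.

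Next, I would use the length-space hypothesis to upgrade each edge of this tour to an actual continuous curve in $\X$. For every $\delta > 0$ and each $k$, the length-space axiom produces a continuous curve $\eta_k \colon [k-1,k] \to \X$ joining $x_{\pi(k)}$ and $x_{\pi(k+1)}$ with $\ell(\eta_k) \le \dist_\X(x_{\pi(k)},x_{\pi(k+1)}) + \delta/N$. Concatenating the $\eta_k$ yields a closed continuous curve $\gamma_\delta \in \mathcal{C}([0,N],\X)$ whose image contains every $x_k$ and which satisfies $\ell(\gamma_\delta) \le C_{\TSP} N^{1-1/d} + \delta$. Hence $\mu \in \mathcal{P}^{\curve}_{C_{\TSP} N^{1-1/d} + \delta}(\X)$ for every $\delta > 0$.

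Finally, I would remove the slack $\delta$ by a limiting argument. Reparametrizing each $\gamma_\delta$ to constant speed on $[0,1]$ as in Lemma \ref{lem:repa} gives a family of Lipschitz curves with Lipschitz constants bounded by $C_{\TSP} N^{1-1/d} + \delta$; compactness of $\X$ together with Arzel\`a--Ascoli then extracts a uniformly convergent subsequence as $\delta \to 0$, whose limit $\gamma$ satisfies $\ell(\gamma) \le C_{\TSP} N^{1-1/d}$. The main obstacle I anticipate is precisely this passage to the limit, namely ensuring that the image of $\gamma$ still contains every atom $x_k$: I would handle it by extracting, for each fixed $k$, parameter values $t_k^\delta \in [0,1]$ with $\gamma_\delta(t_k^\delta) = x_k$ and passing to a further subsequential limit $t_k^\ast$ with $\gamma(t_k^\ast) = x_k$ by uniform convergence. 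A cleaner alternative, which I would also mention, is to invoke the Hopf--Rinow theorem: a compact length space is complete and locally compact, hence geodesic, so the $\delta$-slack becomes unnecessary and the proof reduces to concatenating minimizing geodesics along the TSP tour.
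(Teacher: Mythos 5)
Your proof is correct and follows essentially the same route as the paper: combine the TSP bound of Lemma~\ref{lemma TSP} with the length-space structure to string the $N$ atoms onto a single closed curve of length at most $C_{\TSP}N^{1-1/d}$, so that membership in $\mathcal{P}^{\curve}_{C_{\TSP}N^{1-1/d}}(\X)$ follows from \eqref{eq:def P curve 010}. The paper takes exactly the shortcut you mention at the end --- invoking Hopf--Rinow to make the compact length space geodesic and concatenating minimizing geodesics directly --- which renders your $\delta$-slack and the Arzel\`a--Ascoli limiting argument unnecessary, though that argument is also sound.
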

\begin{proof}
	The Hopf-Rinow Theorem for metric measure spaces, see \cite[Chap.~I.3]{Bridson:1999qz} and \cite[Thm.~2.5.28]{Burago:2001hs}, yields that every pair of points $x,y\in\X$ 
	can be connected by a geodesic, i.e., there is $\gamma\in\Lip(\X)$ with constant speed and $\ell(\gamma|_{[s,t]})=\dist_\X(\gamma(s),\gamma(t))$ for all $0\leq s\leq t\leq 1$. 
	Thus, for any pair $x,y\in\X$, there is a constant speed curve $\gamma_{x,y}\in\Lip(\X)$ of length $\ell(\gamma_{x,y}) = \dist_\X(x, y)$ with $\gamma_{x,y}(0)=x$, $\gamma_{x,y}(1) = y$, 
	cf.~\cite[Rem.~2.5.29]{Burago:2001hs}. For $\mu_N\in\mathcal{P}^{\atom}_N(\X)$, let $\{x_1,\ldots,x_N\}=\supp(\mu_N)$. 
	The minimal cost tour in Lemma \ref{lemma TSP} leads to a curve $\gamma\in\Lip(\X)$, so that $\mu_N=\gamma{_*}\omega\in\mathcal{P}^{\curve}_L(\X)$ 
	for an appropriate measure $\omega\in\mathcal{P}^{\atom}_N([0,1])$.
\end{proof}
Proposition \ref{prop:ahlfors etc} enables us to transfer approximation rates from $\mathcal{P}^{\atom}_N(\X)$ to $\mathcal{P}^{\curve}_L(\X)$.

\begin{theorem}\label{thm:P curve pure no reg}
	For $\mu\in\mathcal{P}(\X)$, it holds with a constant depending on $\X$ and $K$ that
	\begin{equation}\label{eq:est Pcurve 1}
		\min_{\nu \in \mathcal P^{\curve}_L(\X)} \mathscr{D}_K (\mu,\nu)  \lesssim L^{-\frac{d}{2d-2}}.
	\end{equation}
\end{theorem}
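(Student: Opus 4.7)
The plan is to combine the atomic approximation rate from \eqref{eq:DiscConv} with the inclusion provided by Proposition \ref{prop:ahlfors etc}, optimizing the number of atoms as a function of $L$. The inequality \eqref{eq:DiscConv} tells us that for any $\mu\in\mathcal{P}(\X)$ and any $N\in\N$, there exists an atomic measure in $\mathcal{P}^{\atom}_N(\X)$ whose discrepancy to $\mu$ is of order $N^{-1/2}$. Proposition \ref{prop:ahlfors etc} then guarantees that every element of $\mathcal{P}^{\atom}_N(\X)$ can be rewritten as a push-forward of some $\omega\in\mathcal{P}^{\atom}_N([0,1])$ by a constant speed Lipschitz curve of length at most $C_{\TSP}N^{1-1/d}$, and therefore belongs to $\mathcal{P}^{\curve}_{C_{\TSP}N^{1-1/d}}(\X)$.

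The key step is to balance $L$ and $N$. Given $L$ large enough so that the construction below is non-trivial, I would choose $N$ as the largest positive integer with $C_{\TSP}N^{1-1/d}\leq L$, so that $N\sim L^{d/(d-1)}$. With this choice, the inclusion $\mathcal{P}^{\atom}_N(\X)\subset\mathcal{P}^{\curve}_L(\X)$ holds by Proposition \ref{prop:ahlfors etc}, and thus
\begin{equation*}
\min_{\nu\in\mathcal{P}^{\curve}_L(\X)}\mathscr{D}_K(\mu,\nu)\leq \min_{\nu\in\mathcal{P}^{\atom}_N(\X)}\mathscr{D}_K(\mu,\nu)\lesssim N^{-\frac{1}{2}}\lesssim L^{-\frac{d}{2(d-1)}}=L^{-\frac{d}{2d-2}},
\end{equation*}
where the hidden constants depend only on $\X$ and $K$.

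For small values of $L$, the inequality is vacuous after possibly enlarging the constant in $\lesssim$, since $\mathscr{D}_K(\mu,\nu)$ is bounded uniformly on $\mathcal{P}(\X)\times\mathcal{P}(\X)$ by a constant depending only on $\|K\|_{\mathcal{C}(\X\times\X)}$. Concretely, one checks that the choice $\nu=\gamma_*\delta_0$ for any constant curve $\gamma\equiv x_0\in\X$ lies in $\mathcal{P}^{\curve}_L(\X)$ for every $L\geq 0$, and the corresponding discrepancy is bounded by a universal constant. Hence the stated estimate extends to the whole range of $L>0$ after adjusting the multiplicative constant.

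I do not expect any serious obstacle: all the non-trivial work sits in Lemma \ref{lemma TSP} (the TSP bound on Ahlfors regular spaces) and Proposition \ref{prop:ahlfors etc} (turning a tour into a Lipschitz curve via the Hopf--Rinow theorem), both of which are already available. The only mildly subtle point is verifying that the optimizing choice $N\sim L^{d/(d-1)}$ gives the exponent $d/(2d-2)$, and ensuring uniformity in $\mu$; the atomic estimate \eqref{eq:DiscConv} already provides this uniformity, so the reduction is immediate.
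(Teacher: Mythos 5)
Your argument is correct and coincides with the paper's own proof: both choose $N\sim L^{d/(d-1)}$ so that $\mathcal{P}^{\atom}_N(\X)\subset\mathcal{P}^{\curve}_L(\X)$ via Proposition \ref{prop:ahlfors etc}, and then invoke the atomic rate \eqref{eq:DiscConv} to obtain $N^{-1/2}\lesssim L^{-d/(2d-2)}$. Your additional remark handling small $L$ via the uniform boundedness of $\mathscr{D}_K$ is a harmless (and reasonable) supplement to what the paper leaves implicit.
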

\begin{proof}
	Choose $\alpha = \frac{d-1}{d}$. For $L$ large enough, set 
	$N \coloneqq  \lfloor (L/C_{\TSP})^{\frac{1}{\alpha}} \rfloor \in \mathbb N$, 
	so that we observe $\mathcal{P}^{\atom}_{N}(\X)\subset \mathcal{P}^{\curve}_{L}(\X)$. According to \eqref{eq:DiscConv}, we obtain
	\begin{equation*}
		\min_{\nu\in {\mathcal P}^{\curve}_{L} (\X)}\mathscr{D}_K(\mu,\nu)
		\leq	
		\min_{\nu\in {\mathcal P}^{\atom}_{N} (\X)}\mathscr{D}_K(\mu,\nu) \lesssim  N^{-\frac{1}{2}} \lesssim L^{-\frac{1}{2\alpha}}.\eqno
	\end{equation*}
\end{proof}

Next, we derive approximation rates for $\mathcal{P}^{\Acurve}_L(\X) $ and $\mathcal{P}^{\Lcurve}_L(\X)$.

\begin{theorem}
	For $\mu\in\mathcal{P}(\X)$, we have with a constant depending on $\X$ and $K$ that
	\begin{equation}\label{eq:est P L-curve pure}
		\min_{\nu \in \mathcal P^{\Acurve}_L(\X)} \mathscr{D}_K (\mu,\nu)\leq \min_{\nu \in \mathcal P^{\Lcurve}_L(\X)} \mathscr{D}_K (\mu,\nu)  \lesssim L^{-\frac{d}{3d-2}}.
	\end{equation}
\end{theorem}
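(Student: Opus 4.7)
The first inequality is immediate from $\mathcal{P}^{\Lcurve}_L(\X) \subset \mathcal{P}^{\Acurve}_L(\X)$, so the task is the second. My plan is to approximate $\mu$ in two steps—first by an empirical measure $\mu_N$ at $N$ points, then by a suitable $\gamma_*\lambda$—and to balance the two resulting error contributions in the free parameter $N$.

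For the empirical step, I invoke \eqref{eq:DiscConv} to obtain points $x_1,\ldots,x_N \in \X$ with $\mu_N \coloneqq \frac{1}{N}\sum_{k=1}^N \delta_{x_k}$ satisfying $\mathscr{D}_K(\mu,\mu_N) \lesssim N^{-1/2}$. For the curve, I combine a Hamiltonian tour through these points, of length $T \leq C_{\TSP}\, N^{1-1/d}$ (via Lemma \ref{lemma TSP} and the length-space argument used in the proof of Proposition \ref{prop:ahlfors etc}), with a back-and-forth \emph{wiggle} of length $\ell_w \coloneqq (L-T)/N$ attached at every $x_k$. Each wiggle can be placed inside the ball $B(x_k,r)$ for an arbitrarily small $r>0$ by repeatedly traversing a geodesic segment of length $r$ emanating from $x_k$; this uses the length-space assumption on $\X$. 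Concatenating and reparametrizing at constant speed yields $\gamma \in \Lip(\X)$ with $L(\gamma)=L$, provided $T \leq L$ (which will hold for the final choice of $N$).

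To bound $\mathscr{D}_K(\mu_N,\gamma_*\lambda)$, I partition $[0,1]$ into the $N$ wiggle intervals $I_k$ of length $\ell_w/L$ and the $N$ transit intervals $J_k$ with $\sum_k|J_k|=T/L$. For any $\varphi \in H_K(\X)$ with $\|\varphi\|_{H_K(\X)} \leq 1$, the identity
\begin{equation*}
\int_\X \varphi \dx(\gamma_*\lambda - \mu_N) = \sum_{k=1}^N \int_{I_k} \bigl(\varphi(\gamma(t))-\varphi(x_k)\bigr) \dx t - \frac{T}{NL}\sum_{k=1}^N \varphi(x_k) + \sum_{k=1}^N \int_{J_k} \varphi(\gamma(t)) \dx t
\end{equation*}
together with $|\varphi(x)-\varphi(y)|^2 \leq K(x,x)+K(y,y)-2K(x,y)$ and the reproducing bound $\|\varphi\|_{\mathcal{C}(\X)} \lesssim \|\varphi\|_{H_K(\X)}$ gives $\mathscr{D}_K(\mu_N,\gamma_*\lambda) \lesssim \omega(r) + T/L$, where $\omega(r) \coloneqq \sup_{d_\X(x,y)\leq r}\sqrt{K(x,x)+K(y,y)-2K(x,y)}$ vanishes as $r\to 0$ by uniform continuity of $K$ on the compact $\X\times\X$. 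Taking $r$ small enough that $\omega(r) \leq T/L$ leaves only the transit error, so $\mathscr{D}_K(\mu_N,\gamma_*\lambda) \lesssim N^{1-1/d}/L$.

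Combining both steps, $\mathscr{D}_K(\mu,\gamma_*\lambda) \lesssim N^{-1/2} + N^{1-1/d}/L$. Setting $N \sim L^{2d/(3d-2)}$ balances the two terms and produces the claimed rate $L^{-d/(3d-2)}$, and a direct computation gives $T \sim L^{2(d-1)/(3d-2)} \leq L$, so the construction is admissible. The main technical hurdle is the wiggle construction: one must produce a piece of curve of prescribed length that is nevertheless confined in an arbitrarily small ball, so that its Lebesgue push-forward is close to a weighted Dirac mass. The length-space assumption on $\X$ is precisely what guarantees the existence of such wiggles.
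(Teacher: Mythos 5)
Your proof is correct and follows essentially the same route as the paper: approximate $\mu$ by an empirical measure at $N$ points, run a TSP tour of length $T\lesssim N^{1-1/d}$ through them, spend the leftover length budget locally at each point, bound the transit error by $T/L$ via $\|\varphi\|_{L^\infty(\X)}\lesssim\|\varphi\|_{H_K(\X)}$, and balance $N^{-1/2}$ against $N^{1-1/d}/L$ to get $N\sim L^{2d/(3d-2)}$. The only difference is in the local construction: the paper lets the curve \emph{rest} (stay constant) at each $x_k$, so the local mass is an exact Dirac and no modulus-of-continuity term is needed, at the price of a curve with non-constant speed, whereas your wiggle yields a constant-speed curve but requires the (correct) uniform-continuity argument for $K$ to control $\omega(r)$.
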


\begin{proof}
	Let $\alpha = \frac{d-1}{d}$,  $d \ge 2$. 
	For $L$ large enough, set $N \coloneqq  \lfloor L^{\frac{2}{2\alpha + 1}} /\diam(\X) \rfloor \in \mathbb N$.
	By~\eqref{eq:DiscConv}, there is a set of points $\{ x_1, \ldots, x_{N } \} \subset \X$ such that 
	\begin{equation}\label{eq:est 0010}
		\mathscr{D}_K(\mu,\nu_N) \lesssim  N^{-\frac12} \lesssim \,L^{-\frac{1}{2\alpha+1}}, \qquad 
		\nu_N \coloneqq \frac{1}{N}\sum_{j=1}^N \delta_{x_j}.
	\end{equation}
	Let these points be ordered as a solution of the corresponding $\TSP$.
	Set 
	$x_0\coloneqq x_N$ and 
	$\tau_i \coloneqq \dist_\X(x_i,x_{i+1})/L$, $i=0, \ldots, N-1$.
	Note that
	\[N \le  L^{\frac{2}{2\alpha + 1}} /\diam(\X) \le L/\dist_\X(x_i,x_{i+1}),\]
	so that $\tau_i \le N^{-1}$ for all $i=0,\ldots, N -1$.
	We construct a closed curve $\gamma_{\uL} \colon[0,1]\rightarrow\X$ 
	that rests in each $x_i$ for a while and then rushes from $x_i$ to $x_{i+1}$. As in the proof of Proposition \ref{prop:ahlfors etc}, 
	$\X$ being a compact length space enables us to choose $\gamma_i\in\Lip(\X)$ with 
	$\gamma_i(0)=x_i$, $\gamma_i(1) = x_{i+1}$ 
	and $L(\gamma_i) = \dist_\X(x_i, x_{i+1})$.
	For $i=0,\ldots,N_{\uL}-1$, we define
	\begin{equation}
		\gamma_{\uL}(t) \coloneqq 
		\left\{
		\begin{array}{ll}
			x_i& \,\,\mathrm{ for} \; t\in  \left[\frac{i}{N},\frac{i+1}{N}-\tau_i \right),\\[0.5ex]
			\gamma_i\bigl(\frac{1}{\tau_i} \bigl(t-\tfrac{i+1}{N} + \tau_i \bigr) \bigr)
			& \,\,\mathrm{ for} \; t\in \left[\frac{i+1}{N}-\tau_i,\frac{i+1}{N}\right).
		\end{array}
		\right.
	\end{equation}
	By construction, $L(\gamma_{\uL})$ is bounded by $\min_i d(x_i,x_{i+1}) \tau_i^{-1} \leq L$.
	Defining the measure \smash{$\nu \coloneqq (\gamma_{\uL}){_*}\lambda \in {\mathcal P}^{\Lcurve}_{L} (\X)$},
	the related discrepancy can be estimated by
	\begin{align*}
		\mathscr{D}_K(\mu,\nu ) 
		&
		= \sup_{\|\varphi \|_{H_K(\X)}\leq 1 }
		\Big|\int_\X \varphi \dx \mu -\int_0^1 \varphi \circ \gamma_{\uL} \dx \lambda  \Big| \\
		& \leq 
		\mathscr{D}_K(\mu,\nu_N)
		+ 
		\sup_{\|\varphi\|_{H_K(\X)}\leq 1 }  
		\sum_{i=0}^{N-1}  \Big(\tau_i |\varphi(x_i)|
		+
		\Big|\int_{\frac{i+1}{N}-\tau_i}^{\frac{i+1}{N}} \varphi \circ \gamma_{\uL} \dx \lambda\Big|\Big).
	\end{align*}
	The relation \eqref{eq:est 0010} yields $\mathscr{D}_K(\mu,\nu_N)\leq CL^{-\frac{1}{2\alpha+1}}$ with some constant $C>0$.  
	Since for $\varphi \in H_K(\X)$ it holds 
	$\|\varphi \|_{L^\infty(\X)} \leq  C_K\|\varphi\|_{H_K(\X)}$
	with $C_K\coloneqq\sup_{x\in\X} \sqrt{K(x,x)}$,
	we finally obtain by Lemma~\ref{lemma TSP}
	\begin{align*}
		\mathscr{D}_K(\mu,\nu) 
		&\leq 
		C \, L^{-\frac{1}{2\alpha+1}}
		+ 2 \, C_K\sum_{i=0}^{N-1} \tau_i
		\leq  
		C \, L^{-\frac{1}{2\alpha+1}}+ 2C_K \, C_{\TSP}\frac{N^\alpha}{L}\\
		&\leq 
		\bigl(C + 2 C_K \, C_{\TSP}/\diam(\X) \bigr) \, L^{-\frac{1}{2\alpha+1}}.
	\end{align*}
\end{proof}

Note that many compact sets in $\R^d$ are compact Ahlfors $d$-regular length spaces with respect to the Euclidean metric 
and the normalized Lebesgue measure such as the unit ball or the unit cube. Moreover many compact connected manifolds with or without boundary satisfy these conditions. 
All assumptions in this section are indeed satisfied for $d$-dimensional connected, compact Riemannian manifolds without boundary equipped with the Riemannian metric and the normalized Riemannian measure. 
The latter setting is studied in the subsequent section to refine our investigations on approximation rates.

\begin{remark}\label{est_franz}
	For $\X = \mathbb T^d$ with $d\in\N$, the estimate 
	\begin{equation}\label{eq:Weiss ll}
		\min_{\nu \in  \mathcal{P}_L^{\Lcurve}(\X)} \mathscr{D}_K (\mu,\nu) \lesssim  L^{-\frac1d}.
	\end{equation}
	was derived in \cite{CCKW2017} provided that $K$ satisfies an additional Lipschitz condition, where the constant in \eqref{eq:Weiss ll} depends on $d$ and $K$. 
	The rate coincides with our rate in \eqref{eq:est P L-curve pure} for $d = 2$ and is worse for higher dimensions as $\frac{d}{3d-2} > \frac13$ for all $d\geq 3$.
\end{remark}

\section{Approximation of probability measures having Sobolev densities} \label{sec:3}
To study approximation rates in more detail, we follow the standard strategy in approximation theory and take additional smoothness properties into account. 
We shall therefore consider $\mu$ with a density satisfying smoothness requirements.
To define suitable smoothness spaces, we make additional structural assumptions on $\X$. 
Throughout the remaining part of the chapter, we suppose that 
\begin{quotation}
	$\X$ is a $d$-dimensional connected, compact Riemannian manifold without boundary equipped with the \emph{Riemannian metric} $\dist_\X$ and the \emph{normalized Riemannian measure} $\sigma_\X$.
\end{quotation}
In the first part of this section, we introduce the necessary background on Sobolev spaces and derive general lower bounds for the approximation rates.
Then, we focus on upper bounds in the rest of the section.
So far, we only have general upper bounds for $\mathcal{P}^{\curve}_L(\X)$.
In case of the smaller spaces $\mathcal{P}^{\Acurve}_L(\X)$ and $\mathcal{P}^{\Lcurve}_L(\X)$, we have to restrict to special manifolds $\X$ in order to obtain bounds.
For a better overview, all theorems related to approximation rates are named accordingly.
\subsection{Sobolev spaces and lower bounds}
In order to define a smoothness class of functions on $\X$, let $-\Delta$ denote the (negative) Laplace--Beltrami operator on $\X$. 
It is self-adjoint on $L^2(\X,\sigma_\X)$ and has a sequence of positive, non-decreasing eigenvalues 
$(\lambda_k)_{k\in \N}$ (with multiplicities) 
with a corresponding orthonormal complete system of smooth eigenfunctions $\{\phi_k : k\in \N\}$.
Every function $f \in L^2(\X,\sigma_\X)$ has a Fourier expansion
\[
f = \sum_{k=0}^\infty \hat f(k) \phi_k, \qquad \hat f(k) \coloneqq \int_{\X} f \overline{ \phi_k}\dx \sigma_\X.
\]
The \emph{Sobolev space} $H^s(\X)$, $s > 0$, is the set of all functions $f \in L^2(\X,\sigma_\X)$ 
with distributional derivative $(I-\Delta)^{s/2} f \in L^2(\X,\sigma_\X)$ and norm
\[
\|f\|_{H^s(\X)} 
\coloneqq
\| (I-\Delta)^{s/2} f \|_{L^2(\X,\sigma_{\X})}
=
\Big( \sum_{k=0}^\infty (1+ \lambda_k)^{s} |\hat f(k)|^2  \Big)^{\frac{1}{2}}.
\]
For $s>d/2$, the space $H^s(\X)$ is continuously embedded into the space of H\"older continuous functions of degree $s - d/2$, 
and every function $f \in H^s(\X)$ has a uniformly convergent Fourier series, 
see \cite[Thm.~5.7]{Roe1998}. 
Actually, $H^s(\X)$, $s>d/2$, is a RKHS with reproducing kernel 
\begin{equation} \label{sobolev_kernel}
	K(x,y) \coloneqq \sum_{k=0}^\infty (1+\lambda_k)^{-s} \phi_k(x) \overline{\phi_k(y)}.
\end{equation}
Hence, the discrepancy $\mathscr{D}_K(\mu,\nu)$ satisfies \eqref{equiv_1} with $H_K(\X)=H^s(\X)$. 
Clearly, each kernel of the above form with coefficients having the same decay as $(1+\lambda_k)^{-s}$ 
for $k \rightarrow \infty$ gives rise to a RKHS that coincides with $H^s(\X)$ with an equivalent norm. Appendix \ref{sec:examples} 
contains more details of the above discussion for the torus $\mathbb{T}^d$, the sphere $\S^d$, the special orthogonal group $\SO(3)$ and the Grassmannian~$\G_{k,d}$.

Now, we are in the position to establish lower bounds on the approximation rates.
Again, we want to remark that our results still hold if we drop the requirement that the approximating curves are closed.

\begin{theorem}[Lower bound]\label{thm:lower bounds}
	For $s > d/2$ suppose that $H_K(\X) = H^s(\X)$ holds with equivalent norms. 
	Assume that $\mu$ is absolutely continuous with respect to $\sigma_\X$ with a continuous density $\rho$. 
	Then, there are constants depending on $\X$, $K$, and $\rho$ such that 
	\begin{align*}
		N^{-\frac{s}{d}} 
		&\lesssim \min_{\nu \in \mathcal P_N^{\atom}(\X)} \mathscr{D}_K(\mu,\nu)\leq \min_{\nu \in \mathcal P_N^{\emp}(\X)} \mathscr{D}_K (\mu,\nu),\\
		L^{-\frac{s}{d-1}} 
		& \lesssim \min_{\nu \in \mathcal P_L^{\curve}(\X)} \mathscr{D}_K(\mu,\nu)
		\leq \min_{\nu \in \mathcal P_L^{\Acurve}(\X)} \mathscr{D}_K (\mu,\nu)\leq \min_{\nu \in \mathcal P_L^{\Lcurve}(\X)} \mathscr{D}_K (\mu,\nu).
	\end{align*}
\end{theorem}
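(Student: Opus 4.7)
The ordering inequalities on the right of each display are immediate: $\mathcal P_N^{\emp}(\X)\subset \mathcal P_N^{\atom}(\X)$ gives the first one, and the nesting $\mathcal P_L^{\Lcurve}(\X)\subset \mathcal P_L^{\Acurve}(\X)\subset \mathcal P_L^{\curve}(\X)$ recorded after \eqref{eq:Lcurve} gives the second one. The real content is the two lower bounds $N^{-s/d}$ and $L^{-s/(d-1)}$, and for both of them I would build an explicit test function and plug it into the dual representation \eqref{equiv_1}. The two cases share a common ``multi-bump'' construction and differ only in a packing estimate.

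Setup common to both bounds: since $\rho$ is continuous on compact $\X$ and integrates to $1$, I fix an open set $U\subset\X$ and a constant $c_0>0$ with $\rho\ge c_0$ on $U$. I pick, once and for all, a smooth cutoff $\psi\colon[0,\infty)\to[0,1]$ with $\psi\equiv 1$ on $[0,1/2]$ and $\psi\equiv 0$ on $[1,\infty)$, and set $\varphi_{y,r}(x)\coloneqq\psi(\dist_\X(x,y)/r)$ for $y\in U$ and $r$ below the injectivity radius of $\X$. Working in normal coordinates around $y$ together with the standard Sobolev dilation law on $\R^d$ will yield
\[
\int_\X \varphi_{y,r}\dx\sigma_\X \sim r^{d},\qquad \|\varphi_{y,r}\|_{H^s(\X)}^{2}\sim r^{d-2s},
\]
with constants depending only on $\X$, $s$, and $\psi$; the second asymptotic uses $s>d/2$ so the top-order derivative term dominates the $L^2$ term.

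Atomic bound: let $\nu=\sum_{k=1}^{N}w_k\delta_{x_k}$ and put $r=cN^{-1/d}$ with $c$ small. Ahlfors $d$-regularity of $\sigma_\X$ produces a packing of $M\gtrsim c^{-d}N$ pairwise disjoint balls $B_r(y_j)\subset U$. Since $\nu$ has at most $N$ atoms, the index set $J$ of atom-free balls has $|J|\gtrsim N$ once $c$ is small enough. Then $\varphi\coloneqq\sum_{j\in J}\varphi_{y_j,r}$ has pairwise disjoint supports, so
\[
\|\varphi\|_{H^s}^{2}\sim |J|\,r^{d-2s}\sim N^{2s/d},\quad \int\varphi\dx\nu=0,\quad \int\varphi\dx\mu\ge c_0\sum_{j\in J}\int_\X\varphi_{y_j,r}\dx\sigma_\X \gtrsim Nr^{d}\sim 1,
\]
and inserting $\varphi/\|\varphi\|_{H^s}$ into \eqref{equiv_1} gives $\mathscr D_K(\mu,\nu)\gtrsim N^{-s/d}$. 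For the curve bound only the packing step changes: with $\nu$ supported on a curve $\gamma$ of length at most $L$, parametrize $\gamma$ by arclength and pick points $z_1,\dots,z_{\lceil L/r\rceil}$ spaced $r$ apart along it, so that the tube $T_r=\{x\in\X:\dist_\X(x,\gamma)\le r\}$ is contained in $\bigcup_i B_{2r}(z_i)$ and Ahlfors regularity gives $\sigma_\X(T_r)\lesssim Lr^{d-1}$. Choosing $r=cL^{-1/(d-1)}$ with $c$ small makes $T_r$ occupy only a small fraction of $\sigma_\X(U)$, so I can pack $M\gtrsim r^{-d}\sim L^{d/(d-1)}$ disjoint balls $B_r(y_j)\subset U\setminus T_r$, and the identical bump-assembly produces $\|\varphi\|_{H^s}^{2}\sim L^{2s/(d-1)}$ against $\int\varphi\dx(\mu-\nu)\gtrsim 1$.

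The main obstacle is proving the two ingredients with \emph{uniform} constants. The Sobolev dilation $\|\varphi_{y,r}\|_{H^s(\X)}^{2}\sim r^{d-2s}$ must hold independently of the base point $y\in U$ and of $r$ up to a fixed threshold, which reduces via compactness of $\X$ and a finite cover by normal charts to the Euclidean scaling, but requires controlling the distortion of the Riemannian metric up to order $s$ derivatives. The tube estimate $\sigma_\X(T_r)\lesssim Lr^{d-1}$ must hold with a constant independent of $\gamma$ and all small $r$, which is exactly the regime delivered by Ahlfors $d$-regularity applied to balls along the curve. Once these two uniform estimates are in hand, the rest is bookkeeping in \eqref{equiv_1}.
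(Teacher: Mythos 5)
Your proposal is correct and takes essentially the same route as the paper: both build a fooling function for the dual formulation \eqref{equiv_1} by summing bump functions over $\sim N$ (resp.\ $\sim L^{d/(d-1)}$) disjoint balls of radius $\sim N^{-1/d}$ (resp.\ $\sim L^{-1/(d-1)}$) that avoid $\supp(\nu)$, using the scaling $\|\varphi\|_{H^s}^2\sim(\#\text{balls})\,r^{d-2s}$ exactly as in the construction the paper borrows from Brandolini et al. The only (immaterial) difference is the packing step for curves: you bound the measure of the tube $\{x:\dist_\X(x,\gamma)\le r\}$ by $Lr^{d-1}$ via Ahlfors regularity, whereas the paper counts how many of $2N$ balls with mutual separation $\delta N^{-1/d}$ a curve of length $\delta N^{1-1/d}$ can visit; both yield the same exponent, and your closing caveats (uniformity of the bump scaling over base points, separation of the balls so the $H^s$ norms add) are precisely the technical points the paper also delegates to the cited reference.
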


\begin{proof}
	The proof is based on the construction of a suitable fooling function to be used in \eqref{equiv_1} and follows \cite[Thm.~2.16]{Brandolini:2014oz}.
	There exists a ball $B\subset \X$ with $\rho(x)\geq \epsilon = \epsilon(B,\rho)$ for all $x \in B$ and $\sigma_\X(B)>0$, which is chosen as the support of the constructed fooling functions.
	We shall verify that for every $\nu \in {\mathcal P}_N^{\atom}(\X)$ there exists $\varphi \in H^s(\X)$ such that $\varphi$ vanishes on $\supp(\nu)$ but 
	\begin{equation}\label{eq:fooling 101}
		\int_{B} \varphi \dx \mu\gtrsim \Vert \varphi \Vert_{H^s(\X)} N^{-\frac{s}{d}},
	\end{equation}
	where the constant depends on $\X$, $K$, and $\rho$.
	For small enough $\delta$ we can choose $2N$ disjoint balls in $B$ with diameters $\delta N^{-1/d}$, see also \cite{Gigante:2017pi}. 
	For $\nu \in {\mathcal P}_N^{\atom}(\X)$, there are $N$ of these balls that do not intersect with $\supp(\nu)$. 
	By putting together bump functions supported on each of the $N$ balls, we obtain a non-negative function $\varphi$ supported in $B$ 
	that vanishes on $\supp(\nu)$ and satisfies \eqref{eq:fooling 101}, with a constant that depends on $\epsilon$, cf.~\cite[Thm.~2.16]{Brandolini:2014oz}. This yields
	\begin{equation*}
		\Bigl|\int_{\X} \varphi \dx \mu - \int_{\X} \varphi \dx \nu\Bigr|
		= \int_{B} \varphi \dx \mu\gtrsim \Vert \varphi \Vert_{H^s(\X)} N^{-\frac{s}{d}}.
	\end{equation*}
	
	The inequality for $\mathcal{P}_L^{\curve}(\X)$ is derived in a similar way.
	Given a continuous curve $\gamma \colon [0,1]\rightarrow \X$ of length $ L$, choose $N$ such that $L\leq \delta  N N^{-1/d}$.
	By taking half of the radius of the above balls, there are $2N$ pairwise disjoint balls of radius \smash{$\frac{\delta}{2} N^{-1/d}$} contained in $B$ with pairwise distances at least $\delta N^{-1/d}$.
	Any curve of length $\delta N N^{-1/d}$ intersects at most $N$ of those balls.
	Hence, there are $N$ balls of radius $\frac{\delta}{2}N^{-1/d}$ 
	that do not intersect $\supp(\gamma)$. As above, this yields a fooling function $\varphi$ satisfying \eqref{eq:fooling 101}, which ends the proof.
\end{proof}

\subsection{Upper bounds for \texorpdfstring{$\mathcal{P}^{\curve}_L(\X)$}{PC(X)}}

In this section, we derive upper bounds that match the lower bounds in Theorem~\ref{thm:lower bounds} 
for $\mathcal{P}^{\curve}_L(\X)$. 
Our analysis makes use of the following theorem, which was already proved for $\X = \mathbb S^d$ in \cite{HMS2007}.

\begin{theorem}\label{thm:brandolini_2} \cite[Thm.~2.12]{Brandolini:2014oz} 
	Assume that  $\nu_r \in {\mathcal P}(\X)$ provides an exact quadrature for all eigenfunctions $\varphi_k$ 
	of the Laplace--Beltrami operator with eigenvalues $\lambda_k \leq r^2$, i.e.,
	\begin{equation} \label{eq:exact integration}
		\int_{\X} \varphi_k \dx \sigma_\X = \int_{\X} \varphi_k \dx \nu_r.
	\end{equation}
	Then, it holds for every function $f \in H^s(\X)$, $s > d/2$, that there is a constant depending on $\X$ and $s$ with 
	$$
	\Bigl| \int_{\X} f \dx \sigma_\X- \int_{\X} f \dx \nu_r \Bigr| \lesssim r^{-s} \|f\|_{H^s(\X)}.
	$$
\end{theorem}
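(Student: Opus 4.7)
The strategy is to split $f$ via the Laplace--Beltrami eigenbasis into a low-frequency part, on which $\sigma_\X$ and $\nu_r$ agree by the hypothesis, and a high-frequency tail whose contribution is controlled by the smoothness of $f$. Since $s > d/2$, the Sobolev embedding $H^s(\X) \hookrightarrow \mathcal{C}(\X)$ gives uniform convergence of the expansion $f = \sum_k \hat f(k)\phi_k$. Setting $P_r f \coloneqq \sum_{\lambda_k \le r^2}\hat f(k)\phi_k$ and $Q_r f \coloneqq f - P_r f$, the exactness assumption \eqref{eq:exact integration} propagates by linearity to all of $\spann\{\phi_k : \lambda_k \le r^2\}$, so $\int_\X P_r f\,d\sigma_\X = \int_\X P_r f\,d\nu_r$ and the quadrature error reduces to
$$\int_\X f\,d\sigma_\X - \int_\X f\,d\nu_r \;=\; \int_\X Q_r f\,d\sigma_\X - \int_\X Q_r f\,d\nu_r.$$

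The $\sigma_\X$-term is immediate from Cauchy--Schwarz and Parseval: since $\sigma_\X$ is a probability measure,
$$\Bigl|\int_\X Q_r f\,d\sigma_\X\Bigr| \le \|Q_r f\|_{L^2(\X,\sigma_\X)} = \Bigl(\sum_{\lambda_k > r^2}|\hat f(k)|^2\Bigr)^{1/2} \le (1+r^2)^{-s/2}\,\|f\|_{H^s(\X)},$$
which is of the required order $r^{-s}$.

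For the $\nu_r$-term I would pass to the reproducing-kernel picture of $H^s(\X)$. Writing $K^{>}_r(x,y) \coloneqq \sum_{\lambda_k > r^2}(1+\lambda_k)^{-s}\phi_k(x)\overline{\phi_k(y)}$ for the kernel of the high-frequency subspace, one has $Q_r f(x) = \langle f, K^{>}_r(x,\cdot)\rangle_{H^s(\X)}$. Applying Fubini and Cauchy--Schwarz yields
$$\Bigl|\int_\X Q_r f\,d\nu_r\Bigr| \le \|f\|_{H^s(\X)}\Bigl(\iint_{\X\times\X} K^{>}_r\,d\nu_r\,d\nu_r\Bigr)^{1/2},$$
and the Fourier expansion together with the exactness collapses the double integral to $\sum_{\lambda_k > r^2}(1+\lambda_k)^{-s}|\hat\nu_r(k)|^2$, i.e.\ to the squared Sobolev discrepancy $\mathscr{D}_K^2(\sigma_\X,\nu_r)$ for the kernel in \eqref{sobolev_kernel}.

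The remaining task, and the \textbf{main obstacle}, is to show this energy is of order $r^{-2s}$. A purely spectral bound via $|\hat\nu_r(k)| \le \|\phi_k\|_{\mathcal{C}(\X)}$ together with the H\"ormander-type estimate $\|\phi_k\|_{\mathcal{C}(\X)} \lesssim \lambda_k^{(d-1)/4}$ and the Weyl counting asymptotics yields only $r^{d-2s}$ inside the square root, losing a factor $r^{d/2}$. Closing this gap requires exploiting the positivity of $\nu_r$ -- so that the full kernel energy $\iint K\,d\nu_r\,d\nu_r$ is already controlled by $\sup_{x\in\X} K(x,x) < \infty$ -- together with the cancellation forced by the exactness on every low-frequency shell, which produces an almost-orthogonality between shells. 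This dyadic-shell analysis is carried out in Brandolini et al., generalising the sphere-specific argument from \cite{HMS2007}, and is where all the geometric information about $\X$ effectively enters.
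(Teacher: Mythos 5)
You should first be aware that the paper offers no proof of this theorem: it is imported wholesale as \cite[Thm.~2.12]{Brandolini:2014oz}, so there is no in-paper argument to compare yours against. Judged on its own terms, your reduction is correct as far as it goes. The low/high frequency splitting, the propagation of \eqref{eq:exact integration} to the span of the low eigenfunctions, and the Cauchy--Schwarz step in the reproducing-kernel picture are all sound (in fact the $\sigma_\X$-term needs no estimate at all: every $\phi_k$ with $\lambda_k>0$ integrates to zero against the normalized $\sigma_\X$ by orthogonality to the constant eigenfunction, so $\int_\X Q_r f\dx\sigma_\X=0$ exactly).

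The problem is that the quantity you arrive at, namely $\bigl(\sum_{\lambda_k>r^2}(1+\lambda_k)^{-s}|\hat\nu_r(k)|^2\bigr)^{1/2}=\mathscr{D}_K(\sigma_\X,\nu_r)$ for the kernel \eqref{sobolev_kernel}, is by duality \emph{exactly} the assertion of the theorem: bounding it by $r^{-s}$ is equivalent to the claimed quadrature estimate for all $f\in H^s(\X)$ simultaneously. So the reduction, while instructive, carries no content beyond a reformulation, and the entire analytic difficulty --- converting exactness up to frequency $r$ together with positivity of $\nu_r$ into the decay $r^{-2s}$ of the tail energy --- is deferred to the very reference the paper itself cites. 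You diagnose the obstacle accurately (crude sup-norm bounds on eigenfunctions lose a polynomial factor in $r$; positivity and some localization or almost-orthogonality between spectral shells must be exploited), but naming it is not overcoming it. As a standalone proof the proposal therefore has a genuine gap at its central step; relative to the paper, which proves nothing here, it is at least an honest accounting of where the real work lies.
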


For our estimates it is important 
that the number of eigenfunctions of the Laplace--Beltrami operator on $\X$ 
belonging to eigenvalues with $\lambda_k \leq r^2$ is of order $r^d$, 
see \cite[Chap.~6.4]{Chavel1984} and \cite[Thm.~17.5.3, Cor.~17.5.8]{Hoermander1983}.
This is known as Weyl's estimates on the spectrum of an elliptic operator.
For some special manifolds, the eigenfunctions are explicitly given in the appendix.
In the following lemma, the result from Theorem~\ref{thm:brandolini_2} is rewritten in terms of discrepancies and generalized to absolutely continuous measures with densities $\rho \in H^s(\X)$. 

\begin{lemma}\label{lemma:GeneralApproximation}
	For $s>d/2$ suppose that $H_K(\X) = H^s(\X)$ holds with equivalent norms and that $\nu_r\in\mathcal{P}(\X)$ satisfies 
	\eqref{eq:exact integration}.
	Let $\mu \in \mathcal P(\X)$ be absolutely continuous with respect to $\sigma_\X$ with density $\rho \in H^s(\X)$. 
	For sufficiently large $r$, the measures \smash{$\tilde \nu_r\coloneqq \frac{\rho}{\beta_r} \nu_r \in\mathcal{P}(\X)$} with
	$\beta_r \coloneqq \int_{\X} \rho\dx\nu_r$ are well defined and there is a constant depending on $\X$ and $K$ with
	\begin{equation}
		\mathscr{D}_K\bigl(\mu,\tilde \nu_r\bigr) \lesssim \|\rho\|_{H^s(\X)}r^{-s}.
	\end{equation}
\end{lemma}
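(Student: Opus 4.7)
The plan is to reduce the discrepancy bound to the exact quadrature estimate of Theorem~\ref{thm:brandolini_2} applied to the product $\varphi\rho$ rather than to $\varphi$ alone. The key structural input is that, since $s>d/2$ and $\X$ is a compact Riemannian manifold, $H^s(\X)$ is a Banach algebra, so $\|\varphi\rho\|_{H^s(\X)}\lesssim \|\varphi\|_{H^s(\X)}\|\rho\|_{H^s(\X)}$, and $H^s(\X)$ embeds continuously into $C(\X)$.

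First, I would verify that $\tilde\nu_r$ is a well-defined probability measure for sufficiently large $r$. Since $\mu \in \mathcal{P}(\X)$ has density $\rho$ with respect to $\sigma_\X$, we have $\int_\X \rho \dx \sigma_\X = 1$, and by Sobolev embedding $\rho$ is continuous and pointwise non-negative. Applying Theorem~\ref{thm:brandolini_2} to $f=\rho$ yields $|\beta_r - 1| \lesssim r^{-s}\|\rho\|_{H^s(\X)}$, so $\beta_r \geq 1/2$ whenever $r$ is large enough (with threshold depending on $\|\rho\|_{H^s(\X)}$). Hence $\tilde\nu_r = (\rho/\beta_r)\nu_r$ is non-negative and has total mass one.

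Second, for an arbitrary $\varphi \in H^s(\X)$ I would split the integration error as
\begin{equation*}
\int_\X \varphi \dx \mu - \int_\X \varphi \dx \tilde\nu_r = \Bigl(\int_\X \varphi\rho \dx \sigma_\X - \int_\X \varphi\rho \dx \nu_r\Bigr) + \frac{\beta_r-1}{\beta_r}\int_\X \varphi\rho \dx \nu_r.
\end{equation*}
Theorem~\ref{thm:brandolini_2} applied to the product $\varphi\rho\in H^s(\X)$, combined with the Banach-algebra estimate, bounds the bracket by a constant times $r^{-s}\|\varphi\|_{H^s(\X)}\|\rho\|_{H^s(\X)}$. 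For the renormalization term, Step~1 gives $|(\beta_r-1)/\beta_r| \lesssim r^{-s}\|\rho\|_{H^s(\X)}$, while the remaining factor can be controlled by writing $\int_\X \varphi\rho \dx \nu_r = \int_\X \varphi\rho \dx \sigma_\X + O(r^{-s}\|\varphi\|_{H^s(\X)}\|\rho\|_{H^s(\X)})$, so that $|\int_\X \varphi\rho \dx \nu_r| \lesssim \|\varphi\|_{H^s(\X)}(1 + r^{-s}\|\rho\|_{H^s(\X)}) \lesssim \|\varphi\|_{H^s(\X)}$ for $r$ large enough. Taking the supremum over $\|\varphi\|_{H^s(\X)}\leq 1$ in \eqref{equiv_1} then produces the desired bound, with implicit constant depending only on $\X$ and $K$.

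The main subtlety I anticipate is keeping the final estimate linear in $\|\rho\|_{H^s(\X)}$ rather than quadratic: a naive bound $|\int_\X \varphi\rho \dx\nu_r| \lesssim \|\varphi\|_{H^s(\X)}\|\rho\|_{H^s(\X)}$ would give a spurious $r^{-s}\|\rho\|_{H^s(\X)}^2$ term, which is why the renormalization term should be rewritten via a $\sigma_\X$-integral plus a quadrature error before estimating. Apart from this, the proof is essentially a Banach-algebra-plus-normalization argument on top of Theorem~\ref{thm:brandolini_2}.
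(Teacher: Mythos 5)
Your proposal is correct and follows essentially the same route as the paper: the same triangle-inequality decomposition, the same use of the Banach-algebra property of $H^s(\X)$ to apply Theorem~\ref{thm:brandolini_2} to $\varphi\rho$, and the same bound $|1-\beta_r|\lesssim r^{-s}\|\rho\|_{H^s(\X)}$ from the case $\varphi\equiv 1$. The only (immaterial) difference is in the renormalization term, where the paper simply uses H\"older with $\int_\X\rho\dx\nu_r=\beta_r$ to cancel the denominator, while you re-expand $\int_\X\varphi\rho\dx\nu_r$ through a second quadrature-error estimate; both yield the bound linear in $\|\rho\|_{H^s(\X)}$.
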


\begin{proof}
	Note that $H^s(\X)$ is a Banach algebra with respect to addition 
	and multiplication \cite{CRT2001},  in particular, for $f,g \in H^s(\X)$ we have $fg \in H^s(\X)$ with
	\begin{equation} \label{banach_alg}
		\|fg\|_{H^s(\X)} \le \|f\|_{H^s(\X)} \, \|g\|_{H^s(\X)}.
	\end{equation}
	By Theorem \ref{thm:brandolini_2}, we obtain for all $\varphi\in H^s(\X)$ that
	\begin{equation}\label{eq:ref for Brandolini mit product}
		\Big|\int_{\X} \varphi\rho\dx \sigma_\X - \int_\X \varphi \rho\dx \nu_r \Big|
		\lesssim r^{-s}  \|\varphi \, \rho\|_{H^s(\X)} \lesssim r^{-s}  \|\varphi\|_{H^s(\X)}\| \rho\|_{H^s(\X)}.
	\end{equation}
	In particular, this implies for $\varphi \equiv 1$ that
	\begin{equation}\label{eq:1}
		\big|1 - \beta_r \big|
		\lesssim r^{-s} \|\rho\|_{H^s(\X)}.
	\end{equation}
	Then, application of the triangle inequality results in
	\begin{align*}
		&\Big|\int_{\X} \varphi \dx \mu - \int_\X \varphi \dx \tilde \nu_r\Big|
		\leq \Big|\int_{\X} \varphi \dx \mu - \int_\X \varphi \rho\dx \nu_r\Big|
		+ \Big|\int_\X  \varphi\rho \tfrac{\beta_r -1}{\beta_r}  \dx \nu_r  \Big|.
	\end{align*}
	According to \eqref{eq:ref for Brandolini mit product}, the first summand is bounded by 
	$\lesssim r^{-s}  \|\varphi\|_{H^s(\X)}\| \rho\|_{H^s(\X)}$. 
	It remains to derive matching bounds on the second term. H\"older's inequality yields 
	\begin{align*}
		\Big|\int_\X  \varphi\rho \tfrac{\beta_r -1}{\beta_r}  \dx \nu_r  \Big| \lesssim \|\varphi \|_{L^\infty(\X)}  \left|\beta_r - 1\right| \lesssim \|\varphi\|_{H^s(\X)} r^{-s}\| \rho\|_{H^s(\X)},
	\end{align*}
	where the last inequality is due to $H^s(\X) \hookrightarrow L^\infty(\X)$ and \eqref{eq:1}.
\end{proof}
Using the previous lemma, we derive optimal approximation rates for $\mathcal{P}_N^{\atom}(\X)$ and $\mathcal{P}^{\curve}_L(\X)$.

\begin{theorem}[Upper bounds]\label{thm:general no constraints on X}
	For $s > d/2$ suppose that $H_K(\X) = H^s(\X)$ holds with equivalent norms. 
	Assume that $\mu$ is absolutely continuous with respect to $\sigma_\X$ with density $\rho \in H^s(\X)$.
	Then, there are constants depending on $\X$ and $K$ such that 
	\begin{align}
		\min_{\nu \in \mathcal P_N^{\atom}(\X)} \mathscr{D}_K(\mu,\nu) &\lesssim \Vert \rho \Vert_{H^s(\X)} N^{-\frac{s}{d}},\label{eq:bound00}\\
		\min_{\nu\in\mathcal{P}^{\curve}_L(\X)}\mathscr{D}_K (\mu,\nu) &\lesssim\|\rho\|_{H^s(\X)}  L^{-\frac{s}{d-1}}.
		\label{eq:improvement 22}	
	\end{align}
\end{theorem}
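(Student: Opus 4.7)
The plan is to reduce both estimates to Lemma \ref{lemma:GeneralApproximation}, which turns any probability measure $\nu_r$ on $\X$ satisfying the exact-quadrature condition \eqref{eq:exact integration} into an approximation $\tilde \nu_r = \tfrac{\rho}{\beta_r}\nu_r$ of $\mu$ with discrepancy bound $\lesssim \|\rho\|_{H^s(\X)}\, r^{-s}$. A key observation is that the reweighting preserves the support: if $\nu_r$ is atomic then so is $\tilde \nu_r$, with the same nodes. Thus it suffices to produce an atomic exact-quadrature rule with few nodes, and (for the curve case) to route those nodes along a short Lipschitz curve via Proposition \ref{prop:ahlfors etc}.

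For the first bound I would invoke the existence of positive cubature formulae on compact Riemannian manifolds (see, e.g., \cite{Filbir:2011fk,Mhaskar:2010kx}): for each sufficiently large $r$ there is a probability measure $\nu_r \in \mathcal P_M^{\atom}(\X)$ with $M \lesssim r^d$ satisfying \eqref{eq:exact integration} for all eigenfunctions $\phi_k$ with $\lambda_k \le r^2$. The node count matches Weyl's estimate for the dimension of the span of those eigenfunctions. Given $N$ large, choose $r \sim N^{1/d}$ so that $M \le N$. Lemma \ref{lemma:GeneralApproximation} then yields $\tilde \nu_r \in \mathcal P_N^{\atom}(\X)$ with
\[
\mathscr{D}_K(\mu,\tilde \nu_r) \lesssim \|\rho\|_{H^s(\X)}\, r^{-s} \lesssim \|\rho\|_{H^s(\X)}\, N^{-s/d},
\]
which proves \eqref{eq:bound00}.

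For \eqref{eq:improvement 22} I would chain the previous bound with Proposition \ref{prop:ahlfors etc}, which states that $\mathcal P_N^{\atom}(\X) \subset \mathcal P_{C_{\TSP} N^{(d-1)/d}}^{\curve}(\X)$. Given $L$ large, set $N \sim L^{d/(d-1)}$, so that $C_{\TSP} N^{(d-1)/d} \le L$; the atomic approximant $\tilde \nu_r$ from the previous paragraph then lies in $\mathcal P_L^{\curve}(\X)$ and inherits the bound
\[
\mathscr{D}_K(\mu,\tilde \nu_r) \lesssim \|\rho\|_{H^s(\X)}\, N^{-s/d} \lesssim \|\rho\|_{H^s(\X)}\, L^{-s/(d-1)}.
\]

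The main obstacle is the cubature step: producing, on a general compact Riemannian manifold without boundary, a positive quadrature rule with the optimal node count $\lesssim r^d$ that is exact on the full span of eigenfunctions of $-\Delta$ with eigenvalue at most $r^2$. For the specific manifolds of Appendix \ref{sec:examples} such rules are classical, and in the general setting they go back to the Marcinkiewicz--Zygmund / cubature theory developed by Mhaskar and coauthors. Everything after this is bookkeeping: the matchings $r \sim N^{1/d}$ and $N \sim L^{d/(d-1)}$ follow from Weyl's law and from Proposition \ref{prop:ahlfors etc} respectively, while the transfer from unweighted quadrature to weighted approximation of $\mu$ is already packaged in Lemma \ref{lemma:GeneralApproximation}.
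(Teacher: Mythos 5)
Your proposal is correct and follows essentially the same route as the paper: an atomic positive cubature rule with $\lesssim r^d$ nodes exact on the eigenspaces with $\lambda_k \le r^2$ (the paper cites \cite[Lem.~2.11]{Brandolini:2014oz} where you cite the Mhaskar--Filbir cubature theory, but the content is the same), reweighted via Lemma~\ref{lemma:GeneralApproximation} with the observation that $\tilde\nu_r$ keeps the same atomic support, followed by $r \sim N^{1/d}$ for \eqref{eq:bound00} and the TSP embedding $\mathcal P_N^{\atom}(\X)\subset\mathcal P_L^{\curve}(\X)$ with $N\sim L^{d/(d-1)}$ for \eqref{eq:improvement 22}. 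No gaps.
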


\begin{proof}
	By \cite[Lem.~2.11]{Brandolini:2014oz} and since the Laplace--Beltrami has $N \sim r^d$ eigenfunctions belonging to eigenvectors $\lambda_k < r^2$, 
	there exists a measure $\nu_r\in \mathcal{P}^{\atom}_{N}(\X)$ that satisfies \eqref{eq:exact integration}.
	Hence, \eqref{eq:exact integration} is satisfied with $r\sim N^{1/d}$, where the constants depend on $\X$ and $K$. 
	Thus, Lemma \ref{lemma:GeneralApproximation} with $\tilde\nu_r\in \mathcal{P}^{\atom}_N(\X)$ leads to \eqref{eq:bound00}. 
	
	The assumptions of Lemma \ref{lemma TSP} are satisfied, so that analogous arguments as in the proof of Theorem \ref{thm:P curve pure no reg} 
	yield $\mathcal{P}^{\atom}_{N}(\X)\subset \mathcal{P}^{\curve}_{L}(\X)$ with suitable $N\sim L^{d/(d-1)}$. Hence, \eqref{eq:bound00} implies \eqref{eq:improvement 22}.
\end{proof}

\subsection{Upper bounds for \texorpdfstring{$\mathcal{P}^{\Acurve}_L(\X)$}{PA(X)} and special manifolds \texorpdfstring{$\X$}{X}}
To establish upper bounds for the smaller space $\mathcal{P}^{\Acurve}_L(\X)$, restriction to special manifolds is necessary. 
The basic idea consists in the construction of a curve and a related measure $\nu_r$ such that all eigenfunctions of the Laplace--Beltrami 
operator belonging to eigenvalues smaller than a certain value are exactly integrated by this measure and then applying Lemma~\ref{lemma:GeneralApproximation} for estimating the minimum of discrepancies.
We begin with the torus.

\begin{theorem}[Torus]\label{thm:new for torus and acurve}
	Let $\X = \mathbb T^d$ with $d\in\N$, $s>d/2$ and suppose that $H_K(\X) = H^s(\X)$ holds with equivalent norms.
	Then, for any absolutely continuous measure $\mu \in {\mathcal P}(\X)$ with Lipschitz continuous density $\rho \in H^s(\X)$, there exists a constant depending on $d$, $K$, and $\rho$ such that  
	\begin{equation}
		\min_{\nu\in\mathcal{P}^{\Acurve}_L(\X)}\mathscr{D}_K(\mu,\nu) \lesssim L^{-\frac{s}{d-1}}.
	\end{equation}
\end{theorem}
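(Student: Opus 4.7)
The plan is to instantiate Lemma~\ref{lemma:GeneralApproximation} with a quadrature measure of the form $\gamma_*\lambda$ supported on a \emph{single} closed geodesic of $\T^d$ whose direction vector is engineered to integrate all low-frequency characters exactly. Since on $\T^d$ the Laplace--Beltrami eigenfunctions are the characters $\phi_k(x)=\e^{2\pi\ii\langle k,x\rangle}$ with eigenvalues $4\pi^2|k|_2^2$ and $\sigma_\X$ is the normalized Lebesgue measure, the exact integration condition reduces to an arithmetic criterion on $k$.

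For a constant $c=c(\rho)>0$ to be fixed, set $N\coloneqq\lfloor cL^{1/(d-1)}\rfloor$, $a\coloneqq(1,N,N^2,\ldots,N^{d-1})\in\Z^d$, and $\gamma(t)\coloneqq at\bmod 1$. Then $\gamma$ is a closed Lipschitz curve of constant speed $|a|_2\sim N^{d-1}$, and for $\nu_r\coloneqq\gamma_*\lambda$ one has
\[
\int_{\T^d}\phi_k\dx\nu_r=\int_0^1\e^{2\pi\ii(k\cdot a)t}\dx t=\delta_{k\cdot a,0}.
\]
A short base-$N$ uniqueness argument applied to $k\cdot a=k_1+Nk_2+\cdots+N^{d-1}k_d$ shows that $k\cdot a\neq 0$ whenever $0<|k|_\infty<N$. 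Hence \eqref{eq:exact integration} is satisfied for every $\phi_k$ with $\lambda_k\le r^2$ provided $r\lesssim N\sim L^{1/(d-1)}$.

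Lemma~\ref{lemma:GeneralApproximation} then produces the weighted measure $\tilde\nu_r=\gamma_*\!\bigl((\rho\circ\gamma)/\beta_r\cdot\lambda\bigr)$ with
\[
\mathscr{D}_K(\mu,\tilde\nu_r)\lesssim\|\rho\|_{H^s(\X)}r^{-s}\lesssim L^{-s/(d-1)}.
\]
It remains to place $\tilde\nu_r$ in $\mathcal{P}^{\Acurve}_L(\X)$. The speed bound $|a|_2\le L$ holds by the choice of $c$, and the density $\tilde\rho\coloneqq(\rho\circ\gamma)/\beta_r$ is Lipschitz with $L(\tilde\rho)\le L(\rho)L(\gamma)/\beta_r$. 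From \eqref{eq:1} one has $\beta_r\ge 1/2$ for $L$ large, so shrinking $c$ further so that $|a|_2\le L/(2L(\rho)+1)$ yields $L(\tilde\rho)\le L$. The bounded-$L$ regime is trivial because $\mathscr{D}_K$ is uniformly bounded on $\mathcal{P}(\X)\times\mathcal{P}(\X)$.

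The main technical point I expect is the simultaneous matching of both Lipschitz constraints to the single parameter~$L$: the curve's speed and the density's Lipschitz constant are forced to share the same upper bound, so the constants must be balanced carefully. The theorem's allowance of a $\rho$-dependent constant absorbs the factor $L(\rho)$ that enters when controlling $L(\tilde\rho)$, making the balancing routine. The exact-quadrature assertion itself is elementary once the base-$N$ structure of $a$ is exploited.
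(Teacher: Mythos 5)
Your proof is correct, but the key construction differs from the paper's. The paper builds its exact-quadrature curve from $d\,n^{d-1}$ axis-parallel circles through an equispaced grid (each coordinate direction integrating $\mathrm{\Pi}_r(\T^{d-1})$ exactly at the grid nodes), and then invokes an Euler-path argument on the resulting graph, whose vertices all have even degree $2d$, to merge these circles into a single closed constant-speed curve of length $d\,n^{d-1}$. You instead use a single winding geodesic $\gamma(t)=at\bmod 1$ with the Korobov-type generating vector $a=(1,N,\dots,N^{d-1})$; the base-$N$ digit argument correctly shows $k\cdot a\neq 0$ for $0<|k|_\infty<N$ (reduce mod $N$ and peel off digits), so $\gamma_*\lambda$ integrates all characters with $|k|_\infty\le N-1$ exactly, and since $|k|_\infty\le|k|_2$ this covers all eigenvalues $4\pi^2|k|_2^2\le r^2$ once $r\lesssim N$. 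The speed $|a|_2\sim N^{d-1}\sim L$ matches the paper's $d\,n^{d-1}$, and from there both proofs conclude identically via Lemma~\ref{lemma:GeneralApproximation}, pushing forward the reweighted density $(\rho\circ\gamma)/\beta_r$ and bounding its Lipschitz constant by $L(\rho)L(\gamma)/\beta_r\lesssim L$ (your explicit balancing of the two constraints against the single parameter $L$ is a touch more careful than the paper's, which hides it in the $\rho$-dependent constant). What your route buys is the elimination of the combinatorial Euler-path step and an entirely explicit curve; what it gives up is portability, since the diagonal-winding trick exploits the group structure of $\T^d$, whereas the paper's circle-plus-Euler-path template is reused almost verbatim for $\S^d$, $\SO(3)$ and $\G_{2,4}$ in the subsequent results.
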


\begin{proof}
	1. First, we construct a closed curve $\gamma_r$ such that the trigonometric polynomials 
	from  $\mathrm{\Pi}_r(\mathbb T^{d})$, see \eqref{trig_torus} in the appendix, are exactly integrated along this curve.
	Clearly,  the polynomials in $\mathrm{\Pi}_r(\mathbb T^{d-1})$ are exactly integrated at equispaced nodes 
	$x_{\zb k} 
	= \frac{{\zb k}}{n}$, ${\zb k}=(k_1,\ldots,k_{d-1})  \in \mathbb N_0^{d-1}$, $0 \le k_i \le n-1$,
	with weights $1/n^{d-1}$,
	where $n \coloneqq r+1$.
	Set $z(\zb k) \coloneqq k_1 + k_2 n + \ldots + k_{d-1} n^{d-2}$ and
	consider the curves
	\[\gamma_{\zb k} \colon I_{\zb k} 
	\coloneqq 
	\bigl[ \tfrac{z(\zb k)}{n^{d-1}},\tfrac{z(\zb k)+1}{n^{d-1}}\bigr] \rightarrow \mathbb T^d \quad\text{with}\quad
	\gamma_{\zb k}(t)
	\coloneqq 
	\begin{pmatrix} x_{\zb k}\\ n^{d-1} t\end{pmatrix}.
	\]
	Then, each element in $\mathrm{\Pi}^{d}_r$ is exactly integrated along the union of these curves, i.e., using $I \coloneqq \{0,\ldots,n-1\}^{d-1}$, we have
	\[
	\int_{\mathbb T^d}  p \dx \sigma_{\mathbb{T}^d} 
	= 
	\sum_{\zb k \in  I} \int_{I_k} p \circ \gamma_{\zb k} \dx \lambda ,\quad p\in\mathrm{\Pi}^{d}_r.
	\]
	The argument is repeated for every other coordinate direction, so that we end up with $d n^{d-1}$ curves mapping from an interval of length $\frac{1}{d n^{d-1}}$ to $\mathbb T^d$.
	The intersection points of these curves are considered as vertices of a graph, where each vertex has $2d$ edges.
	Consequently, there exists an Euler path $\gamma_r\colon [0,1] \rightarrow \mathbb T^d$ trough the vertices build from all curves.
	It has constant speed $d n^{d-1}$ and the polynomials $\mathrm{\Pi}^{d}_r$ are exactly integrated along $\gamma_r$, i.e., 
	\begin{equation*}
		\int_{\mathbb{T}^d} p\dx \sigma_{\mathbb{T}^d} = \int_{\mathbb{T}^d} p\dx {\gamma_r}{_*}\lambda,\quad p\in\mathrm{\Pi}^{d}_r.
	\end{equation*}
	
	2. Next, we apply Lemma \ref{lemma:GeneralApproximation} for $\nu_r={\gamma_r}_{*}\lambda$. 
	We observe $\tilde \nu_r ={\gamma_r}{_*}((\rho\circ\gamma_r)/{ \beta_r} \lambda)$ and deduce $L(\rho\circ\gamma_r/\beta_r)\leq L(\gamma_r)L(\rho)/{\beta_r}\lesssim r^{d-1}\sim L$ as $\beta_r\sim 1$.
	Here, constants depend on $d$, $K$, and $\rho$.
	%
	%
\end{proof}

Now, we provide approximation rates for $\X=\S^d$.
\begin{theorem}[Sphere]\label{thm:sphere general acurve}
	Let $\X = \mathbb S^d$ with $d \ge 2$, $s>d/2$ and suppose 
	that $H_K(\X) = H^s(\X)$ holds with equivalent norms. 
	Then, we have for any absolutely continuous measure $\mu \in {\mathcal P}(\X)$ with Lipschitz continuous density $\rho \in H^s(\X)$
	that there is a constant depending on $d$, $K$, and $\rho$ with  
	\begin{equation}\label{eq:improvement 3}
		\min_{\nu\in\mathcal{P}^{\Acurve}_L(\X)}\mathscr{D}_K (\mu,\nu) \lesssim L^{-\frac{s}{d-1}}.
	\end{equation}
\end{theorem}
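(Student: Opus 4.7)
The strategy mirrors the proof of Theorem~\ref{thm:new for torus and acurve}: construct a closed, constant-speed curve $\gamma_r\colon[0,1]\to\S^d$ of length $L(\gamma_r)\sim r^{d-1}$ along which every eigenfunction of the Laplace--Beltrami operator with eigenvalue $\le r^2$ (i.e., every spherical harmonic of degree $\le r$) is integrated exactly, and then apply Lemma~\ref{lemma:GeneralApproximation} to $\nu_r=(\gamma_r){_\ast}\lambda$.

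To build $\gamma_r$ I would proceed inductively in $d$ using the hyperspherical parametrization $x(\theta,\omega)=(\sin\theta\,\omega,\cos\theta)$ with $\theta\in[0,\pi]$ and $\omega\in\S^{d-1}$, under which $\sigma_{\S^d}$ factors into the Jacobi-type measure $c_d\sin^{d-1}\theta\dx\theta$ on the polar angle and $\sigma_{\S^{d-1}}$ on the equatorial sphere. By the Funk--Hecke decomposition, every spherical harmonic of degree $\le r$ on $\S^d$ is a finite sum of products $P(\cos\theta)Y(\omega)$ with $\deg P\le r$ and $Y$ a spherical harmonic on $\S^{d-1}$ of degree $\le r$. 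For the polar direction I pick $O(r)$ Gauss--Jacobi nodes $\theta_j$ with weights $w_j$ that exactly integrate polynomials of degree $\le r$ in $\cos\theta$ against $\sin^{d-1}\theta\dx\theta$. By the induction hypothesis, on each parallel $\{x(\theta_j,\omega):\omega\in\S^{d-1}\}$ there is a closed constant-speed curve $\gamma_{r,j}$ of length $\sim r^{d-2}$ which exactly integrates spherical harmonics of degree $\le r$ on $\S^{d-1}$. The global curve $\gamma_r$ is obtained by traversing each $\gamma_{r,j}$ an appropriate number of times so that the arc-length spent on parallel $\theta_j$ is proportional to $w_j$, and by bridging consecutive parallels along short meridian segments; reparametrizing the concatenation to constant speed on $[0,1]$ preserves the exactness of the quadrature and gives total length $\sim r\cdot r^{d-2}+r=O(r^{d-1})$.

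With such a $\gamma_r$ in place, Lemma~\ref{lemma:GeneralApproximation} yields $\tilde\nu_r=\tfrac{\rho}{\beta_r}\nu_r=(\gamma_r){_\ast}\bigl(\tfrac{\rho\circ\gamma_r}{\beta_r}\lambda\bigr)$ with $\mathscr{D}_K(\mu,\tilde\nu_r)\lesssim\|\rho\|_{H^s(\X)}r^{-s}$ and $\beta_r\to 1$. The Lipschitz constant of the resulting density satisfies
\[
L\bigl(\tfrac{\rho\circ\gamma_r}{\beta_r}\bigr)\le\tfrac{L(\rho)\,L(\gamma_r)}{\beta_r}\lesssim r^{d-1},
\]
so the choice $r\sim L^{1/(d-1)}$ places $\tilde\nu_r$ into $\mathcal{P}^{\Acurve}_L(\S^d)$ and yields the claimed rate $L^{-s/(d-1)}$.

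The main obstacle is the construction of $\gamma_r$: unlike on the torus, where all parallels have equal circumference and equispaced nodes suffice, on $\S^d$ the polar integration requires genuine Gauss--Jacobi weights, so realizing those weights as arc-lengths of an honest constant-speed closed curve while maintaining the optimal total length $r^{d-1}$ is the delicate combinatorial point. One expects this to be handled by letting $\gamma_r$ wind around each parallel an integer number of times approximately proportional to $w_j$, possibly after a small rescaling within the induction, so that the induced quadrature weights match the Gauss--Jacobi weights exactly; the Euler-path idea used for the torus then glues everything into a single closed curve.
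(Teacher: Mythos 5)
There is a genuine gap, and it sits exactly at the point you yourself flag as ``the delicate combinatorial point''. Your plan requires a closed constant-speed curve $\gamma_r$ on $\S^d$ of length $\sim r^{d-1}$ whose \emph{plain Lebesgue push-forward} $(\gamma_r)_*\lambda$ satisfies the exactness condition \eqref{eq:exact integration}. Two steps of your construction do not deliver this. First, realizing the Gauss--Jacobi weights $w_j$ as parameter-time fractions of a constant-speed curve by winding an integer number of times around each parallel forces $m_j\,\sin\theta_j\,\ell/V=w_j$ for every $j$ simultaneously; the ratios $w_j/\sin\theta_j$ are not commensurable in general, and ``an integer number of times approximately proportional to $w_j$'' is fatal because Lemma~\ref{lemma:GeneralApproximation} (via Theorem~\ref{thm:brandolini_2}) needs the quadrature to be \emph{exact}, not approximate -- an $O(\varepsilon)$ error in the weights produces an additive quadrature error that does not decay like $r^{-s}$. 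Second, the ``short meridian segments'' bridging consecutive parallels carry positive parameter-time at constant speed, so $(\gamma_r)_*\lambda$ charges sets on which the quadrature identity was never arranged, again destroying exactness; the Euler-path device you invoke cannot replace the bridges because, unlike the union of horizontal and vertical circles on $\T^d$, the parallels at distinct latitudes are pairwise disjoint and their union is not connected. (The paper only manages a Lebesgue-exact curve on the sphere for $d=2$, Theorem~\ref{thm:sphere}, and even there it gives up constant speed, allots time $\omega_j$ to each circle traversed once, and restores connectivity by superposing a rotated copy of the whole circle family.)

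The missing idea is that for $\mathcal{P}^{\Acurve}_L(\X)$ you are allowed -- and should use -- a nonconstant Lipschitz density $\rho_r$ along the curve, which dissolves both obstructions. The paper's proof takes a positive quadrature $\{(\tilde x_i,a_i)\}_{i=1}^{n}$, $n\sim r^{d-1}$, on the \emph{equatorial} sphere $\S^{d-1}$ and runs $\gamma_r$ along the $n$ great circles $\alpha\mapsto(\cos\alpha,\sin\alpha\,\tilde x_i)$: these all meet at the pole $(1,0,\dots,0)$, so the concatenation is automatically a closed constant-speed curve with no bridges, and the polar integral $\int_0^\pi c_d\sin^{d-1}(\theta)\,(\cdots)\dx\theta$ is computed \emph{exactly and continuously} along each circle rather than by a discrete $\theta$-quadrature. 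The weights $a_i$ and the Jacobian $|\sin\alpha|^{d-1}$ are absorbed into $\rho_r(t)=a_i c_d\pi n|\sin(2\pi nt)|^{d-1}$, whose Lipschitz constant is controlled by $L(\rho_r)\lesssim\max_i a_i\,n^2\lesssim r^{d-1}$ using the weight bound $a_i\lesssim r^{-(d-1)}$; a parity argument handles odd-degree harmonics. Only then is Lemma~\ref{lemma:GeneralApproximation} applied, with the product rule $L(\rho\circ\gamma_r\,\rho_r/\beta_r)\lesssim(L(\rho)+\|\rho\|_\infty)r^{d-1}$ replacing your simpler bound. Your closing reduction via Lemma~\ref{lemma:GeneralApproximation} and the choice $r\sim L^{1/(d-1)}$ is correct, but it rests on a curve you have not constructed.
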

\begin{proof}
	1. First, we construct a constant speed curve $\gamma_{r}\colon[0,1]\to \mathbb S^{d}$ and a probability measure $\omega_r = \rho_r \lambda$
	with Lipschitz continuous density $\rho_{r}\colon [0,1] \rightarrow \mathbb R_{\ge 0}$ such that for all $p \in \mathrm{\Pi}_{r}(\mathbb S^{d})$, it holds
	\begin{equation} 	\label{eq:quad_curve_Sd}
		\int_{\mathbb S^{d}} p \dx \sigma_{\mathbb S^{d}} = \int_{0}^{1} p \circ \gamma_{r} \dx \omega_{r}.
	\end{equation}
	Utilizing spherical coordinates 
	\begin{equation} \label{spherical}
		x_1 = \cos \theta_1, \,\, x_2 = \sin \theta_1 \cos\theta_2, \,\, \ldots \,\, ,  \,\,
		x_{d} = \prod_{j=1}^{d-1} \sin\theta_j  \cos\phi,\,\, 
		x_{d+1} = \prod_{j=1}^{d-1} \sin\theta_j \sin\phi,
	\end{equation}
	where $\theta_k \in [0,\pi]$, $k=1,\ldots,d-1$, and $\phi \in [0,2\pi)$,
	we obtain
	\begin{equation} \label{eq:ausgang}
		\int_{\mathbb S^{d}} p \dx \sigma_{\mathbb S^{d}} 
		= \int_{0}^{\pi} c_{d} \sin(\theta_1)^{d-1} \int_{\mathbb S^{d-1}} p\bigl(\cos(\theta_1),\sin(\theta_1)\tilde x\bigr) \dx \sigma_{\mathbb S^{d-1}}(\tilde x)   \dx \theta_1,
	\end{equation}
	where $c_{d} \coloneqq (\int_{0}^{\pi} \sin(\theta)^{d-1} \dx \theta)^{-1} $. 
	There exist nodes  $\tilde x_{i} \in \mathbb S^{d-1}$ and positive weights $a_{i}$, $i=1,\dots, n \sim r^{d-1}$, 
	with $\sum_{i=1}^n a_i = 1$, such that for all \smash{$p \in \mathrm{\Pi}_{r}(\mathbb S^{d-1})$} it holds
	\[
	\int_{\mathbb S^{d-1}} p \dx \sigma_{\mathbb S^{d-1}} = \sum_{i=1}^{n} a_{i} p(\tilde x_{i}).   
	\]
	To see this, substitute $u_k = \sin \theta_k$, $k=2,\ldots,d-1$, apply Gaussian quadrature with nodes  $\lceil (r+1)/2 \rceil$ and corresponding weights to exactly integrate over $u_k$, and equispaced
	nodes and weights $1/(2r+1)$ for the integration over $\phi$ as, e.g., in \cite{Wagner:1992aa}.
	Then, we define $\gamma_{r}\colon[0,1]\to\mathbb S^{d}$ for $t \in [(i-1)/n,i/n]$, $i=1,\dots,n$, by
	\[
	\gamma_{r}(t) \coloneqq \gamma_{r,i}(2\pi n t) , \qquad \gamma_{r,i}(\alpha) \coloneqq \bigl(\cos(\alpha), \sin(\alpha) \tilde x_{i}\bigr), \quad \alpha \in [0,2\pi].
	\]
	Since $(1,0,\dots,0) = \gamma_{r,i}(0) = \gamma_{r,i}(2\pi)$ for all $i=1,\dots,n$, the curve is closed.
	Furthermore, $\gamma_{r}(t)$ has constant speed since for $i=1,\dots,n$, i.e.,
	\[
	| \dot \gamma_{r}|(t) = | \dot \gamma_{r,i}|(2 \pi n t)  = 2\pi n \sim r^{d-1}.
	\]
	Next, the density $\rho_{r}\colon[0,1]\to \mathbb \R$  is defined for $t \in [(i-1)/n,i/n]$, $i=1,\dots,n$, by
	\[
	\rho_{r}(t) \coloneqq \rho_{r,i}(2\pi n t) , \qquad \rho_{r,i}(\alpha) \coloneqq a_{i} c_{d}  \pi  n|\sin(\alpha)|^{d-1}, \qquad \alpha \in [0,2\pi].
	\]
	We directly verify that $ \rho_{r}$ is Lipschitz continuous with $L(\rho_r) \lesssim \max_{i} a_i n^2$.
	By \cite{Forster:1990iw}, the quadrature weights fulfill
	$a_i \lesssim \frac{1}{r^{d-1}}$
	so that
	$L(\rho_r) \lesssim n^2 r^{-(d-1) } \sim r^{d-1}$. 
	By definition of the constant $c_{d}$ and weights $a_{i}$, we see that $\rho_r$ is indeed a probability density
	\begin{align}
		\int_{0}^{1} \rho_{r} \dx \lambda
		&= \sum_{i=1}^{n} \int_{\frac{i-1}{n}}^{\frac{i}{n}}  \rho_{r,i}(2\pi n t) \dx t =
		\frac{1}{2\pi n}  \sum_{i=1}^{n} \int_{0}^{2\pi}  \rho_{r,i}(\alpha) \dx \alpha \\
		&
		= 
		\frac{c_d}{2} \sum_{i=1}^{n} a_{i} \int_{0}^{2\pi}  |\sin(\theta)|^{d-1} \dx \theta = 1.
	\end{align}
	For $p \in \mathrm{\Pi}_{r}(\mathbb S^{d})$, we obtain
	\begin{align}
		&\int_{0}^{1} p \circ \gamma_{r}\, \rho_{r} \dx \lambda\\
		=&  \sum_{i=1}^{n} \int_{\frac{i-1}{n}}^{\frac i n} p\bigl(\gamma_{r,i}(2 \pi n t)\bigr) \rho_{r,i}(2\pi M t) \dx t
		=   \int_{0}^{2\pi}  \frac{1}{2\pi n} \sum_{i=1}^{n}  p\bigl(\gamma_{r,i}(\alpha)\bigr) \rho_{r,i}(\alpha) \dx \alpha\\
		=&  \frac{c_{d}}{2}\int_{0}^{2\pi}  |\sin(\alpha)|^{d-1} \sum_{i=1}^{n} a_{i}  p\bigl(\cos(\alpha),\sin(\alpha)\tilde x_{i}\bigr) \dx \alpha \\
		=&   \frac{c_{d}}{2}\int_{0}^{\pi}  |\sin(\alpha)|^{d-1} \sum_{i=1}^{n} a_{i}  
		\Big( p\bigl(\cos(\alpha),\sin(\alpha)\tilde x_{i}\bigr) + p\bigl(-\cos(\alpha),-\sin(\alpha)\tilde x_{i}\bigr) \Big) \dx \alpha.
	\end{align}
	Without loss of generality, $p$ is chosen as a homogeneous polynomial of degree $k \le r$, i.e., $p(t x) =t^k p(x)$. 
	Then, 
	\begin{align}
		\int_{0}^{1} p \circ \gamma_{r}\, \rho_{r} \dx \lambda
		&=   \frac{1+(-1)^{k}}{2} \int_{0}^{\pi}  c_{d} |\sin(\alpha)|^{d-1} \sum_{i=1}^n a_{i}  p\bigl(\cos(\alpha),\sin(\alpha)\tilde x_{i}\bigr) \dx \alpha,
	\end{align} 
	and regarding that for fixed $\alpha \in [0,2\pi]$ 
	the function $\tilde x \mapsto p(\cos(\alpha), \sin(\alpha) \tilde x)$ 
	is a polynomial of degree at most $r$ on $\mathbb S^{d-1}$, we conclude 
	\begin{align*}
		\int_{0}^{1} p \circ \gamma_{r}\, \rho_{r} \dx \lambda
		=  & \frac{1+(-1)^{k}}{2}\int_{0}^{\pi}  c_{d} |\sin(\alpha)|^{d-1} \int_{\mathbb S^{d-1}}  p\bigl(\cos(\alpha),\sin(\alpha)\tilde x\bigr) \dx \sigma_{\mathbb S^{d-1}}(\tilde x) \dx \alpha. 
	\end{align*}
	Now, the assertion \eqref{eq:quad_curve_Sd} follows from \eqref{eq:ausgang} and since $\int_{\mathbb S^{d}} p \dx \sigma_{\mathbb S^{d}}=0$ if $k$ is odd.
	
	2. Next, we apply Lemma \ref{lemma:GeneralApproximation} for $\nu_r={\gamma_r}_{*} \rho_r\lambda$, from which we obtain that $\tilde \nu_r={\gamma_r}{_*}((\rho\circ\gamma_r) \rho_r/{\beta_r}\lambda)$.
	As all $\rho_r$ are uniformly bounded by construction and $\rho$ is bounded due to continuity, we conclude using $L(\rho_r) \lesssim r^{d-1}$ and $L(\gamma_r) \sim r^{d-1}$ that
	\[L(\rho\circ\gamma_r \, \rho_r/\beta_r) 
	\leq 
	\bigl( L(\rho\circ\gamma_r) \Vert \rho_r \Vert_\infty + L(\rho_r) \Vert \rho \Vert_\infty \bigr)/\beta_r\lesssim \bigl(L(\rho) + \|\rho\|_\infty\bigr) r^{d-1},\]
	which concludes the proof.
\end{proof}

Finally, we derive approximation rates for $\X=\SO(3)$. 

\begin{corollary}[Special orthogonal group] \label{cor:so3}
	Let $\X = \SO(3)$, $s>3/2$ and suppose 
	$H_K(\X) = H^s(\X)$ holds with equivalent norms. 
	Then, we have for any absolutely continuous measure $\mu \in {\mathcal P}(\X)$ with Lipschitz continuous density $\rho \in H^s(\X)$
	that
	\begin{equation}\label{eq:improvement 2}
		\min_{\nu\in\mathcal{P}^{\Acurve}_L(\X)}\mathscr{D}_K (\mu,\nu) \lesssim  L^{-\frac{s}{d-1}},
	\end{equation}
	where the constant depends on $K$ and $\rho$.
\end{corollary}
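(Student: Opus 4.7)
The plan is to reduce the statement to the sphere case Theorem \ref{thm:sphere general acurve} via the two-fold Riemannian covering $\pi\colon \S^3 \to \SO(3)$ obtained by identifying unit quaternions with rotations. Since $\pi$ is a local isometry, it preserves Lipschitz constants, lengths of curves, and pulls back the (suitably normalized) Riemannian measure and Laplace--Beltrami eigenfunctions. In particular, eigenfunctions of the Laplace--Beltrami operator on $\SO(3)$ lift via $\pi$ to exactly those eigenfunctions on $\S^3$ that are invariant under the antipodal map $x\mapsto -x$, and the Sobolev spaces $H^s(\SO(3))$ and $H^s(\S^3)_{\text{even}}$ correspond with equivalent norms.

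First I would inspect the construction of $\gamma_r$ and $\rho_r$ from the proof of Theorem \ref{thm:sphere general acurve} applied to $\S^3$ and observe that it is antipodally symmetric: each segment $\gamma_{r,i}(\alpha)=(\cos\alpha,\sin\alpha\,\tilde x_i)$ runs over $\alpha\in[0,2\pi]$ and satisfies $\gamma_{r,i}(\alpha+\pi)=-\gamma_{r,i}(\alpha)$, while $\rho_{r,i}(\alpha)\sim|\sin\alpha|^{d-1}$ is $\pi$-periodic. Consequently the measure $\tilde\nu_r=(\gamma_r)_*(\rho_r\lambda)$ on $\S^3$ is $\{\pm\id\}$-invariant and therefore descends via $\pi$ to a well-defined probability measure on $\SO(3)$. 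Moreover $\tilde\nu_r$ exactly integrates all spherical harmonics on $\S^3$ of degree $\leq r$, so in particular the even ones, which means that $\pi_*\tilde\nu_r$ exactly integrates all Wigner D-functions on $\SO(3)$ corresponding to the matching spectral cutoff.

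Then I would apply Theorem \ref{thm:sphere general acurve} with $d=3$ at the appropriate degree $r$ (chosen so that the Laplace--Beltrami eigenvalues on $\SO(3)$ are covered up to $r^2$), set $\gamma\coloneqq\pi\circ\gamma_r$, and note that $\gamma$ is a closed constant-speed curve on $\SO(3)$ with $L(\gamma)=L(\gamma_r)\sim r^{d-1}=r^2$, and that $\rho_r$ retains its Lipschitz bound $L(\rho_r)\lesssim r^2$. The push-forward $\nu_r\coloneqq\gamma_*(\rho_r\lambda)$ satisfies the hypotheses of Lemma \ref{lemma:GeneralApproximation} on $\SO(3)$. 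Reweighting by the density $\rho$ of $\mu$, exactly as in Step 2 of the proof of Theorem \ref{thm:sphere general acurve}, and invoking the Banach-algebra inequality \eqref{banach_alg}, yields the Lipschitz control $L(\rho\circ\gamma\,\rho_r/\beta_r)\lesssim r^{d-1}\sim L$. Lemma \ref{lemma:GeneralApproximation} then gives the desired bound $\mathscr{D}_K(\mu,\tilde\nu_r)\lesssim L^{-s/(d-1)}$ with $d=3$.

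The main obstacle I anticipate is not the curve construction itself but the careful bookkeeping needed to match the spectral cutoff between $\S^3$ and $\SO(3)$: one has to verify that the even eigenfunctions of $-\Delta_{\S^3}$ with eigenvalues up to a given level correspond, via $\pi$, precisely to the eigenfunctions of $-\Delta_{\SO(3)}$ below an equivalent level, with constants only depending on the chosen normalizations of the Riemannian metrics. Once this correspondence is established, the rest of the argument is a direct transfer of the sphere proof, and the stated rate follows immediately from $d-1=2$.
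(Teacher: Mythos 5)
Your proposal is correct and follows essentially the same route as the paper: reduce to the $\S^3$ construction of Theorem~\ref{thm:sphere general acurve} via the two-to-one local isometry $a\colon\S^3\to\SO(3)$ and push the curve and density forward. The only cosmetic differences are that the paper does not need the antipodal symmetry of the measure (a push-forward under $a$ is always well defined), and it settles your ``spectral cutoff'' worry simply by noting that $p\in\mathrm{\Pi}_r(\SO(3))$ implies $p\circ a\in\mathrm{\Pi}_{2r}(\S^3)$ and therefore running the sphere construction at degree $2r$, which only changes constants.
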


\begin{proof}
	1. For fixed $L\sim r^2$, we shall construct a curve $\gamma_{r}\colon[0,1] \to \mathrm{SO(3)}$ with $L(\gamma_r)\lesssim L$  and a probability measure 
	$\omega_r = \rho_r \lambda$ with density $\rho_{r}\colon[0,1] \rightarrow \mathbb R_{\ge 0}$ and $L(\rho_{r}) \lesssim L$, 
	such that 
	\begin{equation*}
		\int_{\SO(3)} p \dx \sigma_{\SO(3)} = \int_{\SO(3)} p \dx {\gamma_r}{_*}(\rho_r\lambda). 
	\end{equation*}
	
	We  use the fact that the sphere $\mathbb S^{3}$ is a double covering of $\mathrm{SO(3)}$. 
	That is, there is a surjective two-to-one mapping $a\colon \mathbb S^{3} \to \mathrm{SO(3)}$ satisfying $a(x) = a(-x)$, $x \in \mathbb S^{3}$. 
	Moreover, we know that $a\colon \mathbb S^{3} \to \mathrm{SO(3)}$ is a local isometry, see~\cite{graef2012}, 
	i.e., it respects the Riemannian structures, implying the relations $\sigma_{\mathrm{SO(3)}} = a_{*} \sigma_{\mathbb S^{3}}$ and
	\begin{align}
		\dist_{\mathrm{SO(3)}}\bigl(a(x_{1}),a(x_{2})\bigr) &= \min\bigl( \dist_{\mathbb S^{3}}(x_{1},x_{2}),\dist_{\mathbb S^{3}}(x_{1},-x_{2}) \bigr).
	\end{align}
	It also maps $\mathrm{\Pi}_{r}(\mathrm{SO(3)})$ into $\mathrm{\Pi}_{2r}(\S^3)$, i.e., $p\in \mathrm{\Pi}_{r}(\mathrm{SO(3)})$ implies $p\circ a\in \mathrm{\Pi}_{2r}(\S^3)$. 
	Now, let $\tilde \gamma_{r}\colon [0,1]\to \mathbb S^{3}$ and $\tilde \omega_{r}$ 
	be given as in the first part of the proof of Theorem~\ref{thm:sphere general acurve} for $d=3$, 
	i.e., ${{\tilde{\gamma}}_r}{_*}\tilde \omega_{r}$ satisfies \eqref{eq:quad_curve_Sd} 
	with $L(\tilde{\gamma_r})\lesssim L$ and $\tilde{\omega}_r=\tilde{\rho_r}\lambda$ with $L(\tilde{\rho}_r)\lesssim L$. 
	
	We now define a curve $\gamma_r$ in $\SO(3)$ by 
	\[
	\gamma_{r}\colon [0,1] \to \mathrm{SO(3)},\qquad \gamma_{r}(t) \coloneqq a \circ  \tilde \gamma_{2r}(t),
	\]
	and let $\omega_r \coloneqq \tilde{\omega}_{2r}$. 
	For $p\in\mathrm{\Pi}_r(\SO(3))$, the push-forward measure ${\gamma_r}{_*} \omega_r$ leads to
	\begin{align*}
		\int_{\mathrm{SO(3)}} p \dx \sigma_{\mathrm{SO(3)}} & = \int_{\mathrm{SO(3)}} p\dx a{_*}\sigma_{\S^3} = \int_{\S^3} p \circ a\dx \sigma_{\S^3}\\
		& = \int_{\S^3} p \circ a \dx {\tilde{\gamma_{2r}}}{_*} \tilde{\omega}_{2r} =  \int_{\SO(3)} p\dx {\gamma_{r}}{_*} \omega_{r}.
	\end{align*}
	Hence, property \eqref{eq:exact integration} is satisfied for ${\gamma_r}{_*} \omega_r={\gamma_r}{_*} (\tilde{\rho}_{2r}\lambda)$. 
	
	2. The rest follows along the lines of step 2.~in the proof of Theorem~\ref{thm:sphere general acurve}.
\end{proof}

\subsection{Upper bounds for \texorpdfstring{$\mathcal{P}^{\Lcurve}_L(\X)$}{PL(X)} and special manifolds \texorpdfstring{$\X$}{X}} \label{sec:approx_L}
To derive upper bounds for the smallest space $\mathcal{P}^{\Lcurve}_L(\X)$, we need the following specification of Lemma \ref{lemma:GeneralApproximation}. 

\begin{lemma}\label{lemma:zweites}
	For $s>d/2$ suppose that $H_K(\X) = H^s(\X)$ holds with equivalent norms.
	Let $\mu \in \mathcal P(\X)$ be absolutely continuous with respect to $\sigma_\X$ with positive density $\rho \in H^s(\X)$. 
	Suppose that
	$\nu_r \coloneqq {\gamma_{r}}{_*} \lambda$ with $\gamma_r\in\Lip(\X)$ satisfies \eqref{eq:exact integration} and 
	let $
	\beta_r \coloneqq \int_\X \rho\dx \nu_r 
	$. 
	Then, for sufficiently large $r$, 
	\begin{equation*}
		g \colon [0,1]\rightarrow [0,1],\qquad g(t)\coloneqq
		\frac{1}{\beta_r} \int_0^t \rho \circ \gamma_r  \dx \lambda 
	\end{equation*}
	is well-defined and invertible. Moreover, $\tilde{\gamma}_r \coloneqq \gamma_r \circ g^{-1}$ satisfies
	$L(\tilde \gamma_r) \lesssim L( \gamma_r)$ and  
	\begin{equation}\label{eq:ineq 2 2 2}
		\mathscr{D}_K(\mu,{\tilde \gamma}_r{_*}\lambda) \lesssim  r^{-s},
	\end{equation}
	where the constants depend on $\X$, $K$, and $\rho$. 
\end{lemma}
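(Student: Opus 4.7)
The strategy is to reparametrize the curve $\gamma_r$ so that the push-forward of Lebesgue measure on $[0,1]$ realizes the weighted measure $\tilde\nu_r = (\rho/\beta_r)\nu_r$ from Lemma \ref{lemma:GeneralApproximation}. The estimate for the discrepancy would then follow immediately from that lemma, and the only work is to check that the reparametrization is well-defined and does not inflate the Lipschitz constant by more than a constant factor.

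First I would verify that $g$ is a homeomorphism of $[0,1]$. Since $\rho$ is continuous and positive on the compact manifold $\X$, there is $\rho_{\min}>0$ with $\rho\geq \rho_{\min}$ on $\X$. Hence $\rho\circ\gamma_r$ is bounded below by $\rho_{\min}$ on $[0,1]$, so $g$ is strictly increasing and continuous. Moreover
\begin{equation*}
	g(0)=0,\qquad g(1)=\frac{1}{\beta_r}\int_0^1 \rho\circ\gamma_r \dx\lambda=\frac{1}{\beta_r}\int_\X \rho\dx\nu_r=1,
\end{equation*}
so that $g\colon[0,1]\to[0,1]$ is a homeomorphism. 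Here $\beta_r$ is positive for sufficiently large $r$ since $|\beta_r-1|\lesssim r^{-s}\|\rho\|_{H^s(\X)}$ by \eqref{eq:1}, and in fact $\beta_r\to 1$.

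Next I would identify the push-forward of $\lambda$ by $\tilde\gamma_r$. For any test function $\varphi\in{\mathcal C}(\X)$, substituting $u=g^{-1}(t)$ gives
\begin{equation*}
	\int_0^1 \varphi\circ\tilde\gamma_r\dx\lambda=\int_0^1 \varphi\circ\gamma_r(u)\,g'(u)\dx u=\frac{1}{\beta_r}\int_0^1 (\varphi\rho)\circ\gamma_r\dx\lambda=\int_\X \varphi\dx\tilde\nu_r,
\end{equation*}
so that $\tilde\gamma_{r*}\lambda=\tilde\nu_r$. Lemma \ref{lemma:GeneralApproximation} then yields the desired bound $\mathscr{D}_K(\mu,\tilde\gamma_{r*}\lambda)\lesssim\|\rho\|_{H^s(\X)}r^{-s}\lesssim r^{-s}$, absorbing $\|\rho\|_{H^s(\X)}$ into the constant.

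It remains to bound $L(\tilde\gamma_r)$. For a.e.~$t\in[0,1]$ the chain rule for metric derivatives (cf.~\cite[Sec.~1.1]{Ambrosio}) together with $(g^{-1})'(t)=1/g'(g^{-1}(t))=\beta_r/(\rho\circ\gamma_r)(g^{-1}(t))$ gives
\begin{equation*}
	|\dot{\tilde\gamma}_r|(t)=|\dot\gamma_r|\bigl(g^{-1}(t)\bigr)\cdot(g^{-1})'(t)\leq L(\gamma_r)\frac{\beta_r}{\rho_{\min}}\lesssim L(\gamma_r),
\end{equation*}
since $\beta_r\to 1$ is bounded. The main obstacle in this argument is modest: it is the positivity assumption on $\rho$, which is essential to guarantee both the invertibility of $g$ and the uniform lower bound on $g'$ that controls the Lipschitz constant after reparametrization. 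Without positivity, $g$ could fail to be strictly increasing and $(g^{-1})'$ could blow up, so the conclusion $L(\tilde\gamma_r)\lesssim L(\gamma_r)$ would break down.
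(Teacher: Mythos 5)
Your proof is correct and follows essentially the same route as the paper: reparametrize by the distribution function $g$, identify $\tilde{\gamma}_r{_*}\lambda$ with the measure $\tilde\nu_r$ of Lemma~\ref{lemma:GeneralApproximation} to get the discrepancy bound, and control $L(\tilde\gamma_r)$ via the uniform lower bound on $\rho$ together with $\beta_r\to 1$ from \eqref{eq:1}. The only cosmetic difference is that you phrase the Lipschitz estimate through the metric-derivative chain rule while the paper applies the mean value theorem to $g^{-1}$ directly; these are the same estimate.
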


\begin{proof}
	Since $\rho$ is continuous, there is $\epsilon>0$ with $\rho\geq \epsilon$. 
	To bound the Lipschitz constant $L(\tilde \gamma_r)$, we apply the mean value theorem together with the definition of $g$
	and the fact that $(g^{-1})'(s) = 1/g'(g^{-1}(s))$ to obtain
	\begin{align*}
		\bigl|\tilde{\gamma}_r(s)-\tilde{\gamma}_r(t)\bigr|
		\leq 
		L(\gamma_r)\bigl|g^{-1}(s)-g^{-1}(t)\bigr| 
		\leq 
		L(\gamma_r) \, \frac{\beta_r}{\epsilon}  \, |s-t|.
	\end{align*}
	Using \eqref{eq:1}, this can be further estimated for sufficiently large $r$ as
	\begin{align*}
		\bigl|\tilde{\gamma}_r(s)-\tilde{\gamma}_r(t)\bigr|
		& \lesssim 
		L(\gamma_r) \, \frac{1+\|\rho\|_{H^s(\X)}r^{-s} }{\epsilon} \, |s-t|
		\lesssim L(\gamma_r) \, \frac{2}{\epsilon} \,  |s-t|.
	\end{align*}
	To derive \eqref{eq:ineq 2 2 2}, we aim to apply Lemma \ref{lemma:GeneralApproximation} with 
	$\nu_r={\gamma_r}{_*}\lambda$. 
	We observe 
	\begin{align*}
		\tilde \nu_r =  \frac{\rho}{\beta_r}  {\gamma_r}{_*}\lambda
		= {\gamma_r}{_*} \Bigl( \frac{\rho\circ \gamma_r}{\beta_r} \lambda\Bigr)
		= {\gamma_r}{_*} (g'\lambda)
		=(\gamma_r\circ g^{-1}){_*} \lambda =\tilde{{\gamma}_r}{_*}\lambda,
	\end{align*}
	so that Lemma \ref{lemma:GeneralApproximation} indeed implies \eqref{eq:ineq 2 2 2}.
\end{proof}

In comparison to Theorem \ref{thm:new for torus and acurve}, we now trade the Lipschitz condition on $\rho$ with the positivity requirement, which enables us to cover $\mathcal{P}^{\Lcurve}_L(\X)$.

\begin{theorem}[Torus]\label{thm:torus better estimates}
	Let $\X = \mathbb T^d$ with $d\in\N$, $s>d/2$ and suppose that $H_K(\X) = H^s(\X)$ holds with equivalent norms.
	Then, for any absolutely continuous measure $\mu \in {\mathcal P}(\X)$ with positive density $\rho \in H^s(\X)$, there is a constant depending on $d$, $K$, and $\rho$ with
	\begin{equation}
		\min_{\nu\in\mathcal{P}^{\Acurve}_L(\X)}\mathscr{D}_K(\mu,\nu) \leq \min_{\nu\in\mathcal{P}^{\Lcurve}_L(\X)}\mathscr{D}_K(\mu,\nu) \lesssim L^{-\frac{s}{d-1}}.
	\end{equation}
\end{theorem}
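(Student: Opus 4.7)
The plan is to combine the curve construction from Theorem~\ref{thm:new for torus and acurve} with the reparametrization procedure of Lemma~\ref{lemma:zweites}. From the first step of the proof of Theorem~\ref{thm:new for torus and acurve}, for every $r \in \N$ I already have a closed, constant-speed curve $\gamma_r \colon [0,1] \to \T^d$ with $L(\gamma_r) \sim r^{d-1}$ such that ${\gamma_r}{_*}\lambda$ exactly integrates every trigonometric polynomial in $\mathrm{\Pi}_r(\T^d)$. In particular, this measure satisfies the exactness condition \eqref{eq:exact integration} for the eigenfunctions of the Laplace--Beltrami operator on $\T^d$ with eigenvalues $\lesssim r^2$.

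Next, I want to apply Lemma~\ref{lemma:zweites} to this $\gamma_r$. For that I need $\rho$ to be continuous and bounded away from zero on $\T^d$. Continuity follows from the Sobolev embedding $H^s(\T^d) \hookrightarrow C(\T^d)$, which is valid since $s > d/2$. Combined with the standing positivity assumption on $\rho$ and the compactness of $\T^d$, I obtain $\rho \geq \epsilon > 0$ uniformly. Thus $g(t) = \beta_r^{-1} \int_0^t \rho \circ \gamma_r \dx \lambda$ is well defined and strictly increasing for $r$ large enough, and Lemma~\ref{lemma:zweites} delivers a Lipschitz curve $\tilde\gamma_r = \gamma_r \circ g^{-1}$ satisfying
\begin{equation*}
L(\tilde\gamma_r) \lesssim L(\gamma_r) \sim r^{d-1}, \qquad \mathscr{D}_K\bigl(\mu, {\tilde\gamma_r}{_*}\lambda\bigr) \lesssim r^{-s},
\end{equation*}
with implicit constants depending on $d$, $K$ and $\rho$ (through $\epsilon$ and $\|\rho\|_{H^s(\T^d)}$).

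To conclude, given $L$ large enough I pick $r \in \N$ with $r \sim L^{1/(d-1)}$ so that $L(\tilde\gamma_r) \leq L$; then ${\tilde\gamma_r}{_*}\lambda \in \mathcal{P}^{\Lcurve}_L(\T^d)$ and the bound above becomes $\mathscr{D}_K(\mu, {\tilde\gamma_r}{_*}\lambda) \lesssim L^{-s/(d-1)}$. The first inequality $\min_{\mathcal{P}^{\Acurve}_L}\mathscr{D}_K \leq \min_{\mathcal{P}^{\Lcurve}_L}\mathscr{D}_K$ is immediate from the inclusion $\mathcal{P}^{\Lcurve}_L(\X) \subset \mathcal{P}^{\Acurve}_L(\X)$ noted in Section~\ref{sec:basics}.

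The potential obstacle is purely bookkeeping: verifying that the reparametrization does not destroy the Lipschitz bound with the prescribed scaling, i.e.\ that the factor $\beta_r/\epsilon$ arising in Lemma~\ref{lemma:zweites} is $O(1)$ as $r \to \infty$. This follows from \eqref{eq:1} together with the uniform lower bound $\rho \geq \epsilon$; no further work is needed because Lemma~\ref{lemma:zweites} has already absorbed this into its conclusion. Compared with Theorem~\ref{thm:new for torus and acurve}, the trade is that the Lipschitz hypothesis on $\rho$ is replaced by strict positivity, which is exactly what allows us to push the argument from $\mathcal{P}^{\Acurve}_L$ into the smaller space $\mathcal{P}^{\Lcurve}_L$.
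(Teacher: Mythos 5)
Your proposal is correct and follows essentially the same route as the paper: reuse the exact-quadrature curve $\gamma_r$ with $L(\gamma_r)\sim r^{d-1}$ from step 1 of Theorem~\ref{thm:new for torus and acurve}, then apply Lemma~\ref{lemma:zweites} (with $\rho_r\equiv 1$) instead of Lemma~\ref{lemma:GeneralApproximation}, using positivity of $\rho$ and \eqref{eq:1} to keep $\beta_r/\epsilon$ bounded, and finally choose $r\sim L^{1/(d-1)}$. No gaps.
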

\begin{proof}
	The first part of the proof is identical to the proof of Theorem \ref{thm:new for torus and acurve}. 
	Instead of Lemma \ref{lemma:GeneralApproximation} though, we now apply  Lemma \ref{lemma:zweites} for $\gamma_r$ and $\rho_r \equiv 1$.
	Hence, $\tilde \gamma_r = \gamma_r \circ g_r^{-1}$ satisfies $L(\tilde \gamma_r)\leq \frac{\beta_r}{\epsilon} d(2r +1)^{d-1} \lesssim r^{d-1}$, so that $\tilde{\gamma_r}{_*}\lambda$ satisfies \eqref{eq:ineq 2 2 2} and is in $\mathcal{P}^{\Lcurve}_L(\X)$ with $L \sim r^{d-1}$. 
\end{proof}

The construction on $\X=\S^d$ for $\mathcal{P}^{\Acurve}_L(\X)$ in the proof of Theorem \ref{thm:sphere general acurve} 
is not compatible with $\mathcal{P}^{\Lcurve}_L(\X)$. Thus, the situation is different from the torus, 
where we have used the same underlying construction and only switched from Lemma \ref{lemma:GeneralApproximation} 
to Lemma \ref{lemma:zweites}. 
Now, we present a new construction for $\mathcal{P}^{\Lcurve}_L(\X)$, which is tailored to $\X=\S^2$.
In this case, we can transfer the ideas of the torus, but with Gauss-Legendre quadrature points. 

\begin{theorem}[2-sphere]\label{thm:sphere}
	Let $\X = \mathbb S^2$, $s>1$ and suppose 
	$H_K(\X) = H^s(\X)$ holds with equivalent norms. 
	Then, we have 
	for any absolutely continuous measure $\mu \in {\mathcal P}(\X)$ with positive density $\rho \in H^s(\X)$
	that there is a constant depending on $K$ and $\rho$ with 
	\begin{equation}\label{eq:improvement 2.1}
		\min_{\nu\in\mathcal{P}^{\Acurve}_L(\X)}\mathscr{D}_K(\mu,\nu)\leq \min_{\nu\in\mathcal{P}^{\Lcurve}_L(\X)}\mathscr{D}_K (\mu,\nu) \lesssim  L^{-s}.
	\end{equation}
\end{theorem}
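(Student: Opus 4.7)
My plan is to follow the strategy of Theorem~\ref{thm:torus better estimates}: I would construct a closed Lipschitz curve $\gamma_r \in \Lip(\mathbb{S}^2)$ with $L(\gamma_r) \lesssim r$ such that the push-forward $\gamma_r{_*}\lambda$ exactly integrates every spherical polynomial $p \in \mathrm{\Pi}_r(\mathbb{S}^2)$, i.e., satisfies \eqref{eq:exact integration}. Then, invoking Lemma~\ref{lemma:zweites} with the positive density $\rho \in H^s(\mathbb{S}^2)$, I obtain a reparametrization $\tilde\gamma_r = \gamma_r \circ g^{-1}$ with $L(\tilde\gamma_r) \lesssim L(\gamma_r) \lesssim r$ and $\mathscr{D}_K(\mu, \tilde\gamma_r{_*}\lambda) \lesssim r^{-s}$. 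Choosing $r \sim L$ then places $\tilde\gamma_r{_*}\lambda$ in $\mathcal{P}^{\Lcurve}_L(\mathbb{S}^2) \subset \mathcal{P}^{\Acurve}_L(\mathbb{S}^2)$ and delivers both inequalities in \eqref{eq:improvement 2.1}.

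The construction of $\gamma_r$ is tailored to $\mathbb{S}^2$ by using Gauss--Legendre quadrature in the colatitude variable, in contrast to the equispaced nodes available on the torus. Let $u_1 < \ldots < u_n$ be the Gauss--Legendre nodes on $[-1,1]$ with positive weights $w_j$ summing to $2$, with $n = \lceil (r+1)/2 \rceil \sim r$, and set $\theta_j = \arccos u_j$. The key observation is that for every $p \in \mathrm{\Pi}_r(\mathbb{S}^2)$ the function $u \mapsto \int_0^{2\pi} p(\arccos u, \phi)\,\dx\phi$ is a polynomial of degree at most $r$ in $u$, so the Gauss--Legendre exactness identity yields
\begin{equation*}
	\int_{\mathbb{S}^2} p \,\dx\sigma_{\mathbb{S}^2} \;=\; \sum_{j=1}^{n} \frac{w_j}{4\pi} \int_0^{2\pi} p(\theta_j, \phi)\,\dx\phi.
\end{equation*}
Thus tracing each latitude circle $\{\theta = \theta_j\}$ once at constant $\phi$-speed over a time interval of length $w_j/2$ reproduces the right-hand side. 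The standard asymptotics $w_j \sim \pi \sin\theta_j / n$ imply a speed $4\pi \sin\theta_j / w_j \sim n \sim r$ on every latitude, so the latitude portion alone has Lipschitz constant $\lesssim r$ and total time $\sum_j w_j/2 = 1$.

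The delicate point, which I expect to be the main obstacle, is to concatenate these $n$ disjoint latitude traversals into a single closed Lipschitz curve without destroying the exact-integration property, in analogy with the Euler-circuit construction for the torus in the proof of Theorem~\ref{thm:new for torus and acurve}. My plan is to join consecutive latitudes by meridian arcs placed symmetrically at the $M \ge r+1$ equispaced longitudes $\phi_k = 2\pi k/M$, exploiting the fact that trapezoidal quadrature at these $\phi_k$ is exact for trigonometric polynomials of degree $\le r$. Summation over $k$ then collapses the meridian contribution against any $p \in \mathrm{\Pi}_r(\mathbb{S}^2)$ to a weighted integral of the zonal average $\bar p(\theta) = \tfrac{1}{2\pi}\int_0^{2\pi} p(\theta, \phi)\,\dx\phi$. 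Because $\bar p \circ \arccos$ is a polynomial of degree $\le r$ in $u = \cos\theta$, this residual contribution is governed by only $r+1$ linear functionals of $p$ and can be absorbed by a small $p$-independent perturbation of the latitude timings $w_j/2$, amounting to a square, well-conditioned linear system in the $n$ unknowns that I expect to have a solution with $\tau_j \sim w_j$. The total meridian arc length added is $O(1)$ independently of $r$, so after an affine rescaling of the parameter interval to $[0,1]$ the Lipschitz constant remains of order $r$; feeding the resulting $\gamma_r$ into Lemma~\ref{lemma:zweites} then completes the proof.
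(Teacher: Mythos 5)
Your core construction---Gauss--Legendre nodes in the colatitude, latitude circles traversed at constant speed over time intervals proportional to the quadrature weights, the speed bound via $\omega_j\gtrsim \sin(\theta_j)/n$, and the final appeal to Lemma~\ref{lemma:zweites} with $r\sim L$---is exactly the paper's argument up to and including the disconnected curve $\tilde\gamma$. The divergence, and the problem, is in how you glue the $n$ latitude circles into one closed curve while preserving exactness. The paper does this without any compensation: it takes a second, rotated copy of the family of circles (circles about an orthogonal axis), observes that the union of the two families is connected and that every intersection point has even degree, and extracts an Euler circuit; since each family alone is an exact quadrature, their average (which is what the concatenated equal-time traversal produces) is still exact, and the Lipschitz constant only doubles.

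Your meridian-connector scheme has a genuine gap at the compensation step. The residual you must cancel lives in the $(r+1)$-dimensional space of zonal moments $q\mapsto\sum_j\tau_j\,q(u_j)-\int_{-1}^{1}q\,(\tfrac12-\tilde w)\,\mathrm{d}u$, $\deg q\le r$, but you only have $n=\lceil (r+1)/2\rceil$ free timings $\tau_j$: the linear system is overdetermined by a factor of two, not square, and there is no reason the meridian perturbation lies in the $n$-dimensional range of the map $\tau\mapsto(\sum_j\tau_j q_i(u_j))_i$. (Gauss--Legendre buys exactness up to degree $2n-1$ precisely because the \emph{nodes} are also optimized; once the nodes are fixed you cannot re-solve for weights matching $r+1\approx 2n$ moments.) There is a second tension: collapsing the meridian contribution to a zonal average requires a full set of $M\ge r+1$ equispaced meridian arcs between each pair of consecutive latitudes, which gives total meridian length $O(r)$ and hence total meridian time $O(1)$ at speed $\sim r$---not a small perturbation; whereas if you keep the total meridian length $O(1)$ you lose the trapezoidal exactness and the residual is no longer a functional of $\bar p$ at all, so it cannot be absorbed into the latitude timings. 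Positivity of the perturbed $\tau_j$ near the poles (where $w_j\sim\sin\theta_j/n$ is tiny) and the even-degree condition at latitude--meridian junctions for the Euler circuit would also need separate arguments. I would recommend replacing the gluing step by the paper's two-orthogonal-families trick, which avoids all of these issues.
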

\begin{proof}
	1. We construct closed curves such that the spherical polynomials from $\mathrm{\Pi}_r(\mathbb S^2)$, see \eqref{trig_sphere} in the appendix,
	are exactly integrated along this curve.
	It suffices to show this for the polynomials 
	$p(x) = x^{k_1} x^{k_2} x_3^{k_3} \in \mathrm{\Pi}_r(\S^2)$ with $k_1+k_2+k_3 \le r$
	restricted to $\mathbb S^2$.
	We select $n = \lceil (r+1)/2 \rceil$ Gauss-Legendre quadrature points 
	$u_j = \cos(\theta_j)\in [-1,1]$ 
	and corresponding weights $2\omega_j$, $j=1, \ldots,n$. Note that $\sum_{j=1}^n \omega_j = 1$.
	Using spherical coordinates 
	$x_1=\cos(\theta)$, $x_2=\sin(\theta)\cos(\phi)$, and $x_3=\sin(\theta)\sin(\phi)$
	with $(\theta, \phi) \in[0,\pi] \times [0,2\pi]$, we obtain
	\begin{align*}\label{eq:S2 Wagner}
		\int_{\S^2} p \dx \sigma_{\mathbb S^2} 
		&=
		\frac{1}{4\pi} 
		\int_0^{2\pi} \cos(\phi)^{k_2} \sin(\phi)^{k_3} 
		\int_0^{\pi} \cos(\theta)^{k_1} \sin(\theta)^{k_2+k_3} \sin(\phi) \dx \theta \dx \phi\\
		&=
		\frac{1}{4\pi} \int_0^{2\pi} \cos(\phi)^{k_2} \sin(\phi)^{k_3} \int_{-1}^1 u^{k_1} (1-u^2)^{\frac{k_2+k_3}{2}} \dx u \dx \phi,
	\end{align*}
	see also \cite{Wagner:1991vl}. 
	If $k_2+k_3$ is odd, then the integral over $\phi$ becomes zero.
	If $k_2+k_3$ is even, the inner integrand is a polynomial of degree $\le r$.
	In both cases we get
	\begin{align*}
		\int_{\S^2} p \dx \sigma_{\mathbb S^2} 
		&= 
		\frac{1}{2\pi} \sum_{j=1}^n \omega_j
		\int_0^{2\pi} p\bigl(\cos(\theta_j),\sin(\theta_j) \cos(\phi), \sin(\theta_j)\sin(\phi)\bigr) \dx \phi.
	\end{align*}
	Substituting in each summand $\phi = 2\pi t /\omega_j$, $j=1,\ldots,n$, yields
	\begin{equation*}
		\int_{\S^2} p\dx \sigma_{\mathbb S^2} = \sum_{j=1}^n  \int_0^{\omega_j} p \circ \gamma_j \dx \lambda,
	\end{equation*}
	where $\gamma_j\colon[0,\omega_j] \rightarrow \S^2$ is defined by
	\begin{equation*}
		\gamma_j(t) \coloneqq \bigl(\cos(\theta_j),\sin(\theta_j)\cos(2\pi t/\omega_j),\sin(\theta_j)\sin(2\pi t/\omega_j) \bigr),
	\end{equation*}
	and has constant speed $L(\gamma_j) = 2\pi \sin(\theta_j)/\omega_j$. 
	The lower bound $\omega_j \gtrsim \frac{1}{n}\sin(\theta_j)$,
	cf.~\cite{Forster:1990iw}, implies that $L(\gamma_j)\lesssim n$. 
	Defining a curve $\tilde{\gamma}\colon[0,1]\rightarrow\S^2$ piecewise via 
	\begin{equation*}
		\tilde{\gamma}|_{[0,s_1]} = \gamma_1,\quad 
		\tilde{\gamma}|_{[s_1,s_2]} = \gamma_2(\cdot-s_1), \quad \ldots \quad, \quad
		\tilde{\gamma}|_{[s_{n-1},1]} = \gamma_n(\cdot-s_{n-1}),
	\end{equation*}
	where $s_j \coloneqq \omega_1 + \ldots + \omega_j$, we obtain 
	\begin{equation*}
		\int_{\S^2} p\dx \sigma_{\mathbb S^2}  = \int_0^{1} p \dx \tilde{\gamma}{_*}\lambda,  \quad p \in \mathrm{\Pi}_r(\S^2).
	\end{equation*}
	Further, the curve satisfies $L(\tilde{\gamma})\lesssim r$. 
	
	As with the torus, we now ``turn'' the sphere (or switch the position of $\phi$) so that we get circles along orthogonal directions. 
	This large collection of circles is indeed connected. 
	As with the torus, each intersection point has an incoming and outgoing part of a circle, so that all this corresponds to a graph, 
	where again each vertex has an even number of ``edges''. Hence, there is an Euler path inducing our final curve $\gamma_r\colon[0,1]\rightarrow\S^2$ 
	with piecewise constant speed $L(\gamma_r)\lesssim r$ satisfying 
	\begin{equation*}
		\int_{\S^2} p\dx \sigma_{\mathbb S^2}  = \int_0^{1} p \dx ({\gamma_r}{_*}\lambda),  \quad p \in \mathrm{\Pi}_r(\S^2).
	\end{equation*}
	
	2. Let $r \sim L$. Analogous to the end of the proof of Theorem \ref{thm:torus better estimates}, Lemma \ref{lemma:zweites} now yields the assertion.
\end{proof}

To get the approximation rate for $\X=\G_{2,4}$, we make use of its double covering $\X=\S^2\times\S^2$, cf.~Remark \ref{rem:Grassi}.

\begin{theorem}[Grassmannian]\label{thm:grassi}
	Let $\X = \G_{2,4}$, $s>2$ and suppose 
	$H_K(\X) = H^s(\X)$ holds with equivalent norms. 
	Then, we have 
	for any absolutely continuous measure $\mu \in {\mathcal P}(\X)$ with positive density $\rho \in H^s(\X)$
	that there exists a constant depending on $K$ and $\rho$ with
	\begin{equation}
		\min_{\nu\in\mathcal{P}^{\Acurve}_L(\X)}\mathscr{D}_K(\mu,\nu)\leq \min_{\nu\in\mathcal{P}^{\Lcurve}_L(\X)}\mathscr{D}_K (\mu,\nu) \lesssim  L^{-\frac{s}{3}}.
	\end{equation}
\end{theorem}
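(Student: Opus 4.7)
The plan is to mimic the reduction used in Corollary~\ref{cor:so3} for $\SO(3)$: by Remark~\ref{rem:Grassi}, there is a surjective two-to-one local isometry $a\colon\S^{2}\times\S^{2}\to\G_{2,4}$. Consequently $a_{*}\sigma_{\S^{2}\times\S^{2}}=\sigma_{\G_{2,4}}$, every polynomial of degree $\le r$ on $\G_{2,4}$ lifts via $a$ to a polynomial of degree $\le 2r$ on $\S^{2}\times\S^{2}$, and $a$ preserves Lipschitz constants locally. Hence it suffices to construct, for every $r$, a closed Lipschitz curve $\tilde\gamma_r\colon[0,1]\to\S^{2}\times\S^{2}$ of constant speed $L(\tilde\gamma_r)\lesssim r^{3}$ such that ${\tilde\gamma_r}{_*}\lambda$ exactly integrates every polynomial of degree $\le 2r$ on the product manifold. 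Setting $\gamma_r:=a\circ\tilde\gamma_r$, the pull-back identity combined with the exactness of ${\tilde\gamma_r}{_*}\lambda$ gives $\int_{\G_{2,4}} p\,\mathrm d{\gamma_r}{_*}\lambda=\int_{\G_{2,4}} p\,\mathrm d\sigma_{\G_{2,4}}$ for every $p\in\mathrm{\Pi}_r(\G_{2,4})$, i.e.\ ${\gamma_r}{_*}\lambda$ satisfies \eqref{eq:exact integration}. Choosing $r\sim L^{1/3}$ and invoking Lemma~\ref{lemma:zweites} then produces a reparametrized curve $\bar\gamma\in\Lip(\G_{2,4})$ with $L(\bar\gamma)\lesssim L$ and $\mathscr D_K(\mu,\bar\gamma{_*}\lambda)\lesssim r^{-s}\lesssim L^{-s/3}$, which is the asserted rate.

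To construct $\tilde\gamma_r$ I would extend the Euler-path argument from the proof of Theorem~\ref{thm:sphere} to the four-dimensional product manifold. Use $n\sim r$ Gauss--Legendre nodes $\cos\theta_{1,i}$ and $\cos\theta_{2,j}$ with normalized weights $\omega_i,\omega_j'$ in the latitude variables of the two sphere factors; their product quadrature exactly integrates polynomials of degree $\le 2r$ on $\S^{2}\times\S^{2}$ after reducing each latitude pair to its longitude torus $\T^{2}_{i,j}=\{\theta_{1,i}\}\times\{\theta_{2,j}\}$. On each of the $n^{2}$ torus slices, deposit a closed constant-speed Lissajous-type curve $\alpha_{i,j}$ whose push-forward of $\lambda$ exactly integrates all trigonometric polynomials of total bidegree $\le 2r$ on that torus, and reparametrize it to take time $\omega_i\omega_j'$. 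Concatenate the resulting $n^{2}$ slice-curves along an Euler path in the graph of their intersection points; as in the proof of Theorem~\ref{thm:sphere}, the orientations of the Lissajous traces can be chosen so that this graph has only vertices of even degree. A direct Fubini computation combining the latitude exactness (Gauss--Legendre) and the longitude exactness (Lissajous on each slice) then yields ${\tilde\gamma_r}{_*}\lambda=\sigma$ on $\mathrm{\Pi}_{2r}(\S^{2}\times\S^{2})$.

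The main obstacle is the uniform control of $L(\tilde\gamma_r)$ across all latitude pairs. A naive rescaling of $\alpha_{i,j}$ to time $\omega_i\omega_j'$ produces an intrinsic slice-speed of order $r/(\omega_i\omega_j')$, and combined with the Gauss--Legendre weight bound $\omega_i\gtrsim\sin\theta_{1,i}/n$ of \cite{Forster:1990iw} this would blow up near the poles of either sphere factor. Avoiding the blow-up requires choosing the frequencies of each Lissajous curve $\alpha_{i,j}$ in harmony with the intrinsic slice metric $\sin^{2}\theta_{1,i}\,d\phi_1^{2}+\sin^{2}\theta_{2,j}\,d\phi_2^{2}$ so that the $\sin\theta$ factors appearing in the fiber circumferences cancel the inverse-sine factors arising in the weights, precisely the same cancellation that drives the single-sphere construction of Theorem~\ref{thm:sphere}. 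When this cancellation is realised in both factors simultaneously, the resulting Lipschitz constant is of order $n^{3}=r^{3}$, uniformly in $(\theta_{1,i},\theta_{2,j})$. Executing this two-factor cancellation rigorously, together with the combinatorial bookkeeping needed to turn the $n^{2}$ slice-curves into a genuinely constant-speed closed Euler path, is the technical heart of the proof.
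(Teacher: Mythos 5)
Your overall reduction is the same as the paper's — pass to the double cover $\S^2\times\S^2$, build a closed curve of speed $\lesssim r^3$ whose push-forward of $\lambda$ is an exact quadrature for degree $\sim r$, set $r\sim L^{1/3}$, and invoke Lemma~\ref{lemma:zweites} — but the construction you propose for that curve does not deliver $L(\tilde\gamma_r)\lesssim r^3$, and the cancellation you are counting on is not available. On a single sphere the slice over a latitude $\theta_j$ is a circle and the slice curve traverses it once, so its length equals the circumference $2\pi\sin\theta_j$ and cancels the factor $\sin\theta_j$ in the weight $\omega_j\sim\sin\theta_j/n$, leaving speed $\sim n$. On $\S^2\times\S^2$ the slice over $(\theta_{1,i},\theta_{2,j})$ is a two-torus, and any curve on it whose push-forward of $\lambda$ exactly integrates all trigonometric polynomials of bidegree $\lesssim r$ must have length $\gtrsim r$ in the flat coordinates (combine Theorem~\ref{thm:brandolini_2} with the lower bound of Theorem~\ref{thm:lower bounds} on $\T^2$, or argue directly with a fooling polynomial supported in a missed ball of radius $\sim 1/r$). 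Hence its length in the slice metric is $\gtrsim r\min(\sin\theta_{1,i},\sin\theta_{2,j})$ no matter how the Lissajous frequencies are tuned, while the product quadrature forces this slice to receive time exactly $\omega_i\omega_j'\sim\sin\theta_{1,i}\sin\theta_{2,j}/n^2$. The resulting speed is $\gtrsim rn^2/\max(\sin\theta_{1,i},\sin\theta_{2,j})$, which for a pair of near-polar nodes with $\sin\theta_{1,i}\sim\sin\theta_{2,j}\sim 1/n$ is of order $r^4$. (The total length of the union of all slices is indeed $\sim r^3$; the obstruction is that the prescribed weights $\omega_i\omega_j'$ are smaller than the length-proportional ones by the unbounded factor $1/\max(\sin\theta_{1,i},\sin\theta_{2,j})$, so a constant-speed traversal is incompatible with exactness.) As written, your construction only yields the rate $L^{-s/4}$.

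The paper avoids quadratically small weights by an asymmetric construction: take a spherical design $x_1,\dots,x_N$ with $N\sim r^2$ equal-weight points on one $\S^2$ factor, and on the other factor use rotated copies $\Gamma_j=O_j\tilde\gamma([0,1])$ of the exact curve $\tilde\gamma$ from Theorem~\ref{thm:sphere}, with $O_j\in\OOO(3)$ chosen so that $x_j,x_{j+1}\in\Gamma_j$ (possible because $\tilde\gamma$ contains a great circle). The $2N$ pieces $\{x_j\}\times\Gamma_j$ and $\Gamma_j\times\{x_{j+1}\}$ form a connected set, each piece has length $\sim r$ and receives the equal time $1/(2N)$, and the resulting measure integrates tensor products exactly because a spherical design tensored with an exact curve measure is exact. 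An Euler path through the intersection graph then gives a single closed curve of speed $\sim 2N\cdot r\sim r^3$ uniformly. To repair your proof, replace the torus-slice step by this point-design-times-curve construction; the rest of your argument (the lift through the double cover, $r\sim L^{1/3}$, and Lemma~\ref{lemma:zweites}) goes through as you describe.
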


\begin{proof}
	By Remark \ref{rem:Grassi} in the appendix, we know that $\G_{2,4} \cong \S^2\times\S^2/ \{\pm 1\}$ so that is remains to prove the assertion for
	$\X = \S^2 \times \S^2$.
	
	There exist pairwise distinct points $\{x_1,\ldots,x_N\}\subset\S^2$ such that \smash{$\frac{1}{N}\sum_{j=1}^N \delta_{x_j}$}
	satisfies \eqref{eq:exact integration} on $\S^2$ with $N\sim r^2$, cf.~\cite{Bondarenko:2011kx,Bondarenko:2015eu}. 
	On the other hand, let $\tilde{\gamma}$ be the curve on $\S^2$ constructed in the proof of Theorem \ref{thm:sphere}, 
	so that $\tilde{\gamma}{_*}\lambda$ satisfies \eqref{eq:exact integration} on $\S^2$ with $\ell(\tilde{\gamma})\leq L(\tilde{\gamma})\sim r$. 
	Let us introduce the virtual point $x_{N+1}\coloneqq x_1$. 
	The curve $\tilde{\gamma}([0,1])$ contains a great circle.
	Thus, for each pair $x_j$ and $x_{j+1}$ there is $O_j\in \OOO(3)$ such that $x_j,x_{j+1}\in \Gamma_j\coloneqq O_j\tilde{\gamma}([0,1])$. %
	%
	%
	%
	%
	%
	%
	It turns out that the set on $\S^2 \times \S^2$ given by
	$
	\bigcup_{j=1}^N (\{x_j\}\times \Gamma_j)\cup (\Gamma_j \times \{x_{j+1}\})
	$
	is connected.
	We now choose $\gamma_j\coloneqq O_j\tilde{\gamma}$ and know that the union of the trajectories of the set of curves
	\begin{equation*}
		t\mapsto \bigl(x_j,\gamma_j(t)\bigr),\qquad t\mapsto \bigl(\gamma_{j}(t),x_{j+1}\bigr),\quad j=1,\ldots,N,
	\end{equation*}
	is connected.
	Combinatorial arguments involving Euler paths, see Theorems \ref{thm:new for torus and acurve} 
	and~\ref{thm:sphere}, lead to a curve $\gamma$ with $\ell(\gamma)\leq L(\gamma)\sim N L(\tilde{\gamma}) \sim r^3$, 
	so that $\gamma{_*} \lambda$ satisfies  \eqref{eq:exact integration}.
	The remaining part follows along the lines of the proof of Theorem~\ref{thm:sphere general acurve}.
\end{proof}

Our approximation results can be extended to diffeomorphic manifolds, e.g., from $\S^2$ to ellipsoids, see also the 3D-torus example in Section \ref{sec:numerics_dith}. To this end, recall that
we can describe the Sobolev space $H^s(\X)$ using local charts, 
see \cite[Sec.~7.2]{Triebel:1992aa}.
The exponential maps $\exp_{x} \colon  T_{x}\X \to \X$ 
give rise to local charts $(\mathring{B}_{x}(r_0), \exp_x^{-1})$, 
where $\mathring{B}_{x}(r_0) \coloneqq \{y \in \X: \dist_\X(x,y) < r_0\}$ 
denotes the geodesic balls around $x$ with the injectivity radius $r_0$.
If $\delta < r_0$ is chosen small enough, there exists a uniformly locally finite covering of $\X$ by a sequence of balls 
$(\mathring{B}_{x_j}(\delta))_j$ with a corresponding smooth resolution of unity $(\psi_j)_j$ with 
\smash{$\supp(\psi_j) \subset \mathring{B}_{x_j}(\delta)$}, see  
\cite[Prop.~7.2.1]{Triebel:1992aa}.
Then, an equivalent Sobolev norm is given by
\begin{equation}\label{eq:EquivalentNorm}
	\|f\|_{H^s(\X)} \coloneqq \Big( \sum_{j=1}^\infty \Vert (\psi_j f) \circ \exp_{x_j} \Vert^2_{H^s(\R^d)} \Big)^{\frac{1}{2}},
\end{equation}
where $(\psi_j f) \circ \exp_{x_j}$ is extended 
to $\mathbb R^d$ by zero, see \cite[Thm.~7.4.5]{Triebel:1992aa}.
Using Definition~\eqref{eq:EquivalentNorm}, 
we are able to pull over results from the Euclidean setting.

\begin{proposition}\label{prop:diffeo}
	Let $\mathbb X_1$, $\mathbb X_2$ be two $d$-dimensional connected, compact Riemannian manifolds without boundary, which are $s+1$ diffeomorphic with $s>d/2$.
	Assume that for $H_K(\mathbb X_2)=H^s(\mathbb X_2)$ and every absolutely continuous measure $\mu$ with positive density $\rho\in H^s(\mathbb X_2)$ it holds
	\[
	\min_{\nu \in  \mathcal{P}_L^{\Lcurve} } \mathscr{D}_K(\mu,\nu)\lesssim L^{-\frac{s}{d-1}},
	\]
	where the constant depends on $\X_2$, $K$, and $\rho$. 
	Then, the same property holds for $\mathbb X_1$, where the constant additionally depends on the diffeomorphism.
\end{proposition}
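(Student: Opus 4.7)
The plan is to transport the approximation problem from $\X_1$ to $\X_2$ via the diffeomorphism, apply the hypothesis on $\X_2$, and then pull back the resulting curve. Let $\Phi \colon \X_1 \to \X_2$ denote the $C^{s+1}$ diffeomorphism. First, I would use the local chart characterization \eqref{eq:EquivalentNorm} together with standard Sobolev composition results (see e.g.~\cite[Sec.~4]{Triebel:1992aa}) to establish that $f \mapsto f \circ \Phi$ is a bounded isomorphism $H^s(\X_2) \to H^s(\X_1)$ and $f \mapsto f \circ \Phi^{-1}$ is its bounded inverse. The requirement that $\Phi$ be $C^{s+1}$ rather than merely $C^s$ is precisely what guarantees this for possibly non-integer Sobolev exponents.

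Second, given $\mu = \rho \sigma_{\X_1}$ with positive density $\rho \in H^s(\X_1)$, I would push $\mu$ forward to $\tilde \mu \coloneqq \Phi_* \mu$, which is absolutely continuous with respect to $\sigma_{\X_2}$ with density $\tilde \rho = (\rho \circ \Phi^{-1}) \, J_{\Phi^{-1}}$, where $J_{\Phi^{-1}}$ denotes the Jacobian of $\Phi^{-1}$ with respect to the Riemannian volumes. Since $\Phi \in C^{s+1}$ and $\X_2$ is compact, $J_{\Phi^{-1}} \in C^s(\X_2) \subset H^s(\X_2)$; combining with step one and the Banach algebra property \eqref{banach_alg} yields $\tilde \rho \in H^s(\X_2)$. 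Positivity of $\tilde \rho$ is immediate from positivity of $\rho$ together with non-vanishing of the Jacobian.

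Third, I would fix a kernel $\tilde K$ on $\X_2$ with $H_{\tilde K}(\X_2) = H^s(\X_2)$ and equivalent norms (for instance, the Sobolev kernel \eqref{sobolev_kernel}), and apply the assumed rate to $\tilde\mu$ to obtain, for any $\tilde L > 0$, a curve $\tilde \gamma \in \Lip(\X_2)$ with $L(\tilde \gamma) \leq \tilde L$ and $\mathscr{D}_{\tilde K}(\tilde \mu, \tilde \gamma_* \lambda) \lesssim \tilde L^{-s/(d-1)}$. Setting $\gamma \coloneqq \Phi^{-1} \circ \tilde \gamma$ produces a closed Lipschitz curve on $\X_1$ with $L(\gamma) \leq \Lip(\Phi^{-1})\, \tilde L$, so taking $\tilde L \coloneqq L/\Lip(\Phi^{-1})$ places $\gamma_* \lambda = (\Phi^{-1})_*(\tilde \gamma_* \lambda)$ in $\mathcal{P}_L^{\Lcurve}(\X_1)$.

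Finally, I would transfer the discrepancy bound by duality \eqref{equiv_1}. For every $\varphi \in H^s(\X_1)$ with $\|\varphi\|_{H^s(\X_1)} \leq 1$, the pullback $\tilde \varphi \coloneqq \varphi \circ \Phi^{-1}$ has $H^s(\X_2)$-norm bounded by a constant depending only on $\Phi$, and the change-of-variables identity for push-forwards gives
\[\int_{\X_1} \varphi \dx (\mu - \gamma_* \lambda) = \int_{\X_2} \tilde \varphi \dx (\tilde \mu - \tilde \gamma_* \lambda).\]
Taking the supremum over admissible $\varphi$ and invoking the norm equivalences $H_K(\X_1) \cong H^s(\X_1)$ and $H_{\tilde K}(\X_2) \cong H^s(\X_2)$ yields $\mathscr{D}_K(\mu, \gamma_* \lambda) \lesssim \mathscr{D}_{\tilde K}(\tilde \mu, \tilde \gamma_* \lambda) \lesssim L^{-s/(d-1)}$, with implicit constants depending additionally on $\Phi$ as claimed. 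The main obstacle is step one, namely the boundedness of the composition operator on $H^s$ for possibly non-integer $s$; the remaining steps are routine transport arguments between manifolds.
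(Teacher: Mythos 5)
Your proposal follows essentially the same route as the paper's proof: transport the measure through the diffeomorphism, verify the transported density $(\rho\circ\Phi^{-1})\,J_{\Phi^{-1}}$ lies in $H^s(\X_2)$ via the Banach algebra property \eqref{banach_alg} and Triebel's diffeomorphism theorem, apply the hypothesis on $\X_2$, map the resulting curve back with Lipschitz constant controlled by that of the diffeomorphism, and transfer the error bound through the duality formulation \eqref{equiv_1} using boundedness of the composition operator on $H^s$. The only cosmetic difference is the orientation of the diffeomorphism, and you are slightly more explicit about positivity of the transported density, which the paper leaves implicit.
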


\begin{proof}
	Let $f \colon \mathbb X_2 \to \mathbb X_1$ 
	denote such a diffeomorphism and $\rho \in H^s(\mathbb X_1)$ 
	the density of the measure $\mu$ on $\mathbb X_1$. 
	Any curve $\tilde \gamma \colon[0,1] \to \mathbb X_2$ gives rise to a curve $\gamma \colon[0,1] \to \mathbb X_1$ via $\gamma = f \circ \tilde \gamma$, which for every $\varphi \in H^s(\mathbb X_1)$ satisfies
	\begin{align}
		\Big| \int_{\X_1} \varphi \rho \dx \sigma_{X_1} - \int_{0}^1 \varphi \circ \gamma \dx \lambda \Big|
		= \Big| \int_{\mathbb X_2} (\varphi \rho)\circ f \vert \det(J_f) \vert \dx \sigma_{X_2} -\int_{0}^1 \varphi\circ f\circ\tilde \gamma \dx \lambda \Big|,
	\end{align}
	where $J_f$ denotes the Jacobian of $f$. 
	Now, note that $\varphi \circ f, \rho \circ f \vert \det(J_f) \vert \in H^s(\mathbb X_2)$, see \eqref{banach_alg} and \cite[Thm.~4.3.2]{Triebel:1992aa}, 
	which is lifted to manifolds using \eqref{eq:EquivalentNorm}.
	Hence, we can define a measure $\tilde \mu$ on $\X_2$ through the probability density $\rho \circ f \vert \det(J_f) \vert$.
	Choosing $\tilde \gamma_L$ as a realization for some minimizer of 
	$\inf_{\nu \in  \mathcal{P}_L^{\Lcurve}} \mathscr{D}(\tilde \mu,\nu)$, 
	we can apply the approximation result for 
	$\mathbb X_2$ and estimate for $\gamma_L = f \circ \tilde \gamma_L$ that
	\begin{align}
		\Big| \int_{\mathbb X_1} \varphi \rho \dx \sigma_{X_1} - \int_{0}^1 \varphi \circ \gamma_L \dx \lambda \Big| 
		\lesssim L^{-\frac{s}{d-1}} \|\varphi \circ f\|_{H^s(\X_2)} \lesssim L^{-\frac{s}{d-1}} \|\varphi \|_{H^s(\X_1)},
	\end{align}
	where the second estimate follows from \cite[Thm.~4.3.2]{Triebel:1992aa}.
	Now, $L(\gamma_L) \leq L(f)L$ implies
	\[
	\inf_{\nu \in  \mathcal{P}_L^{\Lcurve}} \mathscr{D}_K (\mu,\nu)
	\lesssim L^{-\frac{s}{d-1}}.\eqno
	\]
\end{proof}

\begin{remark}\label{rem:andere}
	Consider a probability measure $\mu$ on $\X$ such that the dimension $d_\mu$ of its support is smaller than the dimension $d$ of $\X$. Then, $\mu$ does not have any density with respect to $\sigma_\X$. If $\supp(\mu)$ is itself a $d_\mu$-dimensional connected, compact Riemannian manifold $\Y$ without boundary, we switch from $\X$ to $\Y$. Sobolev trace theorems and reproducing kernel Hilbert space theory imply that the assumption $H_K(\X)=H^s(\X)$ leads to $H_{K'}(\Y)=H^{s'}(\Y)$, where $K'\coloneqq K|_{\Y\times\Y}$ is the restricted kernel and $s'=s-(d-d_\mu)/2$, cf.~\cite{Fuselier:2012jt}. If, for instance, $\Y$ is diffeomorphic to $\T^{d_\mu}$ (or $\S^{d_{\mu}}$ with $d_\mu=2$), and $\mu$ has a positive density $\rho\in H^{s'}(\Y)$ with respect to $\sigma_{\Y}$, then Theorem \ref{thm:torus better estimates} (or \ref{thm:sphere}) and Proposition \ref{prop:diffeo} eventually yield
	\begin{equation*}
		\min_{\nu \in  \mathcal{P}_L^{\Lcurve} } \mathscr{D}_K(\mu,\nu)\lesssim L^{-\frac{s'}{d_\mu-1}}.
	\end{equation*}
	
	If $\supp(\mu)$ is a proper subset of $\Y$, we can analyze approximations with $\mathcal{P}_L^{\Acurve}(\Y)$.
	First, we observe that the analogue of Proposition \ref{prop:diffeo} also holds for $\mathcal{P}_L^{\Acurve}(\X_1)$ and $\mathcal{P}_L^{\Acurve}(\X_2)$ when the positivity assumption on $\rho$ is replaced with the Lipschitz requirement as in Theorems~\ref{thm:new for torus and acurve} and  \ref{thm:sphere general acurve}. 
	If, for instance, $\Y$ is diffeomorphic to $\T^{d_\mu}$ or $\S^{d_\mu}$ and $\mu$ has a Lipschitz continuous density $\rho\in H^{s'}(\Y)$ with respect to $\sigma_{\Y}$, then Theorems~\ref{thm:new for torus and acurve} and \ref{thm:sphere general acurve}, and Proposition \ref{prop:diffeo} eventually yield
	\begin{equation*}
		\min_{\nu \in  \mathcal{P}_L^{\Acurve} } \mathscr{D}_K(\mu,\nu)\lesssim L^{-\frac{s'}{d_\mu-1}}.
	\end{equation*}
\end{remark}

\section{Discretization} \label{sec:discretization_dith}
In our numerical experiments, we are interested in determining minimizers of 
\begin{equation} \label{eq:min_PL}
	\min_{\nu\in\mathcal P_{L}^{\Lcurve}(\mathbb X)} \mathscr{D}^2_K (\mu,\nu). 
\end{equation}
Defining 
$
A_L \coloneqq \{\gamma \in \Lip(\X): L(\gamma) \le L\}$
and using the indicator function 
$$
\iota_{A_L} (\gamma) \coloneqq 
\left\{
\begin{array}{ll}
	0&\mathrm{if} \; \gamma \in A_L,\\
	+ \infty&\mathrm{otherwise},
\end{array}
\right.
$$
we can rephrase problem \eqref{eq:min_PL} as a minimization problem over curves
\begin{equation}
	\min_{\gamma \in \mathcal{C}([0,1],\X)} \mathcal J_L(\gamma),
\end{equation}
where $\mathcal J_L(\gamma)\coloneqq \mathscr{D}^2_K(\mu,\gamma{_*}\lambda) + \iota_{A_L}(\gamma)$.
As $\X$ is a connected Riemannian manifold, we can approximate curves in $A_L$ by piecewise shortest geodesics with $N$ parts, i.e., by curves from
\[
A_{L,N} \coloneqq \left\{\gamma \in A_L \colon \gamma|_{[(i-1)/N,i/N]}\text{ is a shortest geodesic for } i = 1,\ldots, N\right\}.
\]
Next, we approximate the Lebesgue measure on $[0,1]$ by $e_N \coloneqq \frac{1}{N} \sum_{i=1}^{N} \delta_{i/N}$ 
and consider the minimization problems
\begin{equation} \label{eq:nextmin}
	\min_{\gamma \in \mathcal{C}([0,1],\X)} \mathcal J_{L,N}(\gamma) ,
\end{equation}
where $\mathcal J_{L,N}(\gamma)\coloneqq \mathscr{D}^{2}_K (\mu, \gamma{_*}e_N) + \iota_{A_{L,N}}(\gamma)$.
Since $\mathrm{ess}\sup_{t \in [0,1]} |\dot \gamma| (t) = L(\gamma)$, the constraint $L(\gamma) \leq L$ can be reformulated as 
\smash{$\int_0^1 (|\dot \gamma| (t) - L)_+^2 \dx t = 0$}.\footnote[1]{For $r\in\R$, we use the notation $r_+=\begin{cases}
		r,& r\geq 0,\\ 0,& 
		\text{otherwise.} \end{cases}$}
Hence, using $x_i = \gamma (i/N)$, $i=1,\ldots,N$, $x_0 = x_N$ 
and regarding that $|\dot \gamma| (t) = N \dist_\X(x_{i-1},x_{i})$
for $t \in \left( \frac{i-1}{N},\frac{i}{N} \right)$,
problem \eqref{eq:nextmin} is rewritten in the computationally more suitable form  
\begin{equation} \label{eq:full}
	\min_{(x_1,\ldots,x_N)\in \X^N} \mathscr{D}^{2}_K\Big(\mu, \frac{1}{N} \sum_{i=1}^{N} \delta_{x_{i}}\Big) 
	\quad \text{s.t.} \quad \frac{1}{N}\sum_{i=1}^{N} \big(N \dist_\X(x_{i-1},x_{i}) - L \big)_+^{2} = 0.
\end{equation}
This discretization is motivated by the next proposition.
To this end, recall that  a sequence $(f_N)_{N\in\N}$ of functions $f_N\colon {\X} \rightarrow (-\infty,+\infty]$
is said to $\Gamma$-converge to $f \colon  {\X} \rightarrow (-\infty,+\infty]$
if the following two conditions are fulfilled for each $x \in {\X}$, see~\cite{Braides02}:
\begin{enumerate}
	\item[i)] $f(x) \leq \liminf_{N \rightarrow \infty} f_N(x_N)$ whenever  $x_N \rightarrow x$, 
	\item[ii)] there is a sequence $(y_N)_{N\in\N}$ with $y_N \rightarrow x $ and $\limsup_{N \to \infty} f_N(y_N) \le f(x)$.
\end{enumerate}
The importance of $\Gamma$-convergence relies in the fact that 
every cluster point of minimizers of $(f_N)_{N\in\N}$ is a minimizer of $f$. Note that for non-compact manifolds $\X$ an additional equi-coercivity condition would be required.

\begin{proposition}\label{prop:gamma-conv}
	The sequence $(\mathcal J_{L,N})_{N\in\N}$ is $\Gamma$-convergent with limit $\mathcal J_L$.
\end{proposition}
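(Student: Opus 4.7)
The plan is to verify the two standard $\Gamma$-convergence conditions for $\mathcal{J}_{L,N}$ on $\mathcal{C}([0,1],\X)$ endowed with the uniform metric. The driving ingredient for both parts is the following claim: if $\gamma_N \to \gamma$ uniformly and $\sup_N L(\gamma_N) \leq L$, then $\gamma_N{_*}e_N \weakly \gamma{_*}\lambda$, so by weak continuity of $\mathscr{D}_K$ (Section~\ref{sec:discrepancies}), $\mathscr{D}_K^2(\mu,\gamma_N{_*}e_N) \to \mathscr{D}_K^2(\mu,\gamma{_*}\lambda)$. To verify this claim, I would test against an arbitrary $\varphi \in \mathcal{C}(\X)$ and split
\begin{equation*}
\int_\X \varphi \dx (\gamma_N{_*} e_N) - \int_\X \varphi \dx (\gamma{_*}\lambda)
= \frac{1}{N}\sum_{i=1}^N \bigl[\varphi(\gamma_N(i/N)) - \varphi(\gamma(i/N))\bigr] + \Bigl[\frac{1}{N}\sum_{i=1}^N \varphi(\gamma(i/N)) - \int_0^1 \varphi\circ\gamma \dx \lambda\Bigr].
\end{equation*}
The first sum vanishes by uniform continuity of $\varphi$ and $\|\gamma_N-\gamma\|_\infty \to 0$, and the second is a right-endpoint Riemann sum for the continuous integrand $\varphi\circ\gamma$.

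For the liminf inequality, fix $\gamma_N \to \gamma$ uniformly. If $\liminf_N \mathcal{J}_{L,N}(\gamma_N) = +\infty$ there is nothing to prove, so after passing to a subsequence I may assume $\gamma_N \in A_{L,N} \subset A_L$, giving $\dist_\X(\gamma_N(s),\gamma_N(t)) \le L|s-t|$ for all $s,t$. Passing to the limit yields $\gamma \in A_L$, hence $\iota_{A_L}(\gamma) = 0$. The key claim then delivers $\mathcal{J}_L(\gamma) = \mathscr{D}_K^2(\mu,\gamma{_*}\lambda) = \lim_N \mathscr{D}_K^2(\mu,\gamma_N{_*} e_N) \le \liminf_N \mathcal{J}_{L,N}(\gamma_N)$.

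For the recovery sequence at $\gamma \in \mathcal{C}([0,1],\X)$, the case $\gamma \notin A_L$ is trivial since $\mathcal{J}_L(\gamma) = +\infty$. Assuming $\gamma \in A_L$, I set $x_i \coloneqq \gamma(i/N)$ for $i=0,\ldots,N$ and define $\gamma_N|_{[(i-1)/N,i/N]}$ to be a constant-speed minimizing geodesic from $x_{i-1}$ to $x_i$. Since $\gamma$ is $L$-Lipschitz, $\dist_\X(x_{i-1},x_i) \le L/N$, so for $N$ large every such pair lies within the injectivity radius of $\X$, which is positive by compactness, making the geodesic unique. The speed of $\gamma_N$ on segment $i$ is $N\dist_\X(x_{i-1},x_i) \le L$, whence $\gamma_N \in A_{L,N}$. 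Moreover, on each segment the triangle inequality and the Lipschitz bounds on both curves yield $\|\gamma_N - \gamma\|_\infty \le 2L/N \to 0$, so the key claim gives $\mathcal{J}_{L,N}(\gamma_N) \to \mathcal{J}_L(\gamma)$.

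The main technical point is the geodesic construction in the recovery sequence: $A_{L,N}$ refers to shortest geodesics, which are not generically unique between arbitrary points, but compactness of the Riemannian manifold $\X$ provides a uniformly positive injectivity radius so that the construction is well defined for large $N$. A minor bookkeeping observation is that $e_N$ samples at $\{1/N,\ldots,1\}$ rather than $\{0,\ldots,(N-1)/N\}$, but right-endpoint Riemann sums of continuous functions converge to the integral just as well.
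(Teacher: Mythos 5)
Your proposal is correct and follows essentially the same route as the paper: the same weak-convergence claim $\gamma_N{_*}e_N \weakly \gamma{_*}\lambda$ (which the paper phrases via $e_N \weakly \lambda$ and convergence of Riemann sums) combined with weak continuity of $\mathscr{D}_K^2$, the same closedness argument for the liminf inequality, and the same piecewise-geodesic recovery sequence with the $2L/N$ uniform bound. The only additions are harmless extra care: the explicit two-term Riemann-sum decomposition, and the injectivity-radius remark, which is not actually needed since $A_{L,N}$ only requires existence (not uniqueness) of shortest geodesics, guaranteed by Hopf--Rinow on a compact connected Riemannian manifold.
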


\begin{proof}
	1. First, we verify the $\liminf$-inequality. Let $(\gamma_N)_{N\in\N}$ with $\lim_{N\rightarrow \infty} \gamma_N = \gamma$, i.e., the sequence satisfies $\sup_{t \in [0,1]} \dist_\X(\gamma(t),\gamma_N(t)) \rightarrow 0$.
	By excluding the trivial case $\liminf_{N \to \infty} \mathcal J_{L,N}(\gamma_N) = \infty$ 
	and restricting to a subsequence $(\gamma_{N_k})_{k \in \N}$, we may assume $\gamma_{N_k} \in A_{L,N_k} \subset A_L$.
	Since $A_L$ is closed, we directly infer $\gamma \in A_L$.
	It holds $e_N \weakly \lambda$,  which is equivalent to the convergence of Riemann sums for $f \in C[0,1]$,
	and hence also ${\gamma_N}_{*} e_N \weakly \gamma_{*}\!\dx r$. 
	By the weak continuity of $\mathscr{D}^2_K$, we obtain
	\begin{equation}\label{eq:liminf}
		\mathcal J_L(\gamma) =\mathscr{D}^2_K(\mu,\gamma_{*}\lambda) 
		= \lim_{N \to \infty} \mathscr{D}^{2}_K (\mu, {\gamma_N}_{*}e_N)= \liminf_{N \to \infty} \mathcal J_{L,N}(\gamma_N).
	\end{equation}
	
	2. Next, we prove the $\limsup$-inequality, i.e., we are searching for a sequence $(\gamma_N)_{N\in \N}$ 
	with $\gamma_N \to \gamma$ and $\limsup_{N \to \infty} \mathcal J_{L,N}(\gamma_N) \leq \mathcal J_L(\gamma)$.
	First, we may exclude the trivial case $\mathcal J_L(\gamma) = \infty$.
	Then, $\gamma_N$ is defined on every interval $[(i-1)/N,i/N]$, $i=1,\ldots,N$, as a shortest geodesic 
	from $\gamma((i-1)/N)$ to $\gamma(i/N)$. By construction we have $\gamma_N \in A_{L,N}$.
	From $\gamma,\gamma_N \in A_L$ we conclude
	\begin{align}
		&\sup_{t \in [0,1]}  \dist_\X\bigl(\gamma(t), \gamma_N(t) \bigr) = 
		\max_{i=1,\ldots N} \sup_{t \in [(i-1)/N,i/N]} \dist_\X \bigl(\gamma(t), \gamma_N(t) \bigr)\\
		\leq& \max_{i=1,\ldots N} \sup_{t \in [(i-1)/N,i/N]} \dist_\X \bigl(\gamma(t), \gamma(i/N) \bigr) + \dist_\X \bigl(\gamma_N(i/N), \gamma_N(t) \bigr)\leq \frac{2L}{N},
	\end{align}
	implying $\gamma_N \to \gamma$.
	Similarly as in \eqref{eq:liminf}, we infer $\limsup_{N \to \infty} \mathcal J_{L,N}(\gamma_N) \leq \mathcal J_L(\gamma)$.
\end{proof}

In the numerical part, we use the 
penalized form of \eqref{eq:full} and minimize
\begin{equation}   \label{eq:constant_speed_min}
	\min_{(x_1,\ldots,x_N) \in \X^N} \mathscr{D}^{2} _K\Big(\mu, \frac{1}{N} \sum_{i=1}^{N} \delta_{x_{i}}\Big) + \frac{\lambda}{N} \sum_{i=1}^{N} \big(N \dist_\X(x_{i-1},x_{i}) - L \big)_+^{2} ,\qquad \lambda >0.
\end{equation} 

\section{Numerical algorithm} \label{sec:algs}
For a detailed overview on Riemannian optimization we refer to  \cite{RiWi12} and the books~\cite{Absil:2008qr,Uriste94}.
In order to minimize \eqref{eq:constant_speed_min}, we have a closer look at the discrepancy term.
By \eqref{mercer_1} and \eqref{mercer_2}, the discrepancy can be represented as follows
\begin{align}  
	\mathscr{D}_K^{2} \Big(\mu, \frac{1}{N} \sum_{i=1}^{N} \delta_{x_{i}}\Big)
	=&
	\frac{1}{N^{2}}\sum_{i,j=1}^{N} K(x_{i},x_{j}) - 2\sum_{i=1}^{N} \int_{\mathbb X} K(x_{i},x) \dx \mu(x) + \iint \limits_{\mathbb X \times \mathbb X} K \dx \mu \dx \mu\\[-1ex]
	=&\sum_{k=0}^{\infty} \alpha_{k} \Big| \hat\mu_{k}
	- \frac 1N \sum_{i=1}^{N} \varphi_{k}(x_{i}) \Big|^{2}.
\end{align}
Both formulas have pros and cons:
The first formula allows for an exact evaluation only if the expressions
$\Phi(x) \coloneqq \int_{\mathbb X} K(x,y) \dx \mu(y)$ and $\int_{\mathbb X} \Phi \dx \mu$
can be written in closed forms.
In this case the complexity scales quadratically in the number of points  $N$.
The second formula allows for exact evaluation only if the kernel has a finite expansion \eqref{mercer}. 
In that case the complexity scales linearly in $N$. 

Our approach is to use kernels fulfilling $H_K(\X) = H^s(\X)$, $s > d/2$, and approximating them
by their truncated representation with respect to the eigenfunctions of the Laplace--Beltrami operator
\[
K_r(x,y) \coloneqq \sum_{k\in {\mathcal I}_r} \alpha_{k} \varphi_{k}(x) \overline{\varphi_{k}(y)}, 
\quad 
{\mathcal I}_r \coloneqq \bigl\{k : \varphi_k \in \mathrm{\Pi}_r(\X) \bigr\}.
\]
Then, we finally aim to minimize
\begin{equation}   \label{eq:final}
	\min_{x \in \X^N}  F(x) \coloneqq \sum_{k\in {\mathcal I}_r} \alpha_{k} \Big( \hat\mu_{k}
	- \frac 1N \sum_{i=1}^{N} \varphi_{k}(x_{i}) \Big)^{2}+ \frac{\lambda}{N} \sum_{i=1}^{N} \big(N \dist_\X(x_{i-1},x_{i}) - L \big)_+^{2},
\end{equation} 
where $\lambda >0$.
Our algorithm of choice is the nonlinear conjugate gradient (CG) method with Armijo line search as outlined in Algorithm~\ref{alg:cg} with notation and implementation details described in the comments after Remark~\ref{rem:gehtdoch}, see \cite{Da67} for Euclidean spaces.
Note that the notation is independent of the special choice of $\X$ in our comments.
The proposed method is of ``exact conjugacy'' 
and uses the second order derivative information provided by the Hessian.
For the Armijo line search itself, the sophisticated initialization in Algorithm~\ref{alg:armijo} is used, which also incorporates second order information via the Hessian.
The main advantage of the CG method is its simplicity together with fast convergence at low computational cost. 
Indeed, Algorithm~\ref{alg:cg}, together with Algorithm~\ref{alg:armijo} replaced by an exact line search, 
converges under suitable assumptions superlinearly, more precisely $dN$-step quadratically
towards a local minimum, cf.~\cite[Thm.~5.3]{Sm94} and \cite[Sec.~3.3.2, Thm.~3.27]{Graf:2013zl}.

\begin{remark}\label{rem:gehtdoch}
	The objective in \eqref{eq:final} violates the smoothness requirements whenever 
	$x_{k-1}=x_{k}$ or $\dist_\X(x_{k-1},x_{k}) = L/N$. 
	However, we observe numerically that local minimizers of \eqref{eq:final} 
	do not belong to this set of measure zero. 
	This means in turn, if a local minimizer has a positive definite Hessian, then there is a local neighborhood where the CG method (with exact line search) permits a superlinear convergence rate.
	We do indeed observe this behavior in our numerical experiments.
\end{remark}
\begin{algorithm}[t]
	\caption{(\textbf{CG Method with Restarts})}
	\begin{algorithmic}
		\State \textbf{Parameters:} maximal iterations $k_{\max} \in \mathbb N$
		\State \textbf{Input:} twice differentiable function $F\colon\mathbb X^{N} \to [0,\infty)$,
		initial point $x^{(0)} \in \mathbb X^{N}$
		\State \textbf{Initialization:} $g^{(0)} \coloneqq \nabla_{\mathbb X^{N}}F\bigl(x^{(0)}\bigr)$, 
		$d^{(0)}\coloneqq - g^{(0)}$, $r\coloneqq0$
		\For{$k\coloneqq0,\dots,k_{\max}$}
		\State $x^{(k+1)} \coloneqq \gamma_{x^{(k)}, d^{(k)}}\bigl(\tau^{(k)}\bigr)$  where $\tau^{(k)}$ is determined by Algorithm~\ref{alg:armijo}
		\State
		$
		\tilde d^{(k)}\coloneqq  \dot \gamma_{x^{(k)}, d^{(k)}}\bigl(\tau^{(k)}\bigr)
		$
		\State  $g^{(k+1)} \coloneqq \nabla_{\mathbb X^{N}}F\bigl(x^{(k+1)}\bigr)$
		\State
		$
		\beta^{(k)}  \coloneqq  
		\begin{cases}
			\frac{ \bigl\langle \tilde d^{(k)}, \mathrm H_{\mathbb X^{N}} F(x^{(k+1)}) g^{(k+1)}\bigr\rangle }{\bigl\langle \tilde d^{(k)}, \mathrm H_{\mathbb X^{N}} F(x^{(k+1)}) \tilde d^{(k)} \bigr\rangle},
			& \bigl\langle \tilde d^{(k}, \mathrm H_{\mathbb X^{N}}F\bigl(x^{(k+1)}\bigr) \tilde d^{(k)} \bigr\rangle \ne 0,\\
			0, & \text{ else}
		\end{cases}
		$
		\State $d^{(k+1)}\coloneqq- g^{(k+1)} + \beta^{(k)} \tilde d^{(k)}$
		\If{$\bigl\langle d^{(k+1)}, g^{(k+1)} \bigr\rangle > 0$ or $(k+1) \equiv r \mod N \mathrm{dim}(\mathbb X)$}
		\State $d^{(k+1)} = - g^{(k+1)}$
		\State $r \coloneqq k+1$
		\EndIf
		\EndFor
		\State \textbf{Output:} iteration sequence $x^{(0)},x^{(1)}, \dots \in \mathbb X^{N}$
	\end{algorithmic}
	\label{alg:cg}
\end{algorithm}
\begin{algorithm}[t]
	\caption{(\textbf{Armijo Line Search})}
	\begin{algorithmic}
		\State \textbf{Parameters:} $0 < a < \tfrac12$, $0 < b < 1$, maximal iterations $k_{\max} \in \mathbb N$
		\State \textbf{Input:} smooth function $F\colon\mathbb X^{N} \to [0,\infty)$, start point $x \in \mathbb X^{N}$, 
		descent direction $d \in \mathrm T_{x}\mathbb X^{N}$
		\State \textbf{Initialization:}
		$k\coloneqq0$,
		\[\tau^{(0)} \coloneqq
		\begin{cases}
			\left| \frac{ \bigl\langle d, \nabla_{\mathbb X^{N}} F(x) \bigr\rangle}{ \bigl\langle d, \mathrm{H}_{\mathbb X^{N}}F(x) d\bigr\rangle } \right|,
			& \bigl\langle d, \mathrm{H}_{\mathbb X^{N}}F(x) d \bigr\rangle \ne 0,\\
			1, & \text{ else}
		\end{cases}
		\]
		\While{$f \circ \gamma_{x,d} \bigl(\tau^{(k)}\bigr) - F(x) \ge a \tau^{(k)} \bigl\langle \nabla_{\mathbb X^{N}} F(x), d \bigr\rangle$ and $k < k_{\max}$}
		\State $\tau^{(k+1)} \coloneqq b \tau^{(k)}$
		\State $k\coloneqq k+1$
		\EndWhile
		\State \textbf{Output:} $\tau^{(k)}$ (success if $k \le k_{\max}$)
	\end{algorithmic}
	\label{alg:armijo}
\end{algorithm}
Let us briefly comment on Algorithm \ref{alg:cg}
for  $\X \in \{\mathbb T^2, \mathbb T^3, \S^2,\SO(3),\mathcal G_{2,4}\}$ which are considered in our numerical examples. 
For additional implementation details we refer to \cite{Graf:2013zl}.
By $\gamma_{x, d}$ we denote the geodesic with $\gamma_{x, d}(0) = x$ and $\dot \gamma_{x, d}(0) = d$.
Besides evaluating the geodesics $\gamma_{x^{(k)}, d^{(k)}}(\tau^{(k)})$ in the first iteration step, 
we have to compute the parallel transport of \smash{$d^{(k)}$} along the geodesics in the second step.
Furthermore, we need to compute the Riemannian gradient $\nabla_{\X^N}F$ and products of the Hessian $H_{\X^N} F$ with vectors $d$, which are approximated by the finite difference
\[
\mathrm H_{\mathbb X^{N}}F(x) d \approx \tfrac{\|d\|}{h} \Bigl(\nabla_{\mathbb X^{N}}F\bigl(\gamma_{x,h d/\|d\|}\bigr) - \nabla_{\mathbb X^{N}}F(x)\Bigr), \qquad h\coloneqq 10^{-8}.
\]
The computation of the gradient of the \emph{penalty term} in (30) is done by applying the chain rule
and noting that for $x \mapsto \dist_\X(x,y)$, 
we have $\nabla_\X \dist_\X(x,y) = \log_x y/\dist_\X(x,y)$, $x\not = y$ 
with the logarithmic map $\log$ on $\X$, while the distance
is not differentiable for $x=y$. Concerning the later point, see Remark~5.
The evaluation of the gradient of the penalty term at a point in $\X^N$ requires only $\mathcal O(N)$ arithmetic operations.
The computation of the Riemannian gradient of the \emph{data term} in (30) is done analytically via
the gradient of the
eigenfunctions $\varphi_k$ of the Laplace--Beltrami operator. 
Then, the evaluation of the gradient of the whole data term at given points 
can be done efficiently by \emph{fast Fourier transform}~(FFT) techniques at non-equispaced nodes using the NFFT software package of Potts et al.~\cite{Keiner:2009cv}.
The overall complexity of the algorithm and references for the computation details for the above manifolds are given in Table \ref{tab:1}.

\begin{table}
	\begin{center}
		\begin{tabular}{l|l|l}
			$\X$ &Reference &Complexity\\
			\hline
			$\T^d$   &\cite{Graf:2013fk}, \cite[Sec.~5.2.1]{Graf:2013zl}           &$\mathcal O(r^{d} \log(r) + N)$  \\
			$\S^2$   &\cite{Graf:2011lp,Graf:2013fk}, \cite[Sec.~5.2.2]{Graf:2013zl}&$\mathcal O(r^{2} \log^{2}(r) + N)$\\
			$\SO(3)$ &\cite{graef2012,Graf:2009ye}, \cite[Sec.~5.2.3]{Graf:2013zl}  &$\mathcal O(r^{3} \log^{2}(r) + N)$\\
			$\G_{2,4}$& \cite{Dick:2019sy}                                              & $\mathcal O(r^{4} \log^{2}(r) + N)$
		\end{tabular}
	\end{center}
	\caption{References for implementation details of Alg.~\ref{alg:cg} (left) and arithmetic complexity for the evaluations per iteration
		for the different manifolds (right). }\label{tab:1}
\end{table}

\section{Numerical results} \label{sec:numerics_dith}
In this section, we underline our theoretical results by numerical examples.
We start by studying the parameter choice in our numerical model.
Then, we provide examples for the approximation of absolutely continuous measures with densities in $H^s(\X)$, $s > d/2$, by push-forward measures of the Lebesgue measure on $[0,1]$ by Lipschitz curves for the manifolds  $\X \in \{\mathbb T^2, \mathbb T^3, \S^2,\SO(3),G_{2,4}\}$.
Supplementary material can be found on our webpage.

\subsection{Parameter choice}
We like to emphasize that the optimization problem \eqref{eq:final} is highly nonlinear and the objective function has a large number of local minimizers, which appear to increase exponentially in N. In order to find for fixed $L$ reasonable (local) solutions of \eqref{eq:min_PL}, we carefully adjust the parameters in problem~\eqref{eq:final}, namely the number of points $N$, the polynomial degree $r$ in the kernel truncation, and the penalty parameter $\lambda$.
In the following, we suppose that $\mathrm{dim}(\supp(\mu)) = d \ge 2$.
\begin{itemize}
	\item[i)] \textbf{Number of points $N$}: 
	Clearly, $N$ should not be too small compared to $L$. However, from a computational perspective it should also be not too large since the optimization procedure is hampered by the vast number of local minimizers.
	From the asymptotic of the path lengths of TSP in Lemma \ref{lemma TSP}, we conclude that
	$N \gtrsim \mathcal \ell(\gamma)^{d/(d-1)}$
	is a reasonable choice, where $\mathcal \ell(\gamma)  \le L$ is the length of the resulting curve $\gamma$ going through the points.
	\item[ii)] \textbf{Polynomial degree} $r$: 
	Based on the proofs of the theorems in Subsection \ref{sec:approx_L} it is reasonable to choose 
	\begin{equation} \label{param2}
		r \sim L^{\frac{1}{d-1}} \sim N^{\frac{1}{d}}.
	\end{equation}
	\item[iii)] \textbf{Penalty parameter} $\lambda$:
	If $\lambda$ is too small, we cannot enforce that the points approximate a regular curve, i.e., $L/N \gtrsim \dist_\X(x_{k-1},x_{k})$. Otherwise, if $\lambda$ is too large the optimization procedure is hampered by the rigid constraints. 
	Hence, to find a reasonable choice for $\lambda$ in dependence on $L$, we assume that the minimizers of \eqref{eq:final}  treat both terms proportionally, i.e., for $N\to \infty$ both terms are of the same order.
	Therefore, our heuristic is to choose the parameter $\lambda$ 
	such that 
	\begin{equation}   \label{eq:penalty_behavior}
		\min_{x_{1},\dots,x_{N}} \mathscr{D}_K^{2} \Big(\mu, \frac{1}{N} \sum_{k=1}^{N} \delta_{x_{k}}\Big) \sim N^{-\frac{2s}{d}} \sim
		\frac{\lambda}{N} \sum_{k=1}^{N} \big(N \dist_\X(x_{k-1},x_{k}) - L \big)_{+}^{2}  .
	\end{equation}
	On the other hand,  assuming that for the length 
	$\ell(\gamma) = \sum_{k=1}^{N}\dist_\X(x_{k-1},x_{k})$ of a minimizer $\gamma$ 
	we have $\ell(\gamma) \sim L \sim N^{(d-1)/d}$, 
	so that $N \dist_\X(x_{k-1},x_{k}) \sim L$, the value of the penalty term behaves like
	\[
	\frac{\lambda}{N} \sum_{k=1}^{N} \big(N \dist_\X(x_{k-1},x_{k}) - L \big)_{+}^{2} \sim  \lambda L^{2} \sim \lambda N^{\frac{2d-2}{d}}.
	\]
	Hence, a reasonable choice is
	\begin{equation}   \label{param3}
		\lambda \sim L^{\frac{-2s-2(d-1)}{d-1}} \sim N^{\frac{-2s-2(d-1)}{d}}.
	\end{equation}
\end{itemize}

\begin{remark}
	\label{rem:param_choice}
	In view of Remark~\ref{rem:andere} 
	the relations in i)-iii) become
	$$
	N \sim L^{\frac{d_{\mu}}{d_{\mu}-1}}, \quad
	r \sim N^{\frac{1}{d_{\mu}}} \sim L^{\frac{1}{d_{\mu}-1}}, \quad
	\lambda \sim L^{\frac{-2s-3d_{\mu}+d+2}{d_{\mu}-1}} \sim N^{\frac{-2s-3d_{\mu}+d+2}{d_{\mu}}}.
	$$
\end{remark} 

In the rest of this subsection, we aim to provide some numerical evidence for the parameter choice above.
We restrict our attention to the torus $\X = \mathbb T^{2}$ and 
the kernel $K$ given in \eqref{kernel_Td} with $d=2$ and $s = 3/2$.
Choose $\mu$ as the Lebesgue measure on $\mathbb T^{2}$.
From \eqref{param3}, we should keep in mind 
$\lambda \sim N^{-5/2} \sim L^{-5}$. 

\textbf{Influence of $N$ and $\lambda$.}
We fix $L=4$ and a large polynomial degree $r=128$ for truncating the kernel.
For any $\lambda_{i}=0.1\cdot 2^{-5i/2}$, $i=1,\dots,4$, we compute local minimizers with $N_{j}=10 \cdot 2^{j}$, $j=1,\dots,4$. 
More precisely, keeping $\lambda_{i}$ fixed we start with $N_{1}=20$ and refine successively the curves 
by inserting the midpoints of the line segments connecting consecutive points 
and applying a local minimization with this initialization.
The results are depicted in Fig.~\ref{fig:Nexperiment}. 
For fixed $\lambda$ (fixed row) we can clearly notice that the local minimizers converge 
towards a smooth curve for increasing $N$. 
Moreover, the diagonal images correspond to the choice  $\lambda =0.1 (N/10)^{-5/2}$, 
where we can already observe good approximation of the curves emerging to the right of it. 
This should provide some evidence that the choice of the penalty parameter $\lambda$ and the number of points $N$ discussed above is reasonable. 
Indeed, for $\lambda \to \infty$  we observe $L(\gamma) \to \ell(\gamma) \to L = 4$.

%
\begin{figure}
	\centering
	\begin{tabular}{cccc}
		$N=20$   & $N=40$  & $N=80$  & $N=160$ \\
		\includegraphics[width=0.21\textwidth]{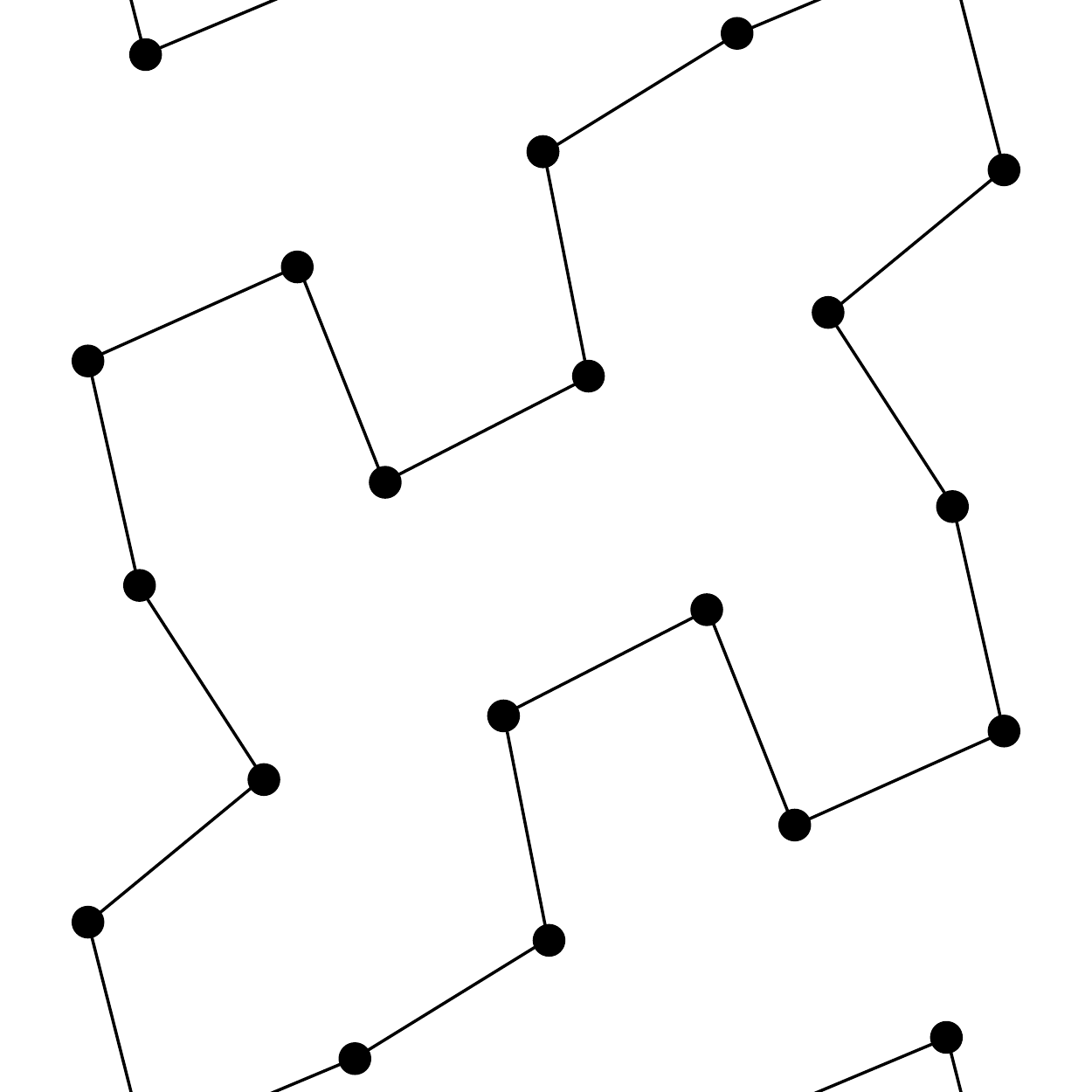} & 
		\includegraphics[width=0.21\textwidth]{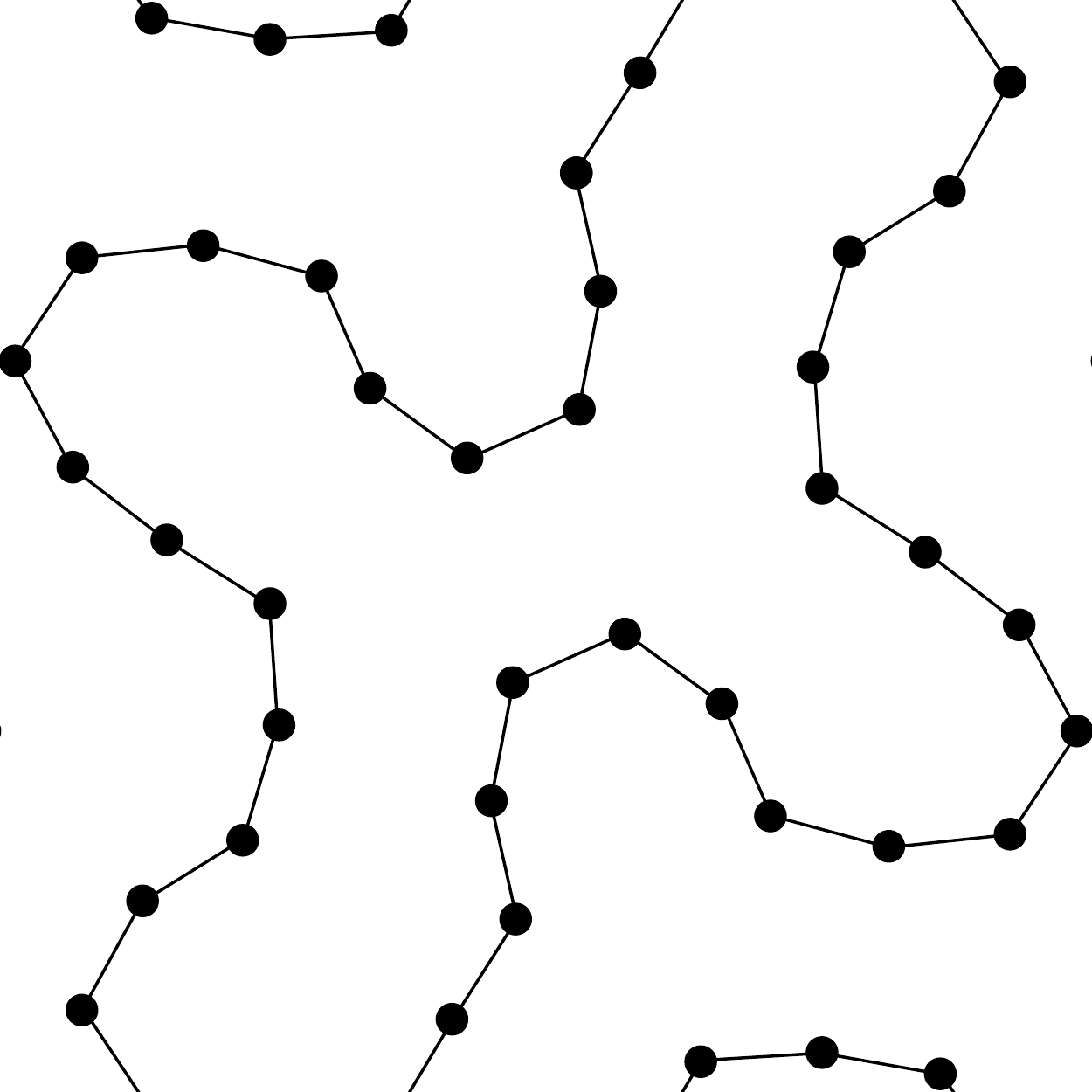} &
		\includegraphics[width=0.21\textwidth]{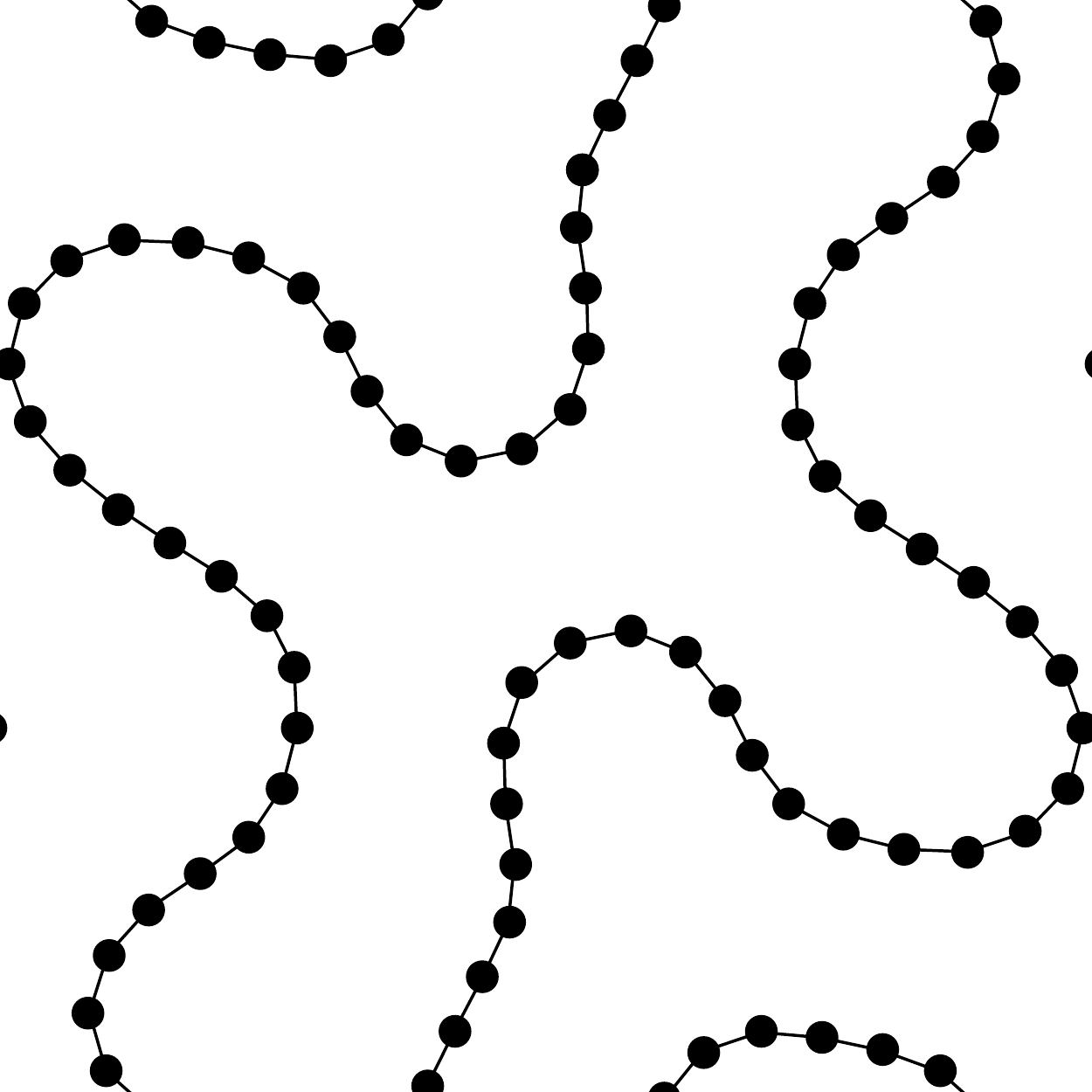} &
		\includegraphics[width=0.21\textwidth]{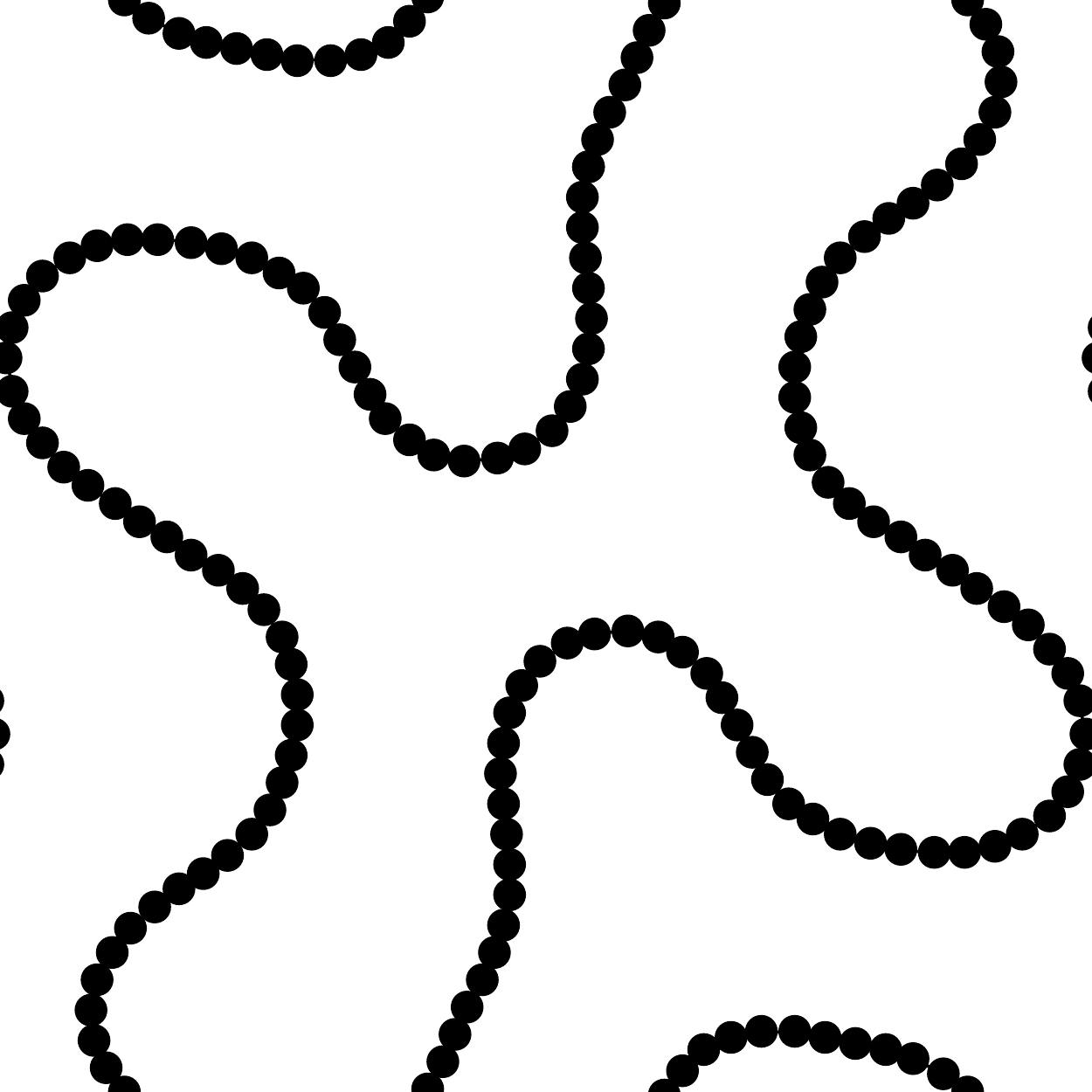}\\
		$\ell(\gamma)\approx 4.20$ &
		$\ell(\gamma)\approx 4.43$ &
		$\ell(\gamma)\approx 4.49$ &
		$\ell(\gamma)\approx 4.50$\\ 
		\includegraphics[width=0.21\textwidth]{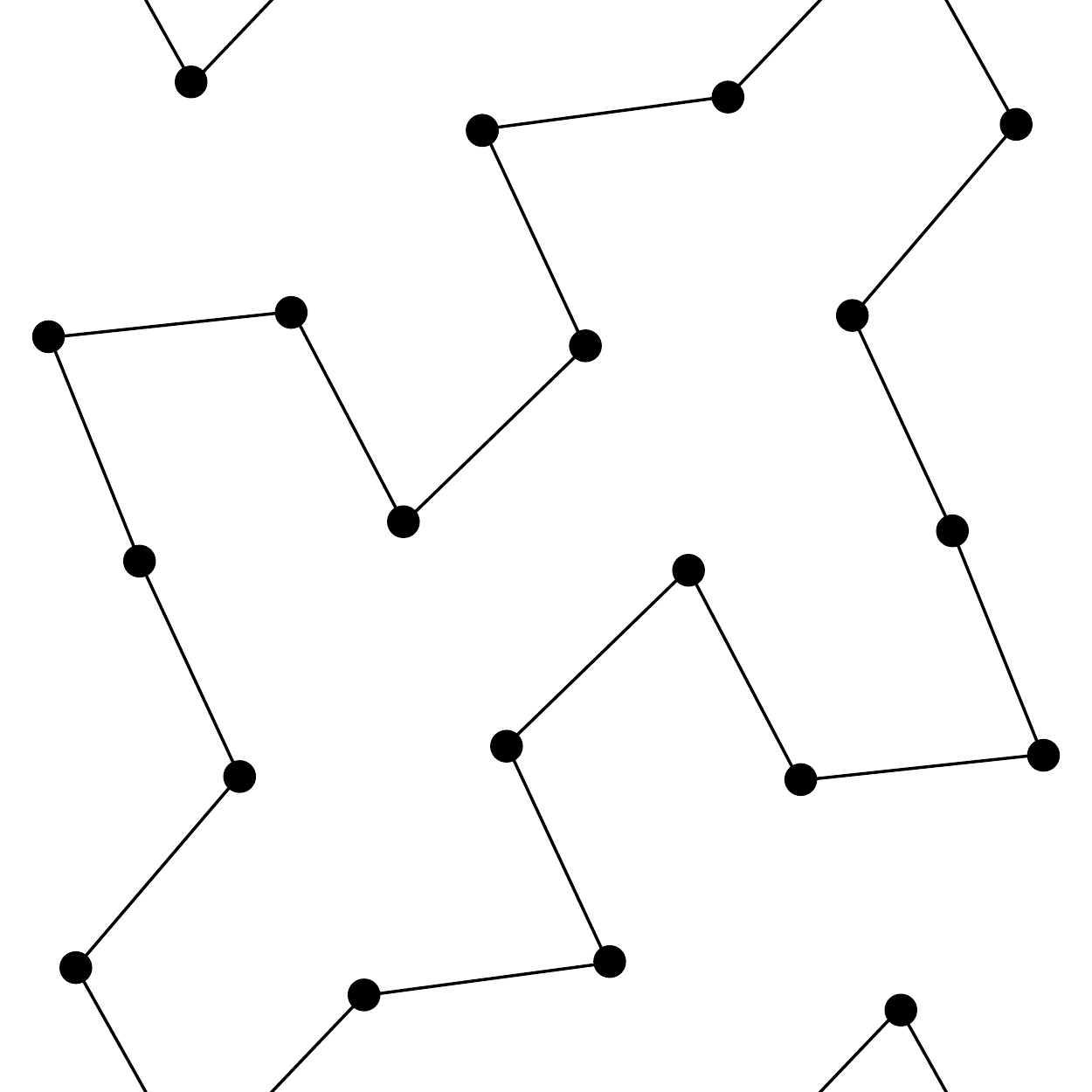} & 
		\includegraphics[width=0.21\textwidth]{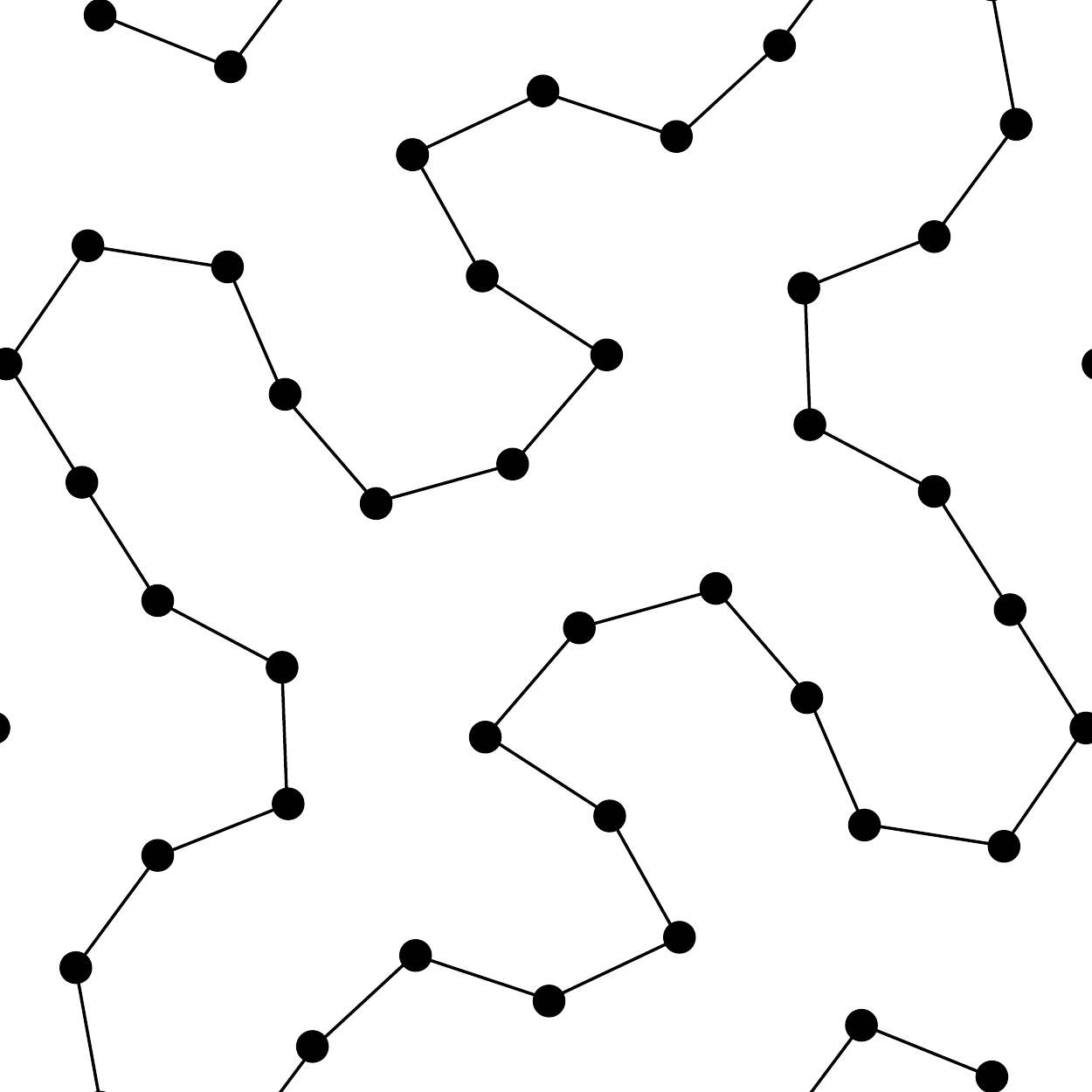} &
		\includegraphics[width=0.21\textwidth]{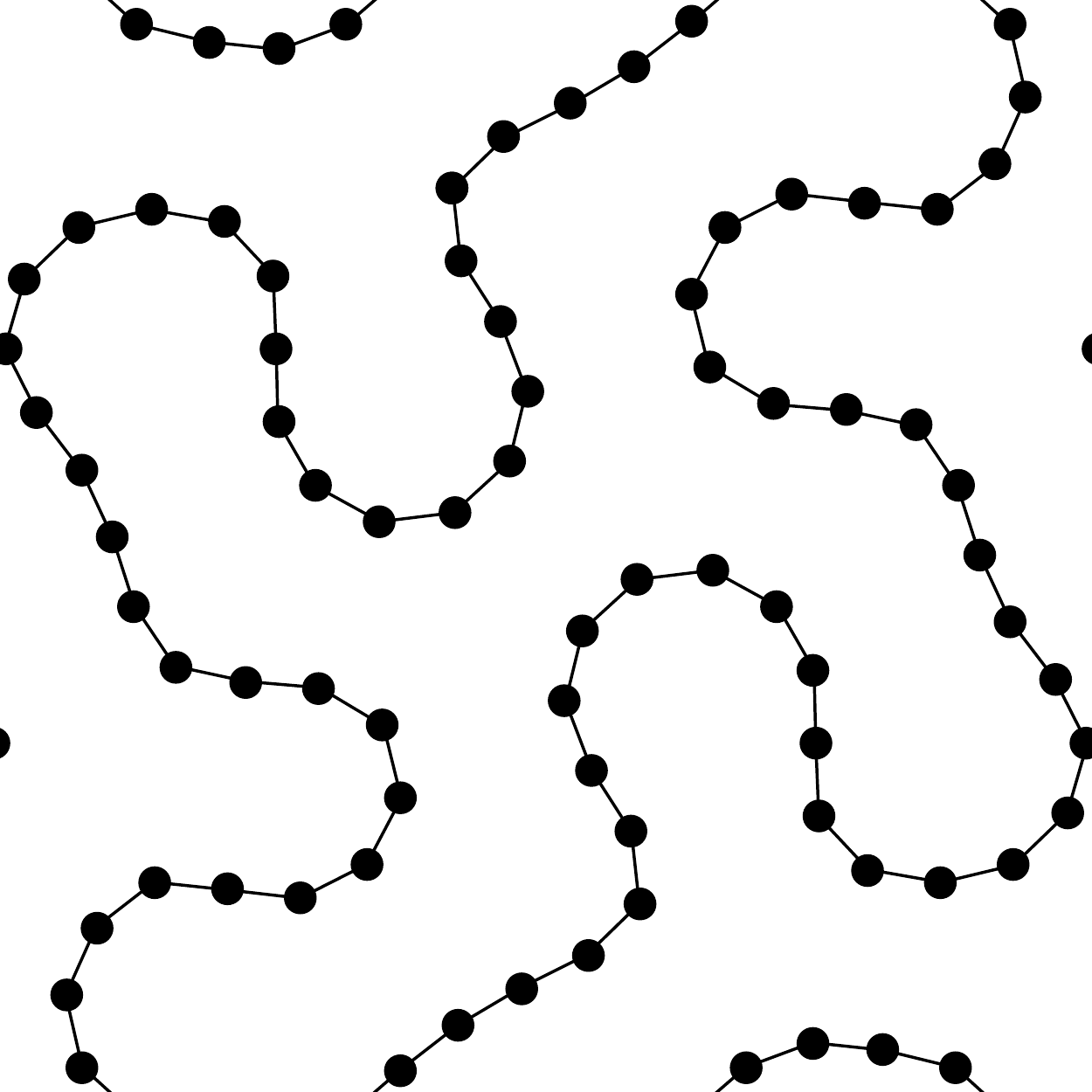} &
		\includegraphics[width=0.21\textwidth]{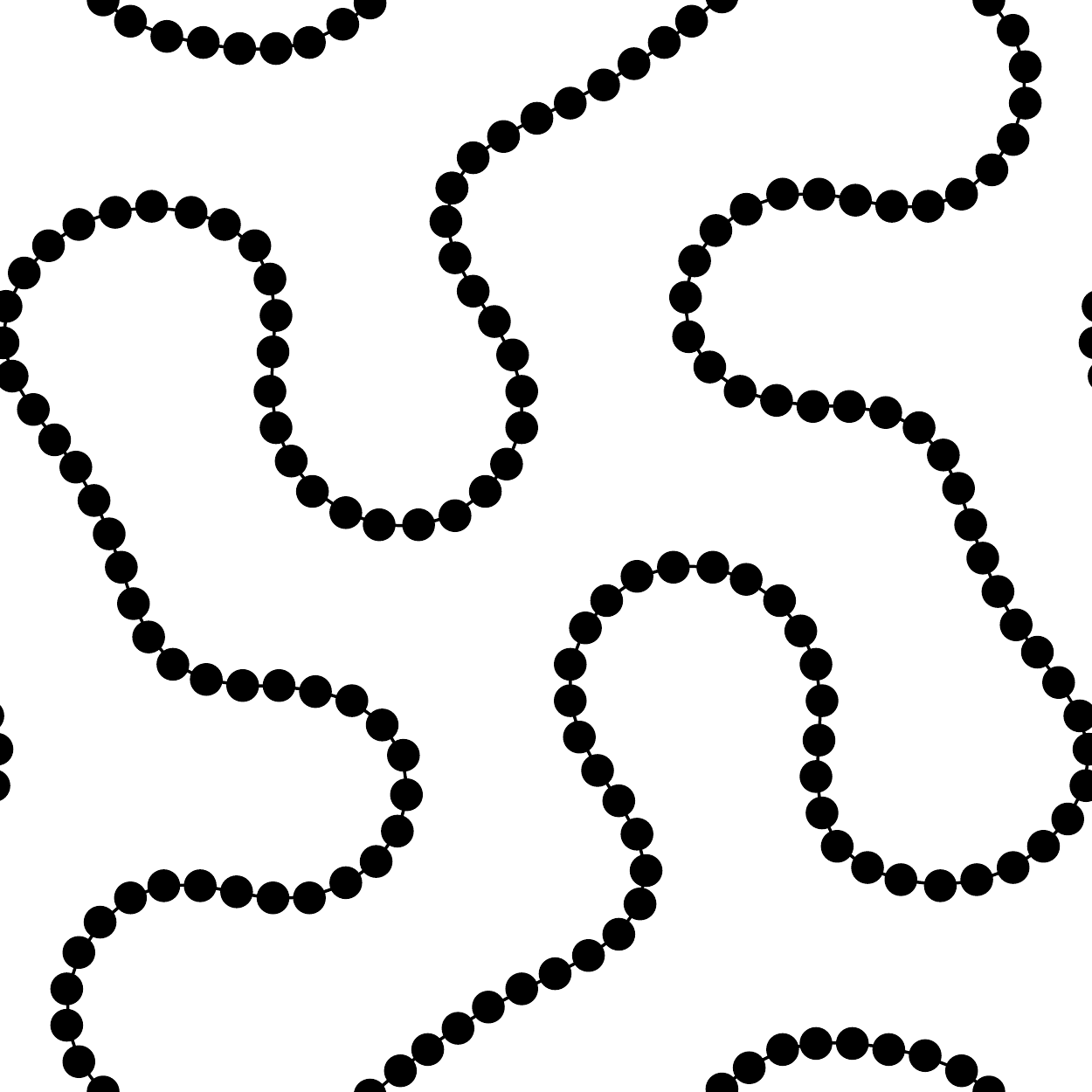}\\
		$\ell(\gamma)\approx 4.47$ &
		$\ell(\gamma)\approx 5.16$ &
		$\ell(\gamma)\approx 5.38$ &
		$\ell(\gamma)\approx 5.44$\\ 
		\includegraphics[width=0.21\textwidth]{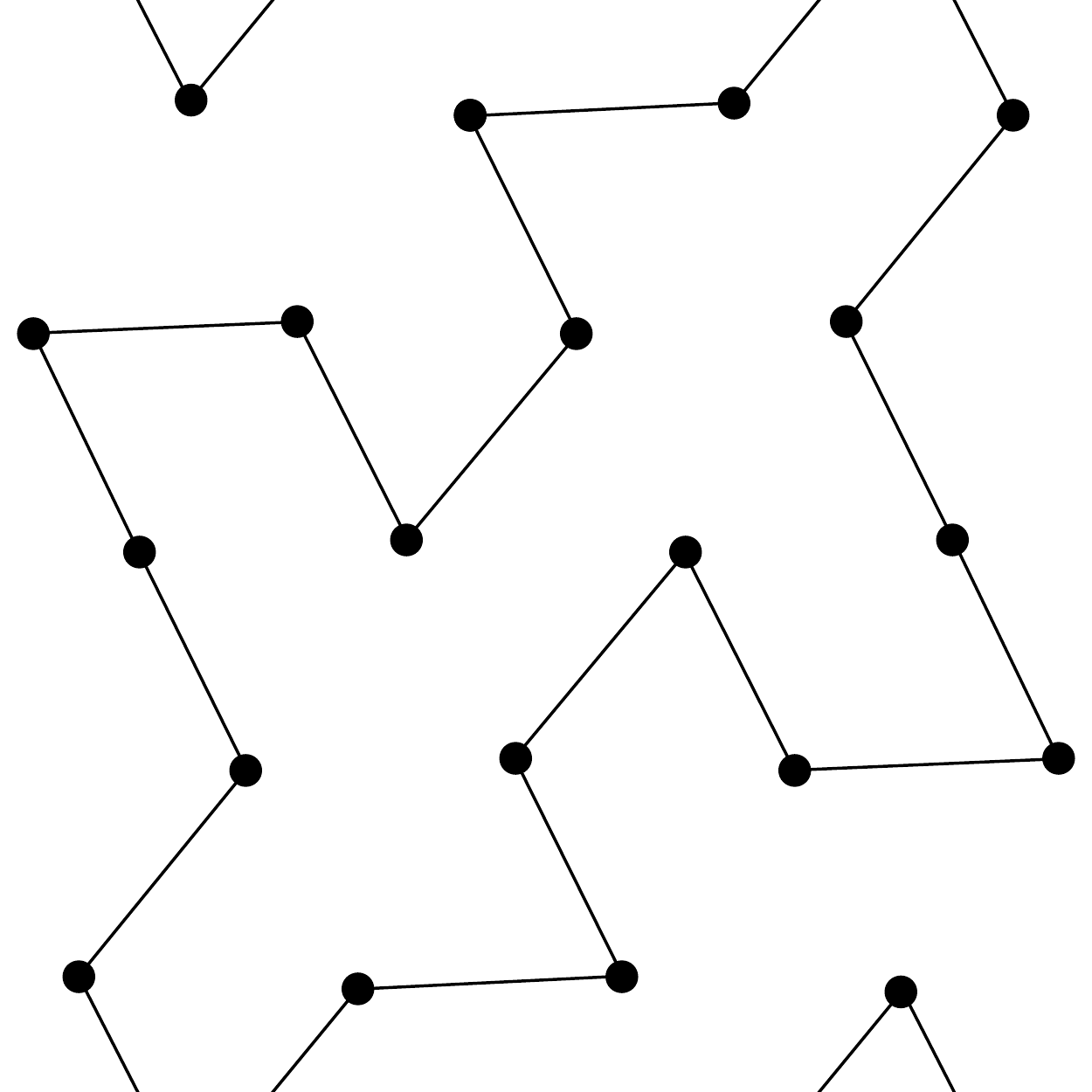} & 
		\includegraphics[width=0.21\textwidth]{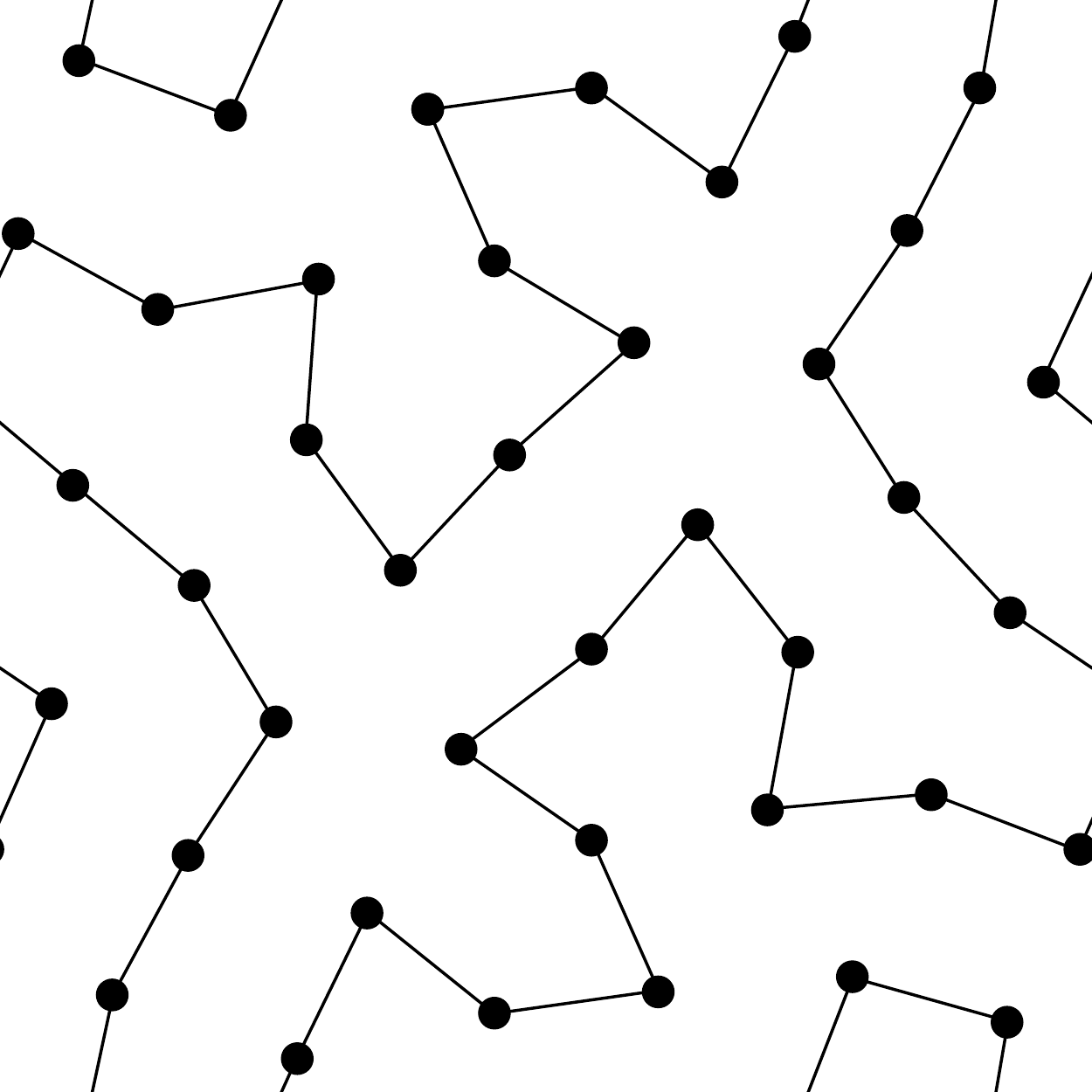} &
		\includegraphics[width=0.21\textwidth]{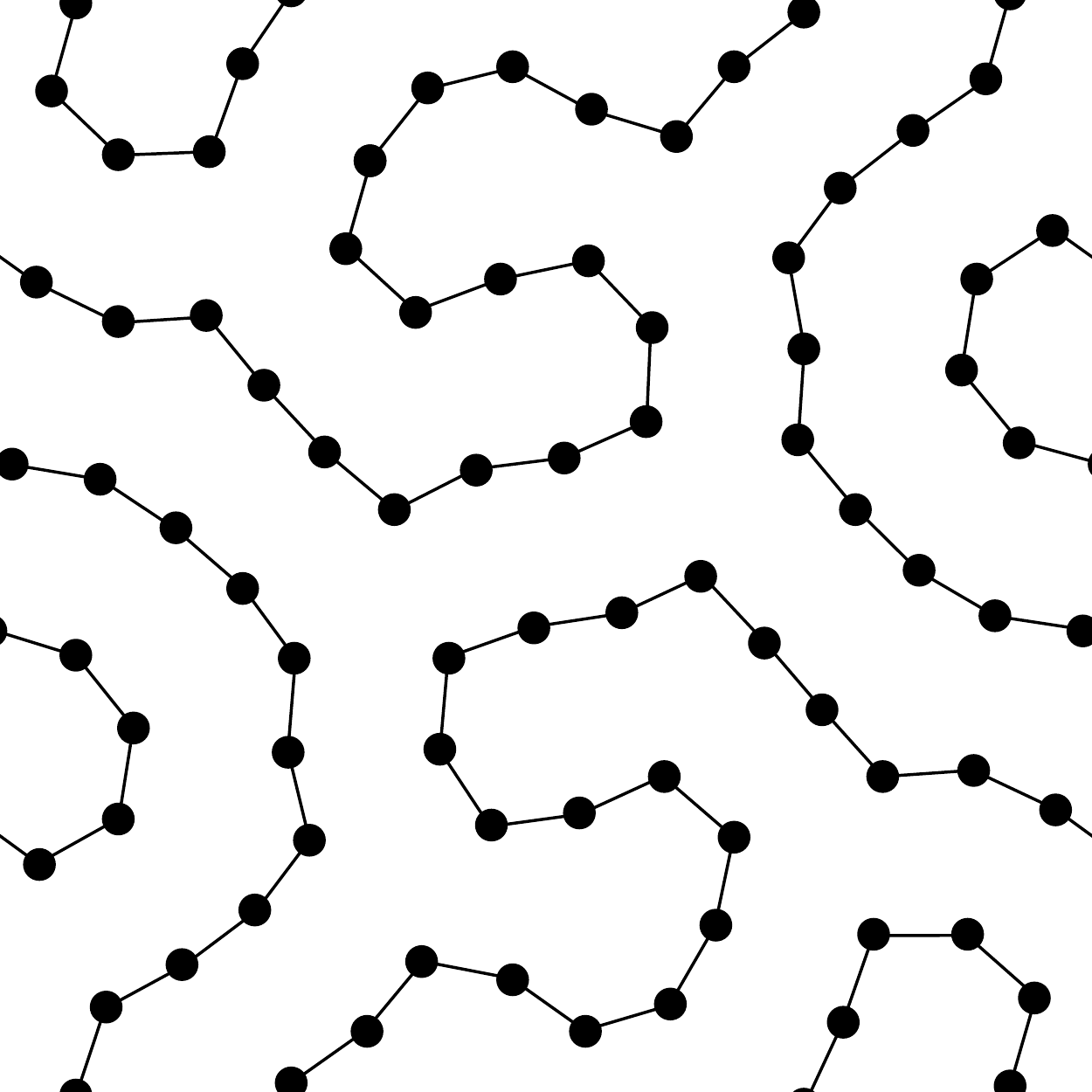} &
		\includegraphics[width=0.21\textwidth]{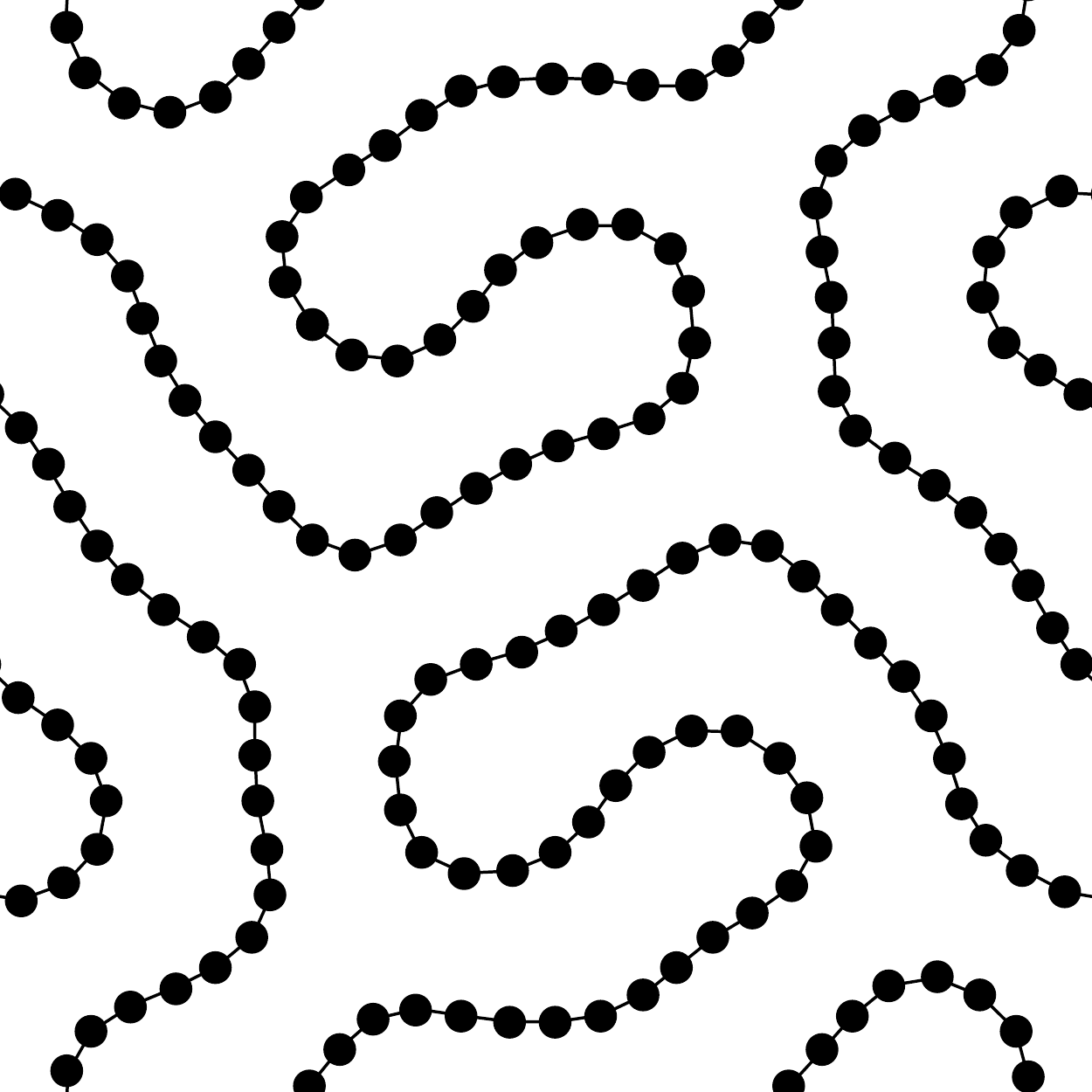}\\
		$\ell(\gamma)\approx 4.66$ &
		$\ell(\gamma)\approx 5.91$ &
		$\ell(\gamma)\approx 6.64$ &
		$\ell(\gamma)\approx 6.87$\\ 
		\includegraphics[width=0.21\textwidth]{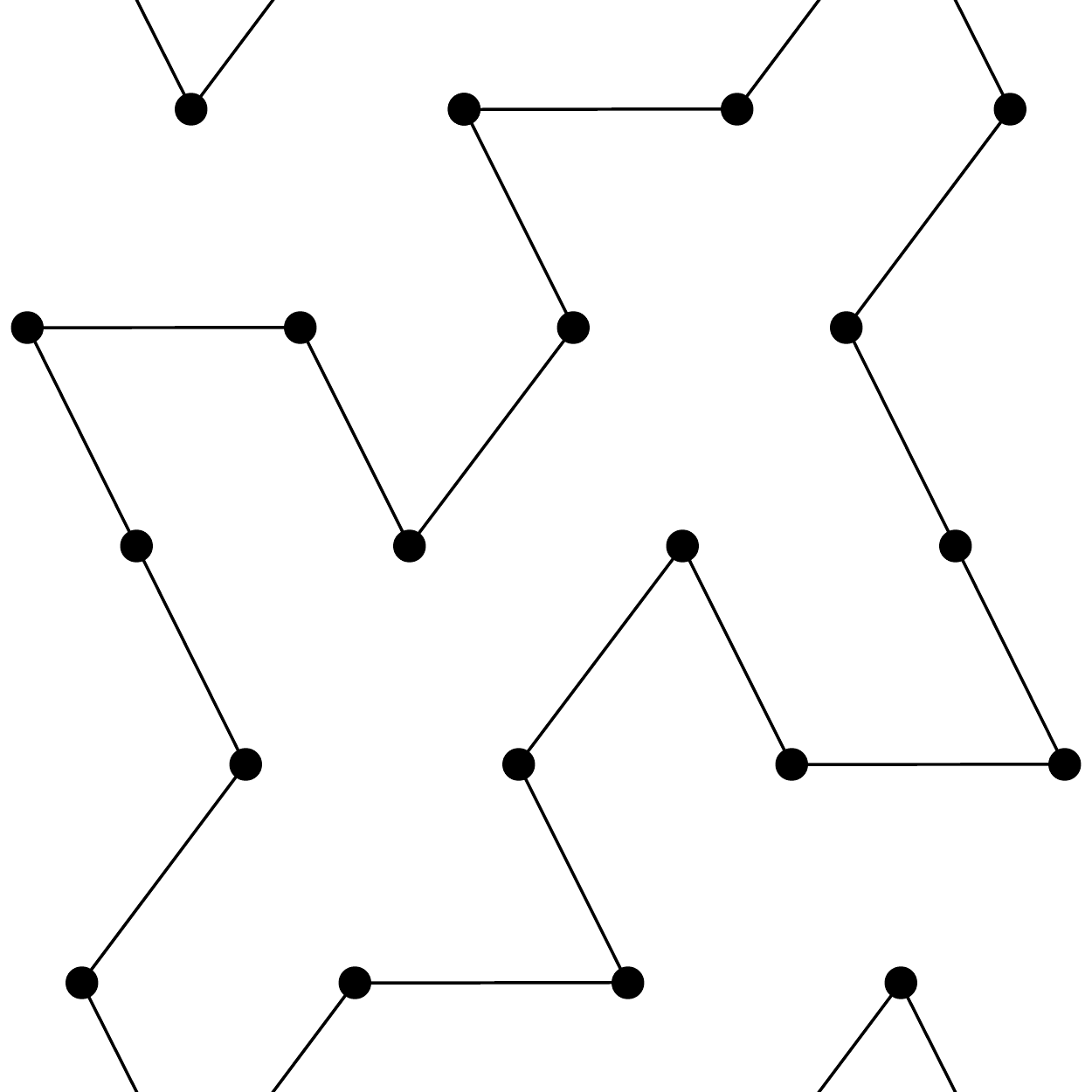} & 
		\includegraphics[width=0.21\textwidth]{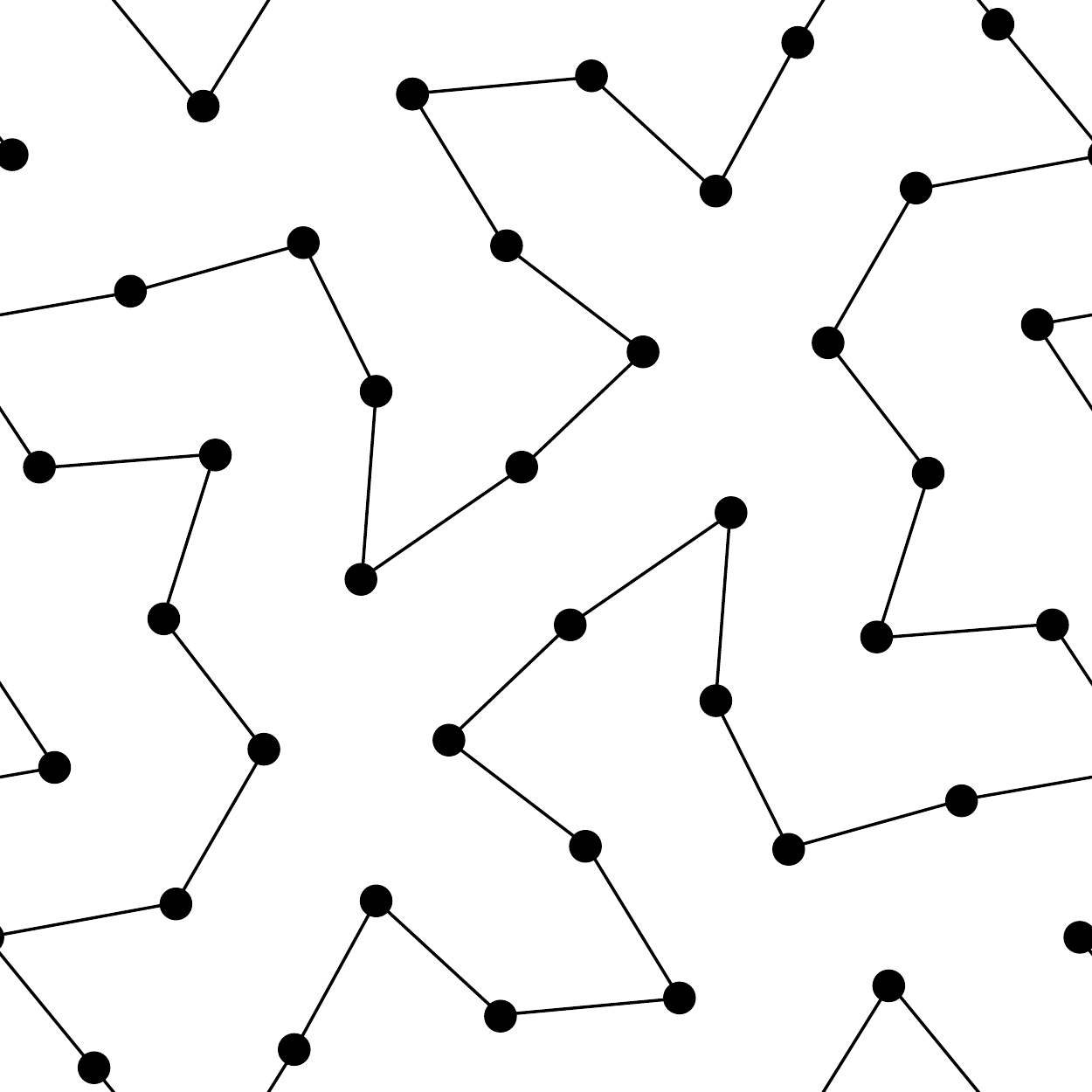} &
		\includegraphics[width=0.21\textwidth]{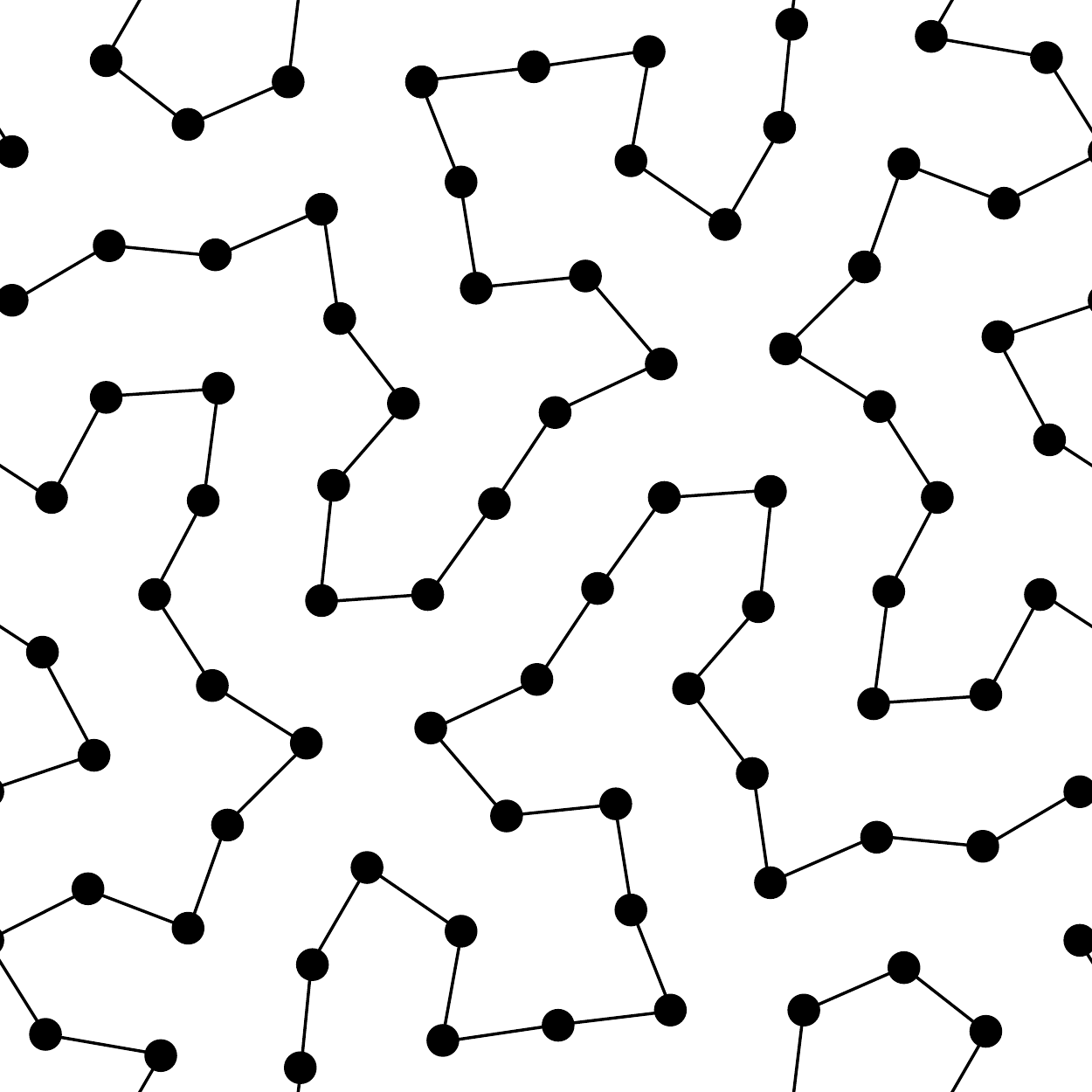} &
		\includegraphics[width=0.21\textwidth]{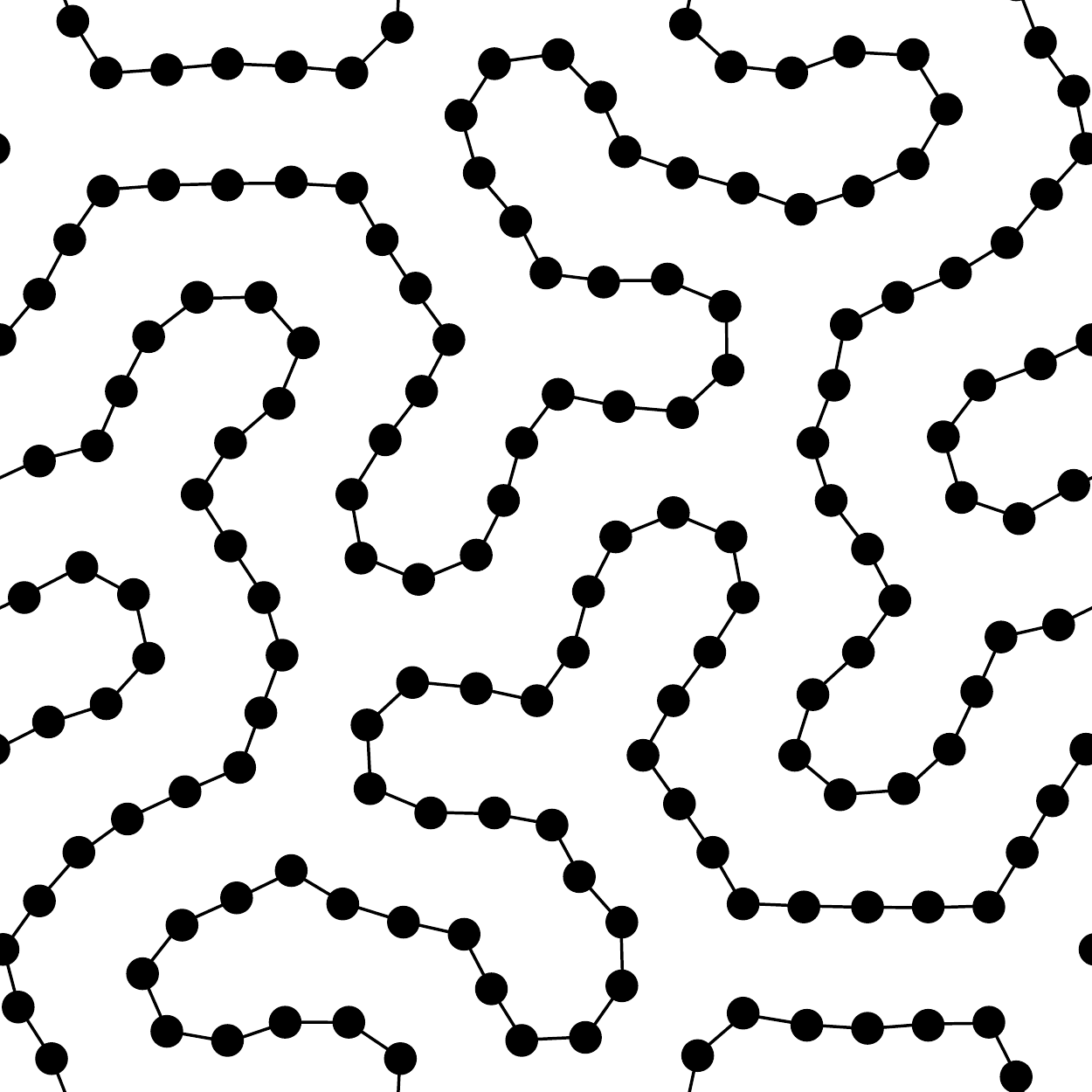}\\
		$\ell(\gamma)\approx 4.73$ &
		$\ell(\gamma)\approx 6.45$ &
		$\ell(\gamma)\approx 8.15$ &
		$\ell(\gamma)\approx 9.03$
	\end{tabular}
	\caption{Influence of $N$ and $\lambda$ on local minimizers of \eqref{eq:final} for the Lebesgue measure on $\T^2$, $L=4$ and $r=128$. 
		Results for increasing $N$ (column-wise) and decreasing $\lambda = 0.1\cdot 2^{-5i/2}$, $i=1,\dots,4$, (row-wise).
		Here, the curve length increases for decreasing $\lambda$ or increasing $N$, until stagnation for sufficient small $\lambda$ or large $N$. 
		For all minimizer the distance between consecutive points is around $\ell(\gamma)/N$.}
	\label{fig:Nexperiment}
\end{figure}

\textbf{Influence of the polynomial degree $r$.}
In Fig.~\ref{fig:t_experiment} we illustrate the local minimizers of \eqref{eq:final} 
for fixed Lipschitz parameters 
$L_{i}=2^{i}$ and corresponding regularization weights $\lambda_{i} = 0.2 \cdot L_{i}^{-5}$, $i=1,\dots,4$, (rows) in dependence on the polynomial degrees $r_{j}=8\cdot 2^{j}$, $j=1,\dots,5$ (columns). 
According to the previous experiments, it seems reasonable to choose $N = 20 L^{2}$.  
Note, that the (numerical) choice of $\lambda$ leads to curves with length $\ell(\gamma) \approx 2L$.
In Fig.~\ref{fig:t_experiment} we observe that for $r=c L$ the corresponding local minimizers have common features.
For instance, if $c=4$ (i.e., $r \approx \ell(\gamma)$) the minimizers have mostly vertical and horizontal line segments. 
Furthermore, for fixed $r$ 
it appears that the length of the curves increases linearly with $L$ until $L$ exceeds $2r$, from where it remains unchanged.
This observation can be explained by the fact that there are curves of bounded length $c r$ which provide exact quadratures for degree $r$. 
\begin{figure}
	\centering
	\begin{tabular}{ccccc}
		$r=16$  &  $r=32$   & $r=64$  & $r=128$  & $r=256$ \\
		\includegraphics[width=0.16\textwidth]{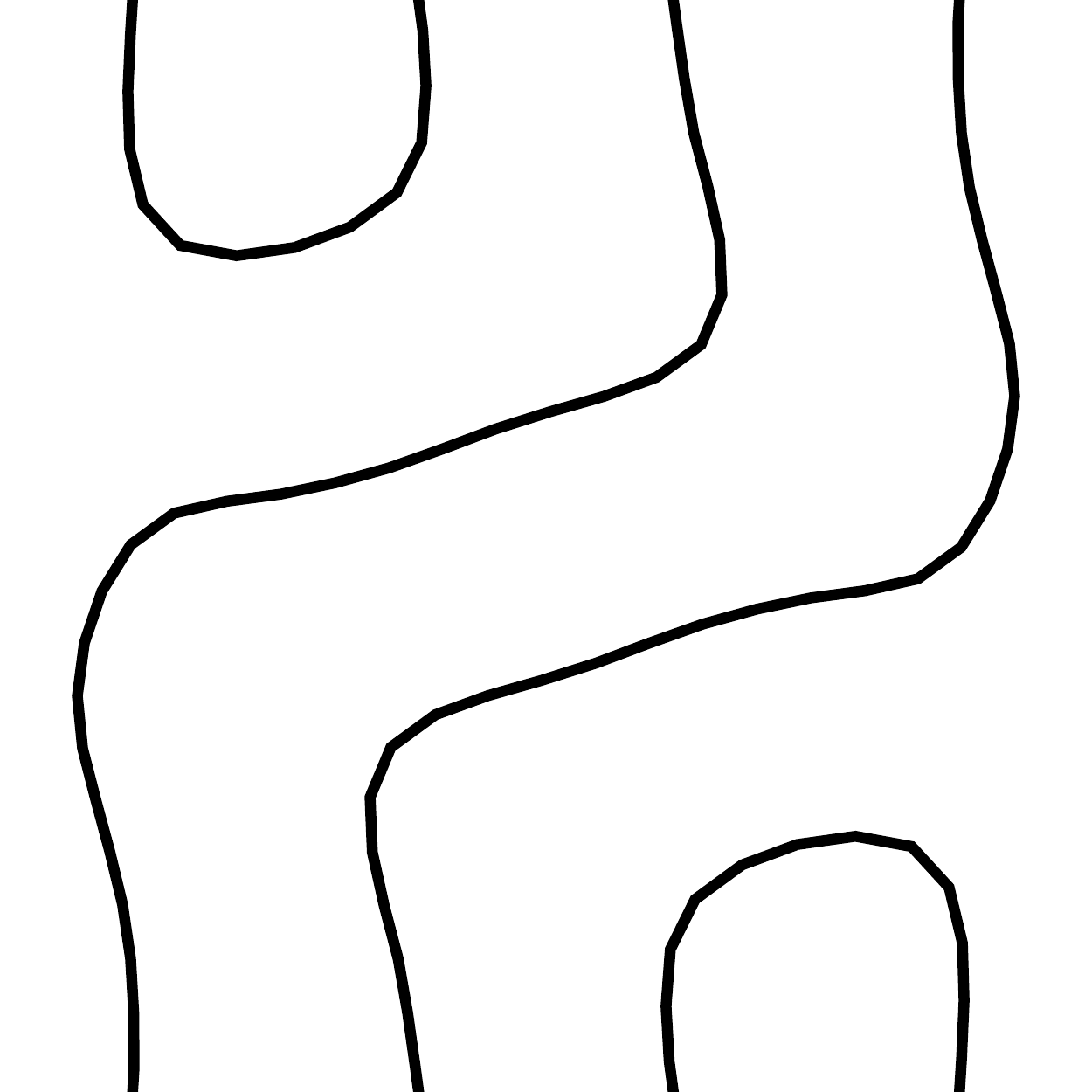} & 
		\includegraphics[width=0.16\textwidth]{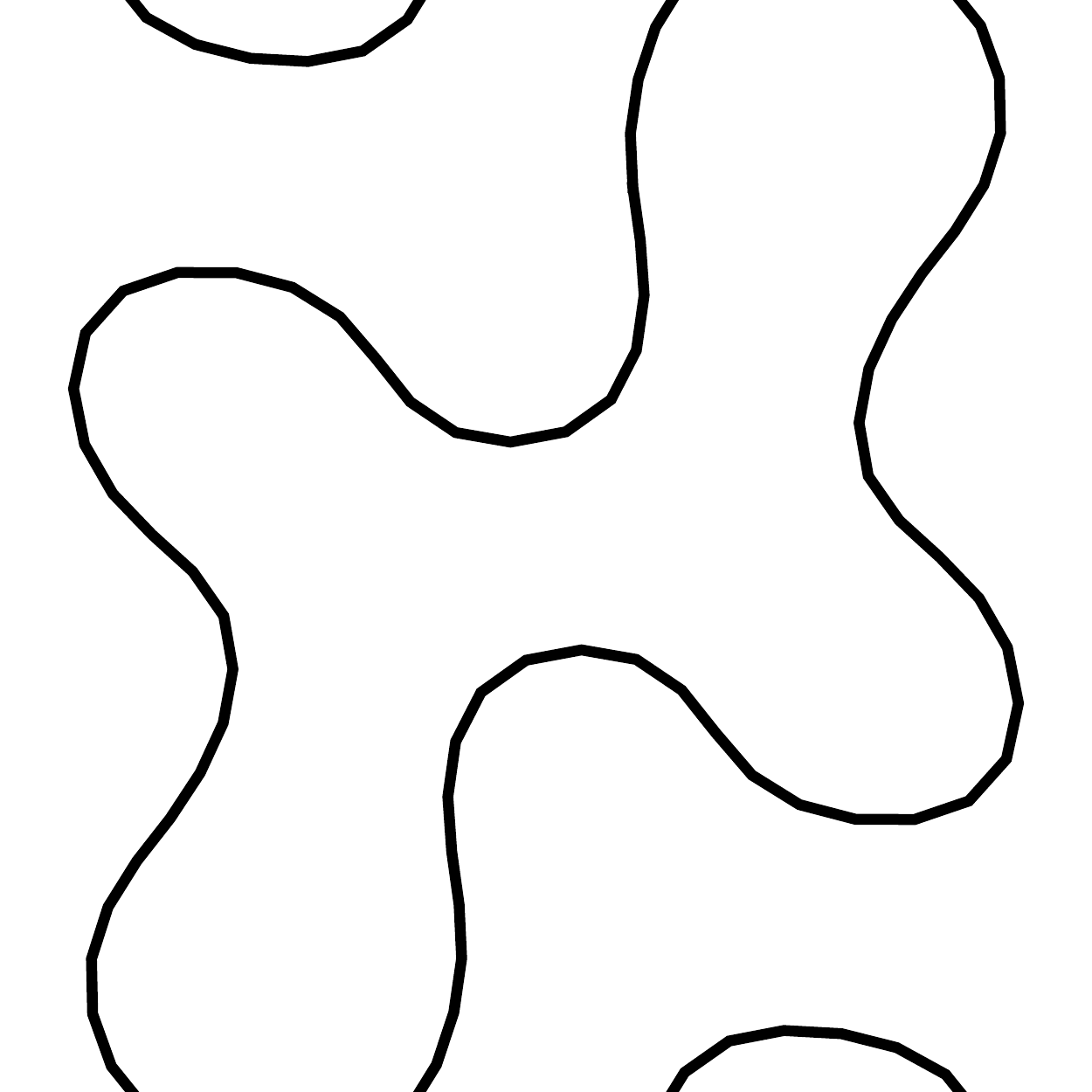} & 
		\includegraphics[width=0.16\textwidth]{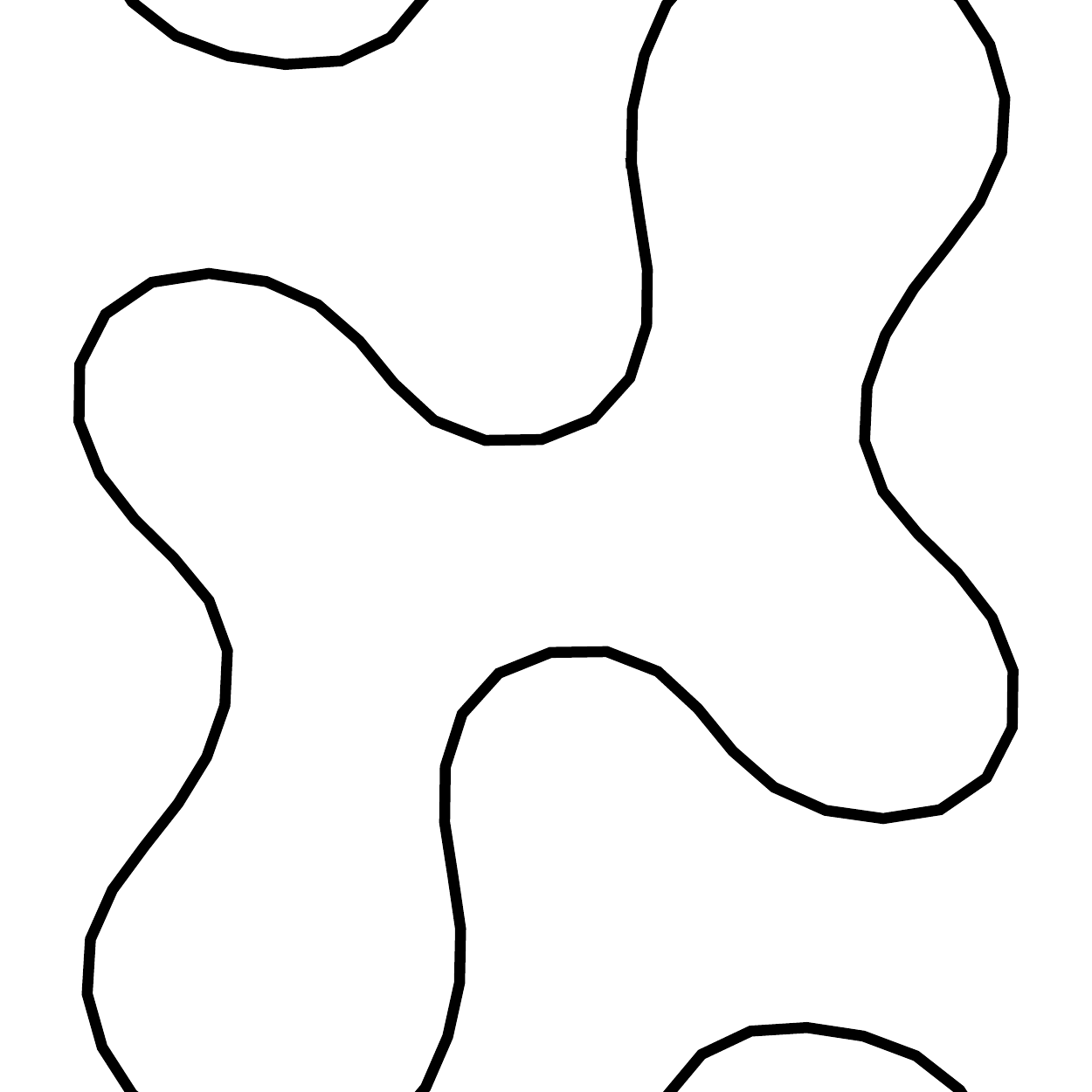} & 
		\includegraphics[width=0.16\textwidth]{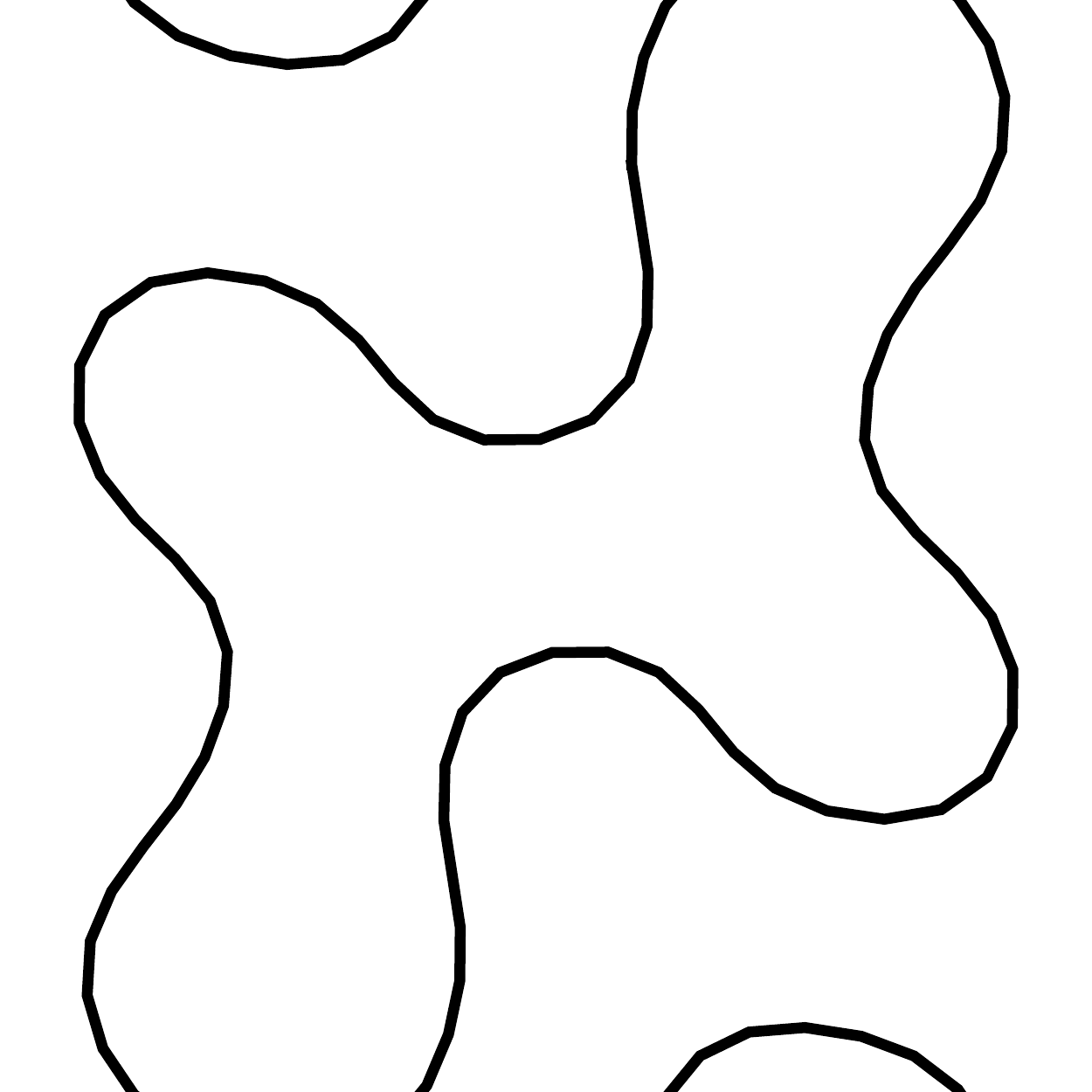} & 
		\includegraphics[width=0.16\textwidth]{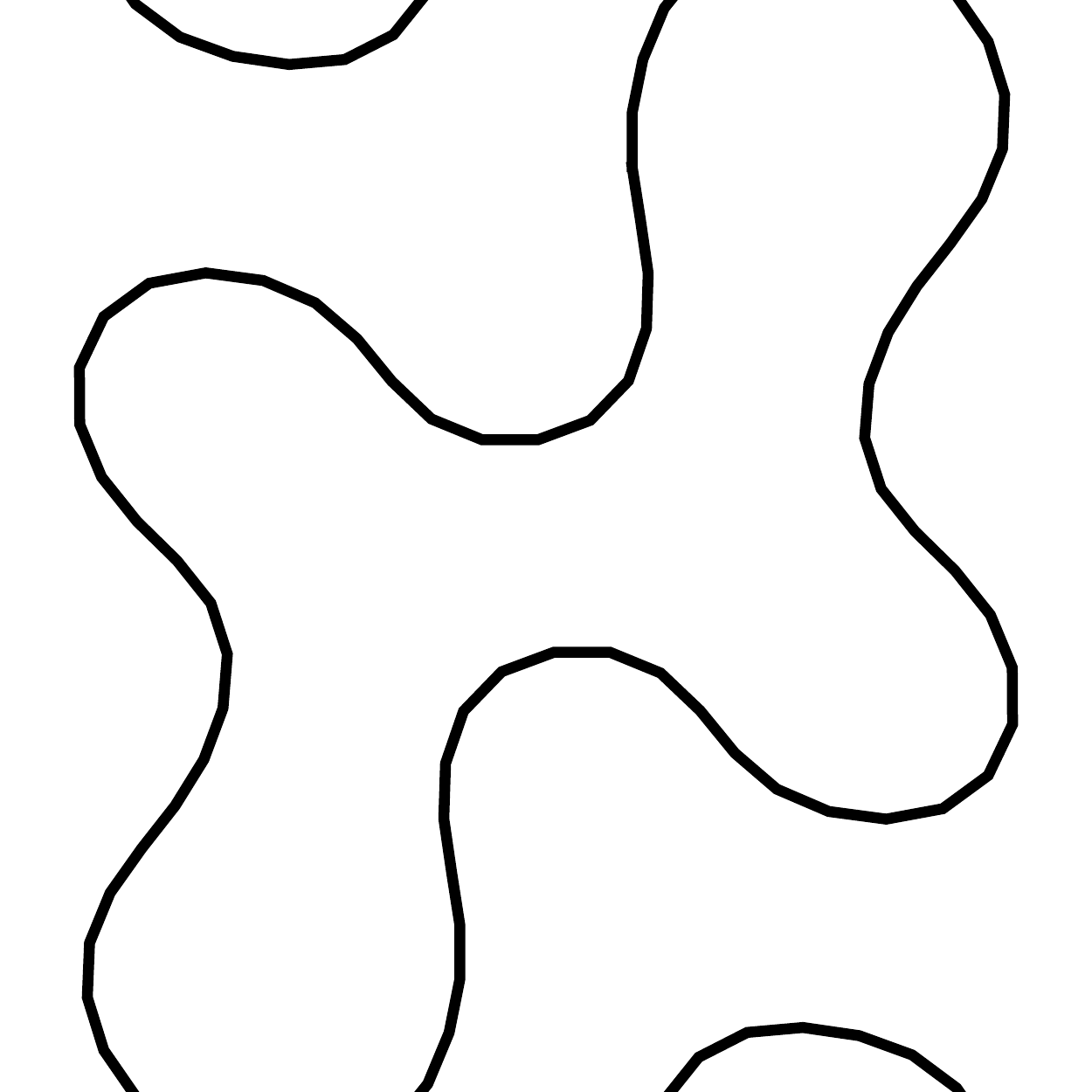} \\
		$\ell(\gamma)\approx 4.07$ &
		$\ell(\gamma)\approx 4.07$ &
		$\ell(\gamma)\approx 4.06$ &
		$\ell(\gamma)\approx 4.06$ &
		$\ell(\gamma)\approx 4.05$\\ 
		\includegraphics[width=0.16\textwidth]{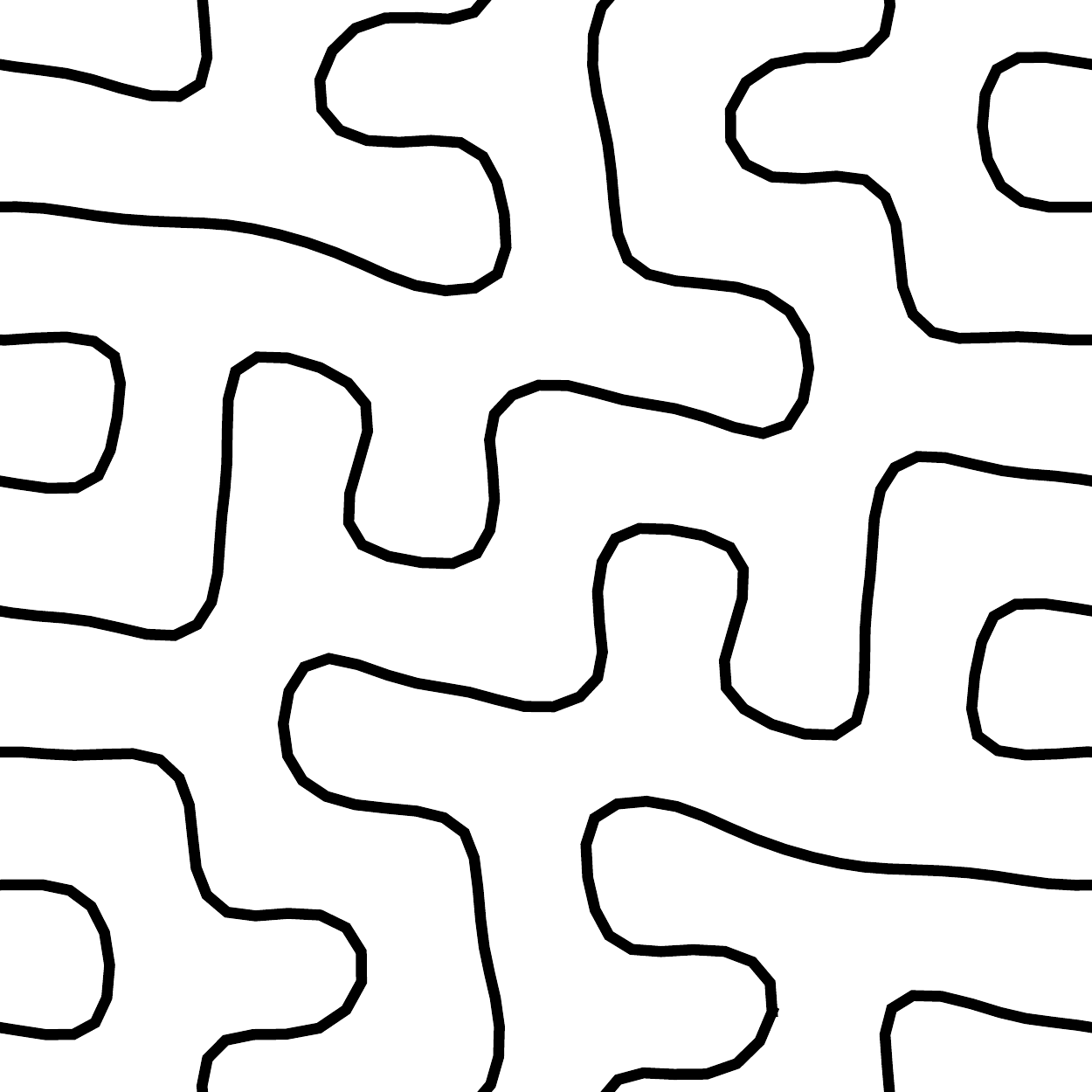} & 
		\includegraphics[width=0.16\textwidth]{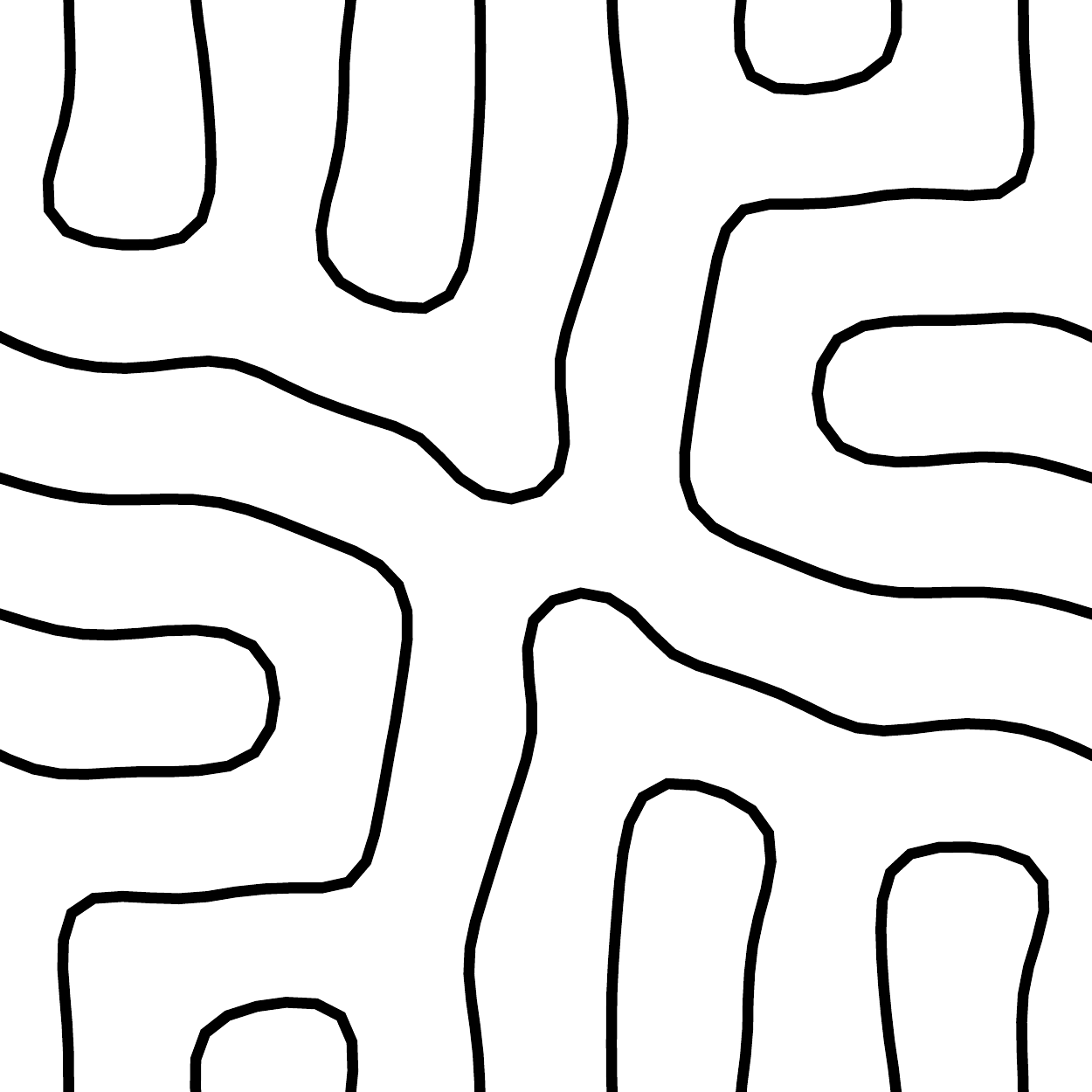} & 
		\includegraphics[width=0.16\textwidth]{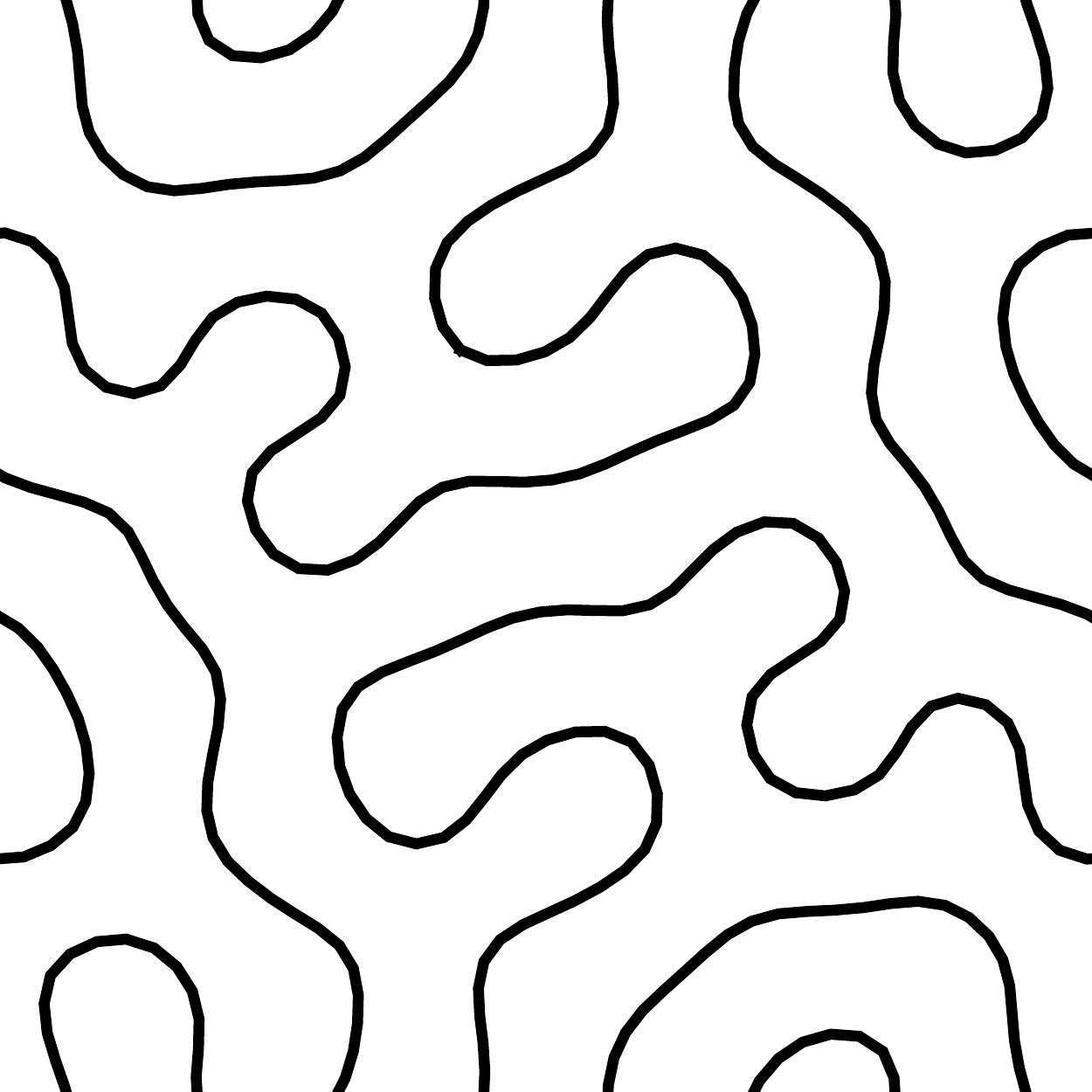} & 
		\includegraphics[width=0.16\textwidth]{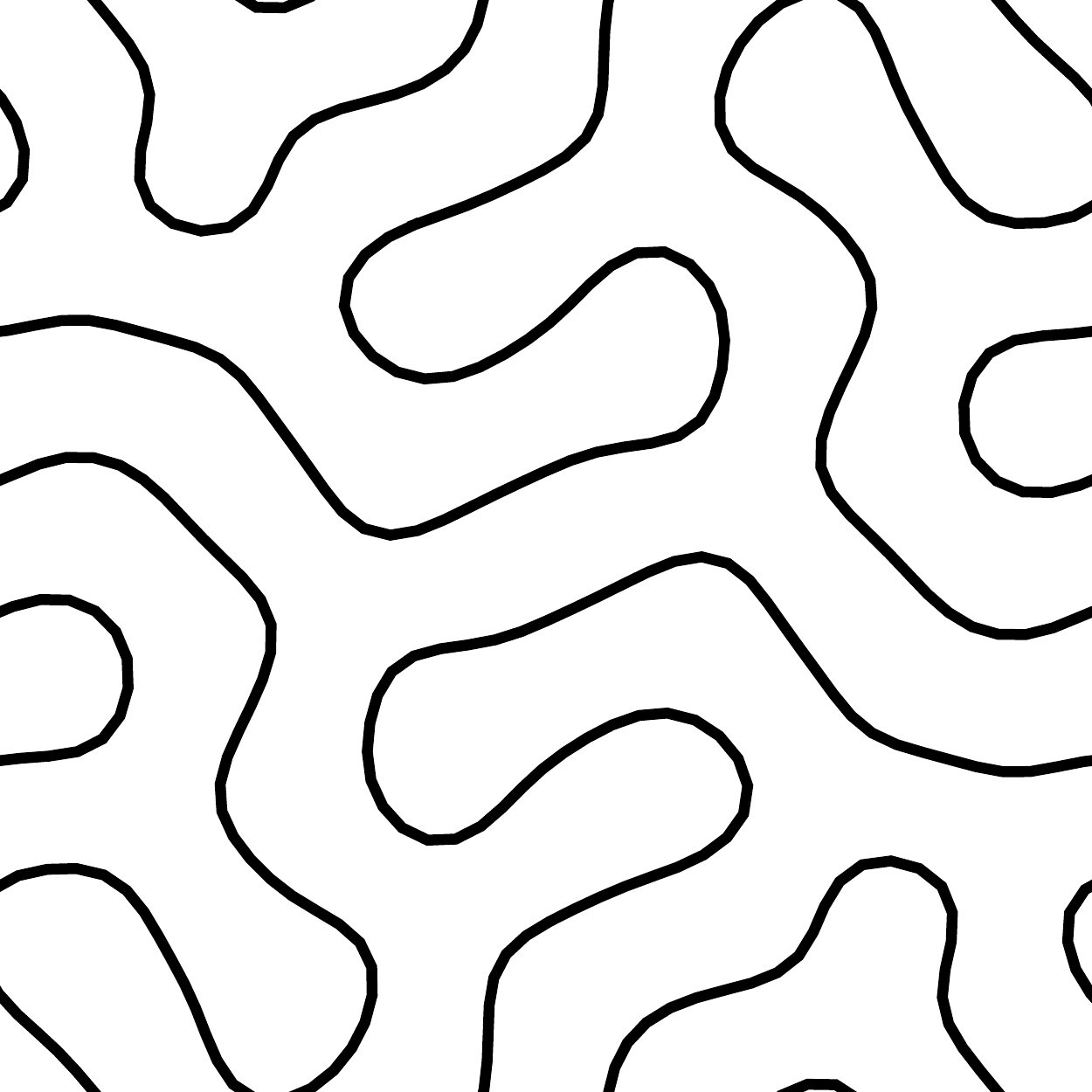} & 
		\includegraphics[width=0.16\textwidth]{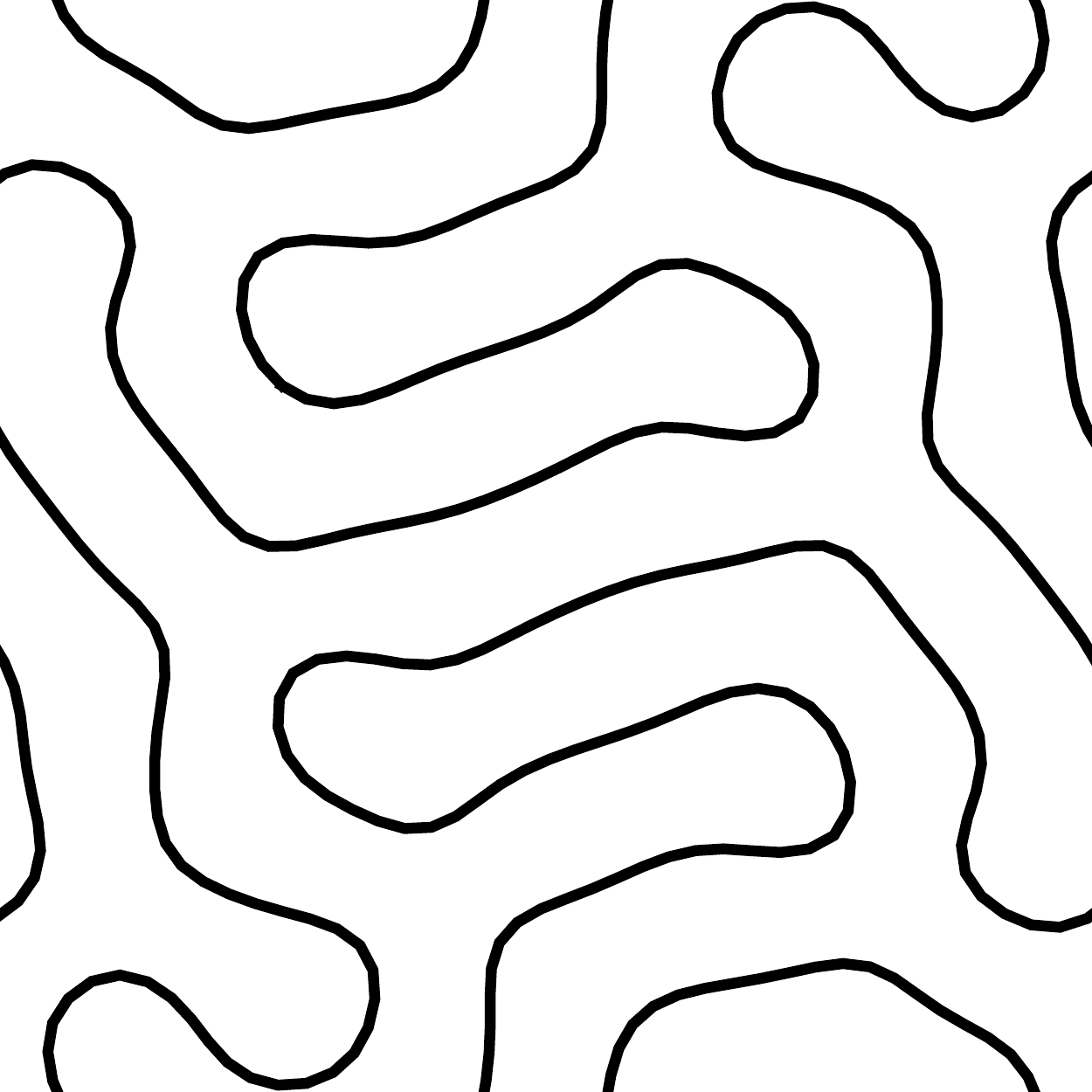} \\
		$\ell(\gamma)\approx 8.48$ &
		$\ell(\gamma)\approx 8.28$ &
		$\ell(\gamma)\approx 8.32$ &
		$\ell(\gamma)\approx 8.23$ &
		$\ell(\gamma)\approx 8.22$\\ 
		\includegraphics[width=0.16\textwidth]{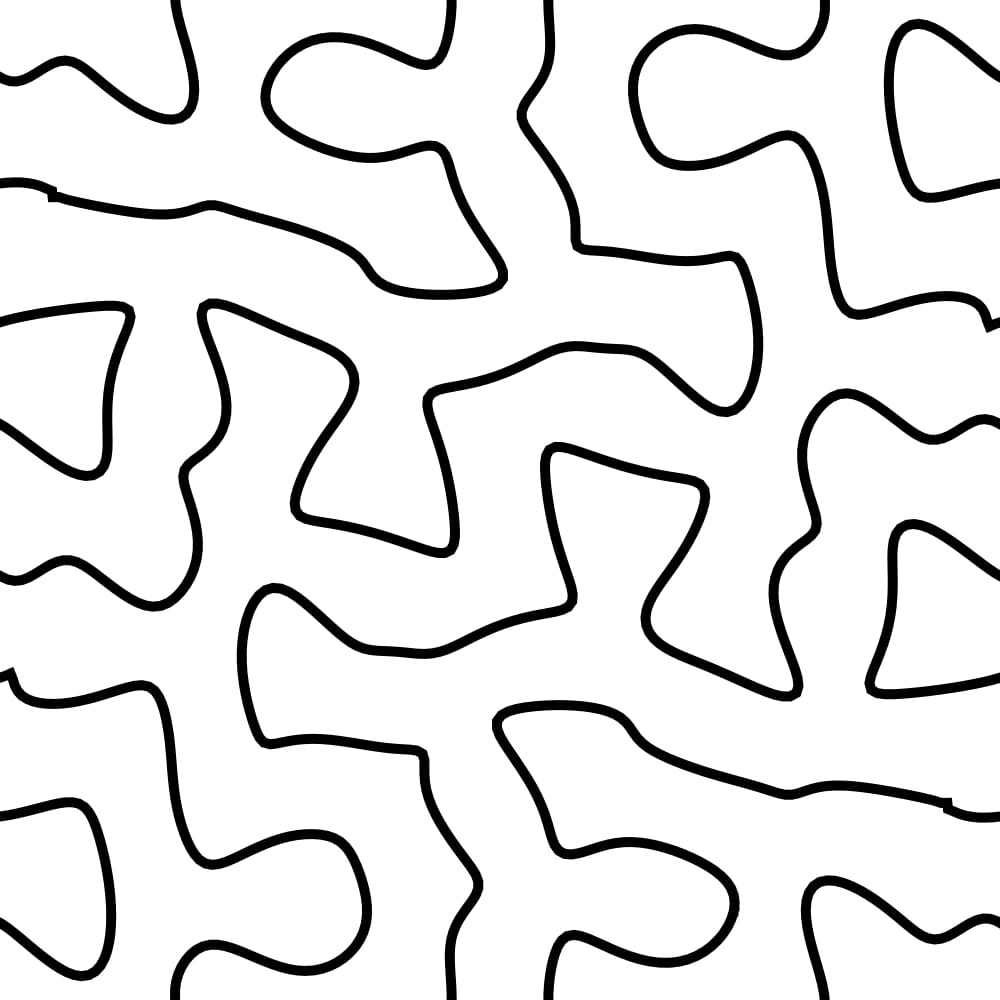} & 
		\includegraphics[width=0.16\textwidth]{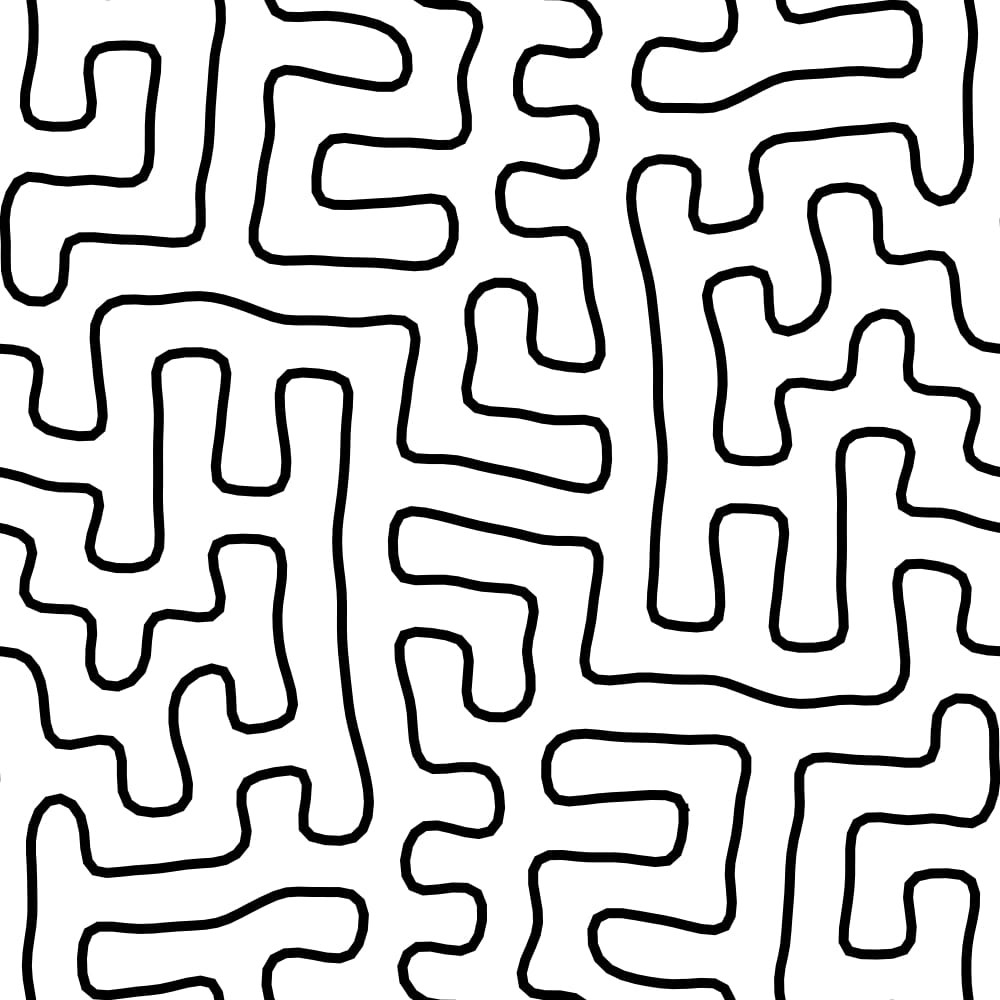} & 
		\includegraphics[width=0.16\textwidth]{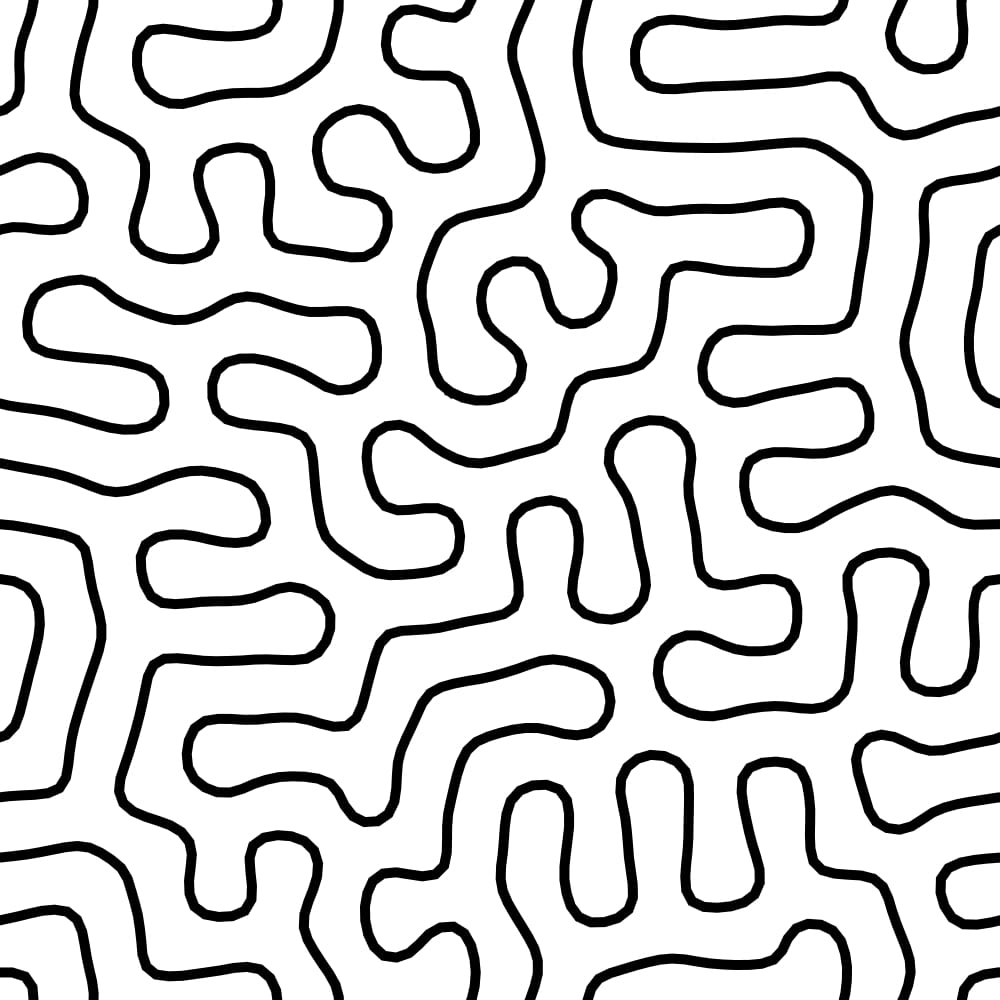} & 
		\includegraphics[width=0.16\textwidth]{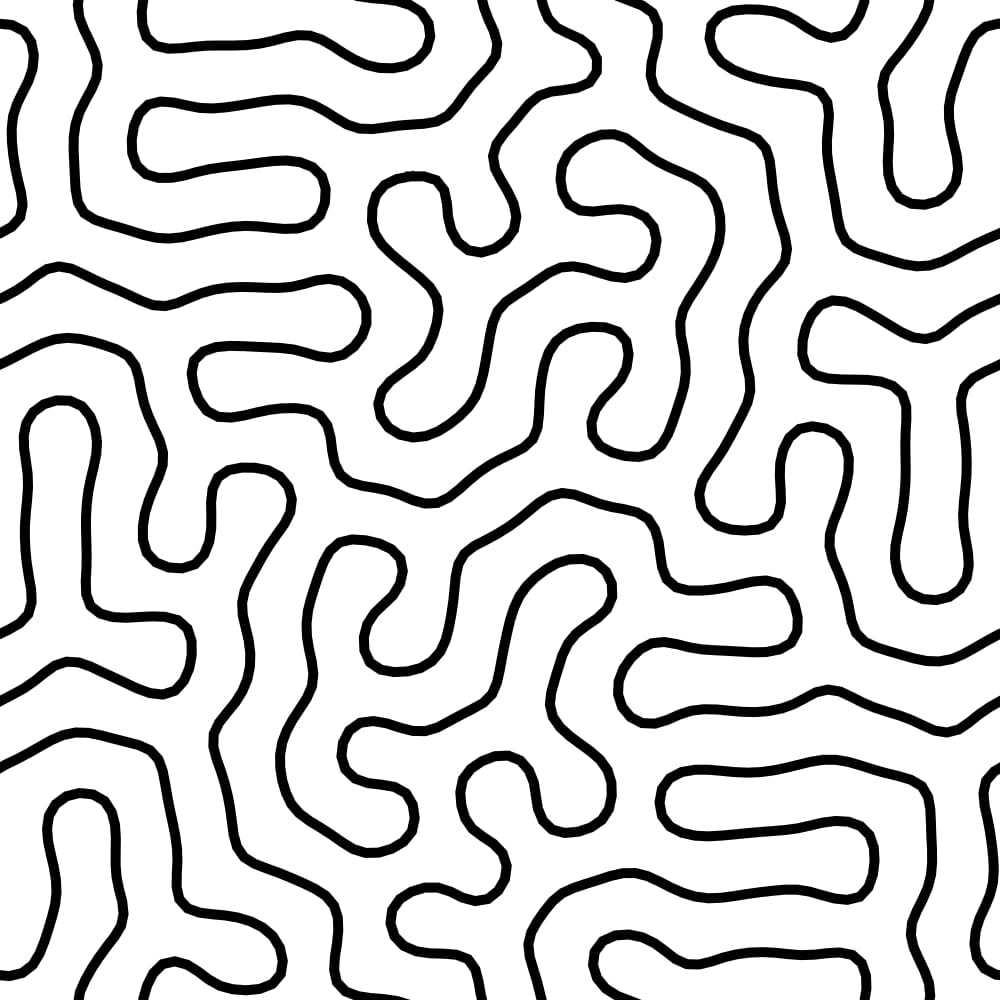} & 
		\includegraphics[width=0.16\textwidth]{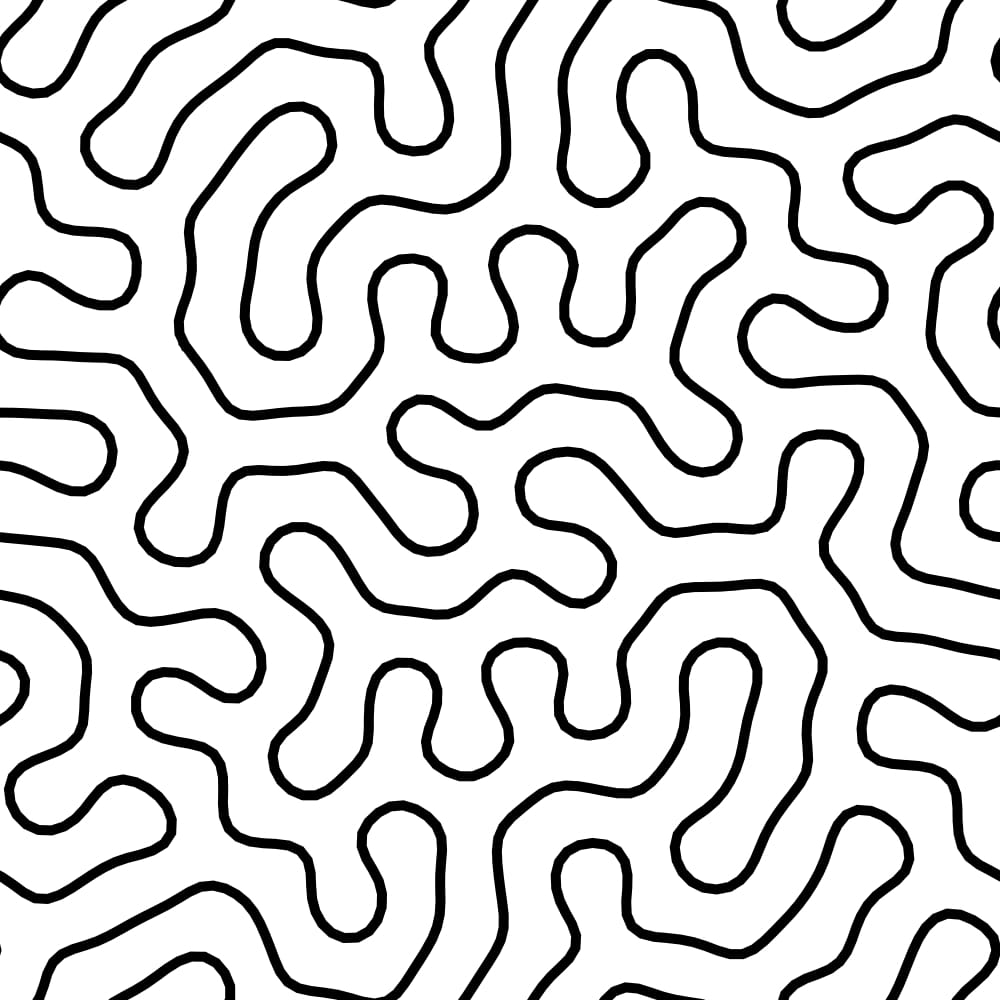} \\
		$\ell(\gamma)\approx 10.42$ &
		$\ell(\gamma)\approx 16.96$ &
		$\ell(\gamma)\approx 16.77$ &
		$\ell(\gamma)\approx 16.63$ &
		$\ell(\gamma)\approx 16.4$\\ 
		\includegraphics[width=0.16\textwidth]{Images/Dith/T8_L8} & 
		\includegraphics[width=0.16\textwidth]{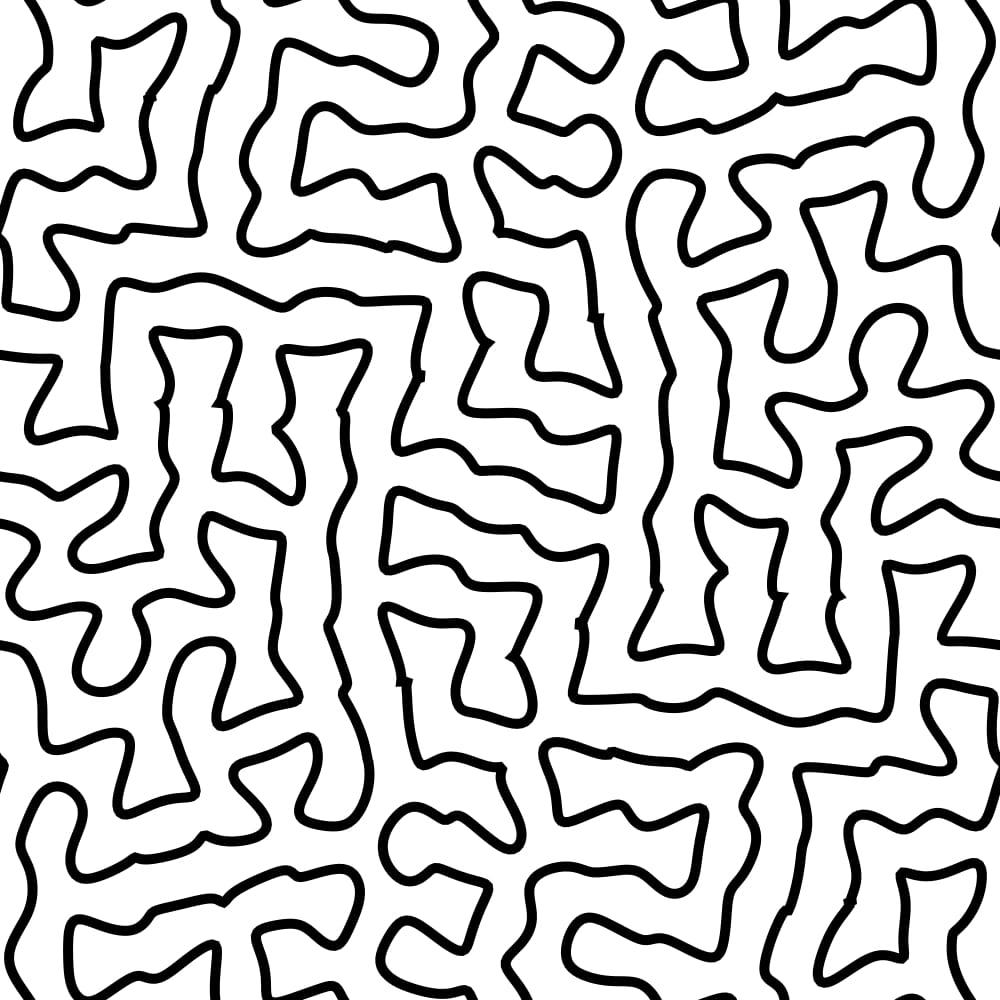} & 
		\includegraphics[width=0.16\textwidth]{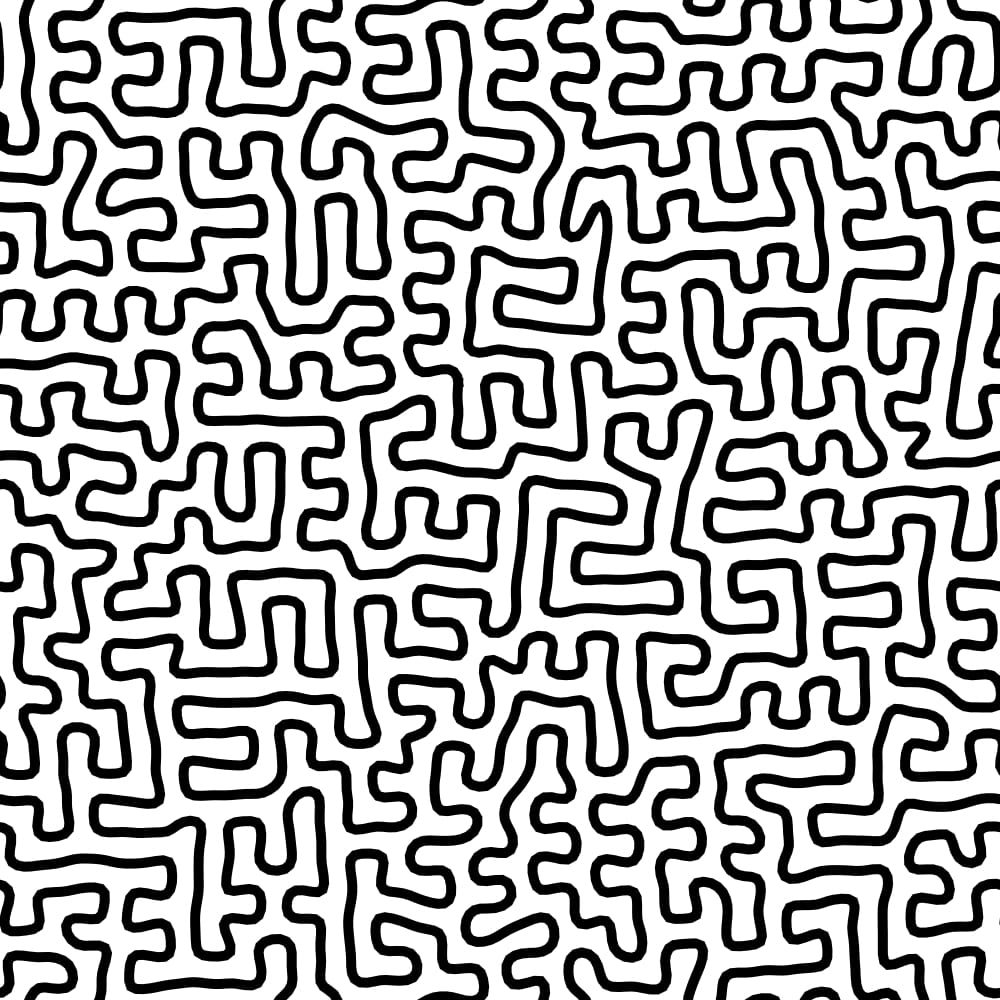} & 
		\includegraphics[width=0.16\textwidth]{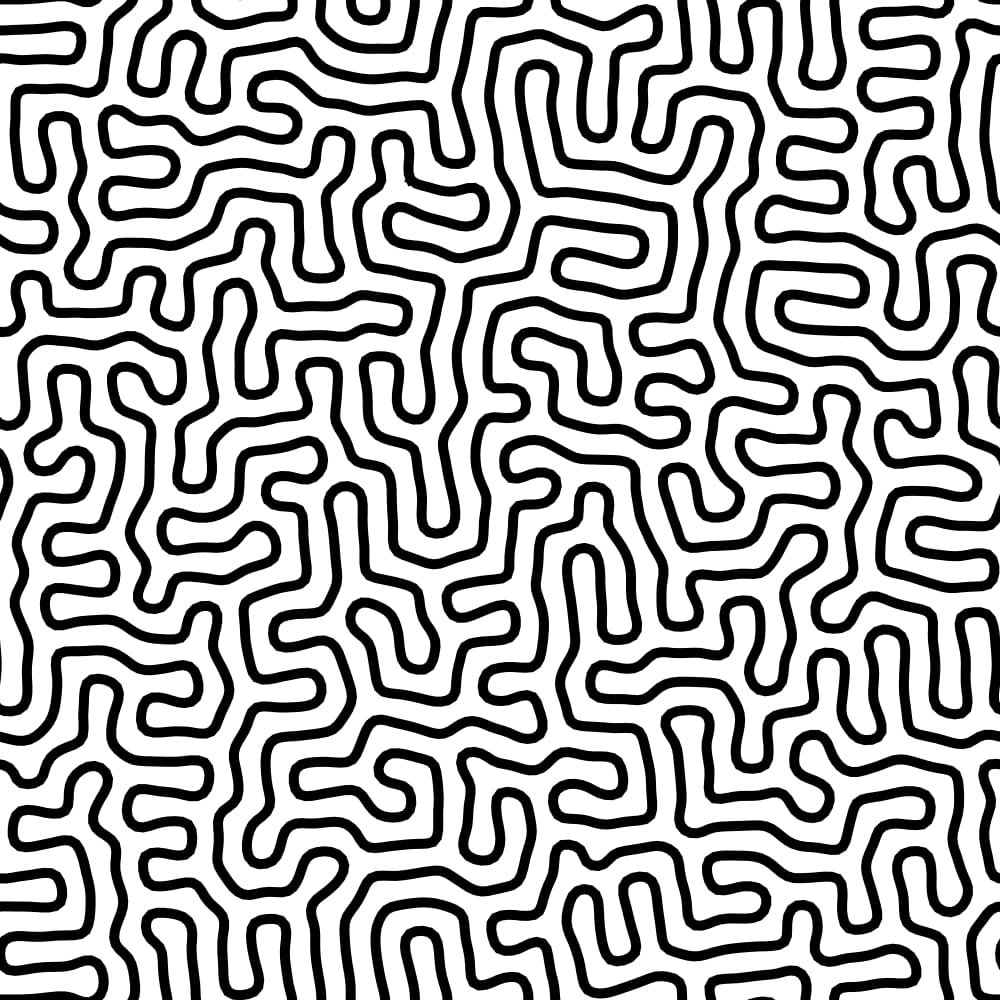} & 
		\includegraphics[width=0.16\textwidth]{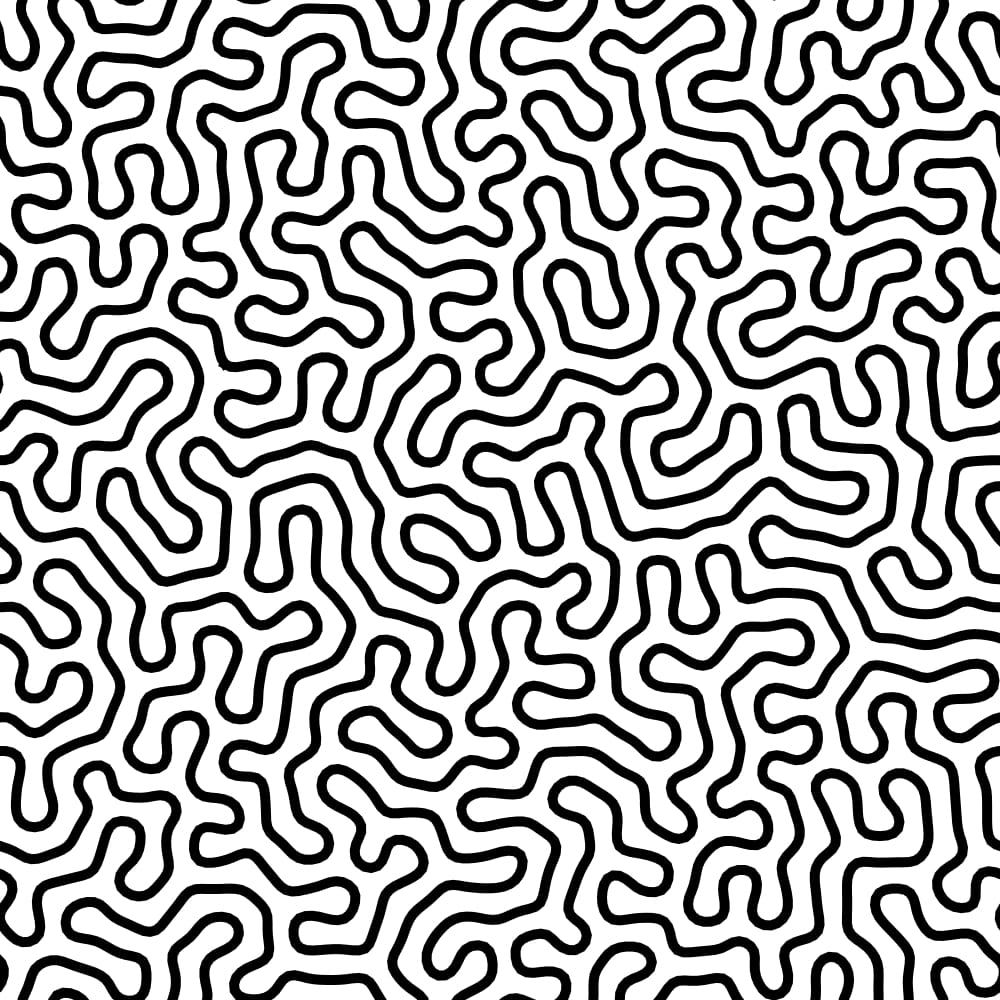} \\
		$\ell(\gamma)\approx 10.48$ &
		$\ell(\gamma)\approx 20.83$ &
		$\ell(\gamma)\approx 34.09$ &
		$\ell(\gamma)\approx 33.52$ &
		$\ell(\gamma)\approx 33.35$\\ 
	\end{tabular}
	\caption{Influence of $r$ on the local minimizer of \eqref{eq:final} for the Lebesgue measure on $\T^2$.
		Column-wise we increase $r=16,32,64,128,256$ and row-wise we increase $L=2,4,8,16$, where $\lambda = 0.2 L^{-5}$ and $N=20 L^{2}$. 
		Note that the degree $r$  steers the resolution of the curves. 
		It appears that the spacing of the curves is bounded by $r^{-1}$.}
	\label{fig:t_experiment}
\end{figure}

\subsection{Quasi-optimal curves on special manifolds}

In this subsection, we give numerical examples for $\X \in \{\T^2,\T^3,\S^2, \SO(3), \mathcal G_{2,4} \}$.
Since the objective function in \eqref{eq:final} is highly non-convex, the main problem is to find nearly optimal curves $\gamma_{L} \in  \mathcal P_{L}^{\Lcurve}(\mathbb X)$ for increasing $L$. 
Our heuristic is as follows:
\begin{itemize}
	\item[i)] We start with a curve $\gamma_{L_{0}}\colon[0,1]\to \mathbb X$ of small length 
	$\ell(\gamma) \approx L_{0}$ and solve the problem \eqref{eq:final} 
	for increasing $L_{i} = c L_{i-1}$, $c>1$, 
	where we choose the parameters $N_{i}$, $\lambda_{i}$ and $r_{i}$ in dependence on $L_{i}$ as described in the previous subsection.
	In each step a local minimizer is computed using the CG method with 100 iterations. 
	Then, the obtained minimizer $\gamma_{i}$ serves as the initial guess in the next step, which is obtained by inserting the midpoints. 
	\item[ii)] In case that the resulting curves $\gamma_i$ have non-constant speed, each is refined by increasing $\lambda_{i}$ and $N_{i}$.
	Then, the resulting problem is solved with the CG method and $\gamma_i$ as initialization.
	Details on the parameter choice are given in the according examples. 
\end{itemize}

The following examples show that this recipe indeed  enables us to compute ``quasi-optimal'' curves, 
meaning that the obtained minimizers have optimal decay in the discrepancy.
\smallskip

\textbf{2d-Torus $\T^2$.}
In this example we illustrate how well a gray-valued image (considered as probability density) may be approximated by an almost constant speed curve. 
The original image of size 170x170 is depicted in the bottom-right corner of Fig.~\ref{fig:tiger}.
Its Fourier coefficients $\hat \mu_{k_{1},k_{2}}$ are computed by a discrete Fourier transform (DFT) 
using the FFT algorithm and normalized appropriately. The kernel $K$ is given by \eqref{kernel_Td} with $d=2$ and $s = 3/2$.

We start with $N_{0}=96$ points on a circle given by the formula
\[
x_{0,k} = \Bigl( \tfrac15 \cos(2\pi k/N_{0}), \tfrac15 \sin(2\pi k/N_{0})\Bigr), \qquad k=0,\dots,N_{0}.
\]
Then, we apply our procedure for  $i=0,\dots,11$ with parameters
\[
L_{i} = 0.97\cdot 2^{\frac{i+5}{2}},\quad \lambda_{i} = 100\cdot L_{i}^{-5},\quad N_{i} =  96\cdot 2^{i} \sim L_{i}^{2}
\quad r_{i}= \lfloor 2^{\frac{i+11}{2}} \rfloor \sim L_{i},
\]
chosen such that the length
of the local minimizer $\gamma_{i}$ satisfies 
$\ell(\gamma_{i}) \approx 2^{(i+5)/2}$ and the maximal speed  is close to $L_{i}$. 

To get nearly constant speed curves $\gamma_i$, see ii), we increase 
$\lambda_{i}$ by a factor of 100, $N_{i}$ by a factor of 2 and set $L_{i} \coloneqq 2^{(i+5)/2}$. 
Then, we apply the CG method with maximal 100 iterations and $i$ restarts.
The results are depicted in Fig.~\ref{fig:tiger}. 
Note that the complexity for the evaluation of the function in \eqref{eq:final} scales roughly as $N \sim L^{2}$.
In Fig.~\ref{fig:err_tiger} we observe that the decay-rate of the squared discrepancy $\mathscr{D}_K^{2}(\mu,\nu)$ 
in dependence on the Lipschitz constant $L$ matches indeed the theoretical findings of Theorem \ref{thm:torus better estimates}.
\begin{figure}
	\centering
	\begin{tabular}{ccc}
		\includegraphics[width=0.23\textwidth]{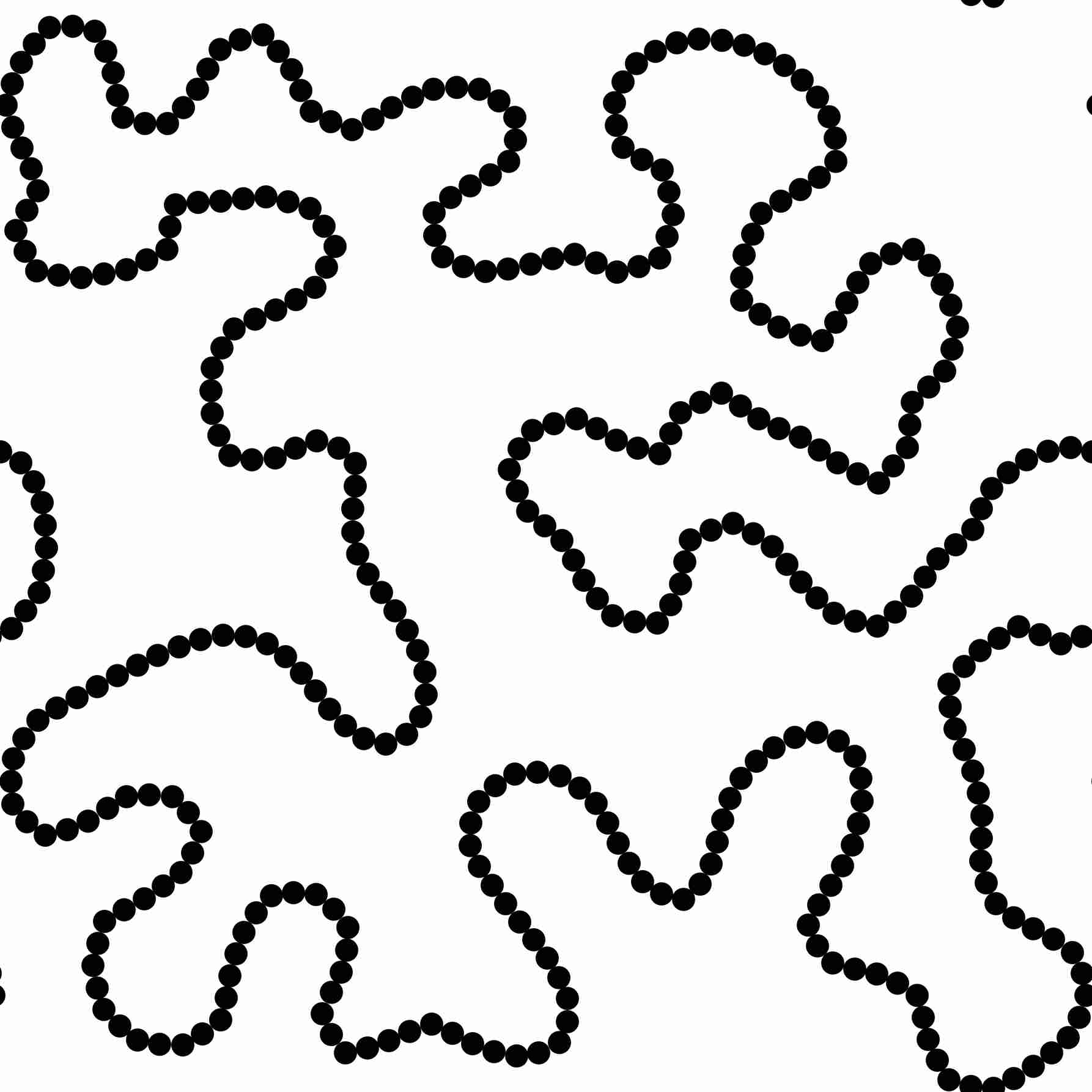} & 
		\includegraphics[width=0.23\textwidth]{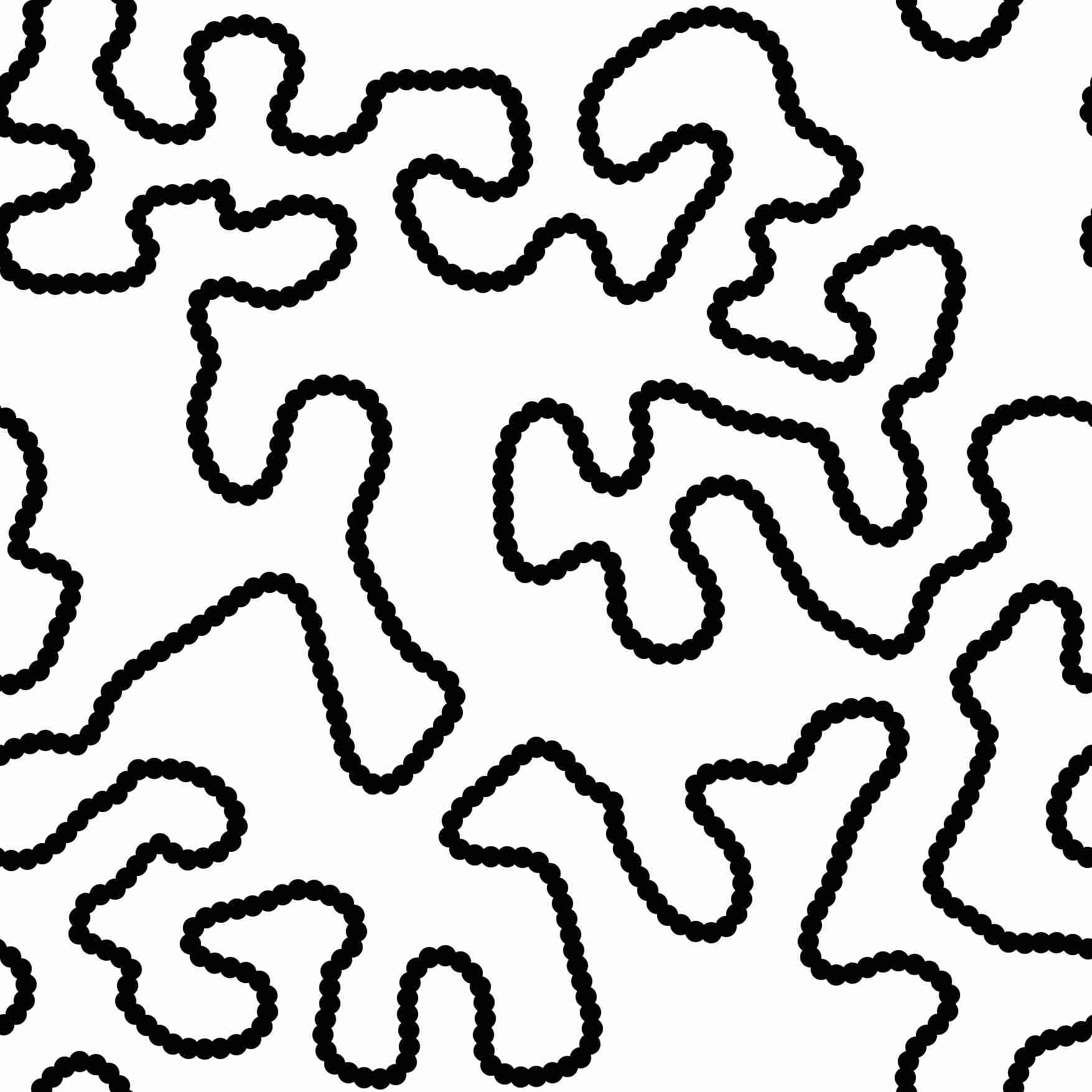} &
		\includegraphics[width=0.23\textwidth]{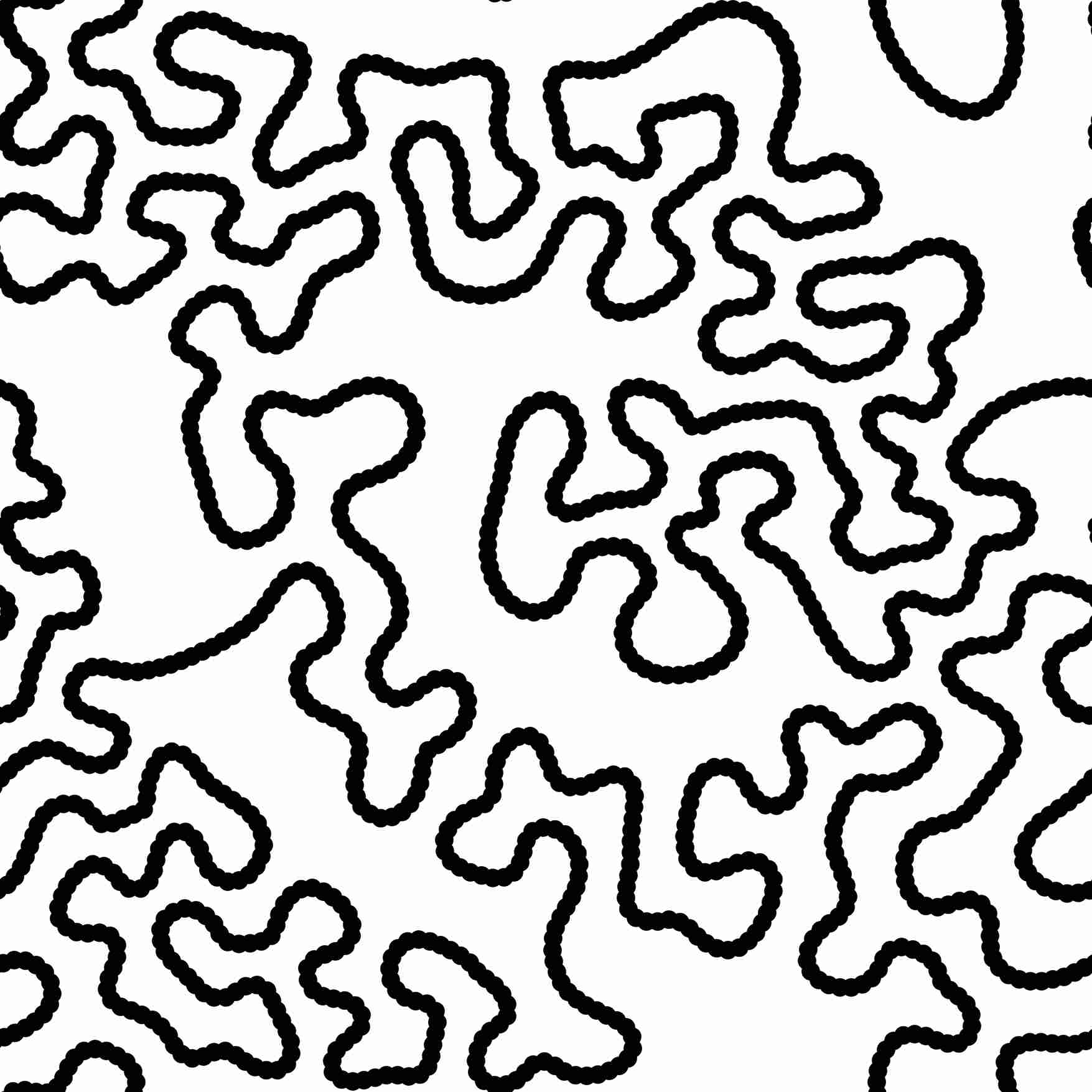} \\ 
		\includegraphics[width=0.23\textwidth]{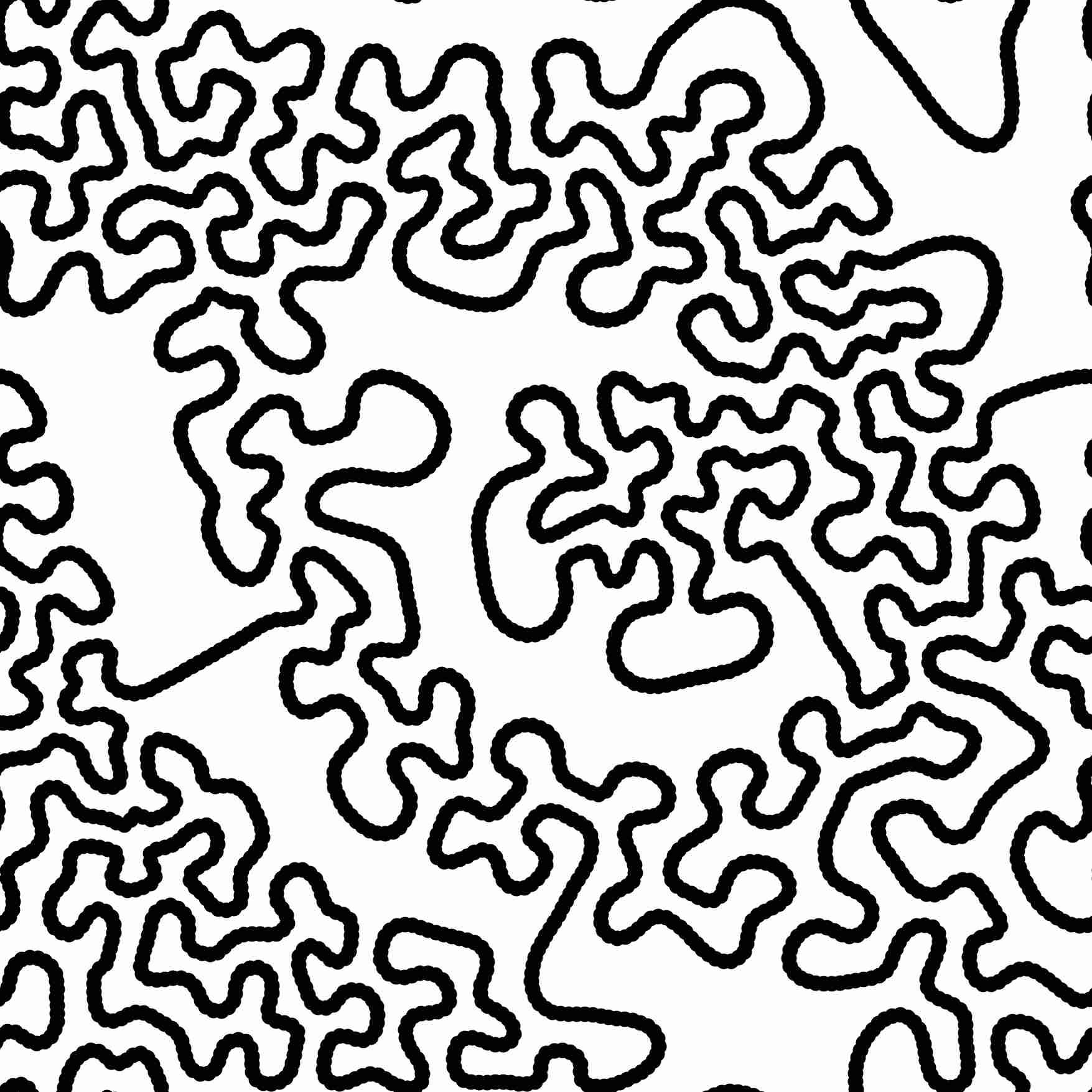} & 
		\includegraphics[width=0.23\textwidth]{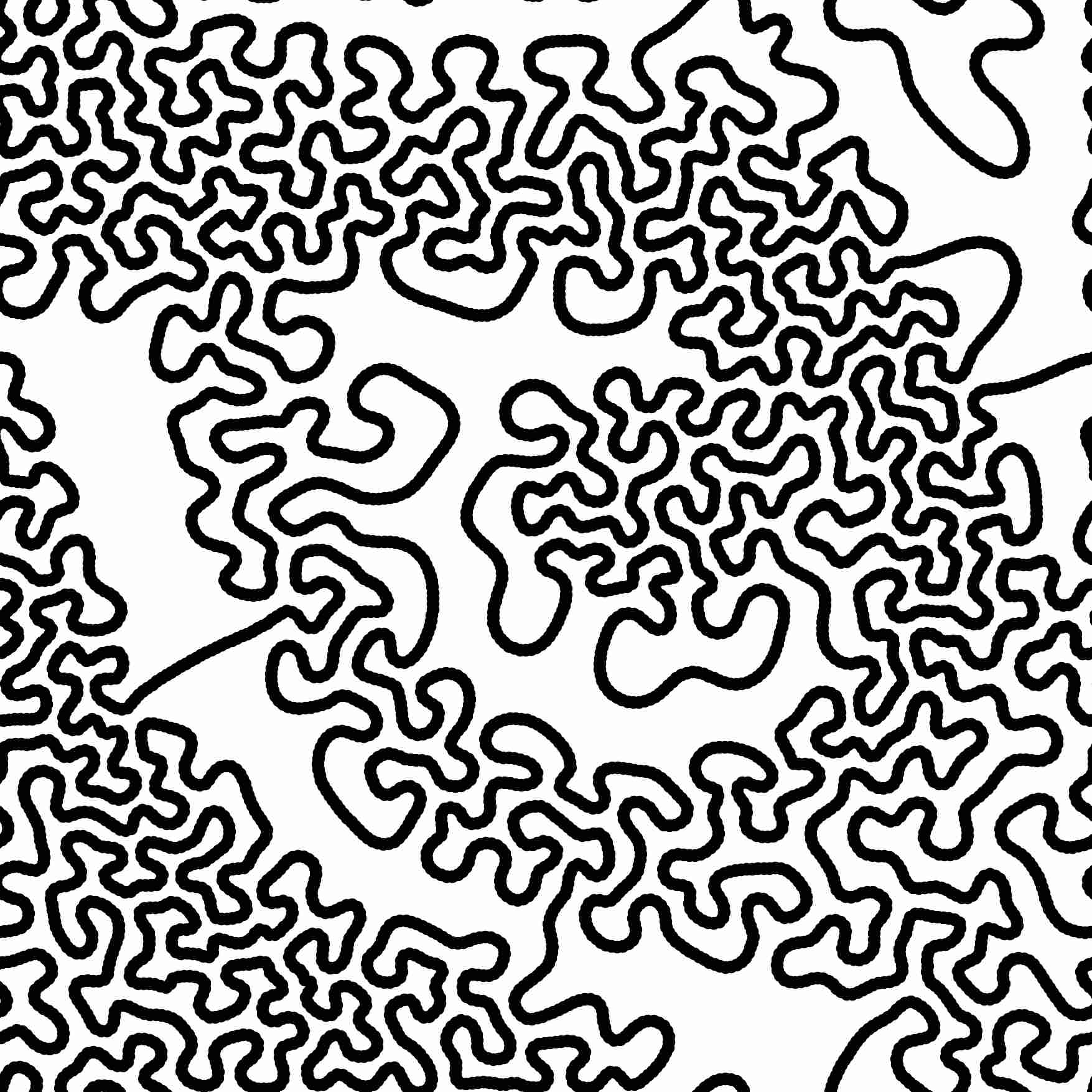} &
		\includegraphics[width=0.23\textwidth]{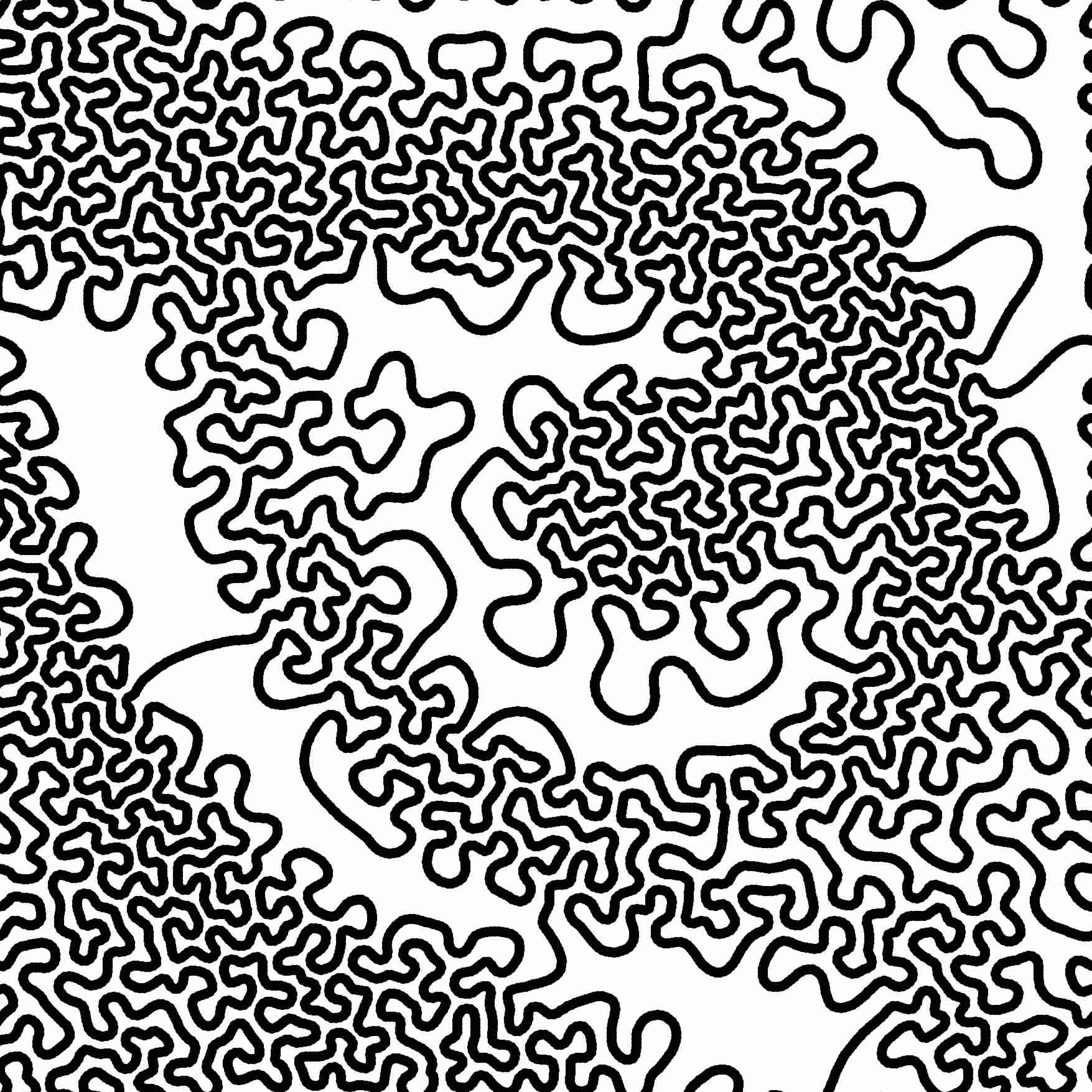} \\ 
		\includegraphics[width=0.23\textwidth]{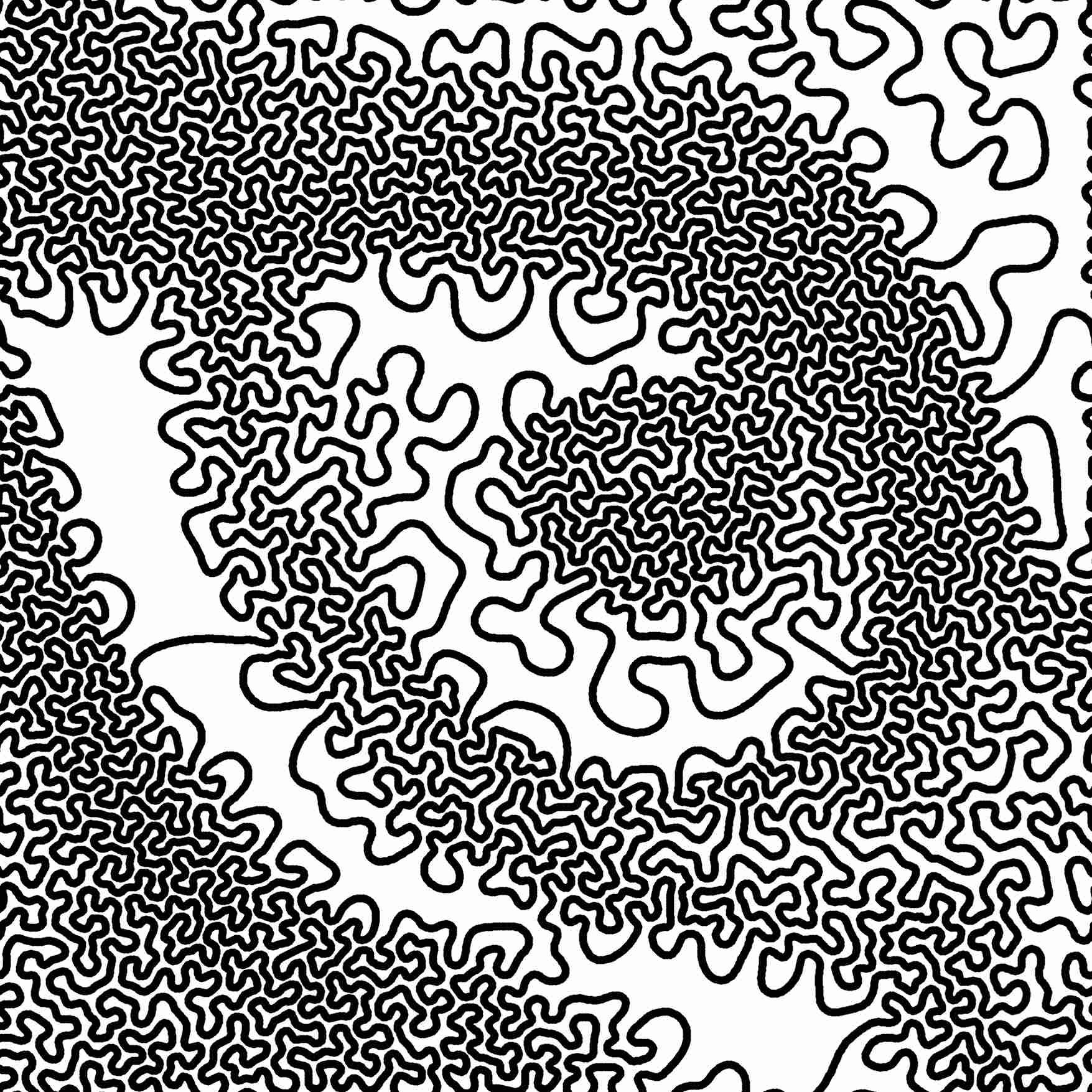} & 
		\includegraphics[width=0.23\textwidth]{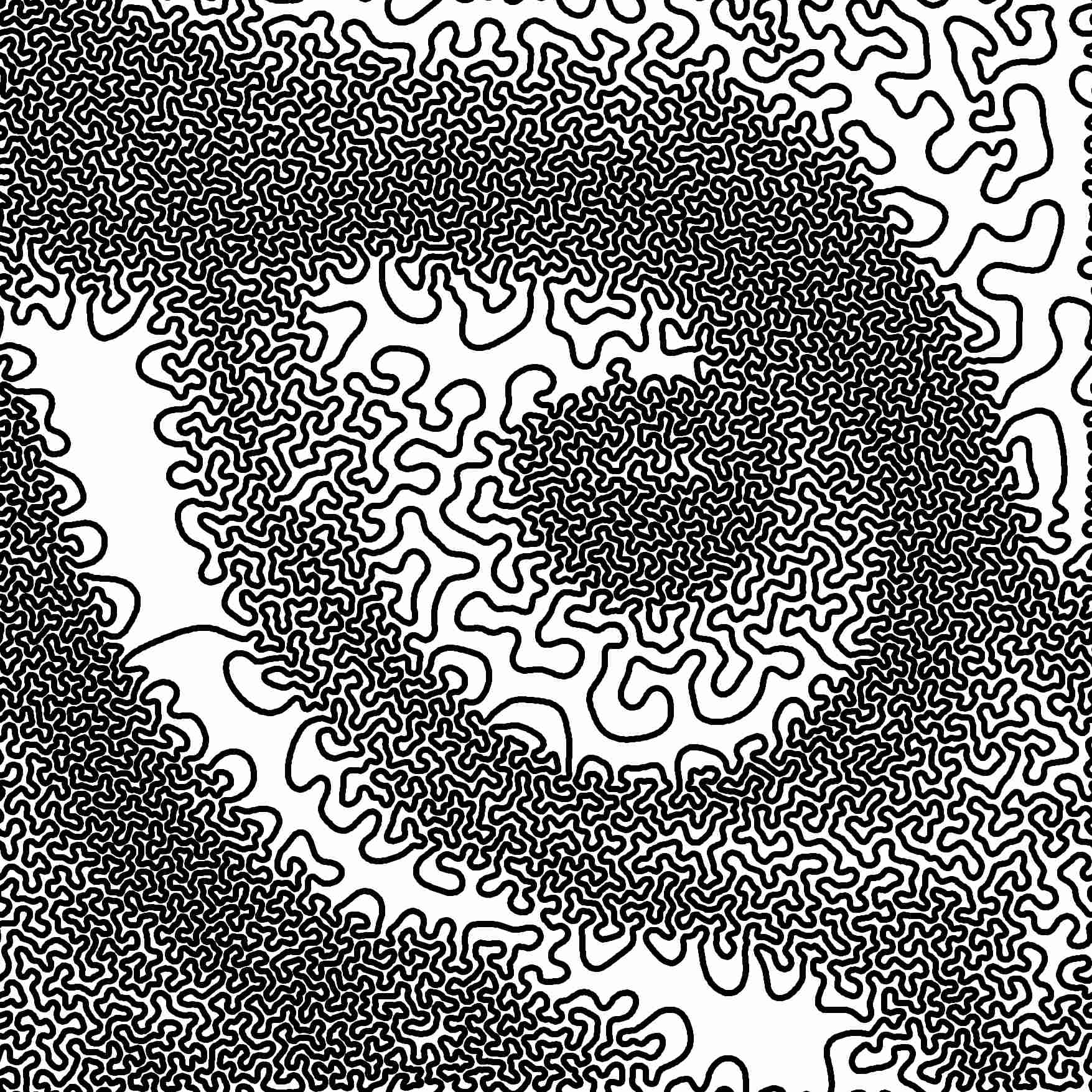} &    
		\includegraphics[width=0.23\textwidth]{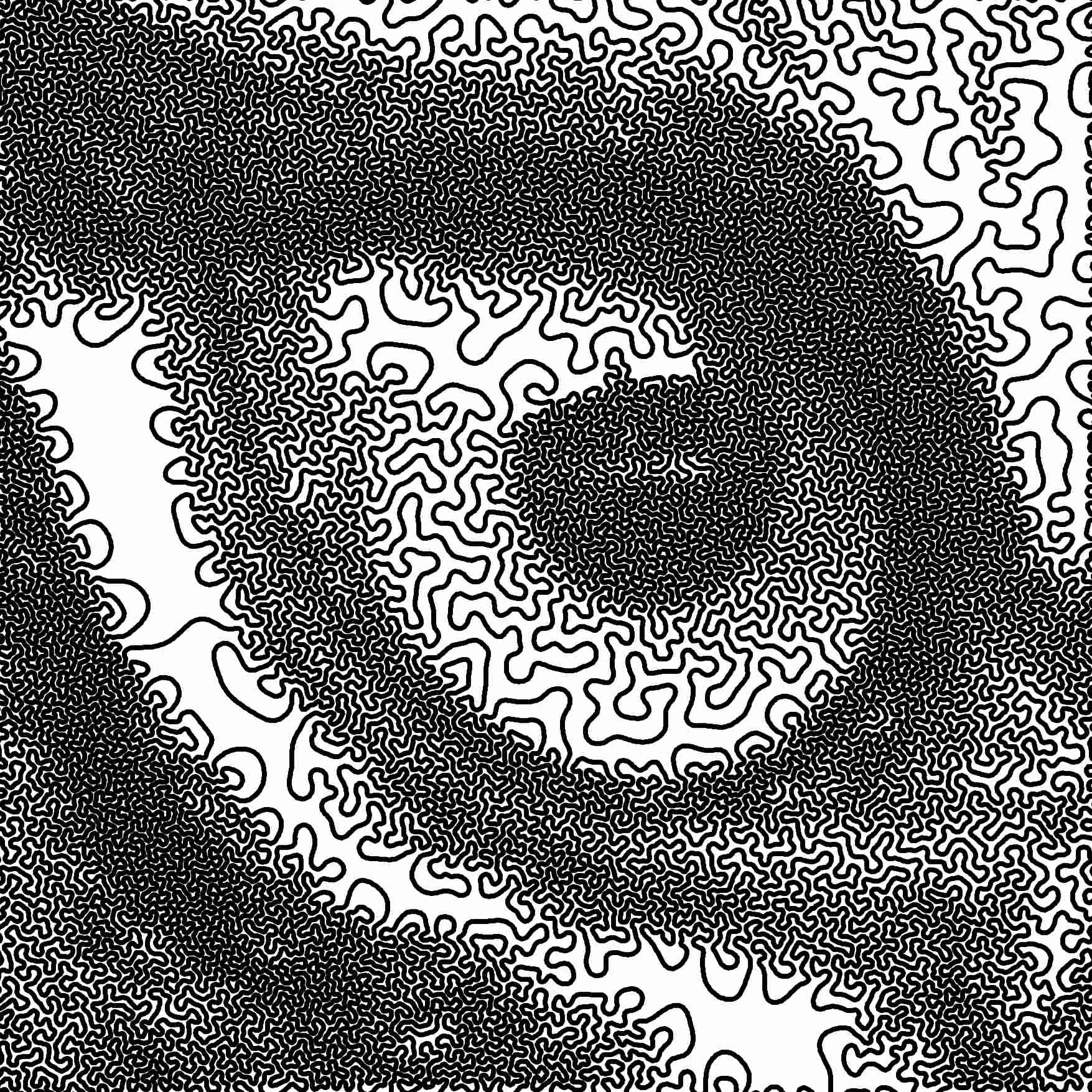} \\ 
		\includegraphics[width=0.23\textwidth]{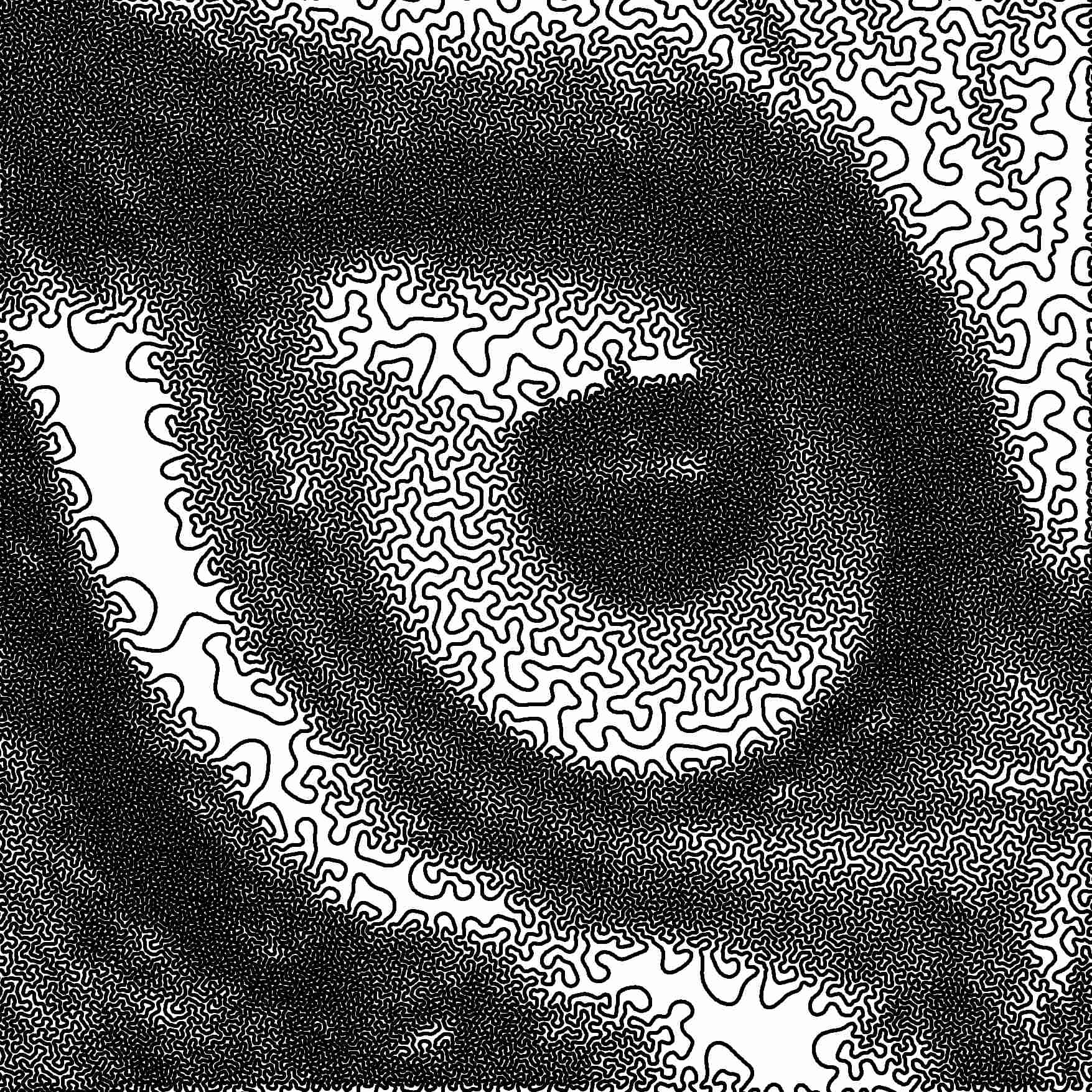} & 
		\includegraphics[width=0.23\textwidth]{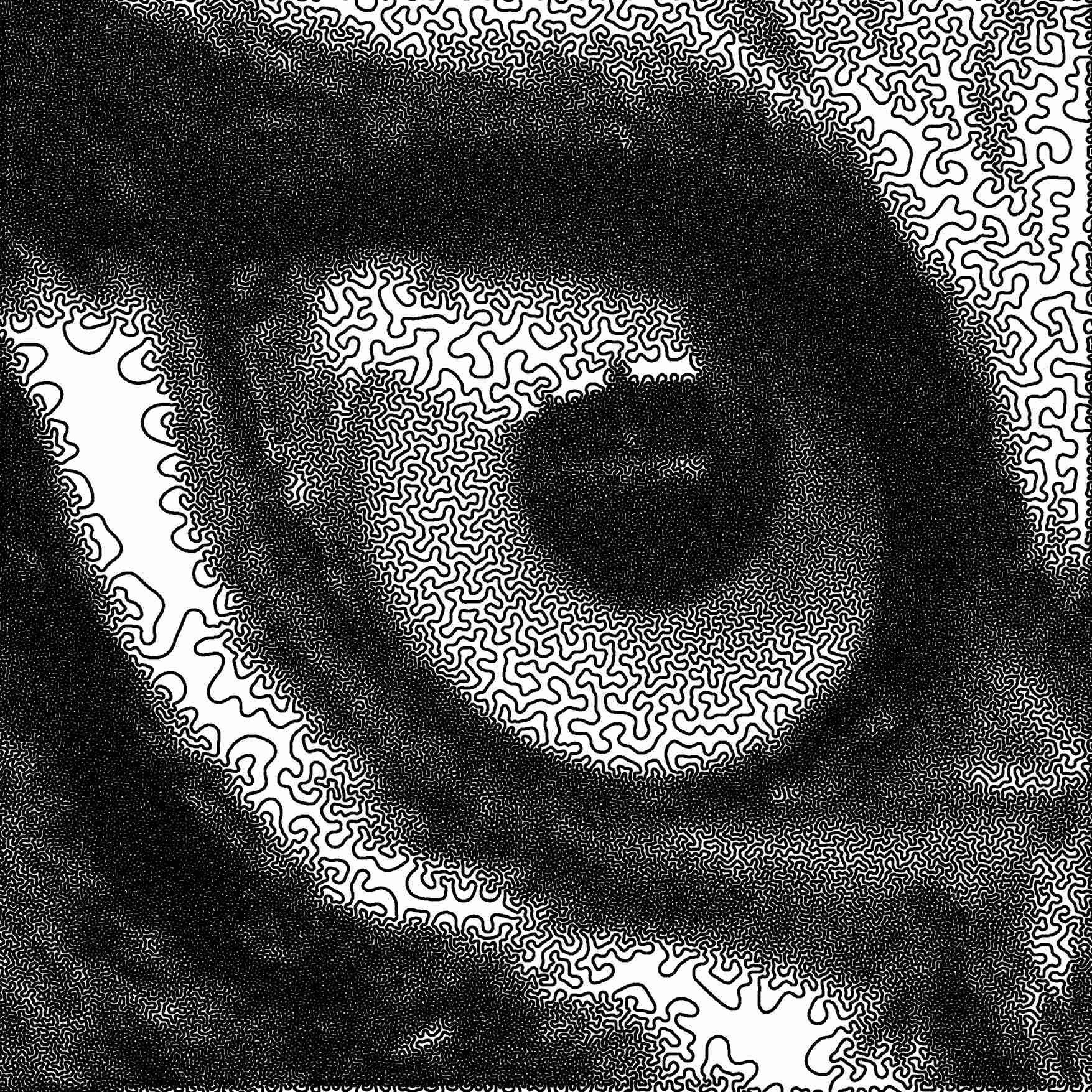} &    
		\includegraphics[width=0.23\textwidth]{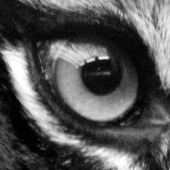} \\   
	\end{tabular}
	\caption{Local minimizers of \eqref{eq:final} for the image at bottom right.}
	\label{fig:tiger}
	\vspace{.73cm}
	\centering
	\includegraphics[width=0.7\textwidth]{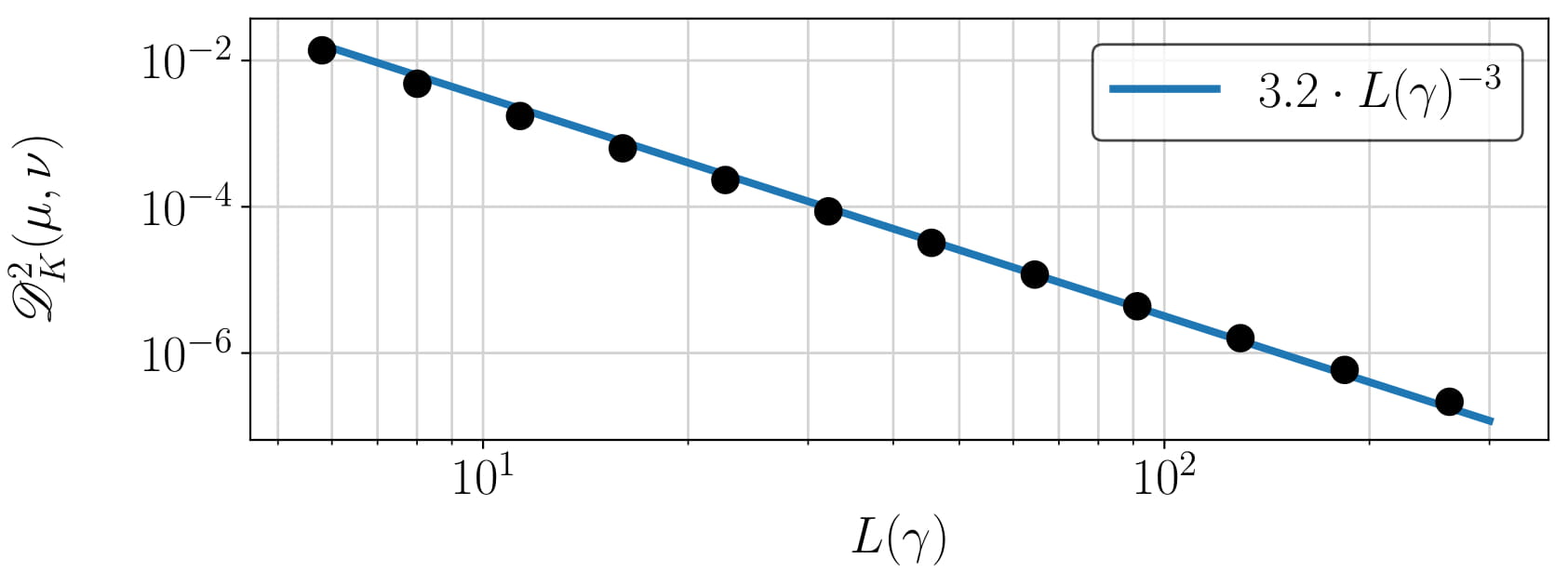}
	\caption{Squared discrepancy between the measure $\mu$ given  by the image in Fig.~\ref{fig:tiger} and the computed local minimizers (black dots) on $\mathbb T^{2}$ in log-scale.
		The blue line corresponds to the optimal decay-rate in  Theorem \ref{thm:torus better estimates}.}
	\label{fig:err_tiger}
\end{figure}
\smallskip

\textbf{3D-Torus $\T^3$.} 
The aim of this example is two-fold. First, it shows that the algorithm works pretty well in three dimensions. 
Second, we are able to approximate any compact surface in the three-dimensional space by a curve. 
We construct a measure $\mu$ supported around a two-dimensional surface by taking samples from Spock's head\footnote{http://www.cs.technion.ac.il/$\sim$vitus/mingle/} 
and placing small Gaussian peaks at the sampling points, i.e., the density is given for $x \in [-\tfrac12,\tfrac12]$ by
\[
\rho(x) \coloneqq c^{-1} \sum_{p \in S} \mathrm{e}^{-30000 \|p-x\|_2^2}, \qquad c \coloneqq \int_{[-\tfrac12,\tfrac12]^3} \sum_{p \in S} \mathrm{e}^{-30000 \|p-x\|_2^2} \dx x,
\]
where $S\subset [-\tfrac12,\tfrac12]^{3}$ is the discrete sampling set.
From a numerical point of view it holds $\mathrm{dim}(\supp (\mu)) = 2$.
The Fourier coefficients are again computed by a DFT and the kernel $K$ is given by \eqref{kernel_Td} with $d=3$ and $s = 2$ so that $H_{K} = H^{2}(\mathbb T^{3})$.

We start with $N_{0}=100$ points on a smooth curve given by the formula
\[
x_{0,k} =  \Bigl( \tfrac{3}{10} \cos(2\pi k/N_{0}), \tfrac{3}{10} \sin(2\pi k/N_{0}), \tfrac{3}{10} \sin(4\pi k/N_{0})\Bigr), \qquad k=0,\dots,N_{0}.
\]
Then, we apply our procedure for  $i=0,\dots,8$ with parameters, cf.~Remark~\ref{rem:param_choice},
\[
L_{i} = 2^{\frac{i+5}{2}},\quad \lambda_{i} = 10\cdot L_{i}^{-5},\quad N_{i} = 100 \cdot 2^{i} \sim L_{i}^{2}, \quad r_{i}= \lfloor 2^{\frac{i+5}{2}} \rfloor \sim L_{i}.
\]
To get nearly constant speed curves  $\gamma_i$, we increase $\lambda_{i}$ by a factor of 100, $N_{i}$ by a factor of 2 and set $L_{i} \coloneqq 2^{(i+6)/2}$. 
Then, we apply the CG method with maximal 100 iterations and one restart to the previously found curve $\gamma_{i}$.
The results are illustrated in Fig.~\ref{fig:spock}.  
Note that the complexity of the function evaluation in \eqref{eq:final} scales roughly as $N^{3/2} \sim L^{3}$.
In Fig.~\ref{fig:err_spock} we depict the squared discrepancy $\mathscr{D}_K^{2}(\mu,\nu)$ of the computed curves. For small Lipschitz constants, say $L(\gamma) \le 50$,  
we observe a decrease of approximately $L(\gamma)^{-3}$, which matches the optimal decay-rate for measures supported on surfaces as discussed in Remark \ref{rem:andere}.
\begin{figure}
	\centering
	\begin{tabular}{ccc}
		\includegraphics[width=0.29\textwidth]{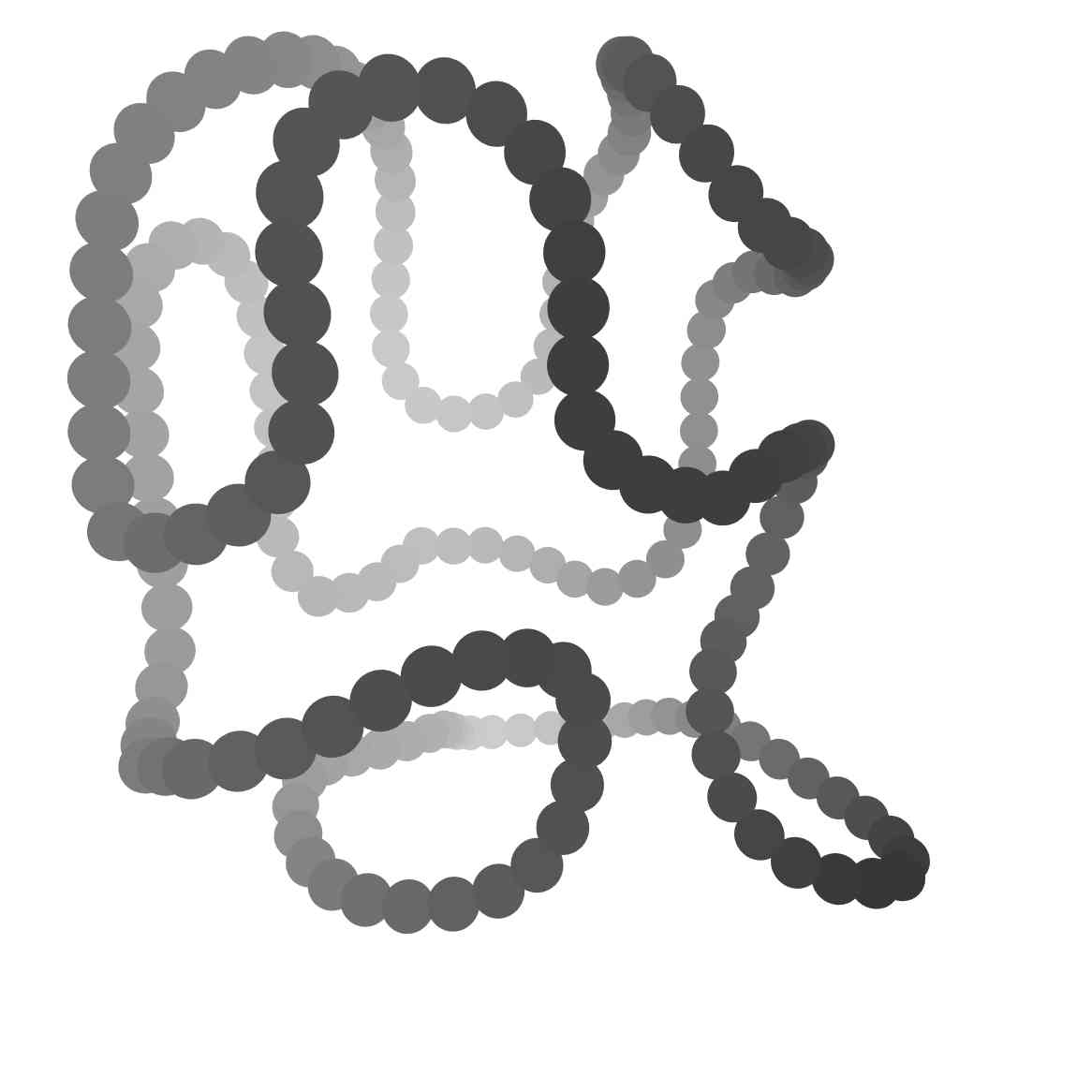} & 
		\includegraphics[width=0.29\textwidth]{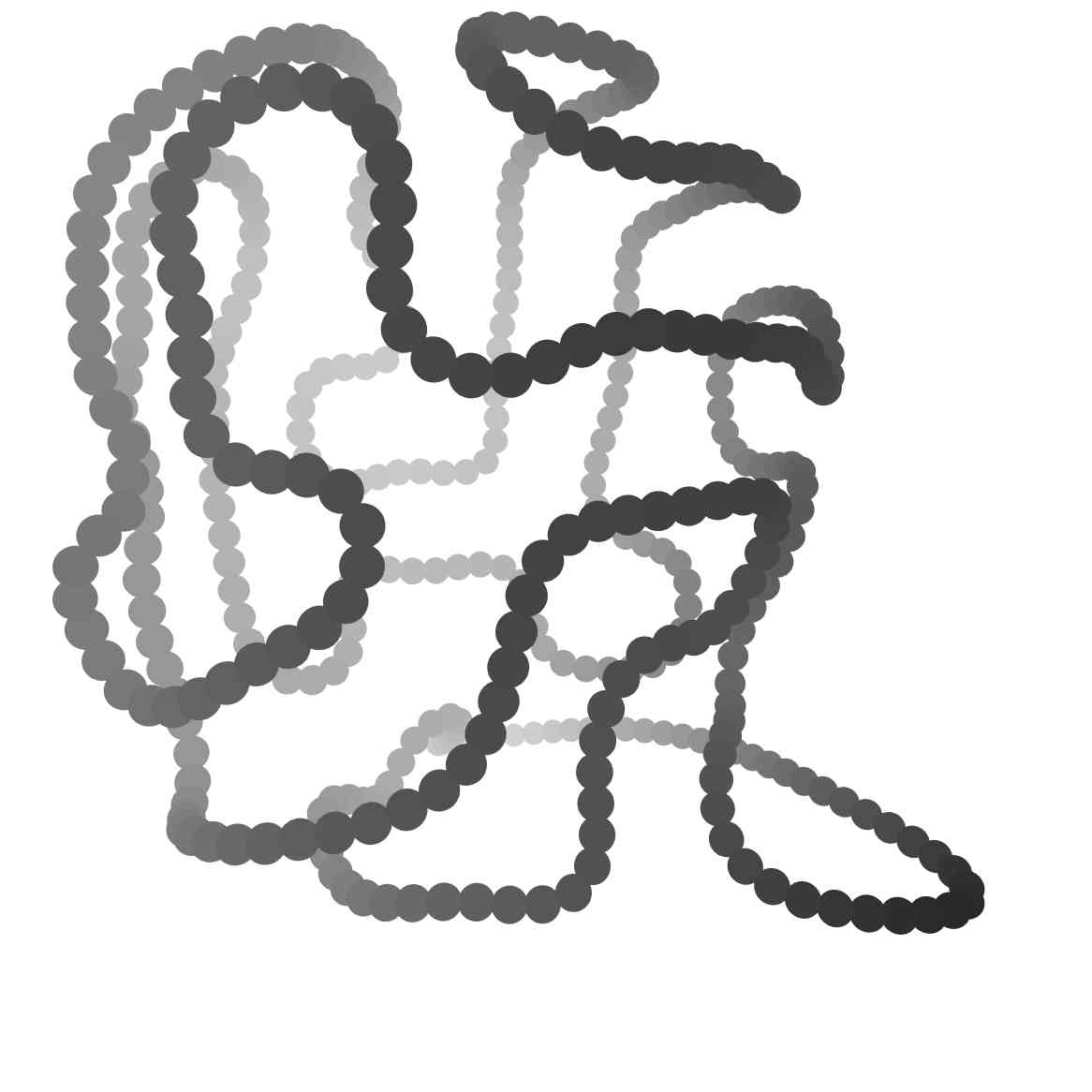} & 
		\includegraphics[width=0.29\textwidth]{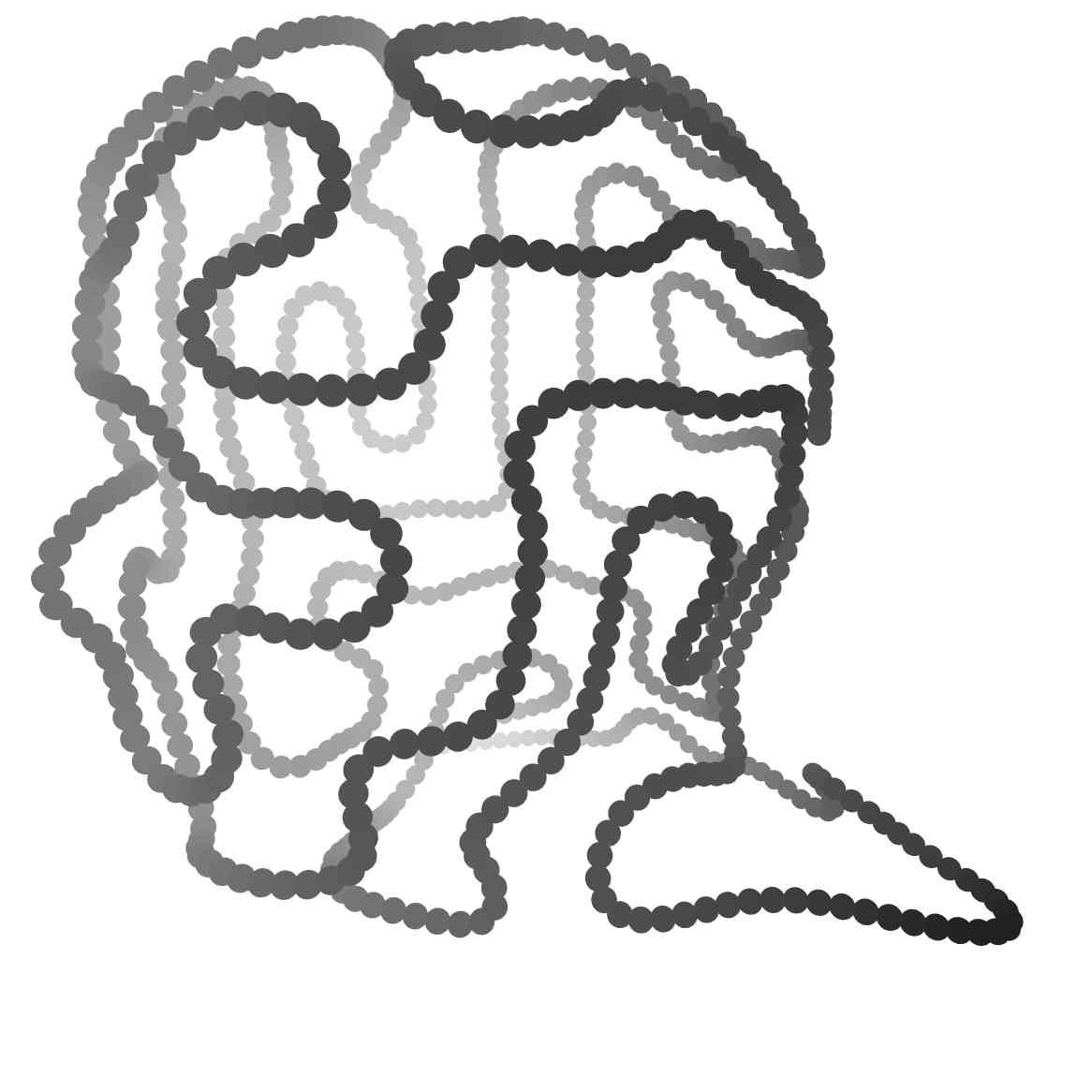} \\
		\includegraphics[width=0.29\textwidth]{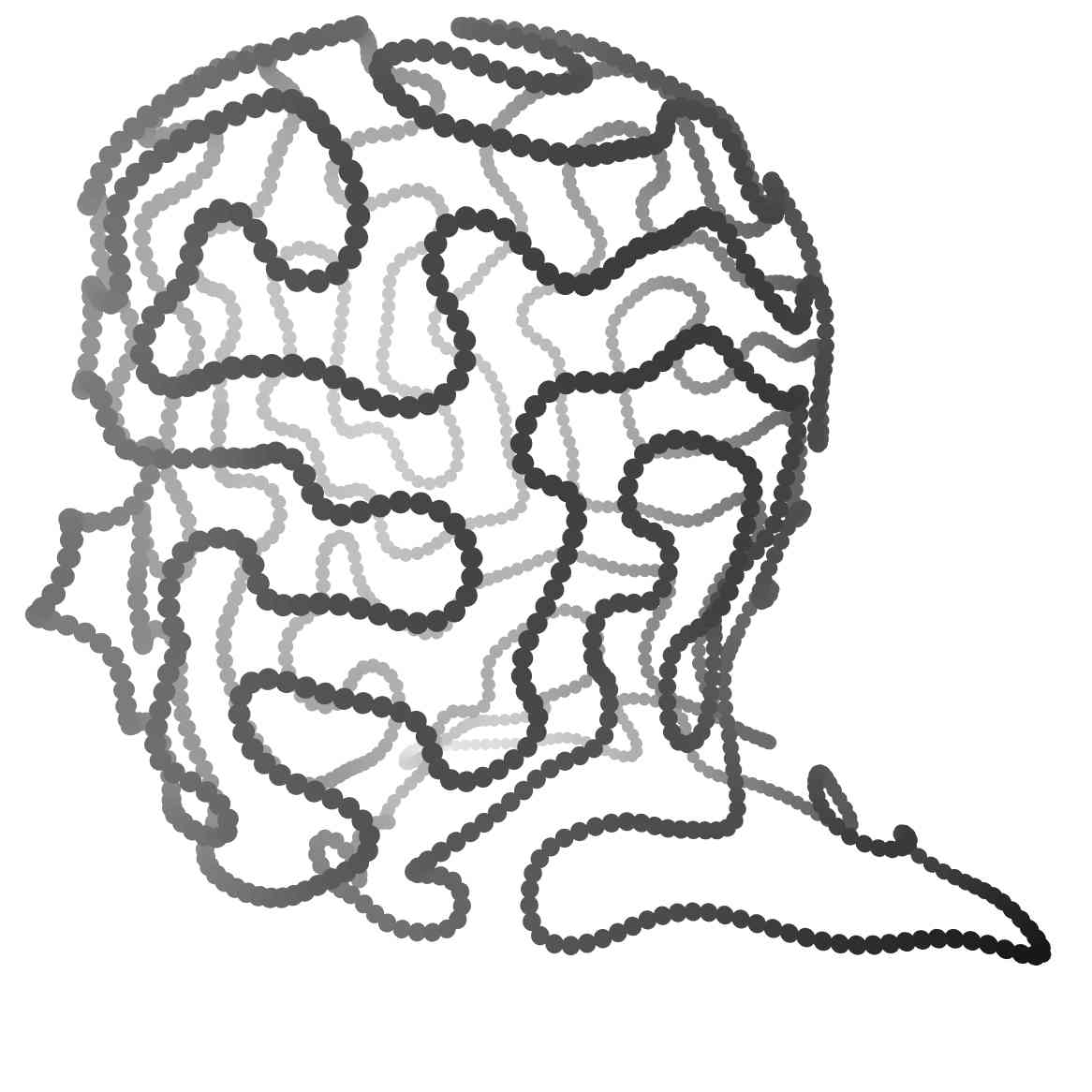} & 
		\includegraphics[width=0.29\textwidth]{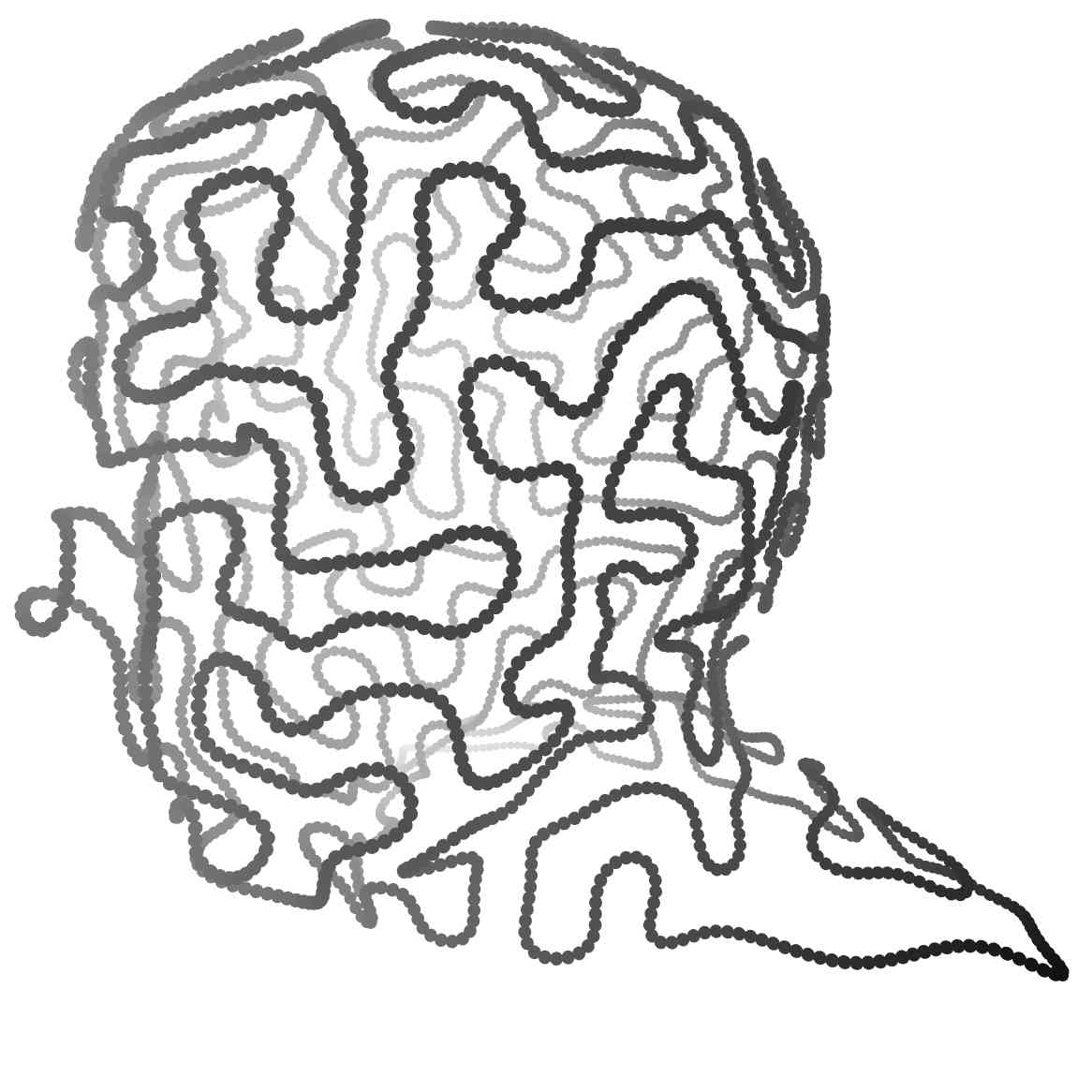} & 
		\includegraphics[width=0.29\textwidth]{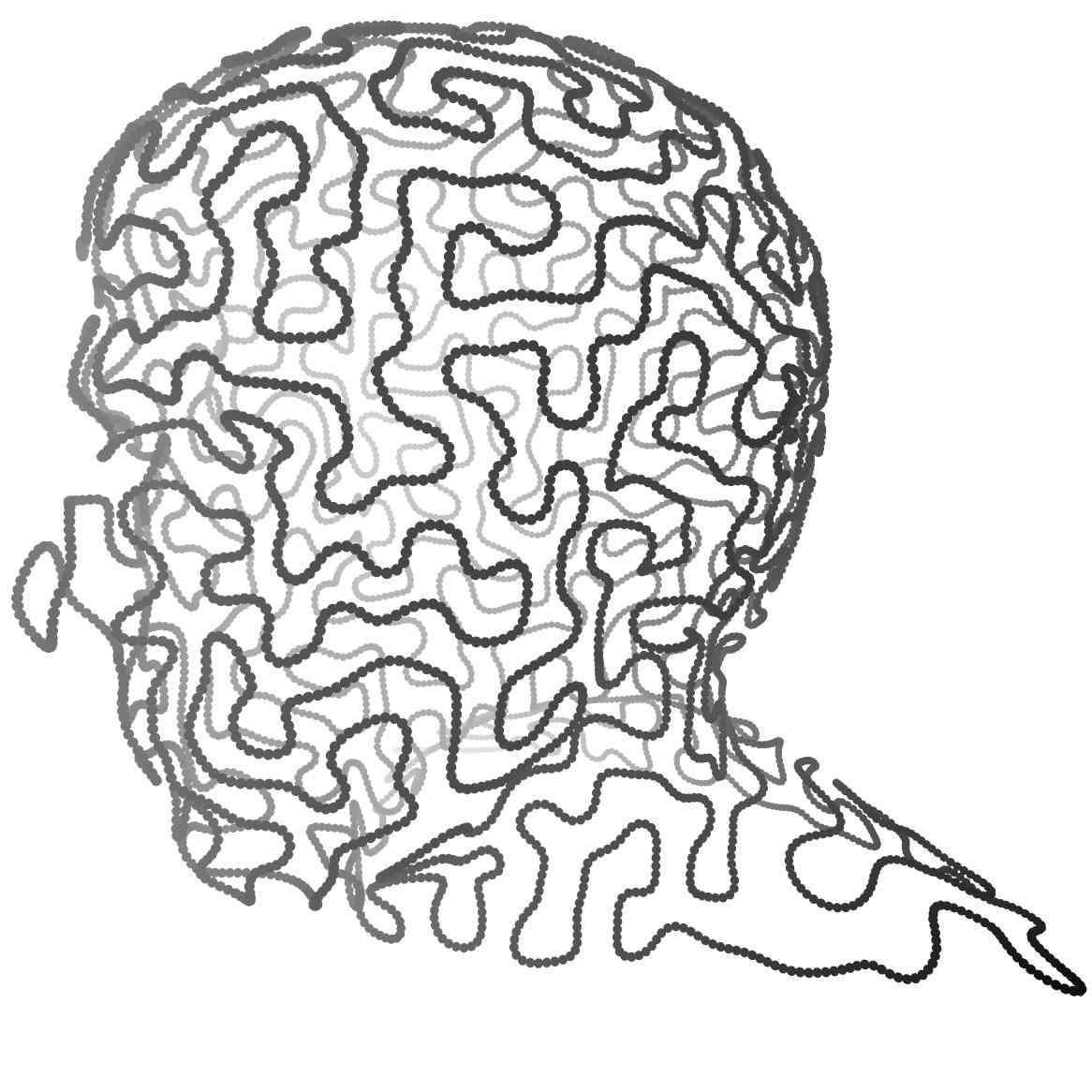} \\ 
		\includegraphics[width=0.29\textwidth]{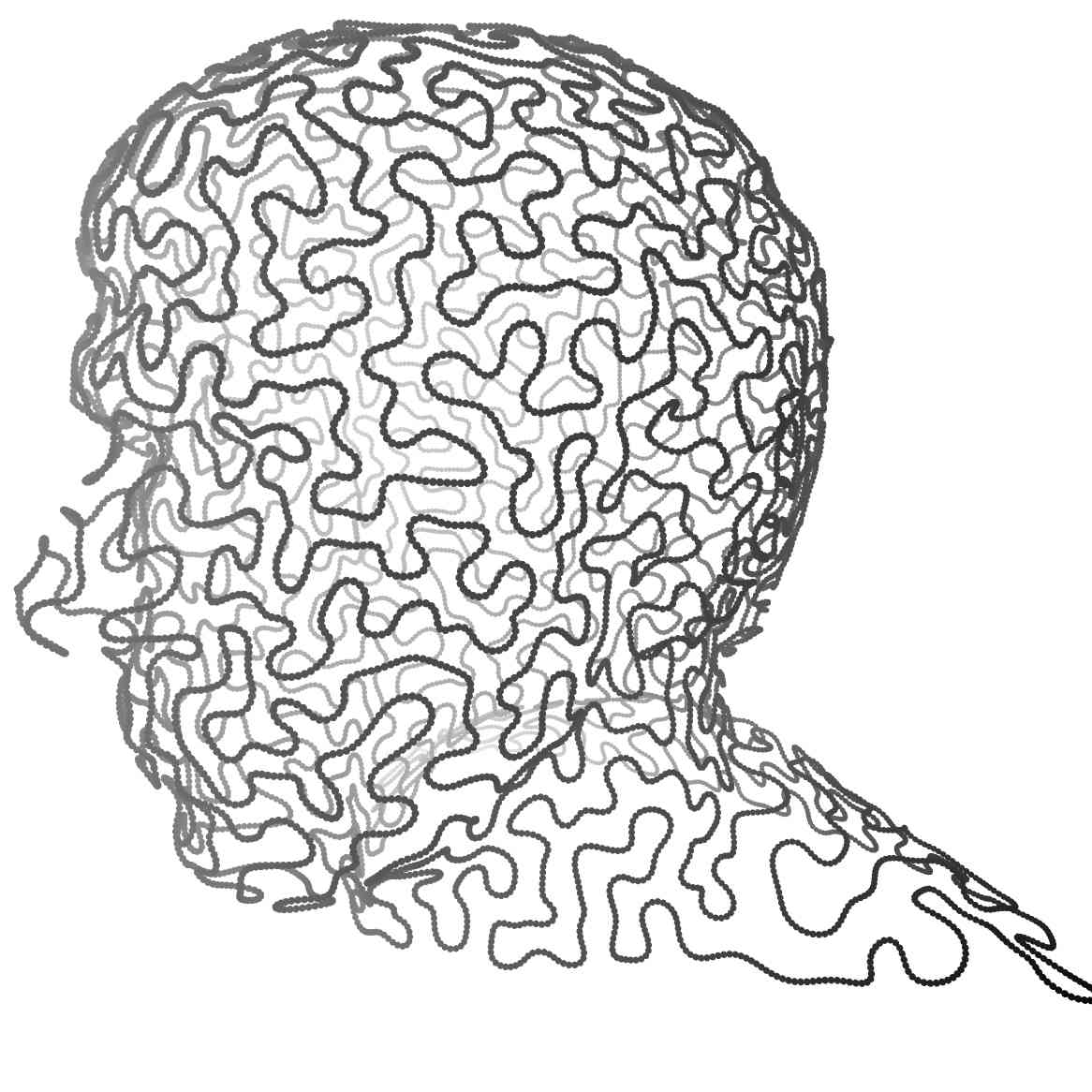} & 
		\includegraphics[width=0.29\textwidth]{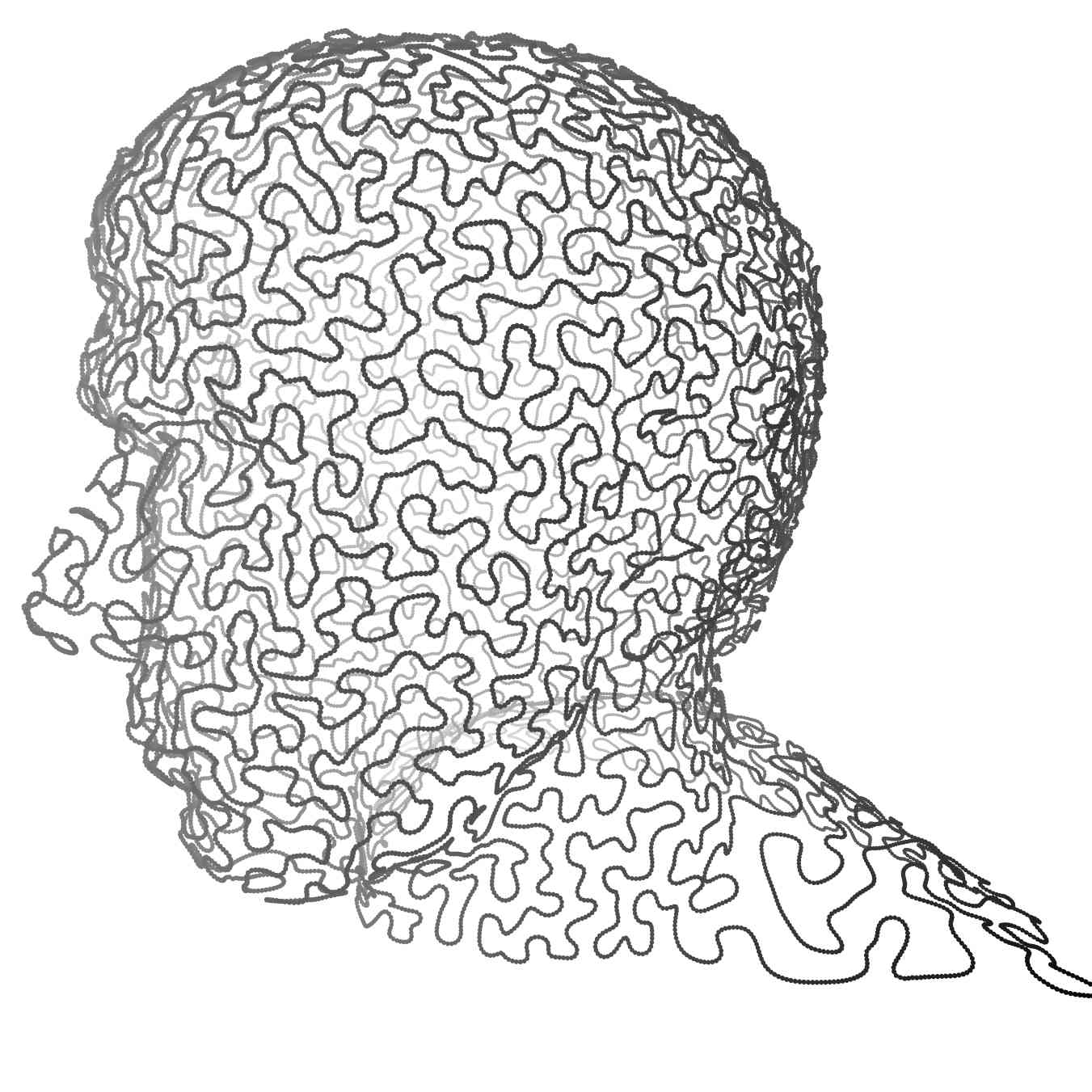} & 
		\includegraphics[width=0.29\textwidth]{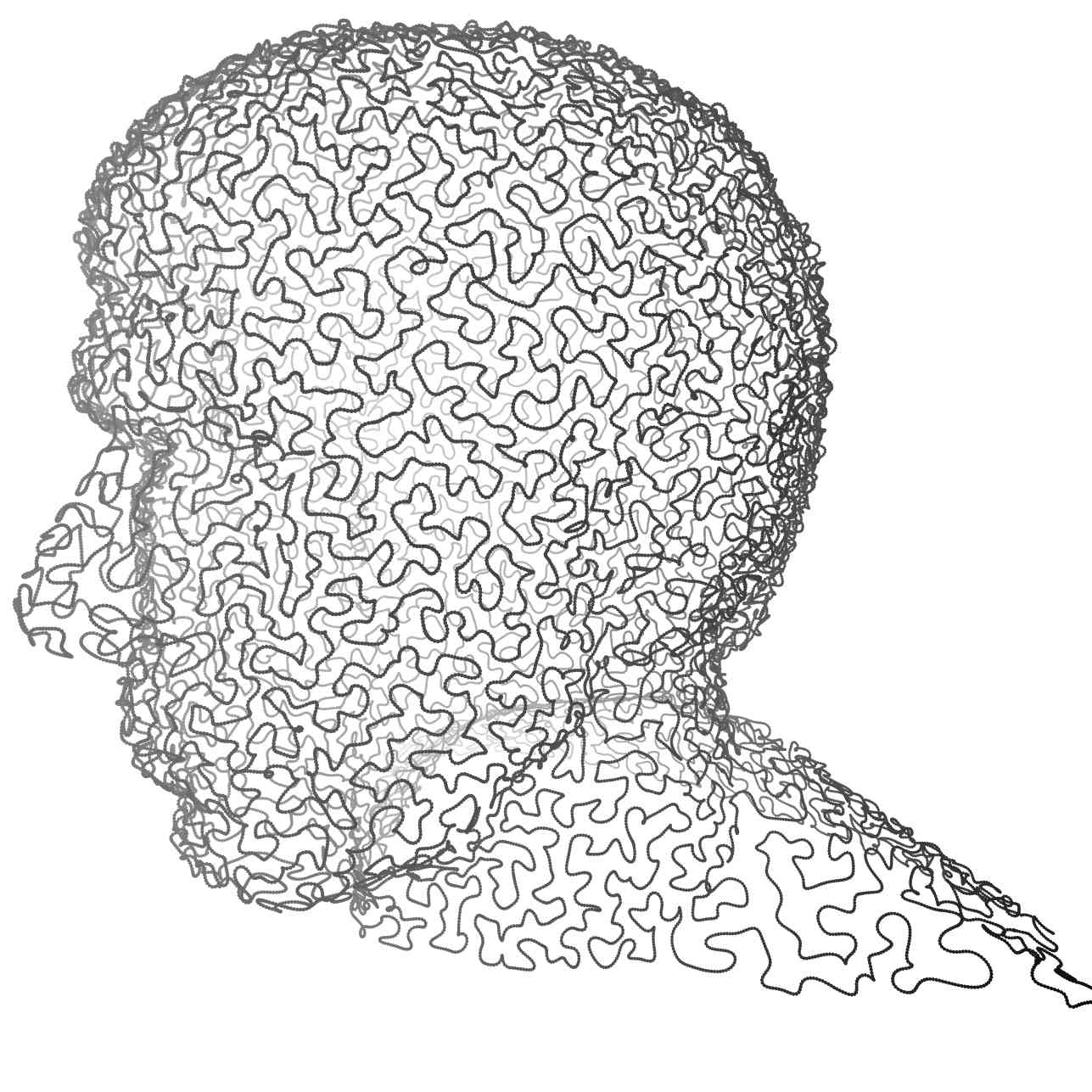} \\ 
	\end{tabular}
	\caption{Local minimizers of \eqref{eq:final} for a measure $\mu$ concentrated on a surface (head of Spock) in~$\T^3$.}
	\label{fig:spock}
	\vspace{.75cm}
	\centering
	\includegraphics[width=0.73\textwidth]{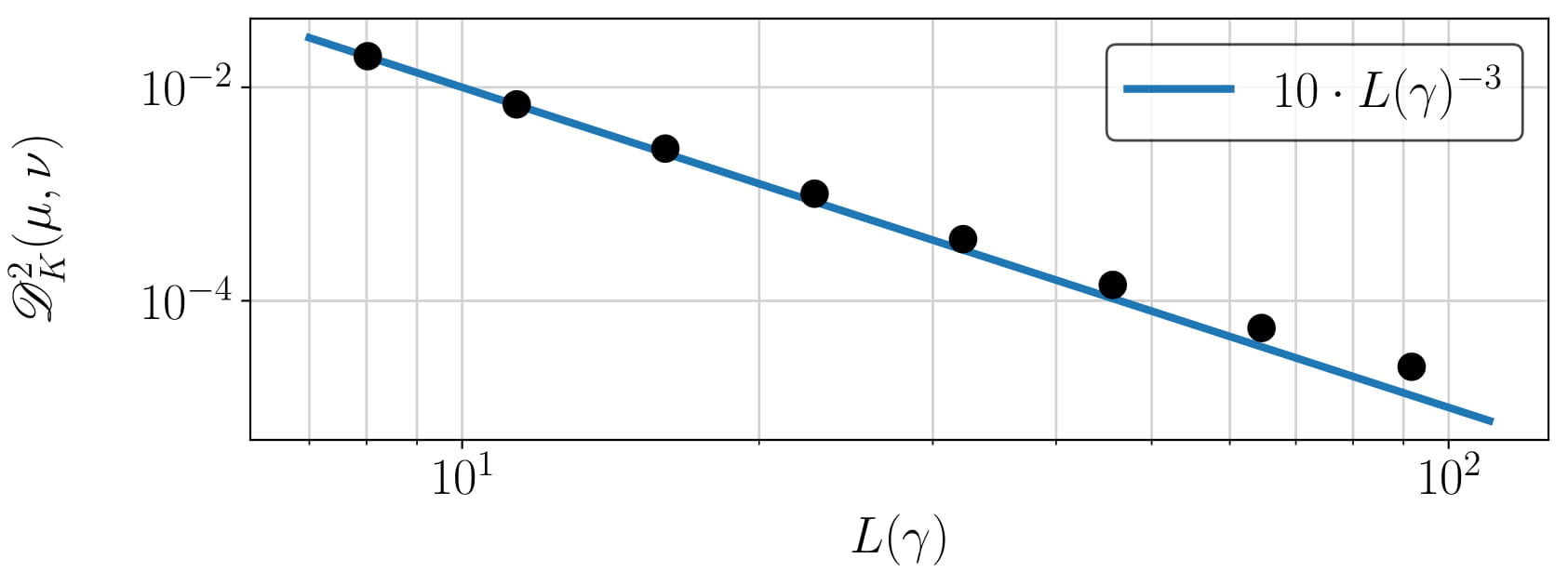}
	\caption{Squared discrepancy between the measure $\mu$ given  by the surface in Fig.~\ref{fig:spock} 
		and the computed local minimizers (black dots) on $\mathbb T^{3}$ in log-scale.
		The blue line corresponds to the optimal decay-rate in  Theorem \ref{thm:torus better estimates}.}
	\label{fig:err_spock}
\end{figure}
\smallskip

\textbf{2-Sphere $\S^2$.}
Next, we approximate a gray-valued image on the sphere $\mathbb S^{2}$ by an almost constant speed curve. 
The image represents the earth's elevation data provided by MATLAB, given by samples $\rho_{i,j}$, $ i=1,\dots,180,\;j=1,\dots,360$, on the grid
\[
x_{i,j} \coloneqq \Bigl(\sin\bigl( i \tfrac{\pi}{180}\bigr) \sin\bigl(j \tfrac{\pi}{180}\bigr), \sin\bigl( i \tfrac{\pi}{180}\bigr) \cos\bigl(j \tfrac{\pi}{180}\bigr), \cos\bigl(i \tfrac{\pi}{180}\bigr)\Bigr).
\]
The Fourier coefficients are computed by discretizing the Fourier integrals, i.e.,
\[
\hat \mu_{k}^{m} \coloneqq \begin{cases}
	\frac{1}{180 \cdot 360} \sum_{i=1}^{180} \sum_{j=1}^{360} \rho_{i,j} \overline{Y_{k}^{m}(x_{i,j})} \sin\bigl(i \tfrac{\pi}{180}\bigr),& 1\leq k\le 2m+1, m \le 180,\\
	0, & \text{else},
\end{cases}
\]
followed by a normalization such that $\hat\mu_{0}^{0}=1$. 
The corresponding sums are efficiently computed by an adjoint non-equispaced fast spherical Fourier transform (NFSFT), see \cite{PPST2019}.
The kernel $K$ is given by \eqref{kernel_Sd}.
Similar to the previous examples, we apply our procedure for  $i=0,\dots,12$ with parameters
\[
L_{i} = 9.7\cdot 2^{\frac{i}{2}}, \quad \lambda_{i} = 100\cdot L_{i}^{-5},\quad N_{i} = 100\cdot 2^{i} \sim L_{i}^{2}, \quad r_{i}= \lfloor L_{i} \rfloor \sim L_{i}.
\]

To get nearly constant speed curves, we increase $\lambda_{i}$ by a factor of 100, $N_{i}$ by a factor of 2 
and set $L_{i} \coloneqq L_{0}  2^{i/2}$. 
Then, we apply the CG method with maximal 100 iterations and one restart to the previously constructed curves $\gamma_{i}$.
The results for $i=6,8,10,12$ are depicted in Fig.~\ref{fig:earth}. 
Note that the complexity of the function evaluation in \eqref{eq:final} scales roughly as $N \sim L^{2}$.
In Fig.~\ref{fig:err_earth} we observe that the decay-rate of the squared discrepancy 
$\mathscr{D}_K^{2}(\mu,\nu)$ in dependence on the Lipschitz constant matches indeed the theoretical findings in Theorem \ref{thm:sphere}.
\begin{figure}
	\centering
	\begin{tabular}{cccc}
		\includegraphics[width=0.21\textwidth]{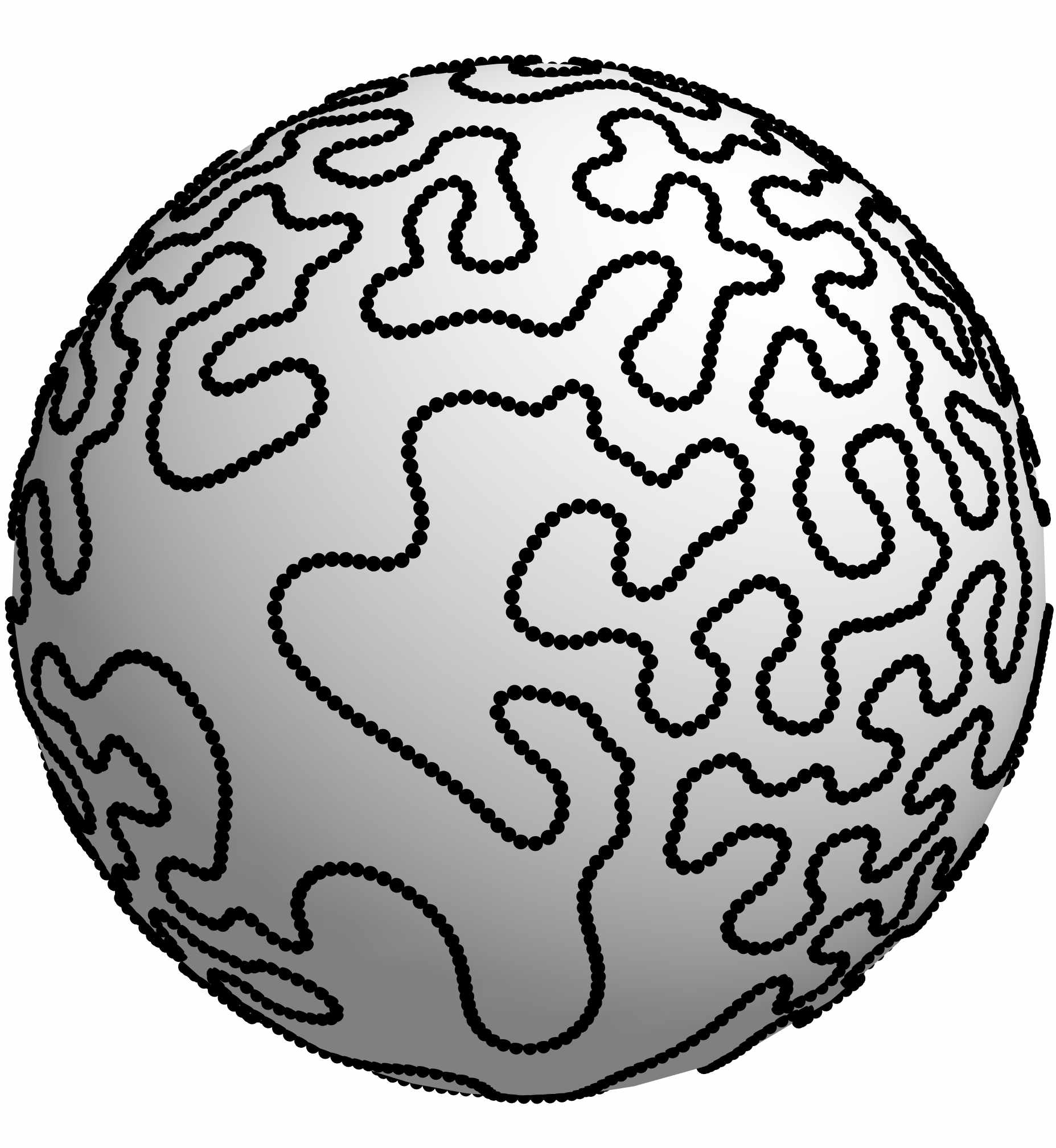} & 
		\includegraphics[width=0.21\textwidth]{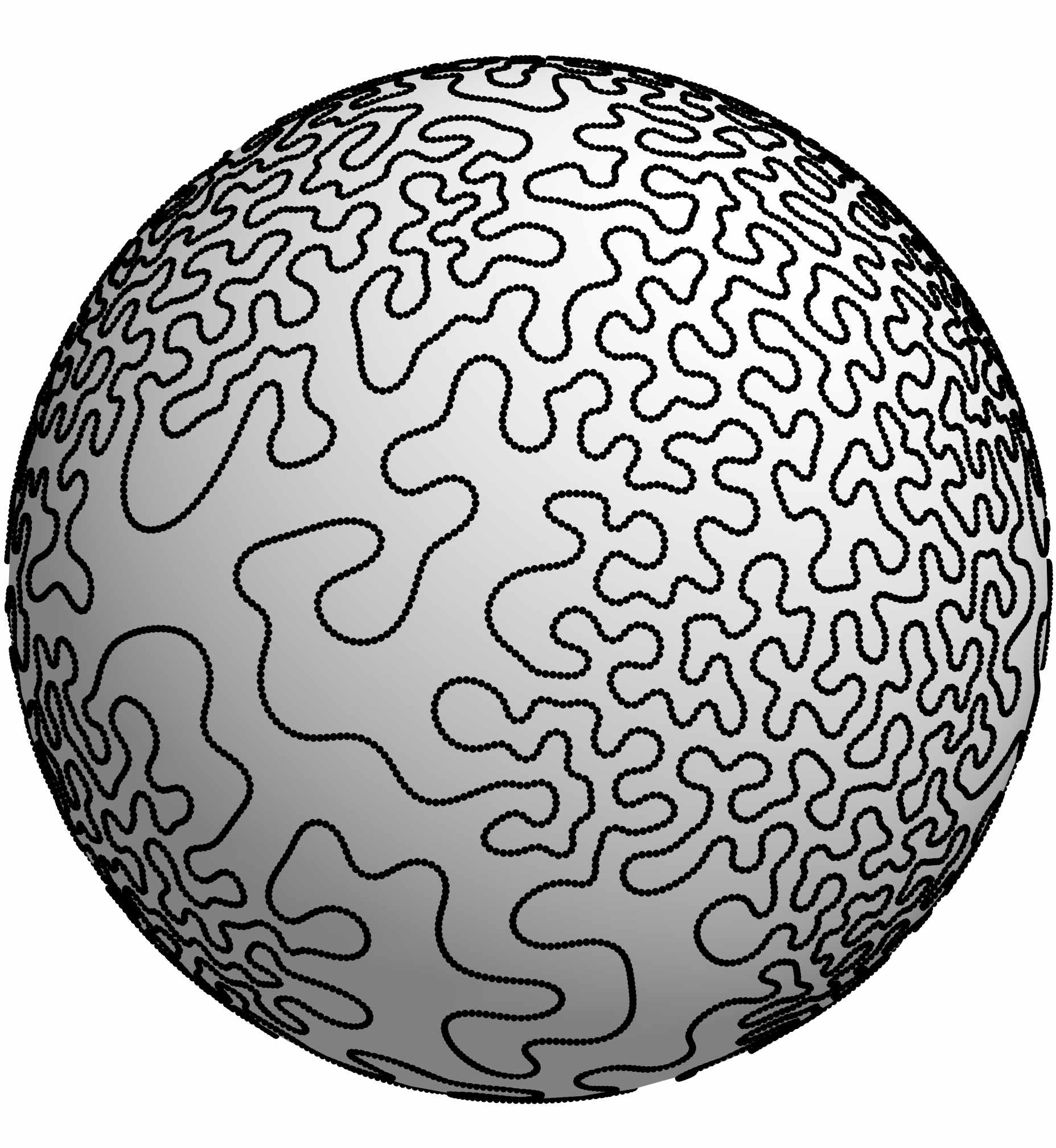} &
		\includegraphics[width=0.21\textwidth]{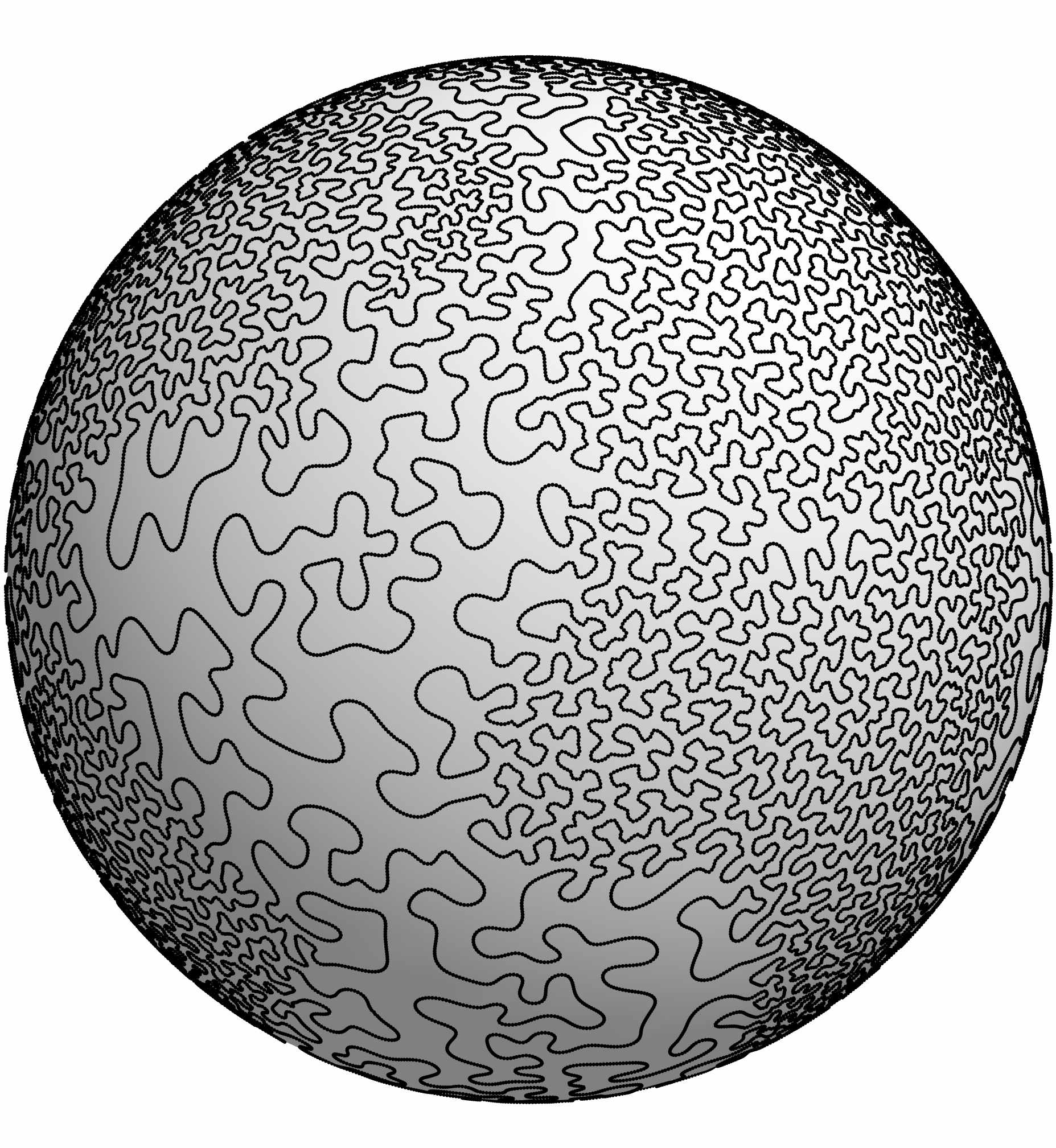} & 
		\includegraphics[width=0.21\textwidth]{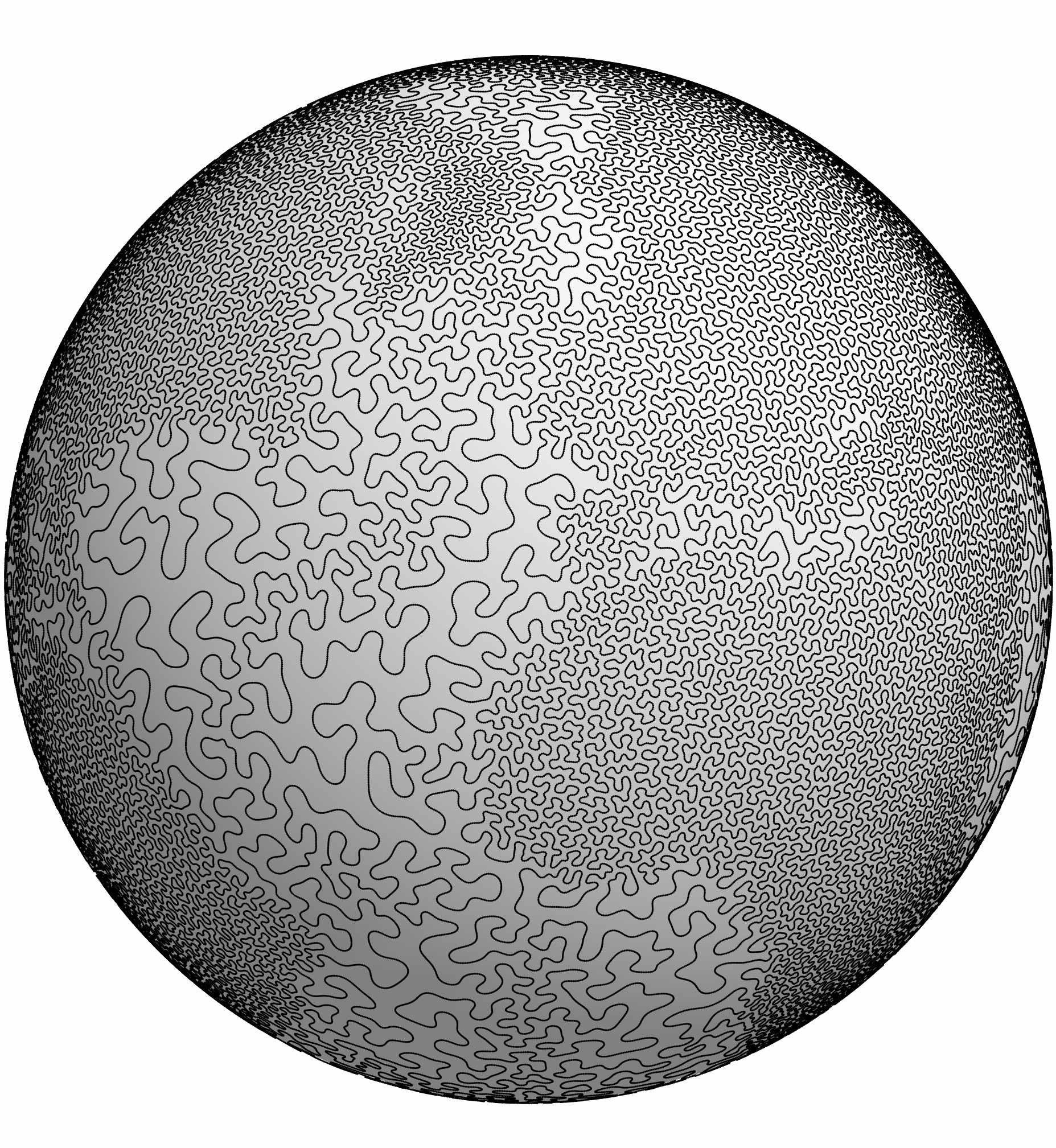} \\
	\end{tabular}
	\caption{Local minimizers of \eqref{eq:final} for $\mu$ given by the earth's elevation data on the sphere $\mathbb S^{2}$.}
	\label{fig:earth}
	\vspace{.5cm}
	\centering
	\includegraphics[width=0.75\textwidth]{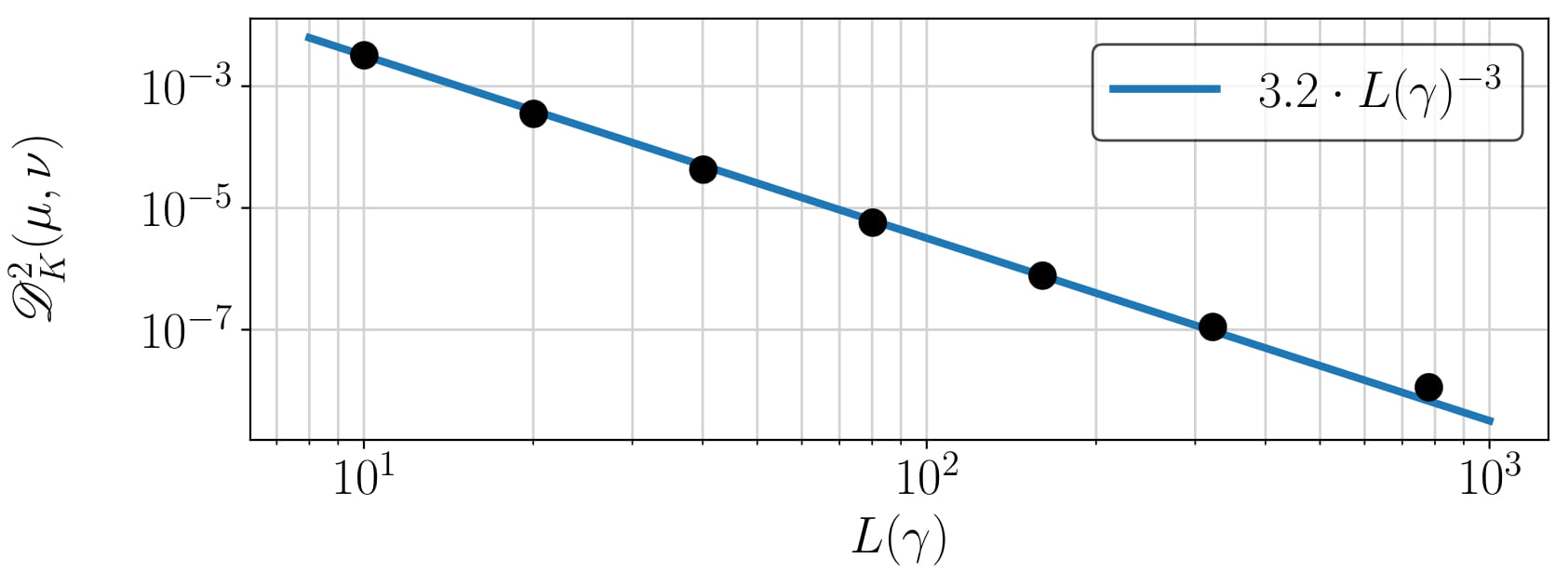}
	\caption{Squared discrepancy between the measure $\mu$ and the computed local minimizers (black dots) in log-scale.
		The blue line corresponds to the optimal decay-rate in Theorem \ref{thm:sphere}.}
	\label{fig:err_earth}
\end{figure}
\smallskip

\textbf{3D-Rotations $\SO(3)$.} There are several possibilities to parameterize the rotation group $\SO(3)$.
We apply those by Euler angles and an axis-angle representation for visualization. 
Euler angles 
$(\varphi_{1}, \theta, \varphi_{2}) \in [0,2\pi) \times [0,\pi] \times [0,2\pi)$ 
correspond to rotations $\mathrm{Rot} ( \varphi_{1} , \theta, \varphi_{2} )$ in $\SO(3)$
that are the successive rotations around the axes $e_3,e_2,e_3$ by the respective angles.
Then, the Haar measure of $\SO(3)$ is determined by
\[
\dx \mu_{\mathrm{SO(3)}}(\varphi_{1},\theta,\varphi_{2}) 
= \tfrac{1}{8\pi^{2}} \sin(\theta) \dx \varphi_{1} \dx \theta \dx \varphi_{2}.
\]
We are interested in the  full three-dimensional doughnut
\[
D = \bigl\{     \mathrm{Rot} ( \varphi_{1} , \theta, \varphi_{2} )  \;:\; 0 \le \theta \le \tfrac{\pi}{2},\; 0\le \varphi_{1},\varphi_{2} \le 2\pi \bigr\} \subset \mathrm{SO(3)}.
\]
Next, we want to approximate the Haar measure $\mu = \mu_D$ restricted to $D$, i.e., with normalization we consider the measure defined for $f \in C(\SO(3))$ by
\[
\int_{\mathrm{SO(3)}} f \dx \mu_{D}
= \frac{1}{4\pi^{2}}\int_{0}^{2\pi}\int_{0}^{\frac{\pi}{2}}\int_{0}^{2\pi} f (\varphi_{1},\theta,\varphi_{2}) \sin(\theta)  \dx \varphi_{1} \dx \theta  \dx \varphi_{2}.
\]
The Fourier coefficients of $\mu_D$ can be explicitly computed by
\[
\hat \mu_{l,l'}^k =
\begin{cases}
	P_{k-1}(0) - P_{k+1}(0), & l,l' = 0,\; k\ge 0,\\
	0, & l,l' \ne 0,
\end{cases}  
\]
where $P_{k}$ are the Legendre polynomials. 
The kernel $K$ is given by \eqref{kernel_SO3} with $d=3$ and $s=2$.
For $i=0,\dots,8$ the parameters are chosen as
\[
L_{i} = 0.93 \cdot  2^{\frac{2i + 12}{3}}, \quad \lambda_{i} = 10\cdot L_{i}^{-4},\quad N_{i} = 64 \cdot 2^{i} \sim L_{i}^{2}, 
\quad r_{i}= \lfloor 2^{\frac{i+9}{3}} \rfloor \sim L_{i}^{\frac12}.
\]
Here, we use a CG method with 100 iterations and one restart. Step ii) appears to be not necessary.
Note that the complexity for the function evaluations in~\eqref{eq:final} scales roughly as $N \sim L^{3/2}$.

The constructed curves are illustrated in Fig.~\ref{fig:so3_cylinder}, where we 
utilized the following visualization: 
Every rotation $R(\alpha,r) \in \SO(3)$ is determined by a rotation axis $r = (r_1,r_2,r_3) \in \S^2$ and a rotation angle $\alpha \in [0,\pi]$, 
i.e.,
\[R(\alpha,r) x = r(r^\tT x) + \cos(\alpha) \left( (r \times x) \times r \right)  + \sin(\alpha) (r \times x).\]
Setting $q \coloneqq (\cos(\tfrac \alpha 2), \sin (\tfrac \alpha 2) r) \in \S^3$ with $r \in \S^2$ and $\alpha \in [0, 2\pi]$,
see \eqref{spherical},  we observe that the same rotation is generated by $-q = (\cos(\tfrac{2\pi- \alpha}{ 2}), \sin (\tfrac{2\pi- \alpha}{ 2} (-r)) \in \S^3$, in other words
$\SO(3) \cong \mathbb S^3 / \{\pm 1\}$.
Then, by applying the stereographic projection
$\pi(q) = (q_2,q_3,q_4)/(1+q_1)$, we map the upper hemisphere onto the three dimensional unit ball. 
Note that the equatorial plane of $\mathbb S^3$ is mapped onto the sphere $\mathbb S^2$, 
hence on the surface of the ball antipodal points have to be identified. 
In other words, the rotation $R(\alpha, r)$ is plotted as  the point \[\pi(q) = \frac{\sin\bigl(\tfrac \alpha 2\bigr)}{1 + \cos\bigl(\tfrac \alpha 2\bigr)}r = \tan\bigl(\tfrac \alpha 4\bigr) r \in \mathbb R^3.\]

In Fig.~\ref{fig:err_so3_cylinder} we observe that the decay-rate of $\mathscr D_K^{2}(\mu,\nu)$ in dependence on the Lipschitz constant $L$ matches the theoretical findings in Corollary \ref{cor:so3}.
\begin{figure}
	\centering
	\begin{tabular}{ccc}
		\includegraphics[width=0.29\textwidth]{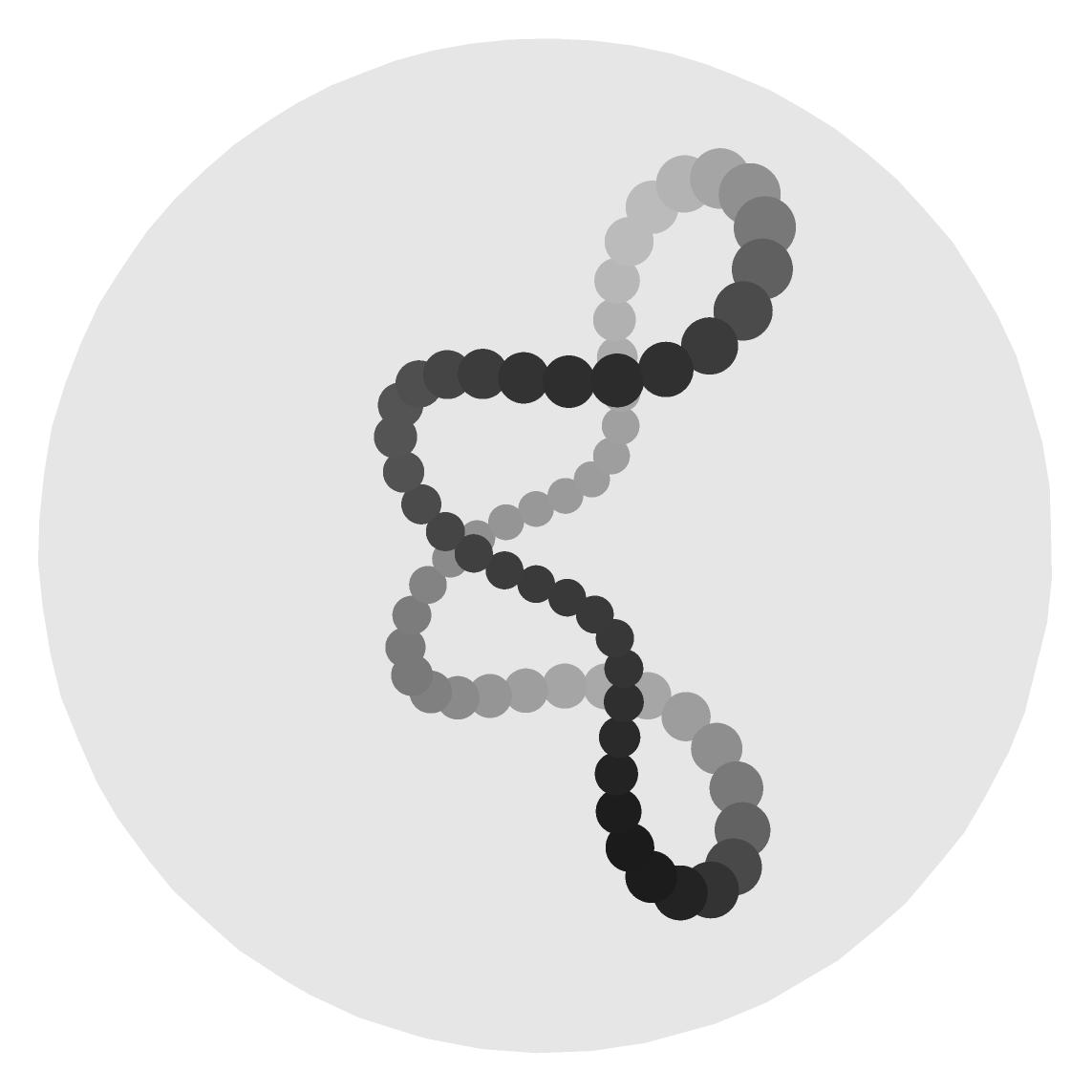} & 
		\includegraphics[width=0.29\textwidth]{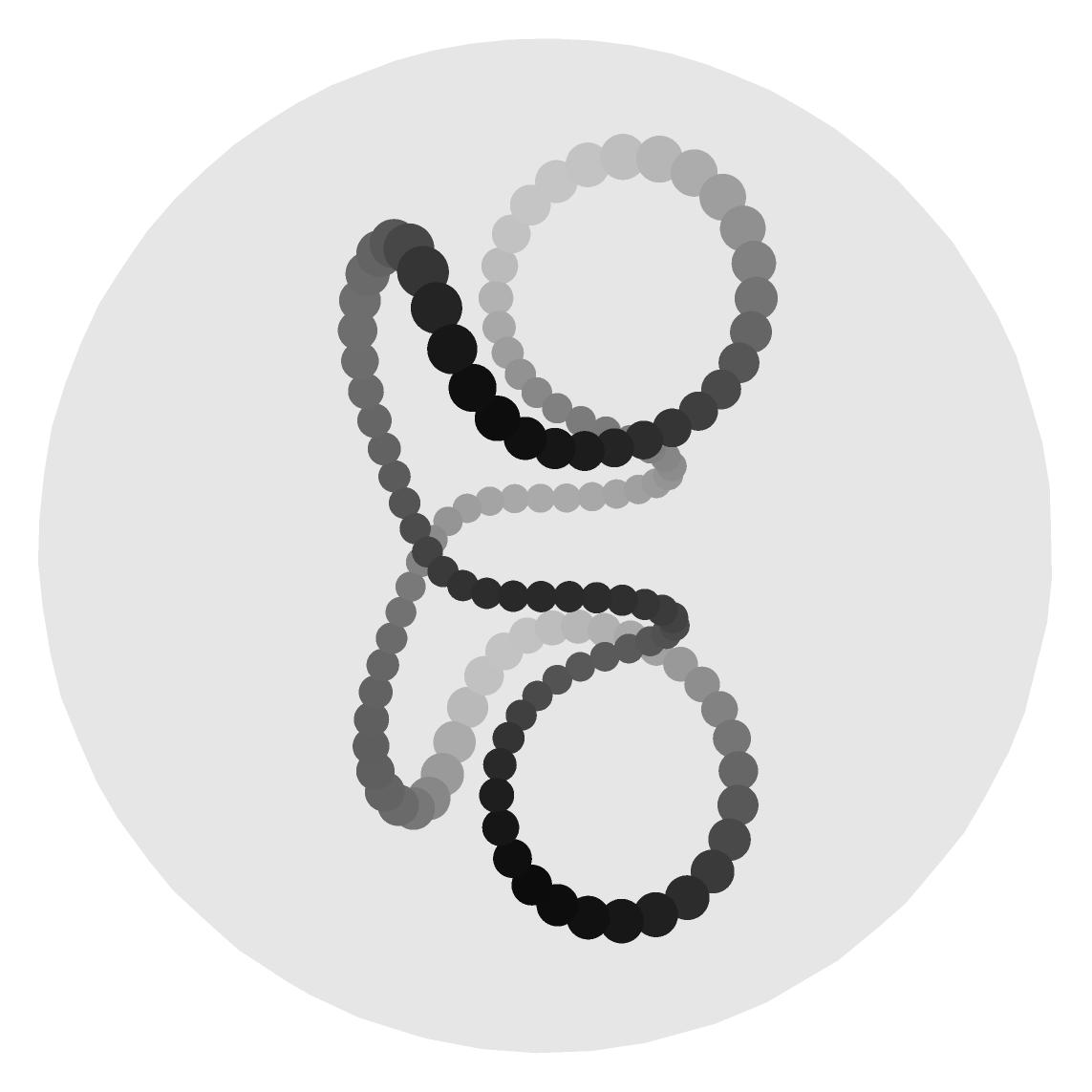} & 
		\includegraphics[width=0.29\textwidth]{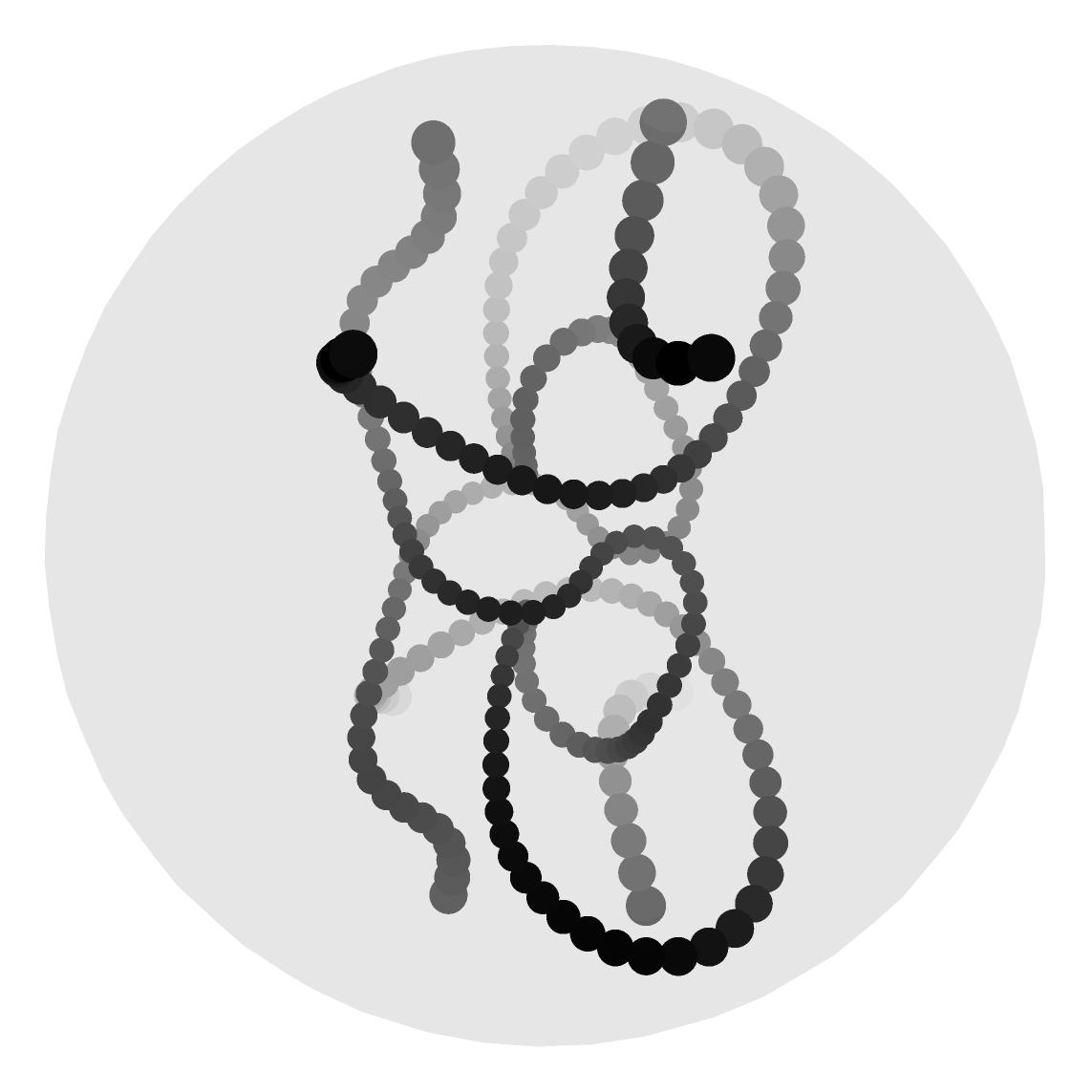} \\ 
		\includegraphics[width=0.29\textwidth]{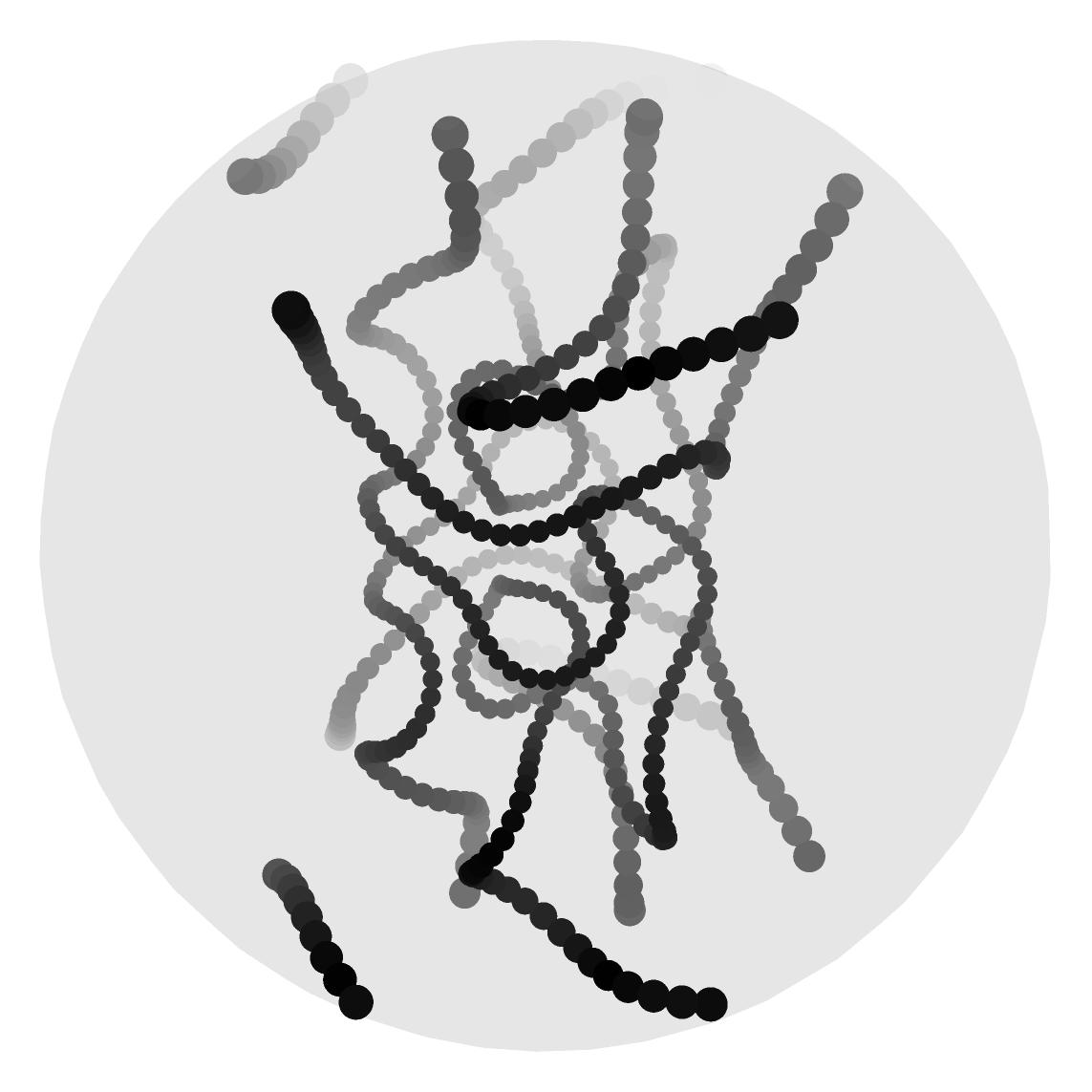} & 
		\includegraphics[width=0.29\textwidth]{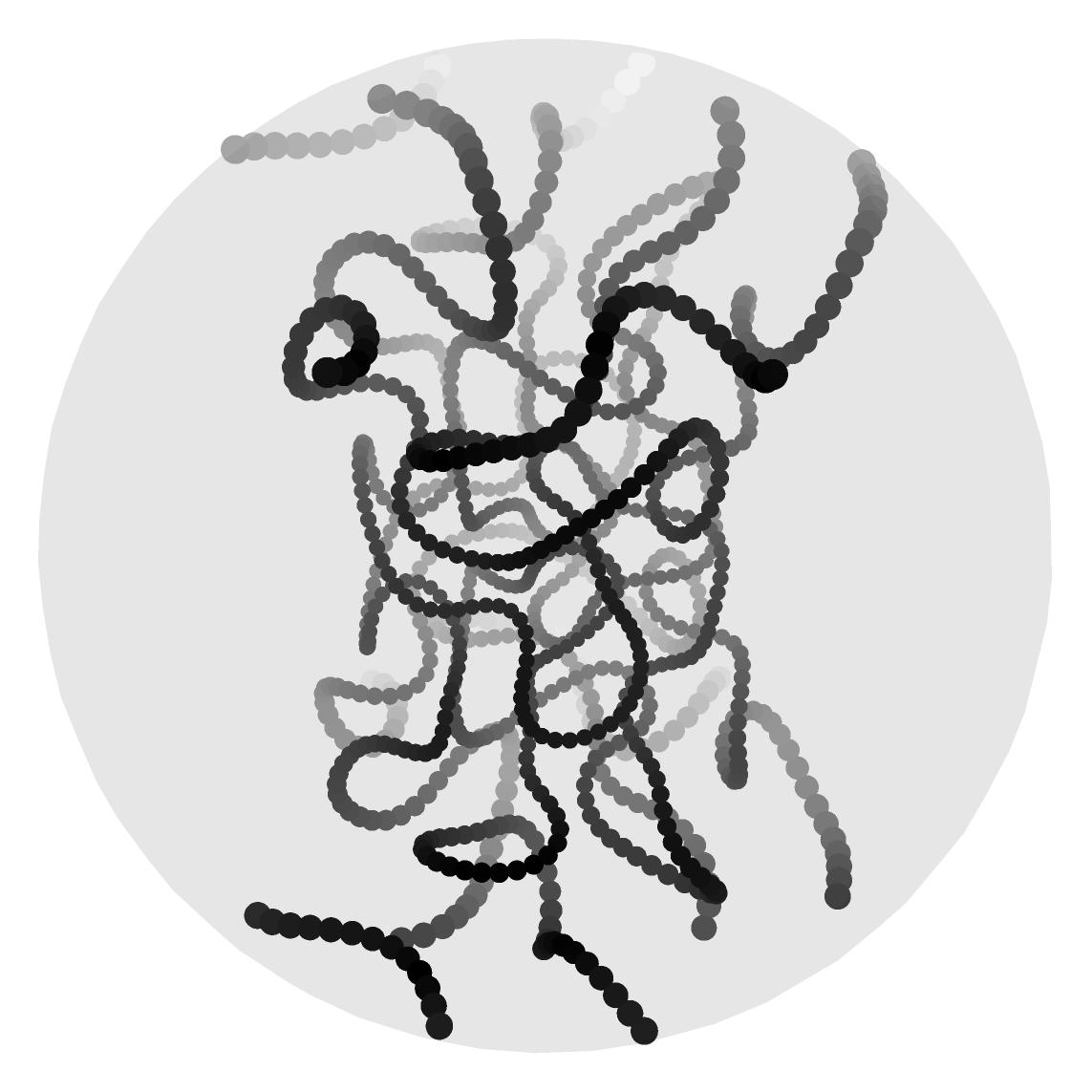} & 
		\includegraphics[width=0.29\textwidth]{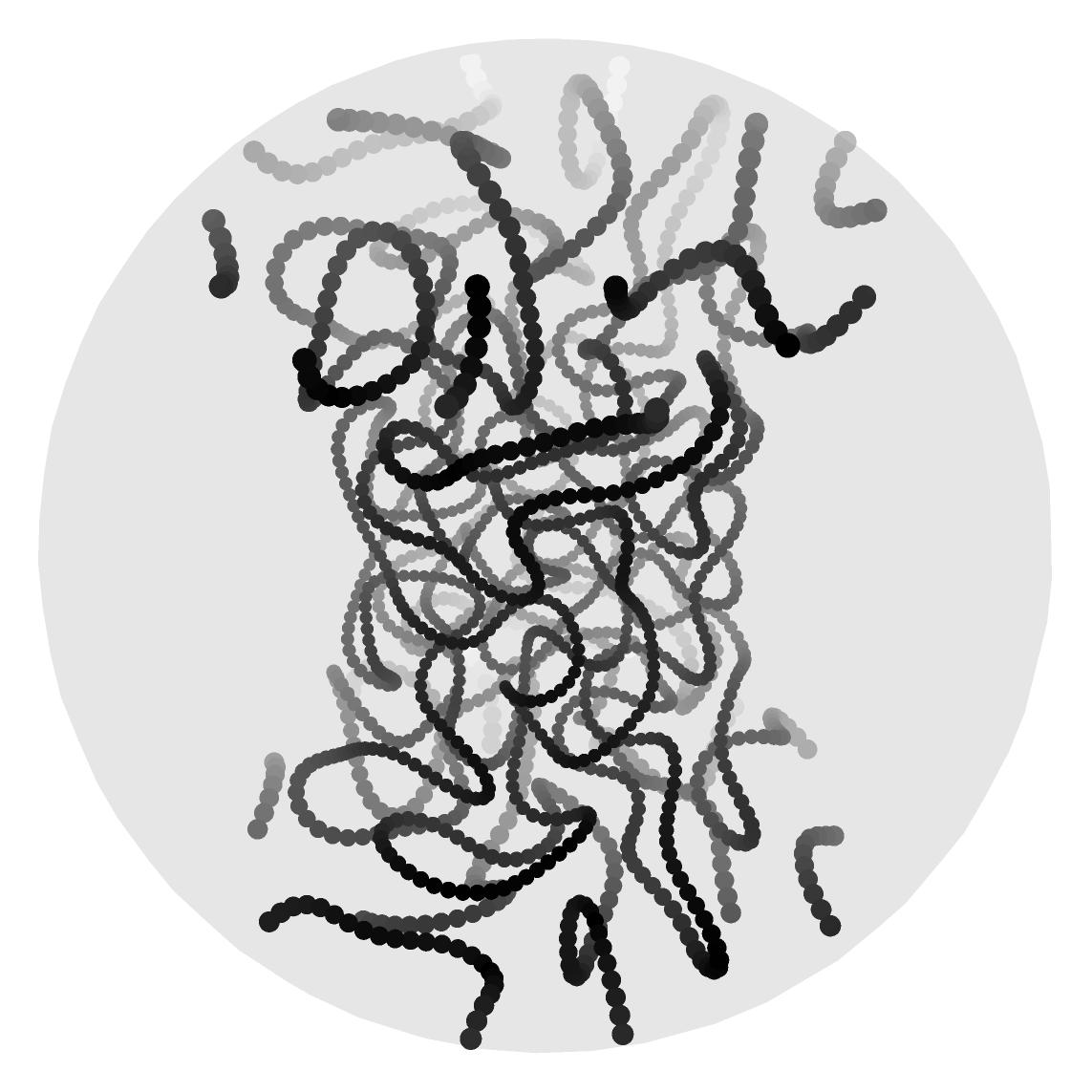} \\ 
		\includegraphics[width=0.29\textwidth]{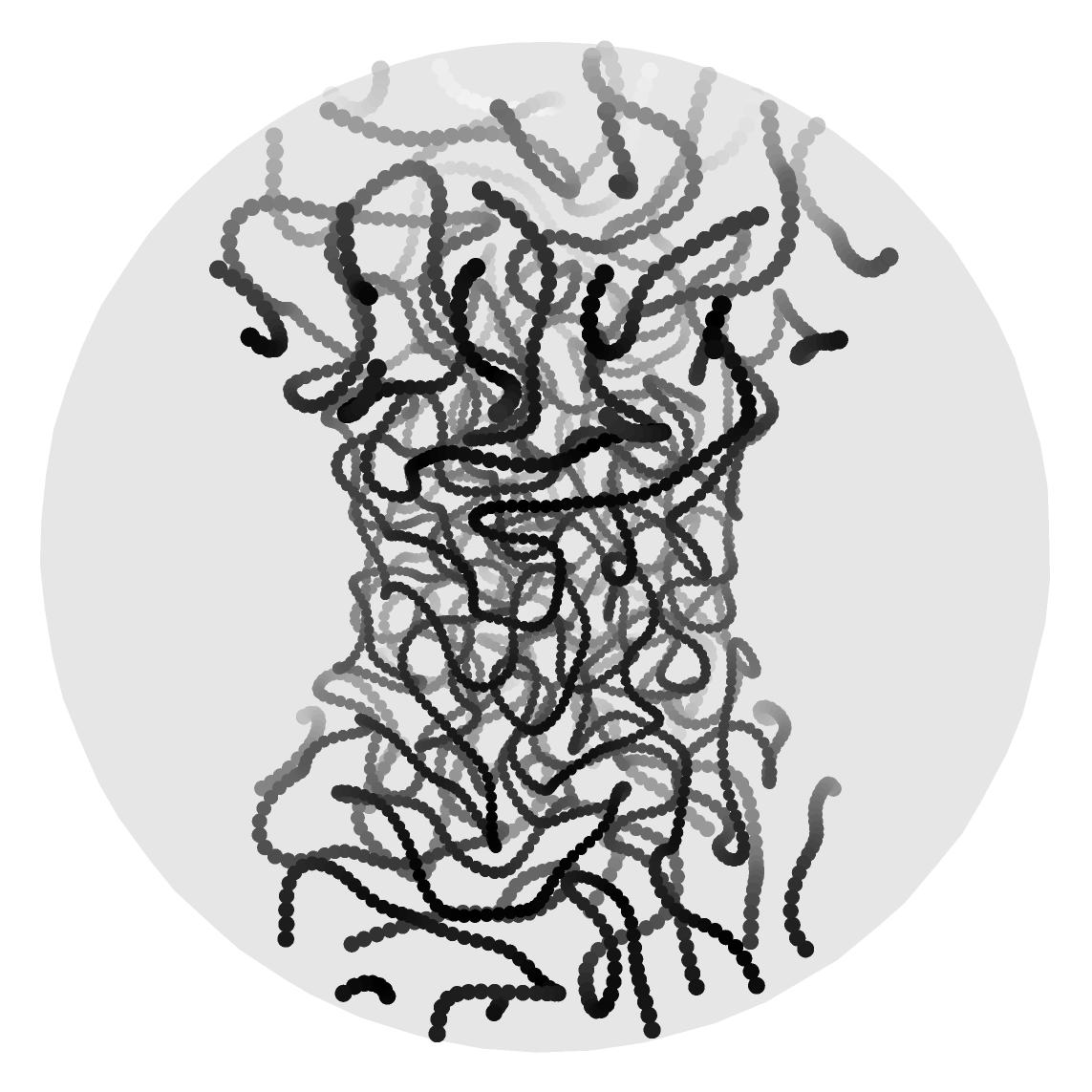} & 
		\includegraphics[width=0.29\textwidth]{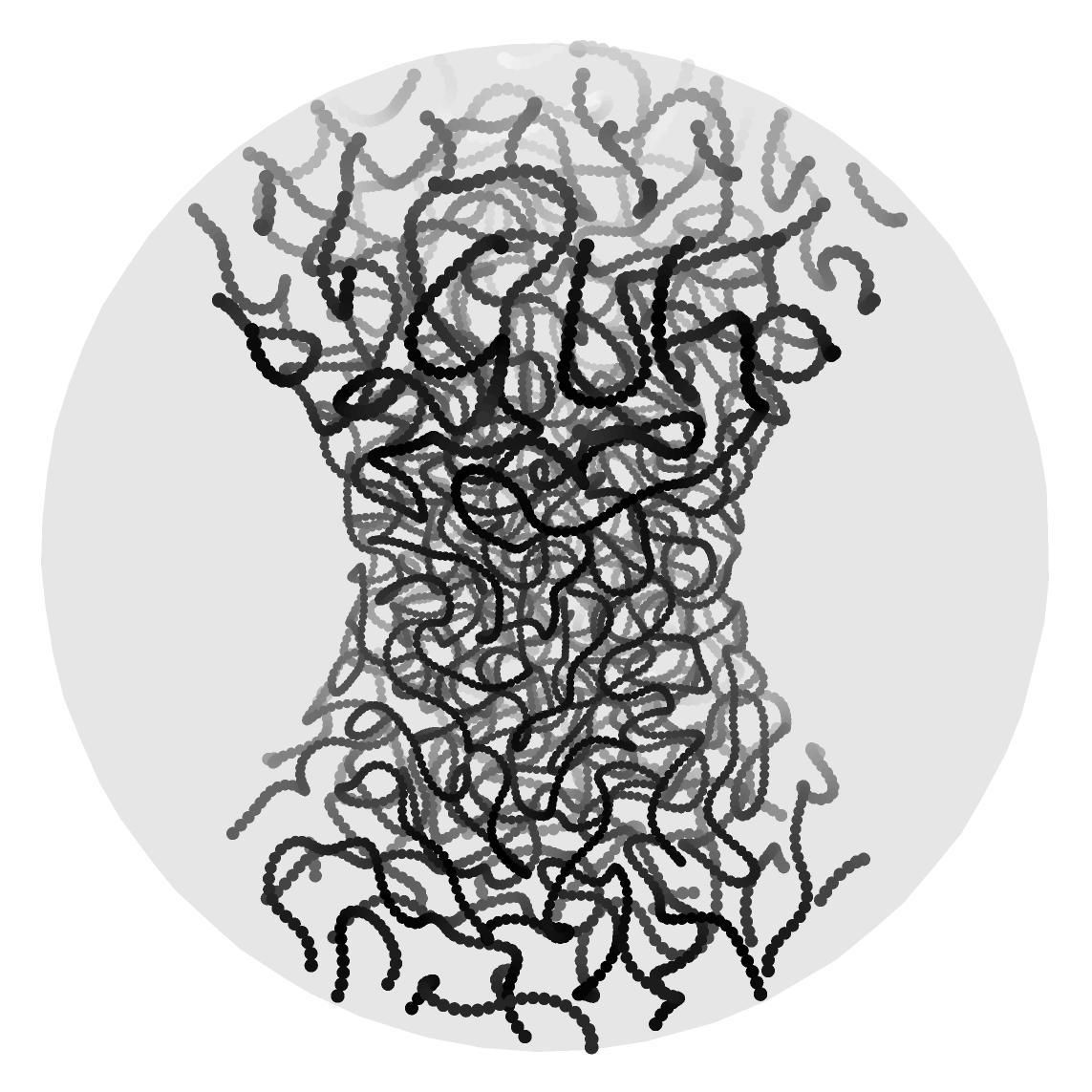} & 
		\includegraphics[width=0.29\textwidth]{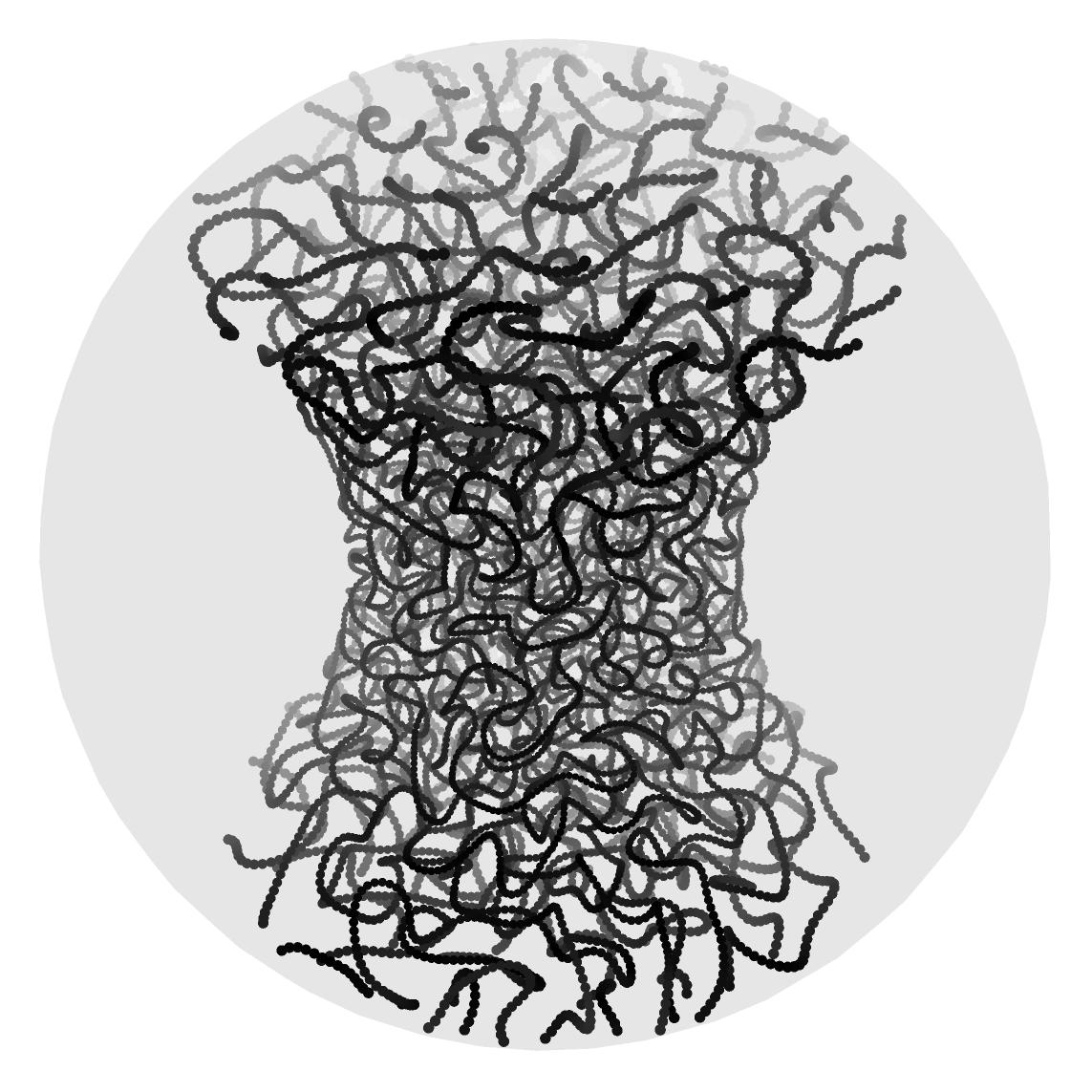} \\ 
	\end{tabular}
	\caption{Local minimizers of \eqref{eq:final} for Haar measure $\mu_{D}$ of three-dimensional doughnut $D$ in the rotation group $\mathrm{SO(3)}$. Color scheme for better visibility of 3D structure.}
	\label{fig:so3_cylinder}
	\vspace{.5cm}
	\centering
	\includegraphics[width=0.73\textwidth]{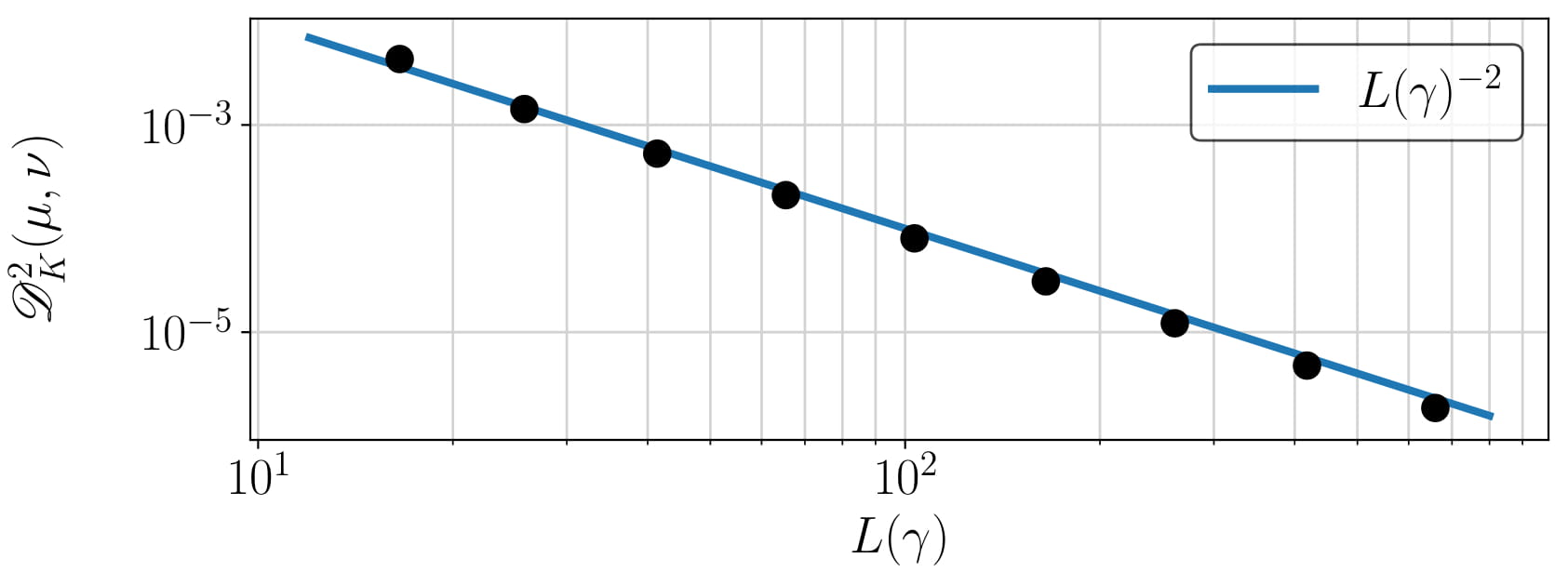}
	\caption{Squared discrepancy between the measure $\mu_D$ and the computed local minimizers (black dots) in log-scale.
		The blue line corresponds to the optimal decay-rate in Corollary \ref{cor:so3}.}
	\label{fig:err_so3_cylinder}
\end{figure}
\smallskip

\textbf{The $4$-dimensional Grassmannian $\mathcal{G}_{2,4}$.}
Here, we aim to approximate the Haar measure of the Grassmannian $\mathcal G_{2,4}$ by a curve of almost constant speed.
As this curve samples the space $\mathcal G_{2,4}$ quite evenly, it could be used for the grand tour, a technique to analyze high-dimensional data by their projections onto two-dimensional subspaces, cf.~\cite{Asimov:1985aa}.


The kernel $K$ of the Haar measure is given by \eqref{kernel:grass} and the Fourier coefficients are given by \smash{$\hat\mu_{m,m'}^{k,k'} = \delta_{m,0}\delta_{m',0}\delta_{k,0}\delta_{k',0}$}.
For $i=0,\dots,8$ the parameters are chosen as
\[
L_{i} = 0.91 \cdot 2^{\frac{3i+16}{4}}, \,\,\, \lambda_{i} = 100\cdot L_{i}^{-\frac{11}{3}}, \,\,\, N_{i} = 128 \cdot 2^{i} \sim L_{i}^{2},\,\,\, r_{i}= \lfloor 2^{\frac{3i+16}{12}} \rfloor +1 \sim L_{i}^{\frac13}.
\]
Here, we use a CG method with 100 iterations and one restart.
Our experiments suggest that step ii) is not necessary.
Note that the complexity for the function evaluation in~\eqref{eq:final} scales roughly as $N \sim L^{3/2}$.

The computed curves are illustrated in Fig.~\ref{fig:uniform_g24}, where we use the following visualization.
By Remark \ref{rem:Grassi},
there exists an isometric one-to-one mapping 
$P \colon \S^2\times\S^2/ \{\pm 1\} \to \G_{2,4}$.
Using this relation, we plot the point $P(u,v) \in \mathcal G_{2,4}$ 
by two antipodal points $z_{1}=u+v,\, z_{2}=-u-v \in \mathbb R^{3}$ together with the RGB color-coded vectors $\pm u$.\footnote{
	Note that the decomposition of $z \in \mathbb R^3$ with  $0 < \|z\| < 2$ into $u$ and $v$ is not unique.
	There is a one-parameter family of points $u_{s},v_{s} \in \mathbb S^{2}$ such $z=u_{s} + v_{s}$. 
	The point $z=0$ has a two-dimensional ambiguity $v=-u$, $u \in \mathbb S^{2}$ and the point $z \in 2\mathbb S^{2}$ has a unique pre-image $v=u=\tfrac 12 z$. }
More precisely, $R=(1\mp u_{1})/2$, $G=(1\mp u_{2})/2$, $B=(1\mp u_{3})/2$.
This  means a curve $\gamma(t) \in \mathcal G_{2,4}$ 
only intersects itself if the corresponding curve $z(t) \in \mathbb R^3$ intersects 
and has the same colors at the intersection point.
In Fig.~\ref{fig:err_uniform_g24} we observe that the decay-rate of the squared discrepancy $\mathscr D_K^{2}(\mu,\nu)$ 
in dependence on the Lipschitz constant $L$ matches indeed the theoretical findings in Theorem~\ref{thm:grassi}.
\begin{figure}
	\centering
	\begin{tabular}{ccc}
		\includegraphics[width=0.29\textwidth]{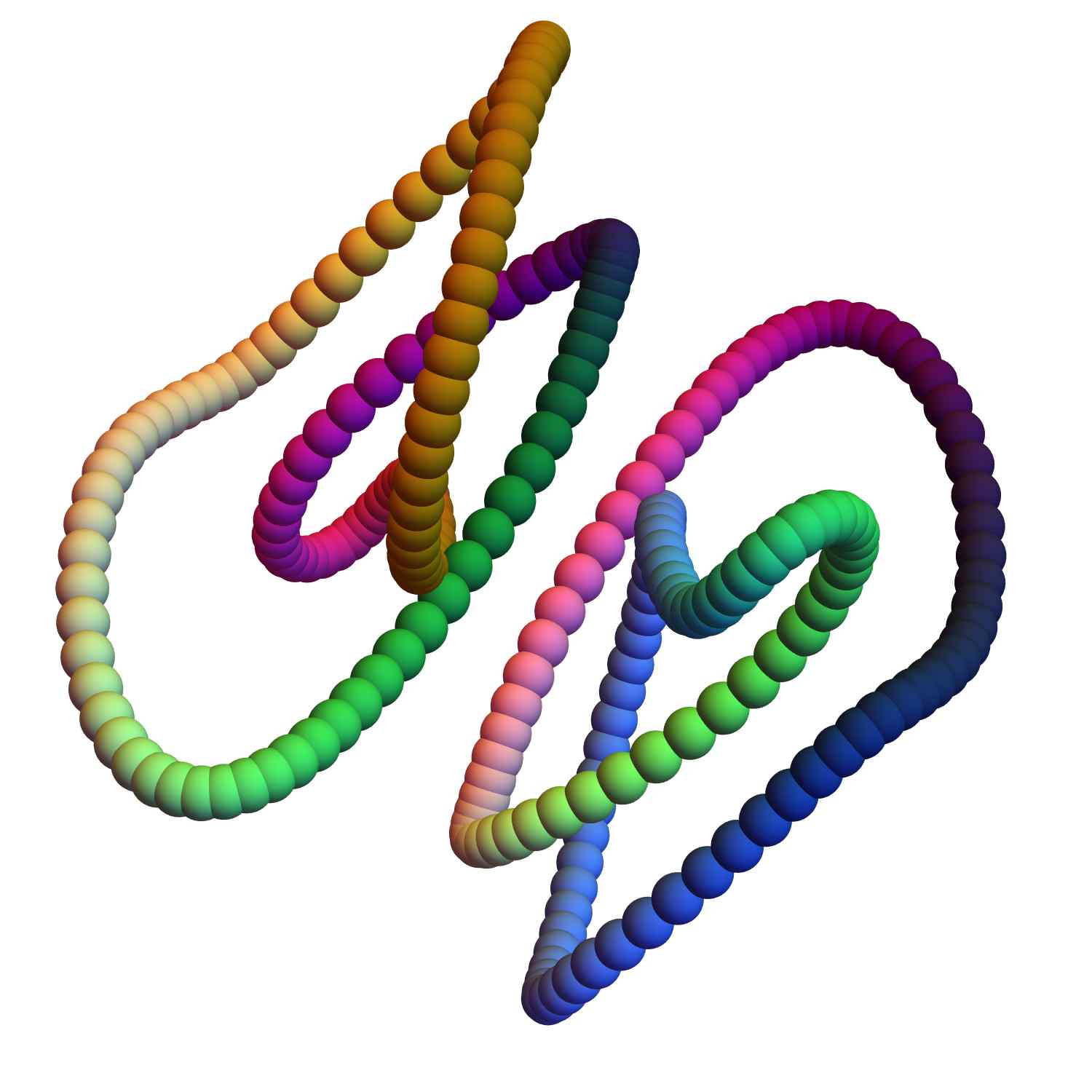} & 
		\includegraphics[width=0.29\textwidth]{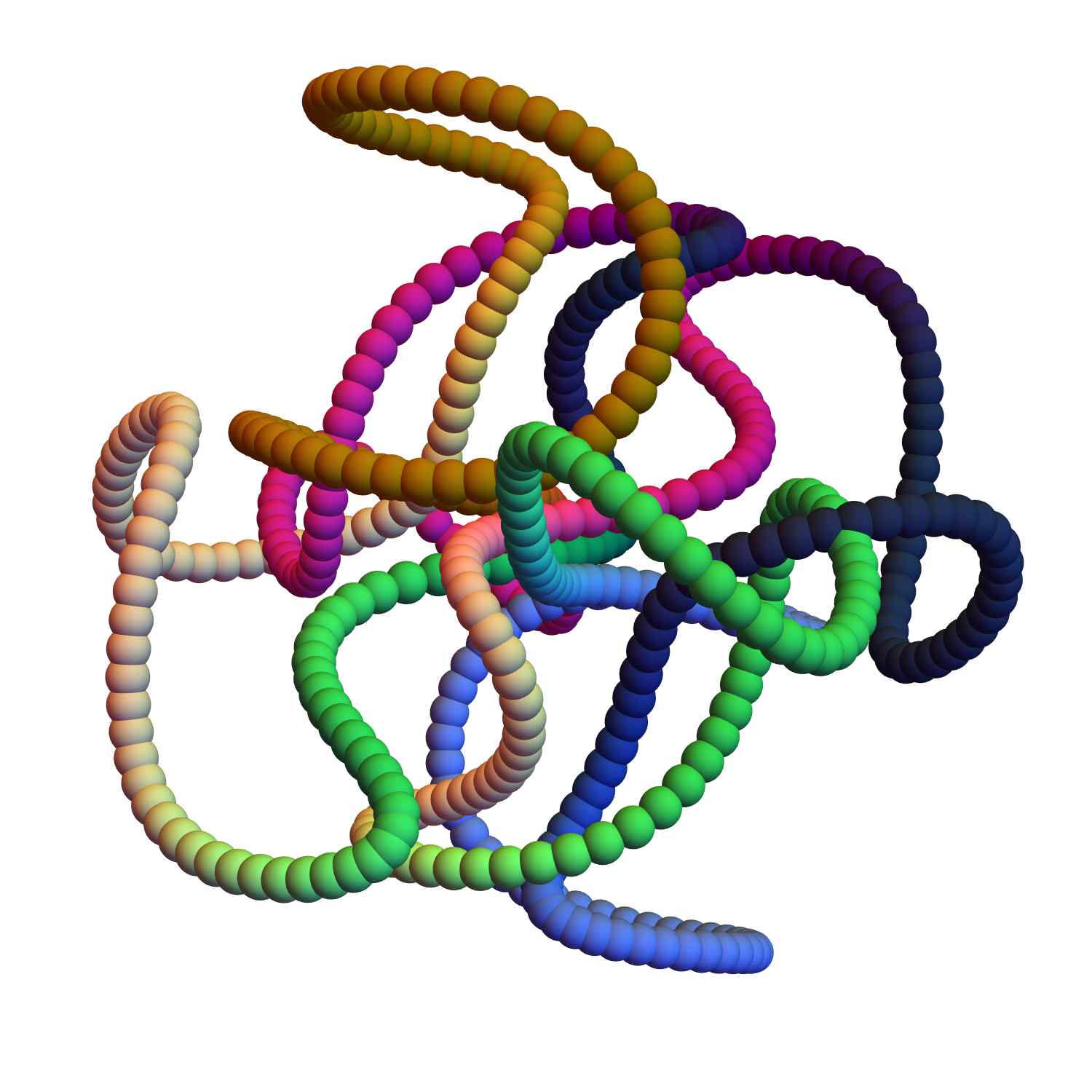} & 
		\includegraphics[width=0.29\textwidth]{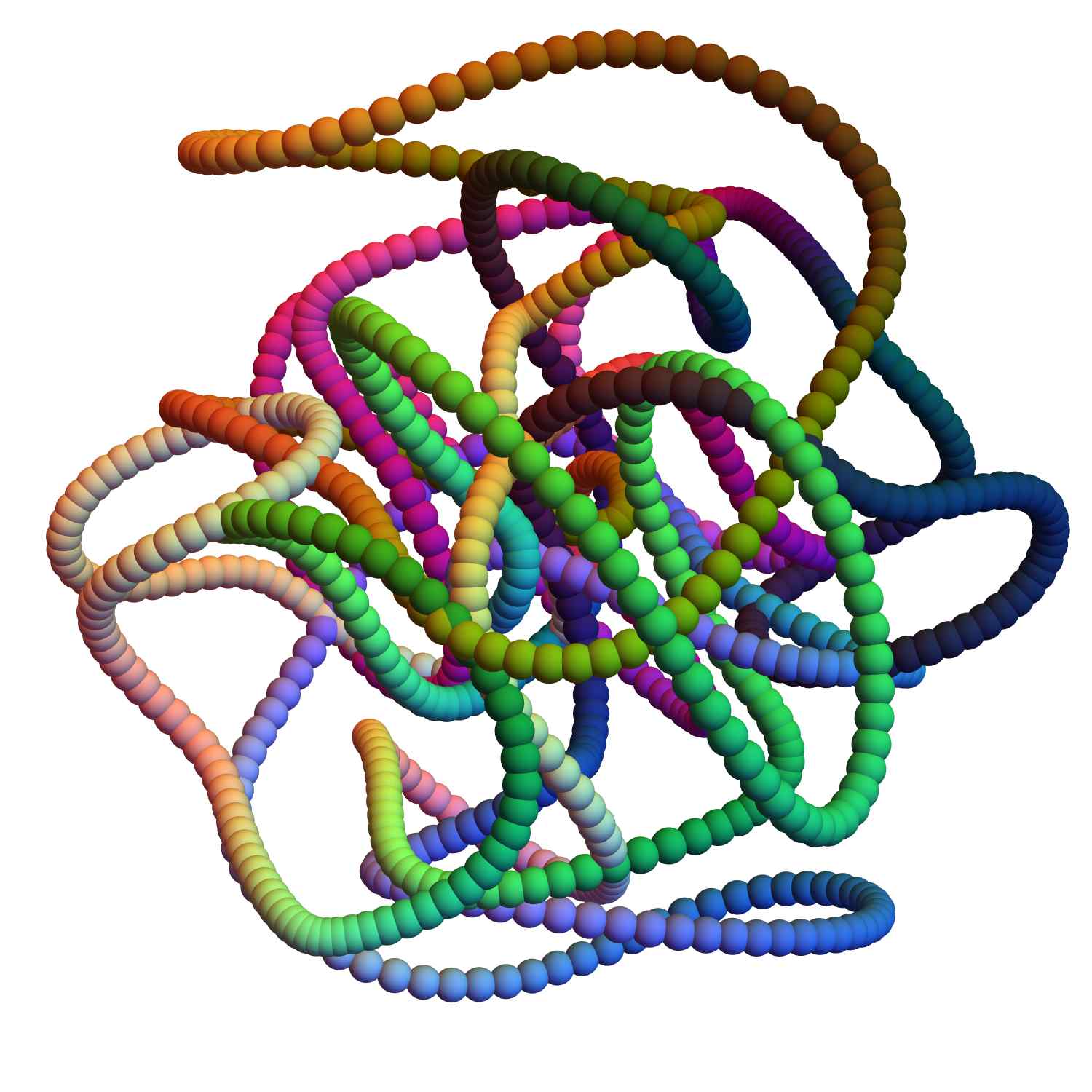} \\ 
		\includegraphics[width=0.29\textwidth]{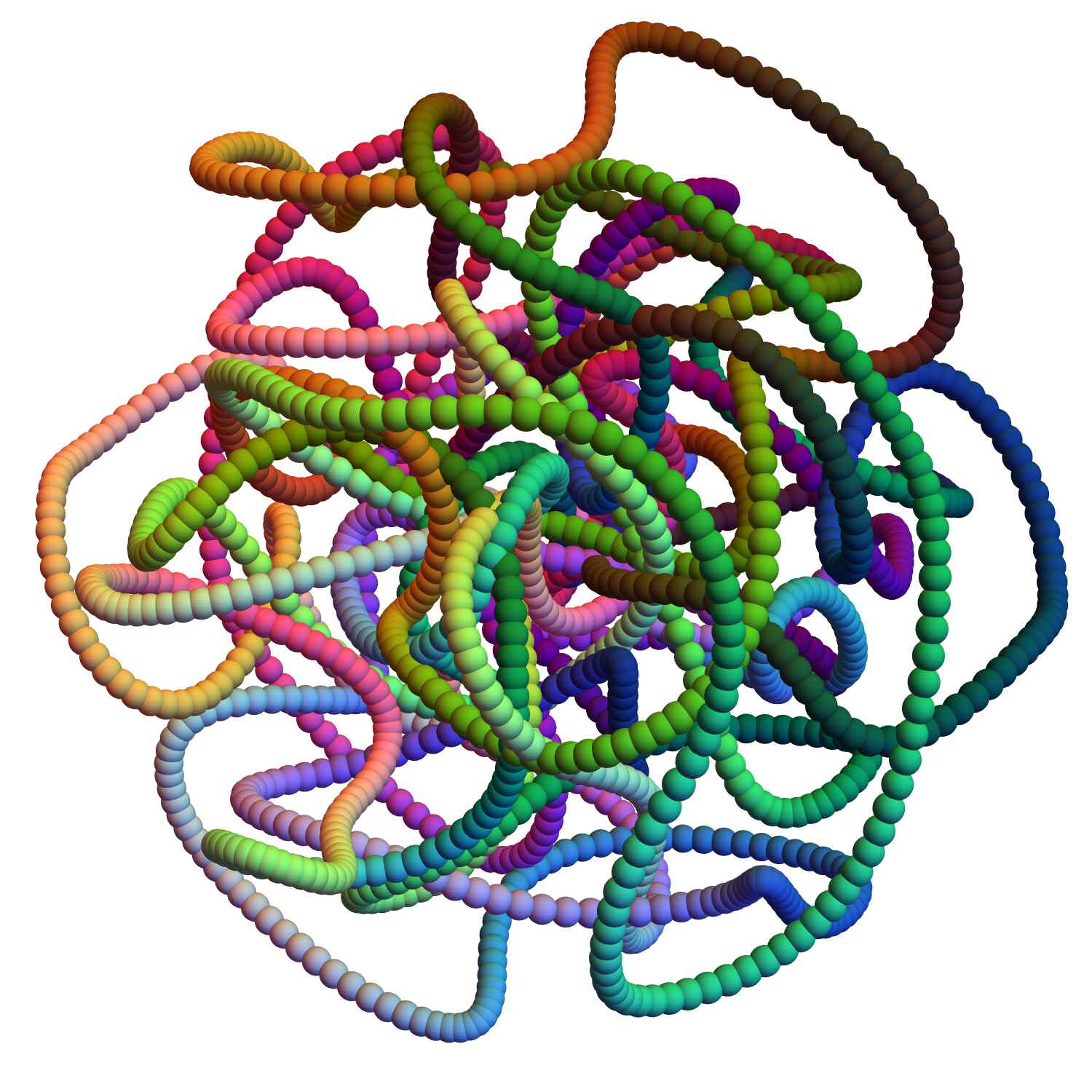} & 
		\includegraphics[width=0.29\textwidth]{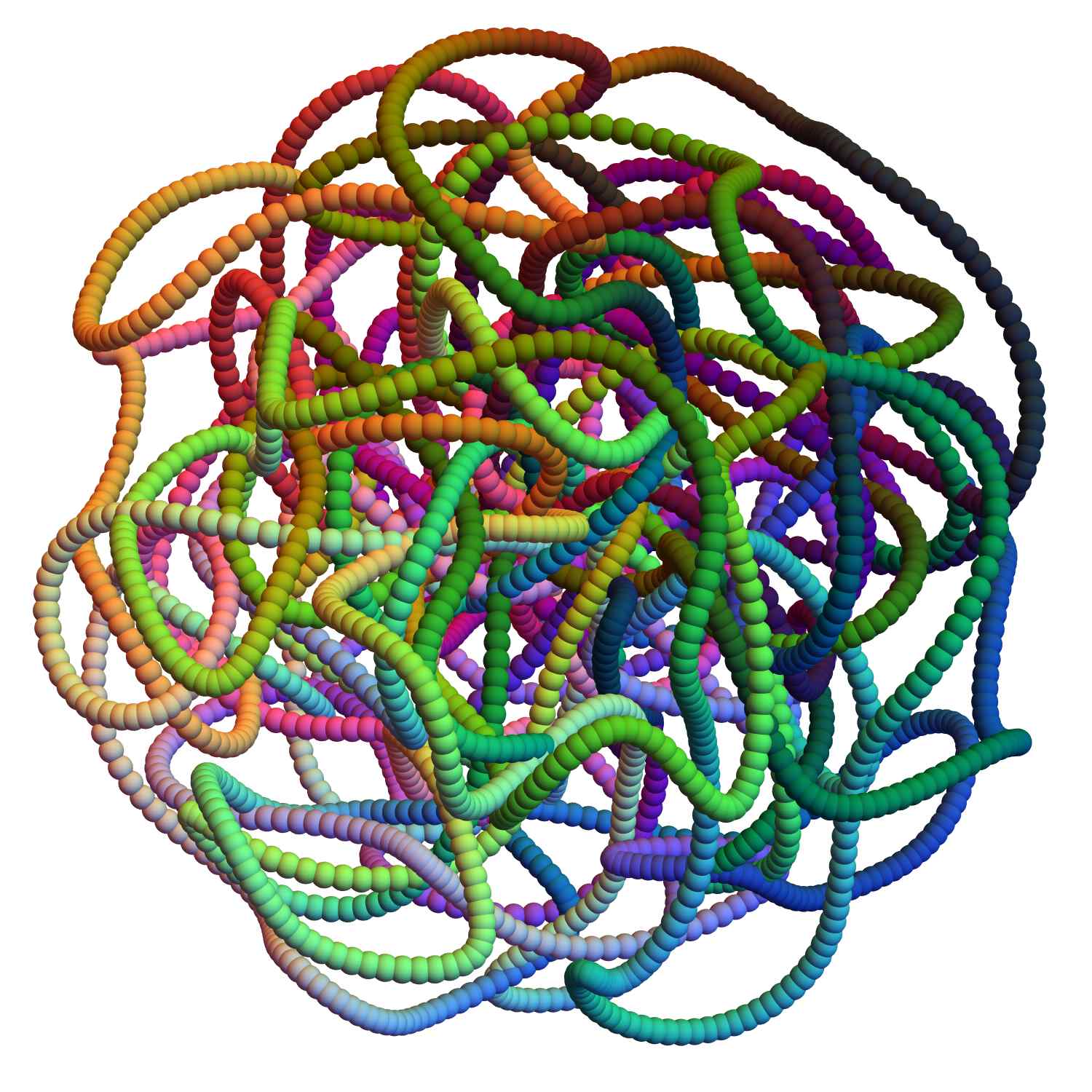} & 
		\includegraphics[width=0.29\textwidth]{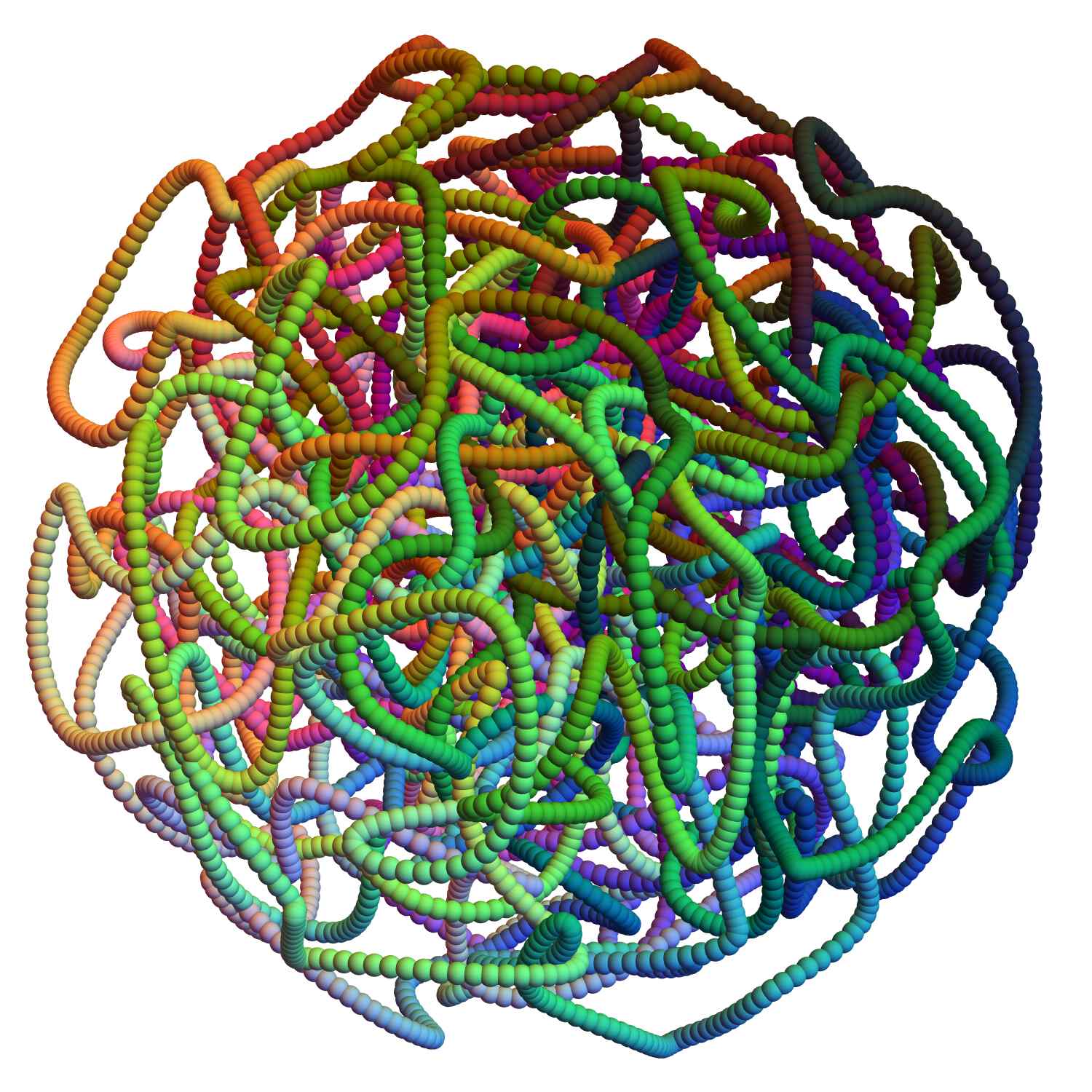} \\ 
		\includegraphics[width=0.29\textwidth]{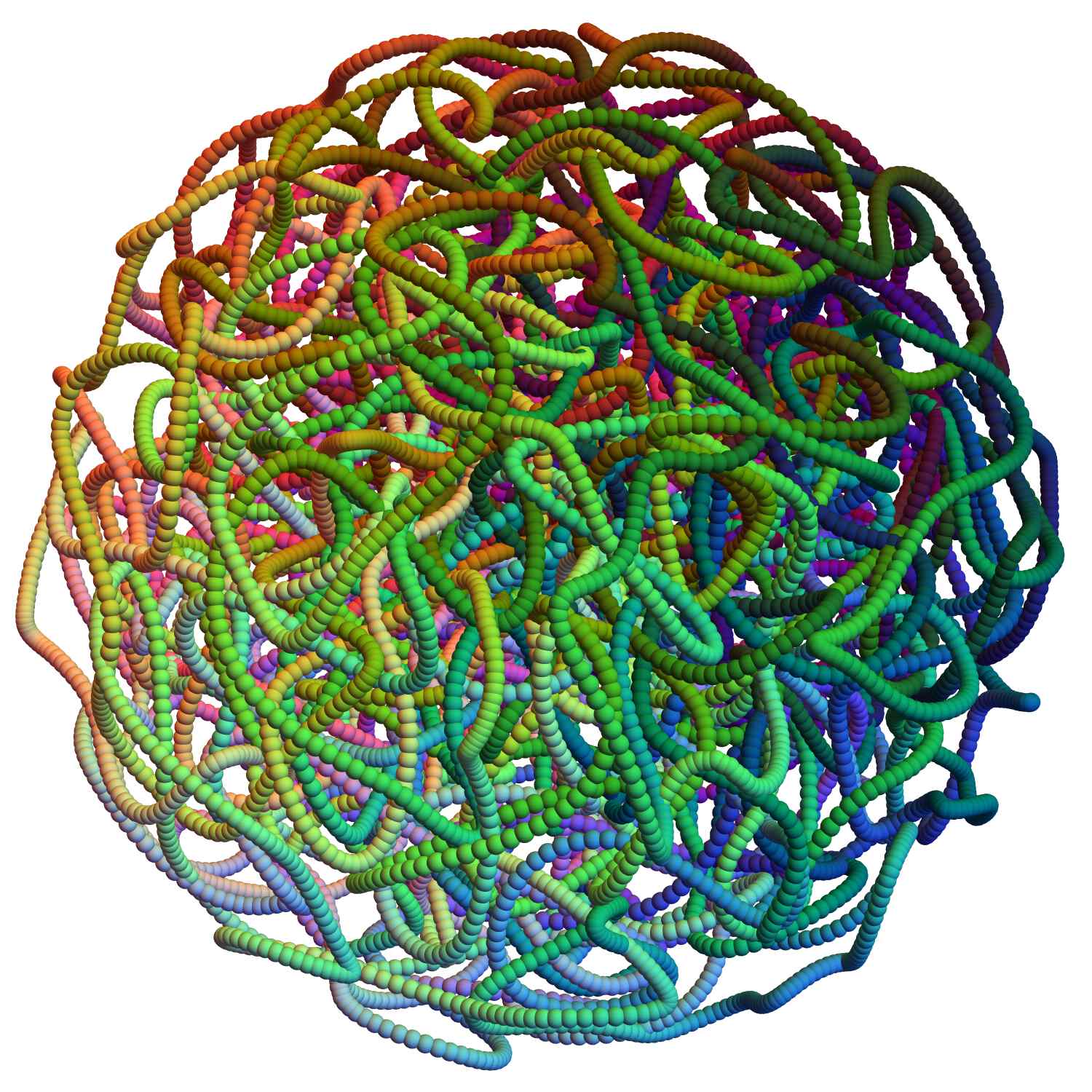} & 
		\includegraphics[width=0.29\textwidth]{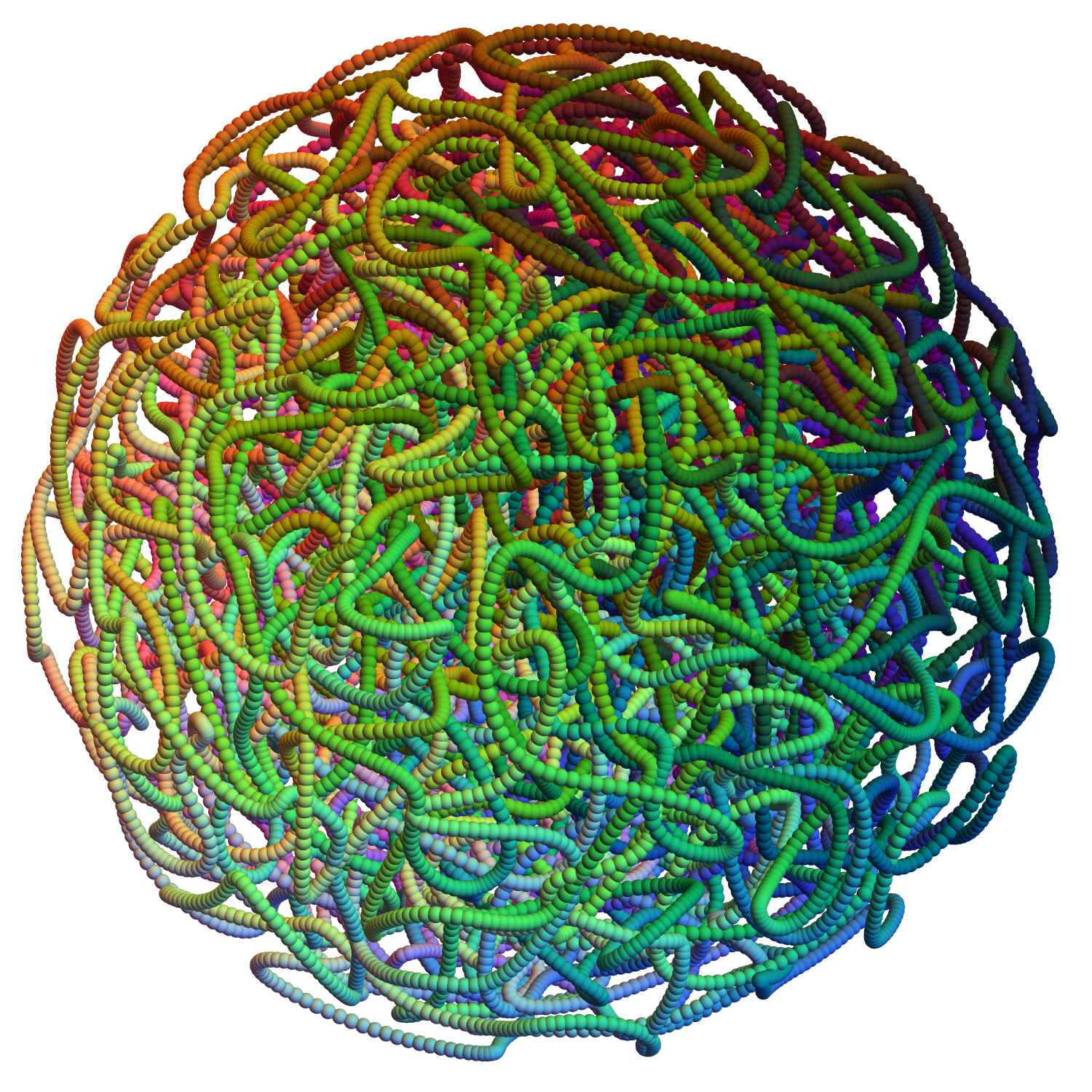} & 
		\includegraphics[width=0.29\textwidth]{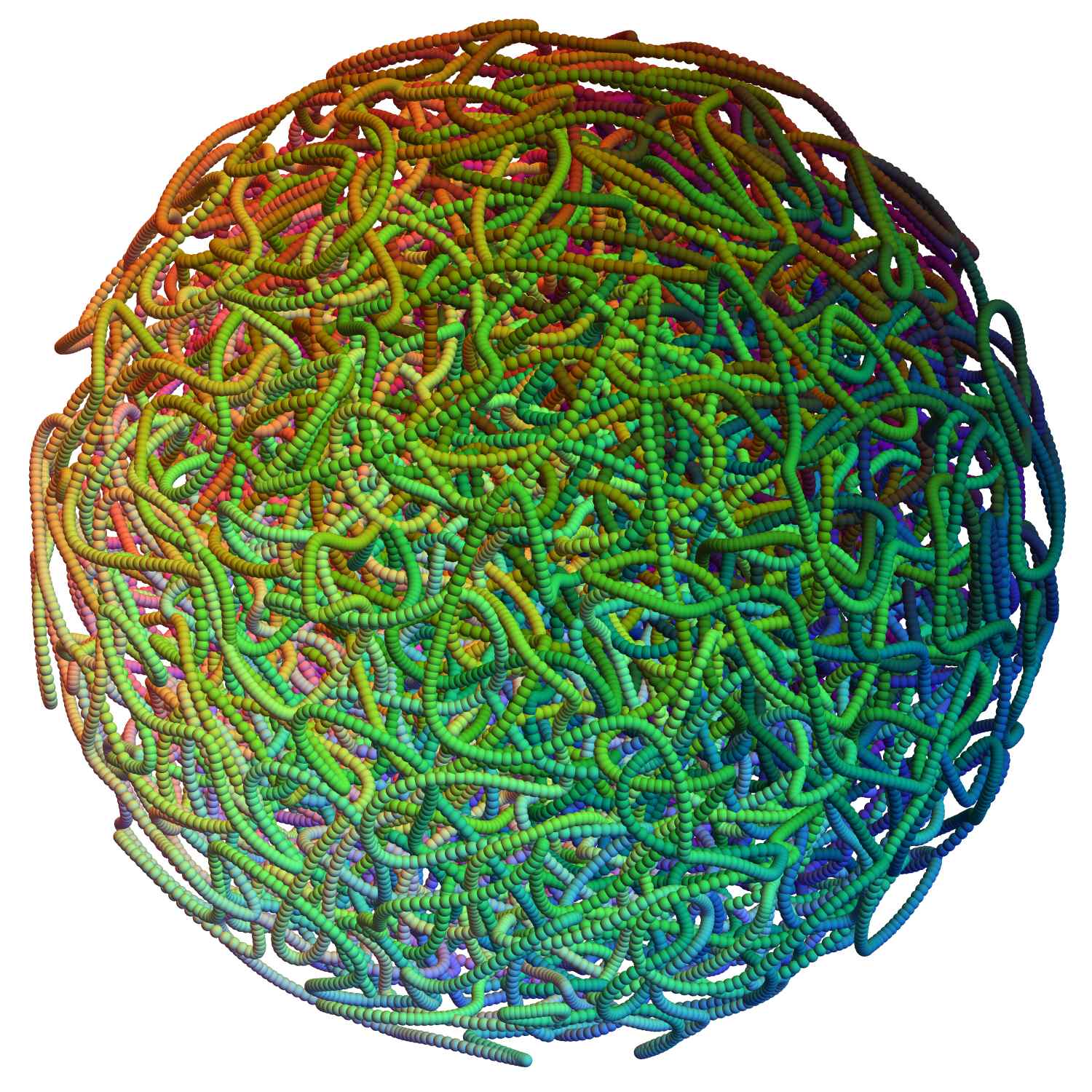} \\ 
	\end{tabular}
	\caption{Local minimizers of \eqref{eq:final} for the Haar measure of the Grassmannian $\mathcal G_{2,4}$.}
	\label{fig:uniform_g24}
	\vspace{.75cm}
	\centering
	\includegraphics[width=0.73\textwidth]{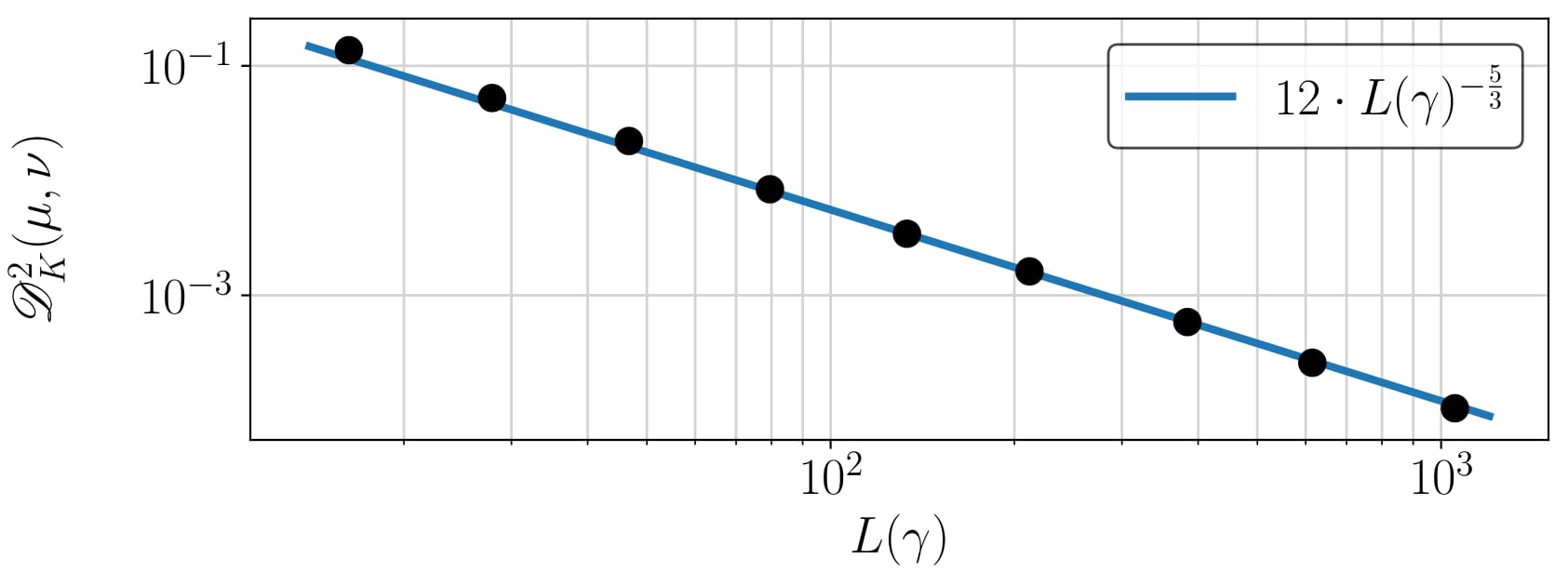}
	\caption{The squared discrepancy between the Haar measure $\mu$ and the computed local minimizers (black dots) in log-scale.
		Here, the blue line corresponds to the optimal decay-rate, cf.~Theorem~\ref{thm:grassi}.}
	\label{fig:err_uniform_g24}
	
\end{figure}

\section{Conclusions} \label{sec:conclusions_dith}

In this chapter, we provided approximation results
for general probability measures on compact Ahlfors $d$-regular metric spaces $\X$ by 
\begin{itemize}
	\item[i)]
	measures supported on continuous curves of finite length,
	which are actually push-forward measures of probability measures on $[0,1]$ by Lipschitz curves;
	\item[ii)] push-forward measures of absolutely continuous probability measures on  $[0,1]$ by Lipschitz curves;
	\item[iii)] push-forward measures of the Lebesgue measure on $[0,1]$ by Lipschitz curves.
\end{itemize}
Our estimates rely on discrepancies between measures.
In contrast to Wasserstein distances, these estimates do not reflect the curse of dimensionality. 

In approximation theory, a natural question is how the approximation rates improve as the ``measures become smoother''.
Therefore, we considered absolutely continuous probability measures with densities in
Sobolev spaces, where we have to restrict ourselves to compact Riemannian manifolds $\X$.
We proved lower estimates for all three approximation spaces i)-iii).
Concerning upper estimates, we gave a result for the approximation space i).
Unfortunately, we were not able to show  similar results for the smaller approximation spaces ii) and iii).
Nevertheless, for these cases, we could provide results for the $d$-dimensional torus, the $d$-sphere, the three-dimensional
rotation group and the Grassmannian $\mathcal{G}_{2,4}$, which are all of interest on their own.
Numerical examples on these manifolds underline our theoretical findings.

Our results can be seen as starting point for future research.
Clearly, we want to have more general results also for the approximation spaces ii) and iii).
We hope that our research leads to further practical applications.
It would be also interesting to consider approximation spaces of measures supported on higher
dimensional submanifolds as, e.g., surfaces.

Recently, results on the principal component analysis (PCA) on manifolds were obtained.
It may be interesting to see if some of our approximation results can be also
modified for the setting of principal curves, cf.~Remark~\ref{rem:principalcurves}. In contrast to \cite[Thm.~1]{KKLZ00} that bounds the discretization error for fixed length, we were able to provide precise error bounds for the discrepancy in dependence on the Lipschitz constant $L$ of $\gamma$ and the smoothness of the density $\mathrm d \mu$.

\appendix
	\section{Special manifolds} \label{sec:examples}
	Here, we introduce the main examples that are addressed in the numerical part. 
	The measure $\sigma_\X$ is always the normalized Riemannian measure on the manifold $\X$.
	Note that for simplicity of notation all eigenspaces are complex in this section.
	We are interested in the following special manifolds.
	
	\paragraph{Example 1: $\X = \mathbb T^d$.} For $\zb k \in \mathbb Z^d$, set $|\zb k|^2 \coloneqq k_1^2+ \ldots+k_d^2$ and
	$|\zb k|_\infty  \coloneqq \max\{|k_1|, \ldots, |k_d|\}$. 
	Then $-\Delta$ has
	eigenvalues $\{4 \pi^2|\zb k|^2\}_{\zb k \in \mathbb Z^d}$
	with eigenfunctions
	$\{ \e^{2 \pi \ii \langle \zb k, \cdot\rangle} \}_{\zb k \in \mathbb Z^d}$.
	The \emph{space of $d$-variate trigonometric polynomials of degree $r$},
	\begin{equation} \label{trig_torus}
		\mathrm{\Pi}_r(\mathbb T^d) \coloneqq \mathrm{span} \big\{\mathrm{e}^{2\pi \mathrm i \langle \zb k,x \rangle}: |\zb k|_\infty \le r \big\}
	\end{equation}
	has dimension $(2r+1)^d$ and contains the eigenspaces belonging to eigenvalues smaller than $4\pi^2 r^2$.
	As kernel for $H^s$, $s =(d+1)/2$, we use in our numerical examples
	\begin{equation}\label{kernel_Td}
		K(x,y) = \sum_{\zb k\in \mathbb Z ^d} (1+ |\zb k|_2^2)^{-\frac{d+1}{2}} \mathrm{e}^{2\pi \mathrm i \langle \zb k,x-y \rangle} 
		= \sum_{\zb k\in \mathbb Z ^d} (1+ |\zb k|_2^2)^{-\frac{d+1}{2}} \cos\bigl(2\pi \langle \zb k,x-y \rangle\bigr).
	\end{equation}
	
	\paragraph{Example 2: $\X = \mathbb S^d \subset \mathbb R^{d+1}$, $d \ge 1$.} 
	We use distance
	$\dist_{\mathbb S^d}(x,z)=\arccos(\left\langle x,z \right\rangle)$.
	The Laplace--Beltrami operator $-\Delta$ on $\mathbb S^d$ has the
	eigenvalues $\{k(k+d-1)\}_{k\in \N}$ 
	with 
	the \emph{spherical harmonics of degree} $k$,
	$$
	\big\{Y^k_l\colon l=1, \ldots, Z(d, k)\big\}, 
	\quad Z(d, k) \coloneqq (2k+d-1)\tfrac{\Gamma(k+d-1)}{\Gamma(d)\Gamma(k+1)}
	$$
	as corresponding orthonormal eigenfunctions \cite{Mueller1966}. The span of eigenfunctions with eigenvalues smaller than $r(r+d-1)$ is
	given by
	\begin{align} \label{trig_sphere}
		\mathrm{\Pi}_r(\mathbb S^d) 
		&\coloneqq  \mathrm{span} \big\{Y^k_l\colon k=0,\ldots,r,\, l=1, \ldots, Z(d, k) \big\}.
	\end{align}
	It has dimension 
	$\sum_{k=0}^r Z(d,k) = \frac{(d+2r)\Gamma(d+r)}{\Gamma(d+1)\Gamma(r+1)} \sim r^{d}$ and coincides with the space of polynomials of total degree $r$ in $d$ variables restricted to the sphere. 
	As kernel for
	$H^s(\mathbb S^2)$, $s = 3/2$, we use
	\begin{align}\label{kernel_Sd}
		K(x,y) 
		&= \frac13 + \sum_{k=1}^\infty \frac{2}{(2k-1)(2k+1)(2k+3)} \sum_{l=1}^{2k+1} Y^k_l (x) \overline{Y^k_l (y)}\\
		&= \frac13 + \sum_{k=1}^\infty \frac{2}{(2k-1)(2k+3)} P_k\bigl(\langle x,y \rangle\bigr) = 1 - \frac12\|x-y\|_{2}  
	\end{align}
	with the Legendre polynomials $P_k$. Note that the coefficients decay as $\left(k(k+1) \right)^{-3/2}$.
	
	\paragraph{Example 3: $\X = \SO(3)$.} This $3$-dimensional manifold is equipped with the distance $\dist_{\SO(3)}(x,y) = \arccos((\operatorname{trace}(x^\tT y)-1)/2)/2$. 
	The eigenvalues of $-\Delta$ are $\{k(k+1)\}_{k=0}^{\infty}$
	and the (normalized) \emph{Wigner}-$\mathcal{D}$ \emph{functions}
	$\{\mathcal{D}^k_{l,l'}\colon l, l' = -k, \ldots, k\}$ provide an orthonormal basis for $L^2(\textup{SO}(3))$, cf.~\cite{VMK1988}.
	The span of eigenspaces belonging to eigenvalues smaller than $r(r+1)$ is 
	\[
	\mathrm{\Pi}_r(\SO(3)) 
	\coloneqq  \mathrm{span} \bigl\{\mathcal D_{l,l'}^k: k=0,\ldots,r,\, l,l' = -k,\ldots,k\bigr\}
	\]
	and has dimension $(r+1)(2r+1)(2r+3)/3$.
	In the numerical part, we use the following kernel for $H^s\left( \textup{SO}(3) \right)$, $s = 2$,
	\begin{align}\label{kernel_SO3}
		K(x,y) & = \frac{\pi}{8} - \frac13 +
		\sum_{k=1}^\infty \frac{1}{(2k-1)(2k+1)^2(2k+3)} 
		\sum_{l = -k}^k \sum_{l'=-k}^k \mathcal D^k_{l,l'} (x) \overline{\mathcal D^k_{l,l'} (y)}\hspace{1.2cm}\\
		& = \frac{\pi}{8} - \frac13 + 
		\sum_{k=1}^\infty \frac{1}{(2k-1)(2k+1)(2k+3)} U_{2k}\Big(\tfrac12\sqrt{\mathrm{tr}(x^\top y)+1}\Big) \\
		& = \frac{\pi}{8} - \pi \frac{\sqrt{2}}{16} \| x - y\|_{\mathrm F},
	\end{align}
	where $U_k$ are the Chebyshev polynomials of the second kind.
	
	\paragraph{Example 4: $\X = \G_{2,4}$.}
	For integers $1\leq s<r$, the $(s,r)$-Grassmannian is the collection of all $s$-dimensional linear subspaces of $\R^r$ and carries the structure of a closed Riemannian manifold. 
	By identifying a subspace with the orthogonal projector onto this subspace, the Grassmannian becomes 
	\begin{equation*}
		\G_{s,r}\coloneqq\bigl\{x\in\R^{r\times r} : x^\top = x,\; x^2=x,\; \mathrm{rank}(x)=s\bigr\}.
	\end{equation*}
	In our context, the cases $\G_{1,2}$, $\G_{1,3}$, and $\G_{2,3}$ can essentially be treated by the spheres $\S^1$ and $\S^2$. 
	The simplest Grassmannian that is algebraically different is $\G_{2,4}$.
	It is a $4$-dimensional manifold and the geodesic distance between $x,y\in\mathcal{G}_{2,4}$ is given by 
	\begin{equation*}
		\dist_{\G_{2,4}}(x,y)=\sqrt{2}\sqrt{\theta_1^2(x,y)+\theta_2^2(x,y)},
	\end{equation*}
	where $\theta_1(x,y)$ and $\theta_2(x,y)$ are the principal angles between the subspaces associated to $x$ and $y$, respectively.
	The terms $\cos(\theta_1(x,y))^{2}$ and $\cos(\theta_2(x,y))^{2}$ correspond to the two largest singular values of the product $xy$. 
	The eigenvalues of $-\Delta$ on $\G_{2,4}$ are $4(\lambda_1^2+\lambda_2^2+\lambda_1)$, where $\lambda_1$ and $\lambda_2$ 
	run through all integers with $\lambda_1 \geq \lambda_2\geq 0$, cf.~\cite{Bachoc:2006aa,Bachoc:2004fk,Bachoc:2002aa,Ehler:2014zl,James:1974aa,Roy:2010fk}. 
	The associated  eigenfunctions are denoted by $\varphi^\lambda_l$ with $l=1,\ldots,Z(\lambda)$, where 
	$Z(\lambda) = (1+\lambda_1+\lambda_2) \eta(\lambda_2)$ and $\eta(\lambda_2) = 1$ if $\lambda_{2}=0$ and $2$ if $\lambda_{2}>0$ 
	cf.~\cite[(24.29) and (24.41)]{Fulton:1991fk} as well as \cite{Bachoc:2004fk,Bachoc:2002aa}. 
	
	The space of polynomials of total degree $r$ on $\R^{16}\cong \R^{4\times 4}$ restricted to $\G_{2,4}$ is 
	\begin{equation*}
		\mathrm{\Pi}_r(\G_{2,4})\coloneqq\spann\bigl\{ \varphi^\lambda_l : \lambda_1+\lambda_2\leq r,\; l=1,\ldots,Z(\lambda) \bigr\}.
	\end{equation*}
	It contains all eigenfunctions $\varphi^\lambda_l$ with $4(\lambda_1^2+\lambda_2^2+\lambda_1)<2(r+1)(r+2)$, cf.~\cite[Thm.~5]{Breger:2016vn}.
	
	For $H^s(\G_{2,4})$ with $s= 5/2$, we chose the kernel
	\begin{equation} \label{kernel:grass}
		K(x,y) = \sum_{\lambda_{1}\ge \lambda_{2}\ge 0}  \bigl(1+\lambda_{1}^{2} + \lambda_{2}^{2}\bigr)^{-\frac{5}{2}} \sum_{l=1}^{Z(\lambda)}\varphi_{l}^{\lambda}(x) \overline{\varphi_{l}^{\lambda}(y)}.
	\end{equation}

	\begin{remark}\label{rem:Grassi}
		It is well-known that $\S^2\times\S^2$ is a double covering of $\G_{2,4}$. 
		More precisely, there is an isometric one-to-one mapping 
		$P \colon \S^2\times\S^2/ \{\pm 1\} \to \G_{2,4}$ given by
		\[
		P(u,v) = P(-u,-v)
		\coloneqq \frac12 
		\begin{pmatrix}
			1+ u^\tT v & -( u \times v)^\tT\\
			- u \times v & u v^\tT + v u^\tT + (1-u^\tT v) I_3
		\end{pmatrix},
		\]
		cf.~\cite{Dick:2019sy}. Moreover, the $\varphi^\lambda_l$ are essentially tensor products of spherical harmonics, which enables transferring the non-equispaced fast Fourier transform from $\S^2\times\S^2$ to $\G_{2,4}$, see \cite{Dick:2019sy} for details. 
	\end{remark}

\section*{Acknowledgments}
	Part of this research was performed while all authors were visiting the Institute for Pure and Applied Mathematics (IPAM)
	during the long term semester on ``Geometry and Learning from 3D Data and Beyond'' 2019,
	which was supported by the National Science Foundation (Grant No. DMS-1440415).   
	Funding by the German Research Foundation (DFG) with\-in the project STE 571/13-1 and with\-in the RTG 1932, project area P3, and by the Vienna Science and Technology Fund (WWTF) within the project VRG12-009 is gratefully acknowledged.

\bibliographystyle{abbrv}
\bibliography{references_clean}
\end{document}